\begin{document}
\bibliographystyle{alpha}
\theoremstyle{plain}
\newtheorem{tech*}{Key Idea}
\newtheorem{thm}{Theorem}[section]
\newtheorem*{thm*}{Theorem}
\newtheorem{prop}[thm]{Proposition}
\newtheorem*{prop*}{Proposition}
\newtheorem{lemma}[thm]{Lemma}
\newtheorem{cor}[thm]{Corollary}
\newtheorem*{conj*}{Conjecture}
\newtheorem*{cor*}{Corollary}
\newtheorem{defn}[thm]{Definition}
\newtheorem{eg}[thm]{Example}
\newtheorem{egs}[thm]{Examples}
\theoremstyle{definition}
\newtheorem*{defn*}{Definition}
\newtheorem{rems}[thm]{Remarks}
\newtheorem{rem}[thm]{Remark}
\newtheorem*{rem*}{Remark}
\newtheorem*{rems*}{Remarks}
\newtheorem*{proof*}{Proof}
\newtheorem*{not*}{Notation}
\newtheorem*{eg*}{Example}
\newtheorem*{egs*}{Examples}
\newtheorem*{dis*}{Discussion}
\setlength{\parskip}{.3cm}
\newcommand{\nc}{\newcommand}
\nc{\nt}{\newtheorem}
\nc{\gf}[2]{\genfrac{}{}{0pt}{}{#1}{#2}}
\nc{\mb}[1]{{\mbox{$ #1 $}}}
\nc{\real}{{\bf R}}
\nc{\comp}{{\bf C}}
\nc{\ints}{{\bf Z}}
\nc{\Ltoo}{\mb{L^2({\mathbf H})}}
\nc{\rtoo}{\mb{{\mathbf R}^2}}
\nc{\bra}{\langle}
\nc{\ket}{\rangle}
\nc{\cal}{\mathcal}
\nc{\frk}{\mathfrak}

\title{}

\titlepage

\begin{center}{\bf CENTRE-VALUED INDEX FOR TOEPLITZ OPERATORS WITH NONCOMMUTING
SYMBOLS}\\

{\bf by}\\
\vspace{.1in}
{\bf John Phillips}\\Department of Mathematics and Statistics\\
University of Victoria\\Victoria, B.C. V8W 3P4, CANADA\\

{\bf and}\\

{\bf Iain Raeburn}\\Department of Mathematics and Statistics\\University of Otago, PO Box 56,\\
Dunedin 9054, NEW ZEALAND\\

This research was supported by the Natural Sciences and Engineering Research Council of Canada, The Australian Research Council, and the University of Otago.
\end{center}
{\bf Abstract.} We formulate and prove a ``winding number'' index theorem for certain ``Toeplitz'' operators in the same spirit as the Gohberg-Krein Theorem and generalizing previous work of Lesch and others. The ``number'' in ``winding number'' is replaced by a self-adjoint operator in a subalgebra $Z\subseteq Z(A)$
of a unital $C^*$-algebra, $A$. We assume that there is a faithful $Z$-valued trace $\tau$ on $A$ which is left invariant under an action $\alpha:{\bf R}\to Aut(A)$ which leaves $Z$ pointwise fixed. If $\delta$ is the infinitesimal generator of $\alpha$ and $u$ is an invertible element in ${\rm dom}(\delta)$ then the
``winding operator'' of $u$ is $\frac{1}{2\pi i}\tau(\delta(u)u^{-1})\in Z_{sa}.$ By a careful choice of representations we can extend the data $(A,Z,\tau,\alpha)$ to a von Neumann setting 
$(\frk{A},\frk{Z},\bar\tau,\bar\alpha)$ where $\frk{A}=A^{\prime\prime}$ and $\frk{Z}=Z^{\prime\prime}.$
Then, $A\subset\frk{A}\subset \frk{A}\rtimes{\bf R}$, the von Neumann crossed product, and there is a faithful, dual $\frk{Z}$-trace on $\frk{A}\rtimes{\bf R}$. If $P$ is the projection in $\frk{A}\rtimes{\bf R}$
corresponding to the non-negative spectrum of the generator of the representation of $\bf R$ in $\frk{A}\rtimes{\bf R}$ and $\tilde\pi:A\to\frk{A}\rtimes{\bf R}$ is the embedding then we define for $u\in A^{-1}$, $T_u=P\tilde\pi(u) P$
and show that it is Fredholm in an appropriate sense and the $\frk{Z}$-valued index of $T_u$ is the negative of the winding operator, i.e.,
$\frac{-1}{2\pi i}\tau(\delta(u)u^{-1})\in Z_{sa}.$
In outline the proof follows the proof of the scalar case done previously by the authors. The difficulties arise in making sense of the various constructions when the scalars are replaced by $\frk{Z}$ in the von Neumann setting. In particular, the construction of the dual $\frk{Z}$-trace on $\frk{A}\rtimes{\bf R}$ required the nontrivial development of a $\frk{Z}$-Hilbert Algebra theory. We show that certain of these Fredholm operators fiber as a ``section'' of Fredholm operators with scalar-valued index and the centre-valued index fibers as a section of the scalar-valued indices. 

\section{WINDING OPERATOR}
\noindent{\bf Objects of Study:} We consider a unital $C^*$-algebra, 
$A$ with a unital
$C^*$-subalgebra $Z$ of the centre of $A$; $Z(A)$. We also assume that
there exists a faithful, unital, tracial, conditional expectation $\tau: A
\to Z$ (a ``faithful $Z$-trace'') and a continuous action 
$\alpha:\real \to Aut(A)$
which leaves $\tau$ invariant. That is , $\tau\circ\alpha_t = \tau$ for all
$t \in \real.$ That is, our Objects of Study are $4$-tuples $(A,Z,\tau,\alpha)$ satisfying these conditions.

Under these hypotheses we show that the {\it ``winding number theorem''}
of \cite {PhR} holds. We will often refer to this as a ``winding operator''.

\begin{thm}\label{wind}
Let $(A,Z,\tau,\alpha)$ be a $4$-tuple; so that $A$ is a unital $C^*$-algebra and $Z\subseteq Z(A)$
is a unital $C^*$-subalgebra of the centre of $A;$ 
$\tau: A \to Z$ is a faithful, unital, tracial, conditional expectation; and
$\alpha:\real \to Aut(A)$ is a continuous action leaving $\tau$
invariant. Let $\delta$ be the infinitesimal generator of $\alpha.$
Then, $$a\mapsto \frac{1}{2 \pi i}\tau(\delta(a)a^{-1}):
dom(\delta)^{-1} \to Z_{sa}$$
is a group homomorphism which is constant on connected components
and so extends uniquely to a group homomorphism $A^{-1} \to Z_{sa}$
which is constant on connected components and is $0$ on $Z^{-1}.$ We denote this map by $wind_\alpha(a).$
\end{thm}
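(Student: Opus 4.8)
Write $w(a)=\frac{1}{2\pi i}\tau(\delta(a)a^{-1})$. The plan is to extract everything from two structural facts about the generator $\delta$. First, $\delta$ is a closed $*$-derivation whose domain is a dense, unital, $*$-subalgebra of $A$; because $\alpha_t(a^{-1})=\alpha_t(a)^{-1}$, this domain is inverse-closed, with $\delta(a^{-1})=-a^{-1}\delta(a)a^{-1}$ for $a\in dom(\delta)^{-1}$. Second, differentiating $\tau\circ\alpha_t=\tau$ at $t=0$ gives $\tau(\delta(x))=0$ for every $x\in dom(\delta)$, hence the integration-by-parts rule $\tau(\delta(x)y)=-\tau(x\,\delta(y))$. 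I would first check that $w$ is a homomorphism on $dom(\delta)^{-1}$: expanding $\delta(ab)=\delta(a)b+a\delta(b)$ and using the trace property $\tau(xy)=\tau(yx)$ to cancel the conjugating factors yields $w(ab)=w(a)+w(b)$ in one line.

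Next I would equip $dom(\delta)$ with the graph norm $\|a\|_\delta=\|a\|+\|\delta(a)\|$, under which it is a Banach algebra and $dom(\delta)^{-1}$ is open; I claim $w$ is constant on its connected components. Since these components are open, they are path-connected by piecewise-affine (hence piecewise-$C^1$) paths, so it suffices to show $\frac{d}{ds}w(a_s)=0$ along any $C^1$ path $a_s$ in $dom(\delta)^{-1}$. Differentiating, using $\frac{d}{ds}a_s^{-1}=-a_s^{-1}\dot a_s a_s^{-1}$ together with $\delta(a_s^{-1})=-a_s^{-1}\delta(a_s)a_s^{-1}$, and then applying integration by parts and the trace identity, the two resulting terms $\tau(\delta(\dot a_s)a_s^{-1})$ and $-\tau(\delta(a_s)a_s^{-1}\dot a_s a_s^{-1})$ both reduce, up to sign, to $\tau(\delta(a_s^{-1})\dot a_s)$ and cancel. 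This is the conceptual heart of the argument. Self-adjointness of $w$ then comes for free: $w(1)=0$, the segment $(1-s)\cdot 1+sh$ connects $1$ to any positive invertible $h$ inside $dom(\delta)^{-1}$, so $w(h)=0$; applying this to $a^*a$ and using the homomorphism property gives $w(a^*)=-w(a)$, while a direct computation gives $w(a)^*=-w(a^*)$, so $w(a)^*=w(a)$ and the range lies in $Z_{sa}$.

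To pass from $dom(\delta)^{-1}$ to $A^{-1}$ I would use the smoothing operators $\alpha_f(x)=\int_{\real}f(t)\alpha_t(x)\,dt$ for $f\in C_c^\infty(\real)$ with $\int f=1$, which map $A$ into $dom(\delta)$ and satisfy $\alpha_f\to\mathrm{id}$ uniformly on norm-compact sets as $f$ approaches a point mass. Density and openness show that every component of $A^{-1}$ meets $dom(\delta)^{-1}$, so $w$ admits at most one locally constant extension $\widetilde w$; the work is well-definedness, i.e.\ that two elements of $dom(\delta)^{-1}$ lying in one component of $A^{-1}$ already lie in one component of $dom(\delta)^{-1}$. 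Given a path $\gamma$ in $A^{-1}$ joining them, compactness of $\gamma([0,1])$ lets me choose a single $f$ with $\sup_s\|\alpha_f(\gamma(s))-\gamma(s)\|$ small enough that $s\mapsto\alpha_f(\gamma(s))$ is a path in $dom(\delta)^{-1}$, and short affine segments join each endpoint to its smoothing inside $dom(\delta)^{-1}$; local constancy then forces equal values. This produces the unique homomorphic, locally constant $\widetilde w\colon A^{-1}\to Z_{sa}$. Finally, since $\alpha$ leaves $Z$ pointwise fixed we have $\delta|_Z=0$ (consistently, $\tau(\delta(z)b)=-z\,\tau(\delta(b))=0$ for all $b\in dom(\delta)$ forces $\delta(z)=0$ by faithfulness), whence $w\equiv 0$ on $Z^{-1}\subseteq dom(\delta)^{-1}$ and $\widetilde w$ vanishes on $Z^{-1}$. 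I expect the main obstacle to be precisely this well-definedness of the extension: reconciling the finer graph-norm topology, in which $w$ is only locally constant, with the coarser norm topology of $A^{-1}$, which is exactly what the smoothing argument is designed to bridge.
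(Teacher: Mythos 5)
Your argument is correct in its essentials, but it takes a genuinely different route from the paper's, and the difference is instructive. The paper never differentiates along paths: its key lemma is the algebraic identity $\tau(\delta(a)a^k)=0$ for all $k\geq 0$ (obtained from $0=\tau(\delta(a^n))$ via Leibniz and the trace property), which, summed against the Neumann series $a^{-1}=\sum_{k}(1-a)^k$, shows $\tau(\delta(a)a^{-1})=0$ whenever $a\in dom(\delta)$ and $\|1-a\|<1$. Applying the homomorphism property to $ba^{-1}$ then yields local constancy with respect to the \emph{ambient $C^*$-norm} of $A$, not merely the graph norm, so the unique extension to $A^{-1}$ is immediate from density of $dom(\delta)^{-1}$ in the open set $A^{-1}$ — no topology-bridging is needed. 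You instead prove constancy only in the finer graph-norm topology, via the vanishing of $\frac{d}{ds}w(a_s)$ (your computation is right: both terms reduce to $\mp\tau(\delta(a_s^{-1})\dot a_s)$ and cancel), and must then repair the mismatch between the two topologies with the smoothing operators $\alpha_f$. That repair does work: $\alpha_f$ is bounded from $(A,\|\cdot\|)$ into $(dom(\delta),\|\cdot\|_\delta)$ because $\delta(\alpha_f(x))=-\alpha_{f'}(x)$, so $s\mapsto\alpha_f(\gamma(s))$ is a graph-norm-continuous path, and your endpoint segments are fine — but it is the longest part of your proof and is precisely the step the paper's power-series lemma makes unnecessary. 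What your route buys is a stronger intermediate fact (two elements of $dom(\delta)^{-1}$ in one norm-component of $A^{-1}$ lie in one graph-norm component of $dom(\delta)^{-1}$, a spectral-invariance-flavoured statement) and a derivative argument that generalizes readily; what the paper's route buys is brevity and the convenience of a single topology throughout. Your self-adjointness argument coincides with the paper's (path from $1$ to $a^*a$, then $w(a^*)=-w(a)$ against $w(a)^*=-w(a^*)$).

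There is one small hole. You assert ``since $\alpha$ leaves $Z$ pointwise fixed we have $\delta|_Z=0$,'' but pointwise fixedness of $Z$ is not among the theorem's hypotheses — only invariance of $\tau$ is — and your parenthetical fallback is circular at exactly one spot: writing $\tau(\delta(z)b)$ already presumes $z\in dom(\delta)$, so that argument shows only that $\delta$ vanishes on $Z\cap dom(\delta)$, and in particular does not justify your inclusion $Z^{-1}\subseteq dom(\delta)^{-1}$. The paper closes this in two lines: for $z\in Z$ and $t\in\real$, the $Z$-bimodule property of $\tau$ and invariance give
$$\tau\bigl((\alpha_t(z)-z)^*(\alpha_t(z)-z)\bigr)=2\tau(z^*z)-\tau(z^*)z-z^*\tau(z)=0,$$
so $\alpha_t(z)=z$ by faithfulness; hence $Z\subseteq dom(\delta)$ with $\delta(Z)=\{0\}$, and $w\equiv 0$ on $Z^{-1}$ follows. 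Insert that computation and your proof is complete.
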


\begin{proof}[\bf Proof]
It is an easy calculation to see that $a\mapsto \tau(\delta(a)a^{-1}): dom(\delta)^{-1}\to (Z,+)$
is a homomorphism.
We next calculate that $\alpha_t(z) = z$ for all $z \in Z$ and $t \in \real:$

$$\tau((\alpha_t(z)-z)^*(\alpha_t(z)-z))=\cdots=\tau(z^*z)-\tau(z^*)z-z^*\tau(z)+\tau(z^*z)=0.$$

Therefore, $\alpha_t(z)-z=0$ since $\tau$ is faithful. So, $Z\subseteq dom(
\delta)$ and $\delta(Z) = \{0\}.$ But then for each $z\in Z^{-1}$ we
have $\tau(\delta(z)z^{-1}) = 0.$

Now, for any $a\in dom(\delta)$, we have 
$$\tau(\delta(a))=\tau\left(\lim_{h\to 0}\frac{\alpha_h(a)-a}{h}\right)=
\lim_{h\to 0}\frac{1}{h}\tau(\alpha_h(a)-a)=0.$$ 
Hence, by the Leibnitz rule, for each $n\geq 1$
\begin{eqnarray}
0 &=& \tau(\delta(a^n)) = \tau\left(\sum_{k=0}^{n-1}a^k\delta(a)a^{(n-1)-k}
\right)\nonumber\\
&=& \sum_{k=0}^{n-1}\tau(a^k\delta(a)a^{(n-1)-k}) = \sum_{k=0}^{n-1}
\tau(a^{n-1}\delta(a))\nonumber\\
&=& n\tau(a^{n-1}\delta(a)).\nonumber
\end{eqnarray}
Thus, for each $a\in dom(\delta)$ and each $k\geq 0$ we have 
$\tau(\delta(a)a^k)=\tau(a^k\delta(a))=0.$

Now, if $a\in dom(\delta)$ and $\|1-a\| < 1$ then $a$ is invertible and
$a^{-1}=\sum_{k=0}^{\infty}(1-a)^k$ which converges in norm. Since 
$\delta(1)=0$ we have:
$$\tau(\delta(a)a^{-1})=-\tau(\delta(1-a)a^{-1})=-\tau\left(\delta(1-a)
\sum_{k=0}^{\infty}(1-a)^k\right)=-\sum_{k=0}^{\infty}
\tau(\delta(1-a)(1-a)^k)=0.$$

To see that the map is constant on connected components, we use the previous 
paragraph to show that it is locally constant. So we fix 
$a\in dom(\delta)^{-1}$ and suppose $b\in dom(\delta)^{-1}$ where 
$\|b-a\| < 1/\|a^{-1}\|.$ Then, $\|ba^{-1}-1\|\leq \|b-a\|\;\|a^{-1}\|<1$
so that 
$$0=\tau(\delta(ba^{-1})(ba^{-1})^{-1})=\tau(\delta(b)b^{-1}) +
\tau(\delta(a^{-1})a) = \tau(\delta(b)b^{-1}) - \tau(\delta(a)a^{-1})$$
as required.

Finally, to see that $\tau(\delta(a)a^{-1}) \in iZ_{sa}$, we observe that since
$dom(\delta)$ is a $*$-subalgebra of $A$ that $a \in dom(\delta)^{-1}$
implies that $a^*a \in dom(\delta)^{-1}$ and so $t\mapsto t1+(1-t)a^*a$
is a path of invertible elements in $dom(\delta)^{-1}$ connecting $1$ to
$a^*a.$ Hence, $\tau(\delta(a^*a)(a^*a)^{-1})=\tau(\delta(1)1)=0.$
Since the map is a homomorphism, this implies that 
$\tau(\delta(a^*)(a^*)^{-1})=-\tau(\delta(a)a^{-1}).$ But, then:
$$[\tau(\delta(a)a^{-1})]^* = \tau((a^*)^{-1}\delta(a^*)) = 
\tau(\delta(a^*)(a^*)^{-1}) = -\tau(\delta(a)a^{-1})$$ as required.

Since $dom(\delta)$ is a dense $*$-subalgebra of $A$ and 
$A^{-1}$ is open, $dom(\delta)^{-1}$ is dense in $A^{-1}$
and so the map extends uniquely to $A^{-1}.$

\end{proof}
\begin{defn}\label{morph} {\bf (Morphism)}
For $i=1,2$ let $(A_i,Z_i,\tau_i,\alpha^i)$ be two such $4$-tuples where $A_i$ is a unital $C^*$-algebra and $Z_i$ is a unital $C^*$-subalgebras of the centre of $A_i$, etc. A {\bf morphism} from
$(A_1,Z_1,\tau_1,\alpha^1)$ to $(A_2,Z_2,\tau_2,\alpha^2)$ is given by a unital
$*$-homomorphism $\varphi:A_1\to A_2$ which maps $Z_1\to Z_2$ and makes all the appropriate diagrams commute:

$$\xymatrix{A_1 \ar[d]_{\tau_1} \ar[r]^{\varphi} & A_2 \ar[d]^{\tau_2}\\
 Z_1  \ar[r]_{\varphi}& Z_2 }\hspace*{2in}\xymatrix{A_1 \ar[d]_{\alpha_t^1} \ar[r]^{\varphi} & A_2 \ar[d]^{\alpha_t^2}\\
 A_1  \ar[r]_{\varphi}& A_2 }$$
\end{defn}

\begin{prop}\label{wind2}
If $\varphi: A_1\to A_2$ defines a morphism from $(A_1,Z_1,\tau_1,\alpha^1)$ to $(A_2,Z_2,\tau_2,\alpha^2)$
and if $a\in A_1^{-1}\cap(dom(\delta_1))$ then $\varphi(a)\in A_2^{-1}\cap(dom(\delta_2))$  and
$$wind_{\alpha^1}(a)\in (Z_1)_{sa}\;\; {\rm while}\;\; wind_{\alpha^2}(\varphi(a))\in (Z_2)_{sa}\;\;{\rm and\;\; also}$$
 $$\varphi(wind_{\alpha^1}(a))=wind_{\alpha^2}(\varphi(a)).$$
\end{prop}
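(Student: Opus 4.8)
The plan is to reduce everything to a single intertwining identity: that $\varphi$ carries $dom(\delta_1)$ into $dom(\delta_2)$ and satisfies $\delta_2\circ\varphi=\varphi\circ\delta_1$ there. Once this is in hand, the equality of winding operators is a purely formal computation using multiplicativity of $\varphi$ together with the two commuting squares in Definition \ref{morph}. The invertibility half of the first assertion is immediate: since $\varphi$ is a unital $*$-homomorphism, $\varphi(a)\varphi(a^{-1})=\varphi(aa^{-1})=\varphi(1)=1$ and symmetrically, so $\varphi(a)\in A_2^{-1}$ with $\varphi(a)^{-1}=\varphi(a^{-1})$.

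For the domain-and-generator step, fix $a\in dom(\delta_1)$. The commuting square for the actions gives $\varphi(\alpha^1_h(a))=\alpha^2_h(\varphi(a))$ for every $h$, so the difference quotient for $\varphi(a)$ under $\alpha^2$ satisfies
$$\frac{\alpha^2_h(\varphi(a))-\varphi(a)}{h}=\varphi\!\left(\frac{\alpha^1_h(a)-a}{h}\right).$$
Since every $*$-homomorphism is norm-contractive, $\varphi$ is continuous; as $\frac{\alpha^1_h(a)-a}{h}\to\delta_1(a)$ in norm when $h\to 0$, the right-hand side converges to $\varphi(\delta_1(a))$. Hence the limit defining $\delta_2(\varphi(a))$ exists, so $\varphi(a)\in dom(\delta_2)$ and $\delta_2(\varphi(a))=\varphi(\delta_1(a))$. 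This short analytic argument is the only genuine obstacle; I would regard it as the crux of the proposition, everything else being algebraic bookkeeping.

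Finally, given $a\in A_1^{-1}\cap dom(\delta_1)$ we have $a\in dom(\delta_1)^{-1}$, so Theorem \ref{wind} gives $wind_{\alpha^1}(a)=\frac{1}{2\pi i}\tau_1(\delta_1(a)a^{-1})$, and by the previous step $\varphi(a)\in dom(\delta_2)^{-1}$ as well. Applying $\varphi$ and using, in turn, the trace square $\varphi\circ\tau_1=\tau_2\circ\varphi$, multiplicativity of $\varphi$, the intertwining $\varphi(\delta_1(a))=\delta_2(\varphi(a))$, and $\varphi(a^{-1})=\varphi(a)^{-1}$, I would obtain
$$\varphi(wind_{\alpha^1}(a))=\frac{1}{2\pi i}\tau_2\!\big(\delta_2(\varphi(a))\,\varphi(a)^{-1}\big)=wind_{\alpha^2}(\varphi(a)).$$
The memberships $wind_{\alpha^i}(\cdot)\in(Z_i)_{sa}$ are supplied by Theorem \ref{wind}, while $\varphi\big((Z_1)_{sa}\big)\subseteq(Z_2)_{sa}$ holds because $\varphi$ maps $Z_1$ into $Z_2$ and preserves adjoints; thus both sides of the displayed identity are self-adjoint central elements of $A_2$, as required.
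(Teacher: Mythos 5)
Your proposal is correct and follows essentially the same route as the paper: the crux in both is the intertwining $\delta_2\circ\varphi=\varphi\circ\delta_1$ on $dom(\delta_1)$, obtained by pushing $\varphi$ through the difference quotient via the commuting square for the actions and norm-continuity of $\varphi$, followed by the same formal chain using $\tau_2\circ\varphi=\varphi\circ\tau_1$ and multiplicativity. Your explicit remarks on invertibility of $\varphi(a)$ and on $\varphi\big((Z_1)_{sa}\big)\subseteq(Z_2)_{sa}$ merely spell out details the paper leaves implicit.
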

\begin{proof}
We first show that $a\in dom(\delta_1)$ implies that $\varphi(a)\in dom(\delta_2)$
and that $\varphi(\delta_1(a))=\delta_2(\varphi(a)).$
So if $a\in dom(\delta_1)$ then
$$\varphi(\delta_1(a))=\varphi\left(\lim_{t\to 0}\frac{\alpha_t^1(a)-a}{t}\right)
=\lim_{t\to 0}\varphi\left(\frac{\alpha_t^1(a)-a}{t}\right)=\lim_{t\to 0}\frac{\alpha_t^2(\varphi(a))-\varphi(a)}{t}.$$
So the right hand limit exists and defines $\delta_2(\varphi(a)).$ That is  $\varphi(\delta_1(a))=\delta_2(\varphi(a)).$
Now for $a\in A_1^{-1}\cap(dom(\delta_1))$:
\begin{eqnarray*}\varphi(wind_{\alpha^1}(a))&=&\frac{1}{2\pi i}\varphi(\tau_1(\delta_1(a)a^{-1}))=
\frac{1}{2\pi i}\tau_2(\varphi(\delta_1(a)a^{-1}))\\
&=&\frac{1}{2\pi i}\tau_2(\varphi(\delta_1(a))\varphi(a)^{-1}))=\frac{1}{2\pi i}\tau_2(\delta_2(\varphi(a))\varphi(a)^{-1}))=
wind_{\alpha^2}(\varphi(a)).
\end{eqnarray*}

\end{proof}
\section{EXTENSION to an ENVELOPING von NEUMANN ALGEBRA}

\begin{tech*} Since the range of our 
$C^*$-algebra trace, $Z$ (an abelian $C^*$-algebra), is no 
longer restricted to being the scalars, the index of our generalized
Toeplitz operators will not be scalar-valued either, but will necessarily
take values in an abelian {\bf von Neumann} algebra, say $\frk Z$, containing $Z$. Unless,
$Z$ is finite-dimensional (a relatively trivial extension of the scalar-valued
trace) we will generally have $Z\neq\frk Z$ (if $Z$ is separable but 
not finite-dimensional we must have $Z\neq\frk Z$).
\end{tech*}

\noindent We want our unital $C^*$-algebra, $A$, to be concretely represented on a Hilbert space, $\mathcal H$ in such a way that the following {\bf nontrivial} conditions hold. Let 
$\frk A = A^{\prime\prime}$ and $\frk Z = Z^{\prime\prime}$.
 
(1) There exists a necessarily unique faithful, tracial, uw-continuous 
conditional expectation, $\bar\tau:\frk A \to \frk Z$ extending $\tau.$ We will
refer to this as a $\frk Z$-trace.

(2) The continuous action $\alpha:\real \to Aut(A)$ which leaves 
$\tau$ invariant extends to an ultraweakly continuous action 
$\bar\alpha :\real \to
Aut(\mathfrak A)$ which leaves $\bar\tau$ invariant.\\

To achieve this we will {\bf assume} that $Z$ has a faithful state, $\omega$ (this is automatically true if $Z$ is separable). We will use the following Proposition to define a concrete representation where these conditions obtain. We emphasize that the extension depends on the choice of the faithful state on $Z$. However, our notation will not show the dependence on this state. Of course if $Z=\bf{C}$ the state is unique.
If $\varphi$ is a morphism from $(A_1,Z_1,\tau_1,\alpha^1)$ to $(A_2,Z_2,\tau_2,\alpha^2)$, we will assume that
$\varphi$ carries the faithful state $\omega_1$ on $Z_1$ to $\omega_2$ on $Z_2:$ that is $\omega_1 = \omega_2\circ\varphi$ restricted to $Z_1.$

\begin{prop}\label{extension}
Let $(A,Z,\tau,\alpha)$ be a $4$-tuple
and let $\omega$ be a faithful state on $Z$. 
Then $\bar\omega:= \omega\circ\tau$ is a faithful
tracial state on $A$ which is left invariant by the action $\alpha$.
If we let $(\pi,\mathcal H, \xi_0)$ be the GNS representation of  $A$ afforded by  $\bar\omega,$ with
cyclic separating trace vector $\xi_0$, then there is a continuous
unitary representation $\{U_t\}$ of $\real$ on $\mathcal H$ so that 
$(\pi,U)$ is
covariant for $\alpha$ on $A$. Then $\{U_t\}$ implements an
uw-continuous extension of $\alpha$ to $\bar\alpha$ acting on 
$\frk A=\pi(A)^{\prime\prime}.$ Morover, letting $\frk Z=\pi(Z)^{\prime\prime},$
there exists a unique faithful unital, uw-continuous 
$\frk Z$-trace $\bar\tau:\frk A \to \frk Z$ extending $\tau,$ 
and $\bar\alpha$ leaves $\bar\tau$ invariant.
\end{prop}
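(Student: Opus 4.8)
The plan is to build everything from the GNS representation of the single faithful tracial state $\bar\omega=\omega\circ\tau$. First I would dispatch the elementary claims about $\bar\omega$: it is tracial because $\tau$ is tracial and $\omega$ only sees $\tau(ab)=\tau(ba)$; it is faithful because $\bar\omega(a^*a)=\omega(\tau(a^*a))=0$ forces $\tau(a^*a)=0$ (as $\omega$ is faithful) and hence $a=0$ (as $\tau$ is faithful); and it is $\alpha$-invariant since $\tau\circ\alpha_t=\tau$. Passing to the GNS triple $(\pi,\mathcal H,\xi_0)$, the standard theory of tracial states shows $\xi_0$ is cyclic and separating and is a trace vector, so that $T(x):=\bra\xi_0,x\xi_0\ket$ is a faithful normal tracial state on $\frk A=\pi(A)''$ extending $\bar\omega$.

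Next I would produce the unitaries. Because $\bar\omega$ is $\alpha$-invariant, $U_t\pi(a)\xi_0:=\pi(\alpha_t(a))\xi_0$ is isometric on the dense subspace $\pi(A)\xi_0$ (the norms match via $\bar\omega(\alpha_t(a)^*\alpha_t(a))=\bar\omega(a^*a)$) and extends to a unitary $U_t$; the group law and the covariance $U_t\pi(a)U_t^*=\pi(\alpha_t(a))$ are immediate, and strong continuity of $t\mapsto U_t$ follows from point-norm continuity of $\alpha$ together with cyclicity of $\xi_0$. Setting $\bar\alpha_t:=\mathrm{Ad}\,U_t$ on $\frk A$ then gives an ultraweakly continuous action extending $\alpha$, since $U_t\pi(A)U_t^*=\pi(A)$ yields $U_t\frk A U_t^*=\frk A$.

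The heart of the proof is the construction of $\bar\tau$. I would first record the structural fact that $\frk Z=\pi(Z)''$ is central in $\frk A$: since $Z\subseteq Z(A)$ we have $\pi(Z)\subseteq\pi(A)'$, hence $\frk Z=\pi(Z)''\subseteq\pi(A)'\cap\pi(A)''=Z(\frk A)$. As $\frk A$ is a finite von Neumann algebra carrying the faithful normal trace $T$ and $\frk Z$ is a von Neumann subalgebra, the standard theory supplies a unique normal, $T$-preserving conditional expectation $\bar\tau\colon\frk A\to\frk Z$, characterised by $T(\bar\tau(x)\,w)=T(xw)$ for all $w\in\frk Z$. The key verification is that $\bar\tau$ restricts to $\tau$ on $A$: for $a\in A$ and $z\in Z$ one computes, using that $\tau$ is a $Z$-bimodule conditional expectation and that $T|_Z=\omega$, that $T(\tau(a)z)=\omega(\tau(a)z)=\omega(\tau(az))=T(az)$; since $Z$ is ultraweakly dense in $\frk Z$ and both sides are ultraweakly continuous in the second variable, this persists for all $w\in\frk Z$, so the characterisation of $\bar\tau$ forces $\bar\tau(a)=\tau(a)$.

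Finally I would harvest the remaining properties. The map $\bar\tau$ is unital and normal by construction; it is faithful because $T=T\circ\bar\tau$ with $T$ faithful, so $\bar\tau(x^*x)=0$ gives $T(x^*x)=0$ and thus $x=0$; and it is tracial because it agrees with the tracial $\tau$ on the ultraweakly dense $*$-subalgebra $A$ and is separately ultraweakly continuous (using the Kaplansky density theorem to approximate by bounded nets). Uniqueness among faithful, unital, uw-continuous $\frk Z$-traces extending $\tau$ is likewise immediate: any two such agree on $A$ and hence, by normality and Kaplansky density, on all of $\frk A$. The same density argument gives $\bar\tau\circ\bar\alpha_t=\bar\tau$ from $\tau\circ\alpha_t=\tau$, and shows $\bar\alpha_t$ fixes $\frk Z$ pointwise since $\alpha_t$ fixes $Z$ pointwise (as established in the proof of Theorem~\ref{wind}). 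I expect the main obstacle to be the construction and identification of $\bar\tau$, namely invoking the trace-preserving conditional expectation for the pair $(\frk A,\frk Z)$ and checking, via the displayed trace computation, that it genuinely extends $\tau$ rather than some other expectation onto $\frk Z$.
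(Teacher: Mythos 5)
Your proposal is correct and follows essentially the same route as the paper: GNS construction from $\bar\omega$, the standard covariant unitaries $U_t$, and a trace-preserving conditional expectation onto $\frk Z$ (the paper cites Umegaki for exactly your characterisation $T(\bar\tau(x)w)=T(xw)$), identified as an extension of $\tau$ by the same displayed computation and uw-density of $Z$ in $\frk Z$, with faithfulness via $T\circ\bar\tau=T$ and traciality and $\bar\alpha$-invariance by bounded-net density arguments just as in the paper. Your explicit observation that $\frk Z=\pi(Z)''\subseteq Z(\frk A)$ is a point the paper leaves implicit, but it changes nothing in substance.
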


\begin{proof}[\bf Proof] 
Denoting the image of $a\in A$ in $\mathcal H_{\bar\omega}:=\mathcal H$
by $\hat a$, it is completely standard that 
$U_t(\hat a):=\widehat{\alpha_t(a)}$ defines a continuous unitary 
representation of $\real$ on $A$ so that $(\pi,U)$ is covariant for
$\alpha.$ Hence, $\{U_t\}$ implements an
uw-continuous extension of $\alpha$ to $\bar\alpha$ acting on 
$\frk A=\pi(A)^{\prime\prime}.$ It is also standard that the cyclic and
separating vector $\xi_0=\hat 1$ gives an extension of the trace
$\bar\omega$ to a faithful uw-continuous trace on $\frk A$. By an 
abuse of notation we will drop the notation ``$\pi$'' for the representation
of $A$ and just assume that $A$ acts directly on $\mathcal H.$ In this way
we will also write the extended scalar trace (given by $\xi_0$) on
$\frk A$ as $\bar\omega.$

With this notation, we invoke \cite{U} to obtain an uw-continuous
conditional expectation $E:\frk A\to\frk Z$ defined by the equation:
$$\bar\omega(E(x)y)=\bar\omega(xy)\;for\;x\in\frk A,\;y\in\frk Z.$$
For $x=a\in A$ and $y=z\in Z$, we have:
$$\bar\omega(\tau(a)z)=\omega(\tau(\tau(a)z))=\omega(\tau(a)z)=\omega(\tau(az))
=\bar\omega(az).$$
Since $Z$ is uw-dense in $\frk Z$ we can replace the $z\in Z$ by any
$y\in\frk Z$ in the previous equation.
That is, for $a\in A$ we have $E(a)=\tau(a)$ and so $E$ is just an extension
of $\tau$ by uw-continuity. We now use the notation $\bar\tau$ in place of $E$,
and observe that since $\tau$ is tracial, so is $\bar\tau.$ To see that
$\bar\tau$ is faithful, suppose $x\in\frk A$ and $\bar\tau(x^*x)=0.$
Then, by the defining equation for $\bar\tau$ we have
$$0=\bar\omega(\bar\tau(x^*x)1)=\bar\omega(x^*x),$$
and since $\bar\omega$ is faithful, $x=0.$

Finally to see that $\bar\alpha$ leaves $\bar\tau$ invariant, we let
$x\in\frk A$ and $t\in\real$. Choose a bounded net $\{a_i\}$ in $A$
which converges to $x$ ultraweakly. Then since $\bar\alpha_t$ is spatial, we
have $\alpha_t(a_i)=\bar\alpha_t(a_i)\to\bar\alpha_t(x)$ ultraweakly. Hence,\\

\hspace{.5in}$\bar\tau(\bar\alpha_t(x))=\lim_i \bar\tau(\alpha_t(a_i))=
\lim_i\tau(\alpha_t(a_i))=\lim_i\tau(a_i)=\lim_i\bar\tau(a_i)=\bar\tau(x).$
\end{proof}

\begin{egs*} {\bf $4$-tuples}\hspace*{4in}

{\bf 1. Kronecker (scalar trace) Example.} Let $A=C({\bf T}^2)$, the $C^{*}$-algebra of 
continuous functions on
the $2$-torus, with the usual scalar trace $\tau_0$ given by integration against
the Haar measure on ${\bf T}^2$.
We let $\alpha^\mu :{\bf R}\to\hbox{Aut}(A)$ be the Kronecker flow on $A$ 
determined by the
real number, $\mu$ (note that $\mu$ is not a power merely a superscript). That is, for $s\in {\bf R}$, $f\in A$, and $
(z,w)\in {\bf T}^2$ we have:
$$(\alpha^\mu_s\,(f))(z,w)=f\left(e^{-2\pi is}\,z,\,e^{-2\pi i\mu
s}\,w\right).$$

In terms of the two commuting unitaries which generate $A=C({\bf T}^2)$, namely $U(z,w)=z$ and $V(z,w)=w$ we have 
$$\alpha^\mu_s(U)=e^{-2\pi is}U,\, \alpha^\mu_s(V)=e^{-2\pi is\mu}V.$$
Of course, this action leaves our scalar trace $\tau_0$ invariant. In this case where $Z={\bf C}$ the faithful state $\omega$ on $Z={\bf C}$ is just the identity mapping and so $\bar\omega:= \omega\circ\tau_0=\tau_0.$ That is, $\mathcal{H}_{\bar\omega}=\mathcal{H}_{\tau_0}=L^2({\bf T}^2)$ with the obvious representation of $A$ on $\mathcal{H}_{\tau_0}$.
In this case, $Z=\frk Z = {\bf C}$ and so $\frk{A}=L^\infty({\bf T}^2).$ 
Clearly $\tau_0$ and $\alpha$ extend to $\bar\tau_0$ and $\bar\alpha$ as required.\\

One easily calculates the winding numbers of the generators:
$$wind_{\alpha^\mu}(U)= -1\;\;\;\rm{and}\;\;\;wind_{\alpha^\mu}(V) = -\mu.$$

{\bf 1.a. Noncommutative Tori.} We quickly observe that the previous construction can be carried over almost verbatim to noncommutative tori. For $\theta\in [0,1)$ let
$$A_\theta = C^*(U,V\,|\,VU=e^{2\pi i\theta}UV)$$
be the universal $C^*$-algebra generated by two unitaries, $U,V$ satisfying the above relation. For $\theta = 0$ the algebra $A_\theta$ is naturally isomorphic to $A=C({\bf T}^2)$ with $U(z,w)=z$ and $V(z,w)=w.$ For $\theta$ irrational, these algebras are of course the irrational rotation algebras which are simple $C^*$-algebras. We let $\alpha^\mu :{\bf R}\to\hbox{Aut}(A)$ be the flow on $A_\theta$ 
determined by the real number, $\mu$. That is, for $s\in {\bf R}$, and $U,V$ the generators of $A_\theta$ we have:
$$\alpha^\mu_s(U)=e^{-2\pi is}U,\, \alpha^\mu_s(V)=e^{-2\pi is\mu}V.$$
Since $\alpha_s(U)$ and $\alpha_s(V)$ satisfy the same relation as $U$ and $V$ this is a well-defined flow on 
$A_\theta.$ 

The scalar trace, $\tau_\theta$ on $A_\theta$ on the dense subalgebra of finite linear combinations of $U^n V^m$
for $m,n$ in $\bf Z$ satisfies:
\[ \tau_\theta(U^n V^m )=\left\{\begin{array}{ll} 0 & \mbox{if $n\neq 0$ or $m\neq 0$}\\
                                           1 & \mbox {if $n=0=m.$}
                                           \end{array} \right.  \]
                                           
Again, one easily calculates the winding numbers of the generators:
$$wind_{\alpha^\mu}(U)= -1\;\;\;\rm{and}\;\;\;wind_{\alpha^\mu}(V) = -\mu.$$                                           
                                           
{\bf 2. Generalized Kronecker and Generalized Noncommutative tori Examples.} We show that any self-adjoint element in any unital commutative $C^*$-algebra (with a faithful state) can be used as a replacement for the scalar $\mu$ in Examples 1 and 1.a to obtain a non-scalar example. Let $Z=C(X)$ be any commutative unital $C^{*}$-algebra 
with a faithful state and let $\eta\in Z_{sa}$ be any self-adjoint 
element in 
$Z$. Let $A=Z\otimes C({\bf T}^2)=C(X,C({\bf T}^2))$ (respectively , $A=Z\otimes A_\theta)=C(X,A_\theta)$) and let $\tau :A\to Z$ be given by the
``slice-map'' $\tau=id_Z\otimes\tau_\theta$ where $\tau_\theta$ for $\theta=0$ is the standard 
trace on $C({\bf T}^2)$ given by Haar measure (respectively, the usual scalar trace $\tau_\theta$ on $A_\theta$ defined above).  Then, $\tau$ is a faithful,
tracial conditional expectation of $A$ onto $Z$. In particular, for $f\in A=Z\otimes C({\bf T}^2)\cong C({\bf T}^2,Z)$ we have
$$\tau(f)=\int_{\bf{T}^2}f(z,w)d(z,w)\in Z.$$ In this case we note that for $A=Z\otimes C({\bf T}^2)$, we have
 $Z(A)=A$ and hence
$Z$ is strictly contained in $Z(A).$ On the other hand, for $\theta$ irrational, $Z(A)=Z(Z\otimes A_\theta)=Z$ since $A_\theta$ is simple.
 In either case we use the element $\eta\in 
Z_{sa}$ to
define a $\tau$-invariant action $\{\alpha^\eta_t\}$ of $\real$ on $A$:
$$\alpha^\eta_t(f)(x)=\alpha^{\eta(x)}_t(f(x)),$$
for $f\in A$, $t\in\real$, $x\in X$ (again, $\eta$ and $\eta(x)$ are not powers, but merely superscripts). It is clear that $(A,Z,\tau,\alpha^\eta)$ is a $4$-tuple.

In both these cases one calculates the following winding operators:
$$wind_{\alpha^\eta}(1\otimes U)= -1\otimes 1\;\;\;\rm{and}\;\;\;wind_{\alpha^\eta}(1\otimes V) = -\eta\otimes 1.$$

Using the faithful state $\omega$ on $Z$, we define a
faithful (tracial) state $\bar\omega$ on $A$ via 
$\bar\omega:=\omega\circ\tau.$ 
By Proposition \ref{extension}, $\bar\omega$ is a faithful (tracial) state on $A$ which is left invariant 
by $\alpha$ and if $(\pi,\mathcal H)$ is the GNS representation of $A$
induced by $\bar\omega$ then there is a continuous
unitary representation $\{U_t\}$ of $\real$ on $\mathcal H$ so that 
$(\pi,U)$ is covariant for $\alpha$ on $A$. Also, $\{U_t\}$ implements an
uw-continuous extension of $\alpha$ to $\bar\alpha$ acting on 
$\frk A:=\pi(A)^{\prime\prime}.$ Morover, 
letting $\frk Z:=\pi(Z)^{\prime\prime},$
there exists a unique faithful unital, uw-continuous 
$\frk Z$-trace $\bar\tau:\frk A \to \frk Z$ extending $\tau,$ 
and $\bar\alpha$ leaves $\bar\tau$ invariant.\\

{\bf 3. $C^*$-algebra of the Integer Heisenberg group}\\
Let $A$ be the $C^*$-algebra $C^*(H)$ of the integer Heisenberg group, $H$:

$$H=\left\{\left[\begin{array}{ccc}
1 & m & p\\
0 & 1 & n\\
0 & 0 & 1 
\end{array}\right]\;\; |\;\; m,n,p\in {\bf Z}\right\}.$$
We view $A=C^*(H)$ as the universal $C^*$-algebra generated by three unitaries $U, V, W$
satisfying:
$$WU=UW,\;\;\;WV=VW,\;\;\;and\;\;\;UV=WVU.$$
Here $U,V,W$ correspond respectively to the three generators of $H$:
$$u=\left[\begin{array}{ccc}
1 & 1 & 0\\
0 & 1 & 0\\
0 & 0 & 1 
\end{array}\right],\;\;\; v=\left[\begin{array}{ccc}
1 & 0 & 0\\
0 & 1 & 1\\
0 & 0 & 1 
\end{array}\right],\;\;\;w=\left[\begin{array}{ccc}
1 & 0 & 1\\
0 & 1 & 0\\
0 & 0 & 1 
\end{array}\right].$$
\begin{prop}
If $H$ is a discrete group with subgroup $C$,
then the map $l^1(H)\to l^1(C)$ defined by $f\mapsto f_{|_C}$ extends to faithful, conditional expectation $\tau$ from ${C_r}^*(H)\to {C_r}^*(C).$ If $C$ is the centre of $H$ then $\tau$ is also tracial. Combining $\tau$ with the canonical $*$-homomorphism: $C^*(H)\to {C_r}^*(H)$ we see that we can also view $\tau$ 
as a trace on $C^*(H).$
\end{prop}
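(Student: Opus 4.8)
The plan is to realise $\tau$ concretely as a compression on the left regular representation and then to read off all four assertions from elementary coset combinatorics. Write $\lambda=\lambda_H$ for the left regular representation of $H$ on $\ell^2(H)$, with canonical orthonormal basis $\{\xi_g\}_{g\in H}$ of point masses, so that ${C_r}^*(H)$ is the norm closure of $\lambda(\comp[H])$, and let $\rho$ be the right regular representation. Decomposing $H$ into right cosets $H=\bigsqcup_i Cg_i$ (with $g_0=e$) gives $\ell^2(H)=\bigoplus_i \ell^2(Cg_i)$, and since left translation by $c\in C$ fixes each right coset setwise, $\lambda|_C$ is a (possibly infinite) multiple of $\lambda_C$; hence $f\mapsto \lambda(f)$ is isometric on $\comp[C]$ and $\lambda(\comp[C])$ closes up to a copy of ${C_r}^*(C)$ inside ${C_r}^*(H)$, which I identify with ${C_r}^*(C)$ acting on $\ell^2(C)$. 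Let $e$ be the orthogonal projection of $\ell^2(H)$ onto $\ell^2(C)$ and set $\tau(a)=eae$. First I would check this is the asserted map: for $g\in H$ one computes $e\lambda(g)e=\lambda(g)e$ when $g\in C$ and $e\lambda(g)e=0$ otherwise (because $gh\notin C$ when $h\in C$ and $g\notin C$), so on $\comp[H]$, and hence by $\ell^1$-continuity on all of $\ell^1(H)$, $\tau$ coincides with $f\mapsto f|_C$. Boundedness of $\tau$ then forces $\tau({C_r}^*(H))\subseteq {C_r}^*(C)$.

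Next I would verify that $\tau$ is a faithful conditional expectation. Because left translation by $c\in C$ preserves every $\ell^2(Cg_i)$, the projection $e$ commutes with each $\lambda(c)$, hence with all of ${C_r}^*(C)$; consequently $\tau(xay)=e(xay)e=x(eae)y=x\,\tau(a)\,y$ for $x,y\in{C_r}^*(C)$, and $\tau$ fixes ${C_r}^*(C)$ pointwise. As a unital compression, $a\mapsto eae$ is completely positive and contractive, so these facts exhibit $\tau$ as a conditional expectation (this is also immediate from Tomiyama's theorem, $\tau$ being a norm-one projection onto the $C^*$-subalgebra ${C_r}^*(C)$). For faithfulness, suppose $\tau(a^*a)=0$. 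For every $\xi\in\ell^2(C)$ we then have $\|a\xi\|^2=\langle a^*a\,\xi,\xi\rangle=\langle \tau(a^*a)\xi,\xi\rangle=0$; taking $\xi=\xi_e$ gives $a\xi_e=0$. Since $a\in {C_r}^*(H)\subseteq \lambda(H)''=\rho(H)'$ commutes with $\rho$, we get $a\xi_g=a\rho(g^{-1})\xi_e=\rho(g^{-1})a\xi_e=0$ for all $g$, whence $a=0$.

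Finally, suppose $C=Z(H)$. By linearity and continuity it suffices to check $\tau(ab)=\tau(ba)$ on generators $a=\lambda(g)$, $b=\lambda(h)$, where it reduces to comparing $\tau(\lambda(gh))$ and $\tau(\lambda(hg))$. The key elementary observation is that $hg=g^{-1}(gh)g$, so if $gh\in Z(H)$ then $hg=gh\in Z(H)$ and symmetrically; thus $gh\in C\iff hg\in C$, and when this holds the two group elements coincide, giving equal values under the restriction map. Hence $\tau$ is tracial, and composing with the canonical surjection $C^*(H)\to {C_r}^*(H)$ transports $\tau$ to a (${C_r}^*(C)$-valued) trace on $C^*(H)$, as claimed. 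The only step requiring genuine structural input rather than bookkeeping is faithfulness, where I rely on the standard fact that an element of the group von Neumann algebra is determined by its value on $\xi_e$; everything else is a direct computation on the dense group algebra followed by a continuity and density argument.
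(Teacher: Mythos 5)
Your proof is correct and follows essentially the same route as the paper's: both realise $\tau$ as the compression $a\mapsto EaE$ by the projection $E$ of $\ell^2(H)$ onto $\ell^2(C)$ in the left regular representation, obtain the conditional expectation property via Tomiyama's theorem (or, as you also do, the direct bimodule identity), and derive traciality from the centrality of $C$ (your check on group elements $gh$ versus $hg$ is the same computation the paper performs on $\ell^1$ convolutions). One genuine improvement: you actually prove faithfulness --- using that $a\in C_r^*(H)$ commutes with the right regular representation, so $\tau(a^*a)=0$ forces $a\xi_e=0$ and hence $a=0$ --- a property the statement claims but the paper's proof never verifies.
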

\begin{proof}
Let $f\mapsto \pi_H(f)$ and $g\mapsto \pi_C(g)$
denote the left regular representations of $l^1(H)$ and $l^1(C)$ on $l^2(H)$ and $l^2(C)$
respectively. Then for $\eta\in l^2(C)\subseteq l^2(H)$ we have:
$$\pi_H(f)(\eta)(c)=\sum_{h\in H}f(ch^{-1})\eta(h)=\sum_{h\in C}f(ch^{-1})\eta(h)=\sum_{h\in C}f_{|_{C}}(ch^{-1})\eta(h)=\pi_C(f_{|_{C}})(\eta)(c).$$
In other words, for each $\eta\in l^2(C),$ $\pi_H(f)(\eta)=\pi_C(f_{|_{C}})(\eta)$ so that
$\pi_H(f)_{|_{l^2(C)}}=\pi_C(f_{|{C}}).$ We let $E:l^2(H)\to l^2(C)$ denote the canonical projection. then  all  $\eta\in l^2(C)$ have the form $\eta=E(\xi)$ for $\xi\in l^2(H)$ and we have
$\pi_C(f_{|_C})E(\xi)=E\pi_C(f_{|_C})E(\xi)=E\pi_H(f)E(\xi).$
 We now define $\tau(\pi_H(f))=\pi_C(f_{|_{C}}).$ To see that $\tau$ is bounded in operator norm, 
$$\|\pi_C(f_{|_{C}})\|=\|E\pi_H(f)E \|\leq \|\pi_H(f)\|.$$
Thus $\tau$ extends by continuity to $\tau:{C_r}^*(H)\to {C_r}^*(C).$
For general $x\in {C_r}^*(H)$ we have $\tau(x)= E\pi_H(x)E$ so that the extended $\tau$ is clearly completely positive, onto and has norm $1$: that is, it is a conditional expectation by Tomiyama's theorem.\\

Now for $f\in l^1(H)$ we have $\tau(\pi_H(f))=\pi_C(f_{|_{C}})$ so that, if $C$ is the centre of $H$, then in order to see that $\tau$ is tracial it suffices to see that for $f,g\in l^1(H)$ that 
$(f*g)_{|_{C}}=(g*f)_{|_{C}}.$ So for $c\in C$ we have:
$$(f*g)(c)=\sum_{h\in H} f(ch^{-1})g(h)=\sum_{h\in H} g(h)f(h^{-1}c)=(g*f)(c).$$
\end{proof}
In our example where $H$ is the Heisenberg group, its centre is $C=\{w^n\;|\;n\in {\bf Z}\}.$ In our realization
of $A=C^*(H)$ as a universal $C^*$-algebra, the centre of $A$ is $Z=C^*(W).$ Now the dense $*$-subalgebra of
$A$ generated by $U,V,W$ has as a basis all elements of the form $W^p V^n U^m $ each of which corresponds
uniquely to the group element $w^p v^n u^m=\left[\begin{array}{ccc}
1 & m & p\\
0 & 1 & n\\
0 & 0 & 1 
\end{array}\right]$ in $H.$
In this notation $\tau:A\to Z$ is given by: 
\[ \tau(W^p V^n U^m)=\left\{\begin{array}{ll} 0 & \mbox{if $n\neq 0$ or $m\neq 0$}\\
                                           W^p & \mbox {if $n=0=m.$}
                                           \end{array} \right.  \]
In order to define our action $\alpha:{\bf R}\to Aut(A),$ we first fix an element $\eta\in Z_{sa}$.
For an explicit example, we {\bf arbitrarily} choose $\eta=(\mu/3)(W+1+W^*)$ where $\mu$ is a fixed real number . For this fixed $\eta$ we define the action $\alpha$ via:
$$\alpha_t(U)=e^{-2\pi it}U;\;\;\;\alpha_t(V)=e^{-2\pi it\eta}V;\;\;\;\alpha_t(W)=W.$$
So on the basis elements we get
$$\alpha_t(W^p V^nU^m)=e^{-2\pi int\eta}e^{-2\pi imt}W^p V^nU^m=e^{-2\pi it(n\eta+m)}W^p V^nU^m.$$
One easily checks that for fixed $t$ the operators $U_t:=\alpha_t(U)$, $V_t:=\alpha_t(V)$, and $W_t:=W$,satisfy the same relations as $U,V,W$, namely:
$$W_tU_t=U_tW_t\; ;\;\;W_tV_t=V_tW_t\; ;\;\; U_tV_t=W_tV_tU_t.$$
Hence, $\alpha_t$ defines a $*$-representation of $H$ in $A=C^*(H)$ and so extends to a $*$-representation
of $C^*(H)$ inside $C^*(H)$. Now $W$ is in the range of this $*$-representation and so $C^*(W)$ is in the range of this $*$-representation and hence $e^{2\pi it\eta}$ is in the range of this $*$-representation
for any $t\in {\bf R}.$ Hence $V=e^{2\pi it\eta}V_t$ is in the range also. Similarly, $U$ is in the range
so that $\alpha_t(C^*(H))=C^*(H)$ since it is dense and closed. Since $\alpha_{-t}$ is the inverse of $\alpha_t$, $\alpha_t$ is one-to-one and hence an automorphism of $C^*(H)$. One easily checks that 
$\alpha_{t+s}=\alpha_t\alpha_s$ using the fact that $e^{-2\pi is\eta}$ is in the centre. The point-norm continuity of $t\mapsto \alpha_t(a)$ is clear.\\

Thus we have an action $\alpha:{\bf R}\to Aut(A),$ that fixes $Z=C^*(W)=C^*(C)$ and leaves the $Z$-valued trace 
$\tau$ invariant. That is, $(C^*(H),C^*(C),\tau,\alpha)$ is a $4$-tuple. Now the left regular representation of $C^*(C)$ on $l^2(C)$ gives a faithful vector state $\omega(x)=\langle x(\delta_1),\delta_1\rangle,$ which for $x\in l^1(C)$ is just $\omega(x)=x(1).$ Then the state $\bar\omega$ on $C^*(H)$ is given for $x\in l^1(H)$ by:
$$\bar\omega(x)=(\omega\circ\tau)(x)=\omega(x_{|_C})=x_{|_C}(1)=x(1).$$ Now if $x,y\in l^1(H)$ then the inner product induced by $\bar\omega$ is:
$$\langle x,y\rangle_{\bar\omega}=\bar\omega(x\cdot y^*)=(x\cdot y^*)(1)=\sum_{h\in H}x(1h)y^*(h^{-1})=
\sum_{h\in H}x(h)\overline{y(h)}=\langle x,y\rangle.$$
That is, $\mathcal{H}_{\bar\omega} =l^2(H)$ and the representation of $C^*(H)$ on $\mathcal{H}_{\bar\omega} =l^2(H)$ is just the left regular representation, so in this case, $\frk{A}=W^*_r(H)$
the left regular von Neumann algebra of $H.$\\
\hspace*{.2in} Now $l^2(H)=\bigoplus_X l^2(C\cdot X)$ over all the cosets
$C\cdot X$ of $C$. Moreover, each coset, $C\cdot(W^pV^nU^m)=C\cdot(V^nU^m)$ is uniquely determined by the pair of integers $(n,m)$, so that $l^2(H)=\bigoplus_{(n,m)} l^2(C\cdot V^n U^m)$. Clearly the left action of $C$ (and hence, of $C^*(C)$) on each coset space is unitarily equivalent to the left regular representation of 
$C^*(C)$ on $l^2(C).$ Hence, the left action of $C^*(C)$ on $l^2(H)$ is just a countably infinite multiple of the left regular representation of $C^*(C)$ on $l^2(C).$ That is, $\frk{Z}=1_{\bf Z^2}\otimes W_r^*(C).$\\
\hspace*{.2in} Thus the map $\tau: C^*(H)\to C^*(C)$ with {\bf both} acting on $l^2(H)$ becomes
 $\tau(x)=1_{\bf Z^2}\otimes ExE$ where $E$ is the projection from $l^2(H)$ onto $l^2(C).$ It is clear that this map is $weak-operator$ continuous and so extends by the same formula to 
a tracial expectation $\bar\tau:\frk{A}\to\frk{Z}.$ It is also clear that $\alpha$ extends to $\bar\alpha$
as needed.

In this example one calculates the following winding operators in $Z=C^*(W)$
$$wind_\alpha(U)=-1;\;\;\; wind_\alpha(V)=-\mu/3(W+1+W^*);\;\;\;wind_\alpha(W)=0.$$

\end{egs*}
\begin{egs*} {\bf Morphisms}\hspace*{4in} 

{\bf 1. Generalized Kronecker to Kronecker Morphisms.} We let $A_1=C(X)\otimes C({\bf T}^2)$ and $Z_1=C(X)\otimes 1$. We let $\tau_1=id_{C(X)}\otimes \tau_0$
where $\tau_0:C({\bf T}^2)\to {\bf C}$ is given by integration with respect to Haar measure on ${\bf T}^2.$
We arbitrarily fix a $\eta\in(Z_1)_{sa}=(C(X)\otimes 1)_{sa}$. We also define $\alpha^1:{\bf R}\to Aut(A_1)$ via:
$$\alpha_t^1(h)(x,z,w)=h(x,e^{-2\pi it}z,e^{-2\pi it\eta(x)}w).$$
As before we let $u\in A_1$ be the unitary $u(x,z,w)=w.$\\

We let $A_2=C({\bf T}^2)$ and $Z_2= {\bf C}1$ and $\tau_2=\tau_0: A_2\to Z_2.$ We arbitrarily fix an $x_0\in X$ and define the evaluation $*$-homomorphism $\varphi:A_1\to A_2$ via $\varphi(h)(z,w)=h(x_0,z,w).$ We let $\mu=\eta(x_0)$ and define 
$$\alpha_t^2(h)(z,w)=h(e^{-2\pi it}z,e^{-2\pi it\mu}w).$$
One easily checks that $\varphi$ defines a morphism from $(A_1,Z_1,\tau_1,\alpha^1)$ to $(A_2,Z_2,\tau_2,\alpha^2)$, and that $\varphi(u)=v$ where $v(z,w)=w.$\\

{\bf 1a. Generalized Noncommutative tori to Kronecker Morphisms.} We previously defined $A=C(X)\otimes A_\theta$ and $Z=C(X)\otimes 1$. We also let $\tau_1=id_{C(X)}\otimes \tau_\theta$
where $\tau_\theta:A_\theta\to {\bf C}$ is defined above.
We arbitrarily fixed an $\eta\in(Z)_{sa}=(C(X)\otimes 1)_{sa}$. And then defined $\alpha:{\bf R}\to Aut(A)$ via:
$$(\alpha_t(f))(x)=\alpha_t^{\eta(x)}(f(x))$$
for $f\in A$, $t\in\real$, $x\in X$.
We let $v\in A_1$ be the constant unitary $v(x)=V.$\\

We now consider $A_\theta$ and $Z={\bf C}1$ and $\tau_\theta: A_\theta\to Z.$ We arbitrarily fix an $x_0\in X$ and consider the action of $\real$ on $A_\theta$ defined by the real number $\eta(x_0)$, that is, $\alpha^{\eta(x_0)}$. This gives us a 4-tuple, $(A_\theta, {\bf C}, \tau_\theta,\alpha^{\eta(x_0)})$. 
We now the evaluation $*$-homomorphism $\varphi:A\to A_\theta$ via $\varphi(h)=h(x_0).$ 
One easily checks that $\varphi$ defines a morphism from $(A,Z,\tau_1,\alpha)$ to $(A_\theta,{\bf C},\tau_\theta,\alpha^{\eta(x_0)}).$
Moreover, $\varphi(v)=V.$

{\bf 2. Heisenberg to Kronecker Morphisms.} We let $A_1=C^*(H)$ and $Z_1=C^*(W)=C^*(C)\cong C^*({\bf Z})\cong C({\bf T})$ and recall
\[ \tau_1(W^p V^n U^m)=\left\{\begin{array}{ll} 0 & \mbox{if $n\neq 0$ or $m\neq 0$}\\
                                           W^p & \mbox {if $n=0=m$}
                                           \end{array} \right.  \] 
defines a trace $\tau_1:A_1\to Z_1.$ Recall that we (randomly) chose $\theta =(\mu/3)(W+1+W^*)\in (Z_1)_{sa}$ and defined our automorphism group by 
$$\alpha^1_t(W^p V^nU^m)=e^{-2\pi int\theta}e^{-2\pi imt}W^p V^nU^m=e^{-2\pi it(n\theta+m)}W^pV^nU^m.$$

We let $A_2=C^*(H/C)\cong C^*({\bf Z^2})\cong C({\bf T^2})$ where the two isomorphisms are given by:
$$Coset(W^pV^n U^m)=C\cdot (W^pV^n U^m)=C\cdot (V^n U^m) \mapsto (n,m) \mapsto z^nw^m. $$ 
We let $Z_2={\bf C}1\subset A_2$ and define $\tau_2:A_2\to Z_2={\bf C}1$ to be the composition of these isomorphisms with the trace on $C({\bf T^2})$ given by the Haar integral. This clearly implies that
\[ \tau_2(C\cdot (V^n U^m))=\left\{\begin{array}{ll}  0 & \mbox{if $n\neq 0$ or $m\neq 0$}\\
                                           1 & \mbox {if $n=0=m$}
                                           \end{array} \right.  \] 
We now define $\alpha^2_t\in Aut(A_2)$ via
$$\alpha^2_t((C\cdot V)^n (C\cdot U)^m)=e^{-2\pi itn\mu}(C\cdot V)^n e^{-2\pi itm}(C\cdot U)^m=e^{-2\pi it(n\mu+m)}(C\cdot V)^n (C\cdot U)^m.$$
Clearly, $(A_2,Z_2,\tau_2,\alpha_2)$ is isomorphic to the Kronecker example with scalar $\mu.$ 

We now define a $*$-homomorphism $\varphi:A_1=C^*(H)\to A_2=C^*(H/C)$ as the unique extension of the canonical group homomorphism $H\to H/C.$ So, 
$$\varphi(W^p V^nU^m)=(C\cdot V)^n(C\cdot U)^m\;\;{\rm in\;\;\;particular,}\;\;\;\varphi(W^p)=(C\cdot 1)=1\in H/C.$$
One easily checks that $\varphi$ defines a morphism from $(A_1,Z_1,\tau_1,\alpha^1)$ to $(A_2,Z_2,\tau_2,\alpha^2)$, and that $\varphi(W^p V^nU^m)=(C\cdot V)^n(C\cdot U)^m.$ Hence,
$\varphi(\theta)=\varphi((\mu/3)(W^{-1}+1+W))=\mu$ by our choice of $\theta.$
\end{egs*}

\section{ HILBERT ALGEBRAS OVER an ABELIAN von NEUMANN ALGEBRA}

\begin{tech*}
While centre-valued traces are well-known (eg., the Traces
Op\'{e}ratorielles of \cite{Dix}) a completely general construction of such 
traces suitable for use with crossed-products has not (to our knowledge) been
attempted before now.\\
\indent In this section we combine the theory of Hilbert modules (\cite{Pa}, \cite{R})
with the theory of Hilbert Algebras \cite{Dix} in order to construct
centre-valued traces on certain crossed product von Neumann algebras.
 Although the outline
is similar to the usual Hilbert Algebra theory, the details are rather
subtle. The main difficulties arise because the usual norm completion
of these new ``Hilbert Algebras'' is not self-dual in the sense of Paschke
\cite{Pa}.
\end{tech*}

\begin{defn}\label{Hilbmod}
Let $\mathfrak B$ be a von Neumann algebra. A complex vector space ${\bf X}$
is a (right) {\bf pre-Hilbert $\mathfrak B$-module} if there exists a
$\mathfrak B$-valued inner product $\bra\cdot,\cdot\ket$ which is linear
in the second co-ordinate satisfying:

\noindent(i) $\bra x,x\ket\geq0\;and\;\bra x,x\ket=0 \Longleftrightarrow x=0$
for each $x\in {\bf X}.$

\noindent(ii) $\bra x,y\ket^* = \bra y,x\ket$ for all $x,y \in {\bf X}.$

\noindent(iii) $\bra x,ya\ket=\bra x,y\ket a$ for all $x,y \in {\bf X}$ and
$a \in \mathfrak B.$

\noindent(iv) $span\{\bra x,y\ket\,|\,x,y \in {\bf X}\}$ is uw-dense in
$\mathfrak B.$

\end{defn}

\begin{tech*}
In the following we do {\bf not} assume that our bounded module mappings are
adjointable: as pointed out by Lance \cite{L} this yields a rather trivial
result that for Hilbert modules all such maps arise from inner products. However, 
most Hilbert modules are not self-dual: self-dual modules $Y$ have the property that
$\cal L(Y)$ is a von Neumann algebra.  In the examples that we use later, 
the Paschke dual $X^\dagger$ of a pre-Hilbert $\frk B$-module $X$ is a self-dual module
that is usually much larger than $X$. We need these
self-dual modules in order to work in the von Neumann algebra, $\cal L(X^\dagger)$.

\end{tech*}

\begin{defn}\label{dual}
We follow Paschke \cite{Pa} by defining the {\bf dual} of a pre-Hilbert 
$\mathfrak B$-module $\bf X$ to be the space:
$${\bf X}^{\dagger}=\{\theta :{\bf X} \to \mathfrak B\,|\,\theta\;is\;a\;
bounded\;
\mathfrak B{\text -}module\; map\}.$$
In order to make the embedding of $\bf X$ into ${\bf X}^{\dagger}$ linear,
Paschke defines scalar multiplication on
${\bf X}^{\dagger}$ by:
$$(\lambda\theta)(x):=\bar\lambda\theta(x)\;\;\;for\;\lambda \in \comp,\;
\theta \in {\bf X}^{\dagger},\;and\;x\in {\bf X}.$$
Similarly, module multiplication on ${\bf X}^{\dagger}$ is given by:
$$(\theta\cdot a)(x):=(a^*\theta(x))\;\;\;for\;\theta \in {\bf X}^{\dagger},\;a \in
\mathfrak B,\;and\;x \in {\bf X}.$$
\end{defn}

Therefore, we can identify $\bf X$ in ${\bf X}^{\dagger}$ via
$x \mapsto \hat x$ where $\hat x(y)=\bra x,y\ket$ for $x,y \in {\bf X}.$
Since $\mathfrak B$ is a von Neumann algebra, Paschke shows how to extend
the $\mathfrak B$-valued inner product on $\bf X$ to an inner product on
${\bf X}^{\dagger}$ so that ${\bf X}^{\dagger}$ becomes self-dual \cite{Pa} 
Theorem 3.2. This theorem is {\bf not} trivial.

We recall Paschke's construction on page 450 of \cite{Pa}.
Let $\mathfrak B_*$ be the space of
ultraweakly continuous linear functionals on $\mathfrak B$: that is,
the predual of $\mathfrak B.$ Now for each positive functional $\omega$ in
$\mathfrak B_*$ we have that for
$N_{\omega}=\{x\in {\bf X}\,|\,\omega(\bra x,x\ket)=0\}$, the space 
${\bf X}/N_{\omega}$
is a pre-Hilbert space with inner product: $\bra x+N_{\omega},y+N_{\omega}\ket
_{\omega} = \omega(\bra x,y\ket).$ Moreover, for each $\theta \in 
{\bf X}^{\dagger}$,
the mapping $x+N_{\omega}\mapsto\omega (\theta (x))$ 
is a well-defined bounded linear functional on ${\bf X}/N_{\omega}$ satisfying
$|\omega(\theta(x))|\leq \|\omega\|^{1/2}\|\theta\|\,\|x+N_{\omega}\|_\omega.$
Hence, there exists a unique vector $\theta_{\omega}$ in $\mathcal{H}_{\omega},$
the Hilbert space completion of ${\bf X}/N_{\omega},$ with
$$\omega(\theta(x))=\bra\theta_{\omega},x+N_{\omega}\ket_{\omega}\; for\; all\;
x \in {\bf X},\,and$$ 
$$\|\theta_{\omega}\|_{\omega}\leq \|\omega\|^{1/2}\|\theta\|.$$
Thus, $\|x\|_\omega := \omega (\bra x,x\ket )^{1/2}$ is a 
well-defined seminorm on $\bf X$ which extends naturally to 
${\bf X}^\dagger$ via
$\|\theta\|_\omega=\bra\theta_{\omega},\theta_{\omega}\ket_{\omega} ^{1/2}.$
Moreover, for all $\omega\in\mathfrak B_*^+ ,\;\theta\in {\bf X}^{\dagger},\; 
x\in {\bf X}$
we have: 
\begin{eqnarray*}
|\bra \theta_{\omega},x+N_{\omega}\ket_{\omega}| &\leq&
\|\theta_{\omega}\|_{\omega}\|x+N_{\omega}\|_\omega \\
&\leq& \|\omega\|^{1/2}\|\theta\|\,\|\omega\|^{1/2}\|x\|=\|\omega\|\,\|\theta\|
\,\|x\|.
\end{eqnarray*}

We recall from Proposition 3.8 of \cite{Pa} that ${\bf X}^\dagger$ is a dual
space with the $weak^*$-topology given by the linear functionals:
$$\theta \mapsto \omega(\bra \tau,\theta\ket)\;for\;\omega\in\mathfrak B_*\;
\tau\in{\bf X}^\dagger.$$

\begin{prop}\label{density}
Let $\mathfrak B$ be a von Neumann algebra and let $\bf X$ be a pre-Hilbert 
$\mathfrak B$-module. Then, 

\noindent(i) the unit ball of ${\bf X}^\dagger$ is complete
in the topology given by the family of seminorms,\\ 
\indent \;$\{\|\cdot\|_\omega\,|\,
\omega \in \mathfrak B_*^+\};$

\noindent(ii) $\bf X$ is dense in ${\bf X}^\dagger$ in this topology; and hence

\noindent(iii) $\bf X$ is $weak^*$ dense in ${\bf X}^\dagger.$

\noindent(iv)\;For each $\omega\in \frk B_*^+$,\; $\theta\in {\bf X}^\dagger$,
and $\epsilon>0$ there exists an $x\in {\bf X}$ with:
$$\|\theta - x\|_{\omega}^2 = \omega(\bra\theta -x,\theta -x\ket) 
< \epsilon^2.$$
\end{prop}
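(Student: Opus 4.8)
The plan is to treat (iv) as the core approximation statement, deduce (ii) and (iii) from it cheaply, and prove the completeness (i) by a separate construction, since it is the only part that requires manufacturing a new element of ${\bf X}^\dagger$. The identity that drives everything is that under the embedding $x\mapsto\hat x$ the vector attached to $\hat x$ in $\mathcal H_\omega$ is exactly $x+N_\omega$, because $\bra \hat x_\omega, y+N_\omega\ket_\omega=\omega(\hat x(y))=\omega(\bra x,y\ket)=\bra x+N_\omega,y+N_\omega\ket_\omega$. Since $\theta\mapsto\theta_\omega$ is additive, $(\theta-x)_\omega=\theta_\omega-(x+N_\omega)$, hence $\|\theta-x\|_\omega=\|\theta_\omega-(x+N_\omega)\|_\omega$. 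Now (iv) is immediate: $\mathcal H_\omega$ is by definition the completion of ${\bf X}/N_\omega$, so $\{x+N_\omega:x\in{\bf X}\}$ is norm-dense in $\mathcal H_\omega$ and I may choose $x\in{\bf X}$ with $\|\theta_\omega-(x+N_\omega)\|_\omega<\epsilon$.

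For (ii) I would reduce simultaneous approximation in $\omega_1,\dots,\omega_n$ to a single seminorm: set $\omega:=\omega_1+\cdots+\omega_n\in\mathfrak B_*^+$; since $\omega_j\le\omega$ and $\bra\theta-x,\theta-x\ket\ge0$ we get $\|\theta-x\|_{\omega_j}\le\|\theta-x\|_\omega$, so one application of (iv) approximates $\theta$ in all the $\|\cdot\|_{\omega_j}$ at once. For (iii) I would check that the seminorm topology is finer than the $weak^*$-topology: using Paschke's identity $\omega(\bra\tau,\eta\ket)=\bra\tau_\omega,\eta_\omega\ket_\omega$ together with Cauchy--Schwarz in $\mathcal H_\omega$ gives $|\omega(\bra\tau,x-\theta\ket)|\le\|\tau\|_\omega\,\|x-\theta\|_\omega$, so $\|\cdot\|_\omega$-convergence forces convergence of every $weak^*$-defining functional, and the density from (ii) thereby upgrades to $weak^*$-density.

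The substantive part is (i). Let $\{\theta_i\}$ be a net in the unit ball of ${\bf X}^\dagger$ that is Cauchy for every $\|\cdot\|_\omega$; by additivity $(\theta_i)_\omega-(\theta_j)_\omega=(\theta_i-\theta_j)_\omega$, so each net $\{(\theta_i)_\omega\}$ is Cauchy in $\mathcal H_\omega$ and converges to some $\psi_\omega$. To assemble a single limit I would define $\theta$ by $weak^*$-limits: for fixed $x\in{\bf X}$ the functional $\omega\mapsto\lim_i\omega(\theta_i(x))=\bra\psi_\omega,x+N_\omega\ket_\omega$ is well-defined and bounded on $\mathfrak B_*$ (by $\|\omega\|\,\|x\|$, using $\|\theta_i(x)\|\le\|x\|$), hence represented by a unique $\theta(x)\in\mathfrak B$ with $\|\theta(x)\|\le\|x\|$. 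Normality of the bimodule action of $\mathfrak B$ on $\mathfrak B_*$ then yields $\theta(xa)=\theta(x)a$, so $\theta\in{\bf X}^\dagger$ with $\|\theta\|\le1$; and comparing $\bra\theta_\omega,x+N_\omega\ket_\omega=\omega(\theta(x))=\bra\psi_\omega,x+N_\omega\ket_\omega$ for all $x$ forces $\theta_\omega=\psi_\omega$, whence $\|\theta_i-\theta\|_\omega\to0$ for every $\omega$ and $\theta$ lies in the unit ball.

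I expect (i) to be the main obstacle, precisely because it is the only place where an element of ${\bf X}^\dagger$ must be built by hand: the delicate points are verifying that the pointwise $weak^*$-limits respect the module structure and the uniform norm bound, and that the single $\theta$ so produced simultaneously reproduces every Hilbert-space limit $\psi_\omega$. An alternative that avoids the explicit construction is to extract $\theta$ as a $weak^*$-cluster point of $\{\theta_i\}$ from the $weak^*$-compactness of the unit ball of ${\bf X}^\dagger$ (\cite{Pa}, Prop.~3.8) and then identify $\theta_\omega$ with $\psi_\omega$ by a density-plus-boundedness argument inside each $\mathcal H_\omega$, using that $\{x+N_\omega\}$ is dense and $\{(\theta_i)_\omega\}$ is bounded.
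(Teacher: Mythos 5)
Your proposal is correct and takes essentially the same route as the paper: part (i) is the paper's construction almost verbatim (convergence of the $(\theta_i)_\omega$ in each $\mathcal H_\omega$, pointwise limits defining $\theta(x)\in(\mathfrak B_*)^*=\mathfrak B$ with $\|\theta(x)\|\le\|x\|$, the module property passing to the ultraweak limit, and the identification $\theta_\omega=\psi_\omega$), while (iii) rests on the same Cauchy--Schwarz estimate $|\omega(\bra\tau,x-\theta\ket)|\le\|\tau\|_\omega\|x-\theta\|_\omega$ the paper uses. The only cosmetic differences are your ordering (proving (iv) first) and your derivation of (ii) from the monotonicity $\omega_j\le\omega$ applied to the extended inner product of Paschke's Theorem 3.2, where the paper instead invokes the contraction $\mathcal H_\omega\to\mathcal H_{\omega_i}$ of his Proposition 3.1; both are sound.
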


\begin{proof}[\bf Proof]
(i) Let $\{\theta^\alpha\}$ be a Cauchy net in the unit ball of 
${\bf X}^\dagger.$
Then, for a fixed $\omega \in\mathfrak B_*^+$, the net 
$\{(\theta^{\alpha})_\omega
\}$ is a Cauchy net in the norm $\|\cdot\|_\omega$ on $\mathcal{H}_\omega$ by definition.
Hence, there exists an element $\theta_\omega \in\mathcal{H}_\omega$ with
$\|(\theta^{\alpha})_{\omega} - \theta_{\omega}\| \to 0.$ Moreover,
$$\|\theta_{\omega}\| \leq \limsup_{\alpha} \|(\theta^{\alpha})_{\omega}\|
\leq \|\omega\|^{1/2}\|\theta^{\alpha}\| \leq \|\omega\|^{1/2}.$$ 

Now, for fixed $x \in {\bf X}$, $\{\theta^{\alpha}(x)\}$ is a bounded net in
$\mathfrak B.$ Moreover, for each $\omega\in \mathfrak B_*^+$
$$\lim_{\alpha} \omega(\theta^{\alpha}(x))=\lim_{\alpha}
\bra (\theta^{\alpha})_{\omega},x+N_{\omega}\ket_\omega =\bra \theta_{\omega},
x+N_{\omega}\ket_\omega .$$
Thus for $every$ $\omega \in \mathfrak B_*$, the net 
$\{\omega(\theta^{\alpha}(x))\}$ converges in $\comp.$ Clearly,
this limit is linear in $\omega$: that is, the bounded net 
$\{\theta^{\alpha}(x)\}$ of linear functionals on $\mathfrak B_*$
converges pointwise to a linear functional on $\mathfrak B_*$ which is 
therefore bounded by the same bound, $\|x\|.$ That is, the pair 
$(x,\{\theta_{\omega}\,|\,\omega\in\mathfrak B_*^+ \})$ defines
an element in $(\mathfrak B_* )^* = \mathfrak B$ via
$\omega \mapsto \bra\theta_{\omega}, x+N_{\omega}\ket_\omega .$ If we call 
this element $\theta(x)$, then by definition,
$$\omega(\theta(x))=\bra\theta_{\omega},x+N_{\omega}\ket_\omega =\lim_{\alpha}
\omega(\theta^{\alpha}(x)),$$
$$and\; \|\theta(x)\|\leq \|x\|.$$ 

By this formula, $\theta(x)$ is clearly linear in $x$, and so
$\theta :{\bf X} \to \mathfrak B$ is linear. By construction, 
$\theta^{\alpha}(x)$
converges ultraweakly to $\theta(x)$ and since each $\theta^\alpha$ is
a $\mathfrak B$-module map, so is $\theta$. Clearly, $\|\theta\|\leq 1$,
so $\theta$ is in the unit ball of ${\bf X}^\dagger ,$ 
and $\theta^\alpha$ converges to $\theta$. That is, the unit ball of
${\bf X}^\dagger$ is complete as claimed.

(ii) To see that $\bf X$ is dense in ${\bf X}^\dagger$,
fix $\theta \in {\bf X}^\dagger$ and $\epsilon > 0$. Let 
$\{\omega_1,\omega_2,...,\omega_m\}$ be a finite set of functionals 
in $\mathfrak B_*^+.$ Given this data we let $\omega=\omega_1+\cdots
+\omega_m.$ Now,
$\omega\geq\omega_i$ for each $i=1,2,...,m$ and so by Proposition 
3.1 of \cite{Pa}, the map $x+N_{\omega}
\mapsto x+N_{\omega_i}$ is a well-defined contraction which extends to 
a contraction $\mathcal{H}_{\omega} \to \mathcal{H}_{\omega_i}$ carrying $\theta_{\omega}$
to $\theta_{\omega_i}.$ We choose $x 
\in {\bf X}$ so that 
$\|(x+N_{\omega})-\theta_{\omega}\|_{\omega} < \epsilon.$
Then, for each $i=1,2,...,m$, we have:
$$\|x-\theta\|_{\omega_i}:=\|(x+N_{\omega_i})-\theta_{\omega_i}\|_{\omega_i}
\leq \|(x+N_{\omega}) -\theta_\omega \|_\omega <\epsilon.$$

(iii) Now fix $\theta\in {\bf X}^\dagger$ and let $\epsilon>0$, $\{\tau_1,...
\tau_n\}\subseteq {\bf X}^\dagger$, $\{\omega_1,...,\omega_m\}\subseteq
\mathfrak B_*$ define a basic $weak^*$-neighbourhood of $\theta.$ Since 
every element of $\mathfrak B_*$ is expressible as a linear combination of four
elements in $\mathfrak B_*^+$ we can assume that $\omega_1,...,\omega_m$
are positive. Let $\omega=\omega_1+\cdots+\omega_m$ and choose $x\in {\bf X}$
with $$\|(x+N_{\omega})-\theta_{\omega}\|_{\omega} < \frac{\epsilon}{\|\tau_1\|
+\cdots+\|\tau_n\|}.$$
Then, for each $i=1,...,m$ and $k=1,...,n$, we have:
\begin{eqnarray*}
 |\omega_i \bra\tau_k,x-\theta\ket| &=&|\bra\tau_k,x-\theta\ket_{\omega_i}|
 \leq\|\tau_k\|_{\omega_i}\|x-\theta\|_{\omega_i}\\
 &\leq &\|\tau_k\|_{\omega}\|x-\theta\|_{\omega}
 \leq\|\tau_k\|\,\|(x+N_{\omega})-\theta_{\omega}\|_\omega < \epsilon.
\end{eqnarray*}

(iv) This is just a restatement of the fact that ${\bf X}/N_\omega$ is dense
in its Hilbert space completion $\mathcal{H}_\omega$ as described above in the remarks
after Definition \ref{dual}. 
\end{proof}

\begin{rem*}
In the following class of examples we can more or less explicitly calculate $X^\dagger.$
\end{rem*}

\begin{eg}\label{tensormod}
Let $\mathcal H$ be a Hilbert space with orthonormal basis
$\{\xi_n\}$, let $\mathfrak B$ be a von Neumann algebra,
 and let $X$ be the algebraic tensor product 
$X=\mathcal H\otimes\mathfrak B$, with the obvious $\mathfrak B$-valued 
inner product. Then, $X$ is a pre-Hilbert $\mathfrak B$-module and we can
identify $X^\dagger$ as:
$$X^\dagger=\left\{\sum_n \xi_n\otimes b_n\;|\; b_n\in\mathfrak B\;and\;
\exists M>0\;with\;
\|\sum_{n\in F}b_n^*b_n\|\leq M,\; \forall\;finite\;F\right\}.$$
Such a formal sum defines a bounded $\mathfrak B$-module mapping $\theta$ on $X$
as follows:
$$\theta\left(\sum_{k=1}^N\eta_k\otimes a_k\right)=
\sum_{k=1}^N\sum_n\bra\xi_n,\eta_k\ket b_n^*a_k,$$
where the right hand side converges in norm.
\end{eg}

\begin{proof}
First, let $\theta$ denote an arbitrary element in $X^\dagger.$
Define $b_n^*:=\theta(\xi_n\otimes 1).$ Since $\theta$ is also defined on the
norm closure of $X$, we see that $\theta$ is defined on each element
of the form,
$\sum_n \xi_n\otimes a_n$ where $\sum_n a_n^*a_n$ converges in norm in
$\mathfrak B.$ In particular, if $\eta\in\mathcal H$, so that
$\eta=\sum_n\bra\xi_n,\eta\ket\xi_n$ converges in norm then, 
$\eta\otimes a=\sum_n\xi_n\otimes\bra\xi_n,\eta\ket a$ converges in norm, 
and so
$$\theta(\eta\otimes a)=\sum_n\theta(\xi_n\otimes\bra\xi_n,\eta\ket a)=
\sum_n\bra\xi_n,\eta\ket\theta(\xi_n\otimes 1)a=\sum_n\bra\xi_n,\eta\ket b_n^*a=
\sum_n\bra\xi_n,\eta\ket b_n^*a.$$
Hence for any element $x=\sum_{k=1}^N\eta_k\otimes a_k\in X$ we have
$x=\sum_{k=1}^N\sum_n\xi_n\otimes\bra\xi_n,\eta_k\ket a_k$ converges in norm
and:
$$\theta\left(\sum_{k=1}^N\eta_k\otimes a_k\right)=
\sum_{k=1}^N\theta(\eta_k\otimes a_k)=
\sum_{k=1}^N\sum_n\bra\xi_n,\eta_k\ket b_n^*a_k,$$
as claimed. To see that the $b_n$'s satisfy the boundedness condition,
let $F$ be any finite set of indices. Then,
\begin{eqnarray*}
&&\|\sum_{n\in F}b_n^*b_n\|=\|\theta(\sum_{n\in F}\xi_n\otimes b_n)\|\leq
\|\theta\|\cdot\|\sum_{n\in F}\xi_n\otimes b_n\|\\
&=&\|\theta\|\cdot\|\bra\sum_{n\in F}\xi_n\otimes b_n,
\sum_{n\in F}\xi_n\otimes b_n\ket_{\mathfrak B}\|^{1/2}=\|\theta\|\cdot
\|\sum_{n\in F}b_n^*b_n\|^{1/2}.
\end{eqnarray*}
That is, $\|\sum_{n\in F}b_n^*b_n\|^{1/2}\leq \|\theta\|$ for all finite $F$,
so we can choose $M=\|\theta\|^2.$

On the other hand if we have such a formal sum,
$\sum_n \xi_n\otimes b_n$, then we will show that the finite partial sums
$\sum_{n\in F}\xi_n\otimes b_n$
form a Cauchy net (in the family of seminorms of Prop. \ref{density})
in the ball of radius $\sqrt M$ in
$X$, and invoke the previous proposition to conclude that they converge
pointwise ultraweakly to an element in $X^\dagger$ of norm at most $\sqrt M.$

To this end let $\omega\in\mathfrak B_*^+$ and let $\epsilon>0$. Since the 
finite sums, $\left\{\sum_{n\in F}b_n^*b_n\right\}_F$ form a bounded increasing 
net of positive operators in $\mathfrak B$, they converge strongly to an 
element of $\mathfrak B$. Hence the net 
$\left\{\sum_{n\in F}\omega(b_n^*b_n)\right\}_F$ converges to a 
finite nonnegative number. Thus, there exists a large finite set $F_0$ so that 
if $F_0\cap F=\phi$ then $\sum_{F}\omega(b_n^*b_n)<\epsilon/2.$

Thus if $F_0\subset F_1$ and $F_0\subset F_2$, we have
\begin{eqnarray*}
&&\|\sum_{F_1}\xi_n\otimes b_n-\sum_{F_2}\xi_n\otimes b_n\|^2_{\omega}=
\|\sum_{F_1\sim F_2}\xi_n\otimes b_n
-\sum_{F_2\sim F_1}\xi_n\otimes b_n\|^2_{\omega}\\
&=&\omega\left(\left\bra(\sum_{F_1\sim F_2}\xi_n\otimes b_n
-\sum_{F_2\sim F_1}\xi_n\otimes b_n),
(\sum_{F_1\sim F_2}\xi_n\otimes b_n
-\sum_{F_2\sim F_1}\xi_n\otimes b_n)\right\ket_{\mathfrak B}\,\right)\\
&=&\omega\left(\sum_{F_1\sim F_2}b_n^*b_n\right) + 
\omega\left(\sum_{F_2\sim F_1}b_n^*b_n\right) < \epsilon/2 +\epsilon/2=\epsilon.
\end{eqnarray*}
Hence, the finite sums $\sum_F\xi_n\otimes b_n$ converge to an element 
$\theta\in X^\dagger:$ that is, for each $x\in X$,
$\theta(x)=uw-lim_F\bra\sum_F\xi_n\otimes b_n,x\ket.$
Now, for $x=\sum_{k=1}^N\eta_k\otimes a_k\in X$ we have by the first part of the 
proof that
$x=\sum_{k=1}^N\sum_n\xi_n\otimes\bra\xi_n,\eta_k\ket a_k$ converges in norm.
Since $\theta$ is bounded, $\theta(x)=\sum_{k=1}^N\sum_n\bra\xi_n,\eta_k\ket 
\theta(\xi_n\otimes a_k)$ also converges in norm. But then,
$$\theta(\xi_n\otimes a_k)=uw-lim_{F}\left\bra\sum_{m\in F}\xi_m\otimes b_m,
\xi_n\otimes a_k\right\ket_{\mathfrak B}=b_n^*a_k.$$
And so, indeed,  $\theta(\sum_{k=1}^N\eta_k\otimes a_k)=\sum_{k=1}^N\sum_n 
\bra\xi_n,\eta_k\ket b_n^*a_k$
converges in norm.

\end{proof}

\begin{tech*} In the definition below of a {\bf $\mathfrak Z$-Hilbert algebra},
$\cal A,$ a key idea is the use of the topology given by the seminorms
in Proposition \ref{density} to replace the norm topology on
$\mathcal{H}_{\mathcal A}:={\mathcal A}^\dagger$ when $\mathfrak Z$ is not $\comp.$\\
\indent Hence, axiom (viii) below seems to us the most natural replacement 
for the usual
axiom of the norm-density of $\cal A^2$ in $\cal A$. When we come to apply
this axiom to the crossed product examples that we construct we 
are actually able to
show that a stronger condition holds. However, in order to prove that
the algebra of bounded elements $\cal A_b$ also satisfies axiom (viii)
we need the weaker version below. Moreover, in the converse construction of 
a $\frk Z$-Hilbert
Algebra from a {\bf given} $\frk Z$-trace one also needs the weaker 
version of axiom (viii) below. 
\end{tech*}

\begin{defn}\label{Hilbalg}
Let $\mathfrak Z$ be an abelian von Neumann algebra. A complex $*$-algebra
$\mathcal A$ is called a {\bf $\mathfrak Z$-Hilbert algebra} if $\mathcal A$
is a right pre-Hilbert $\mathfrak Z$-module which satisfies the further four
axioms:

\noindent (v) $\bra a^*,b^*\ket=\bra b,a\ket\;for\;a,b\in \mathcal A.$

\noindent (vi) $\bra ab,c\ket=\bra b,a^*c\ket\;for\;a,b,c\in\mathcal A.$

\noindent (vii) $b \mapsto ab:\mathcal A \to \mathcal A$ is bounded in 
the $\mathfrak Z$-module norm for each fixed $a\in\mathcal A.$

\noindent (viii) The space $\mathcal A^2=span\{ab\,|\,a,b\in\mathcal A\}$
is dense in $\mathcal A$ in the topology given by the family of seminorms
$\{\|\cdot\|_{\omega}\,|\,\omega\in\ \mathfrak Z_*^+\},$
defined above.
\end{defn}

\begin{rem*}
It is easy to see that if $\mathcal A^2$ is norm-dense in $\mathcal A$ in the
$\mathfrak Z$-module norm,\\
 $\|a\|^2=\|\bra a,a\ket\|$ then axiom {\it (viii)} is
satisfied.
\end{rem*}

\begin{eg}\label{Hilbex}
Let $\mathfrak A$ be a von Neumann algebra and let 
$\mathfrak Z$ be a von Neumann subalgebra of the centre of $\mathfrak A.$
Suppose $\tau : \mathfrak A \to \mathfrak Z$ is a faithful, unital
uw-continuous $\frk Z$-trace. Then, for $a,b \in 
\mathfrak A$, the
following inner product makes $\mathfrak A$ into a  
$\mathfrak Z$-Hilbert algebra:
$$\bra a,b\ket_{\mathfrak Z}:=\tau(a^*b).$$
\end{eg}

\begin{proof}[\bf Proof]
The only axioms that are not completely trivial are (iii) and (vii).
Axiom (iii) follows from lemma 1.1,
while Axiom (vii) follows from the calculation:
\begin{eqnarray*}
\|ab\|^2_{\mathfrak A} &=& \|\bra ab,ab\ket_{\mathfrak Z}\|_{\mathfrak Z}
= \|\tau(b^*a^*ab)\|_{\mathfrak Z}\\
&\leq & \|\tau(\|a^*a\|_{op}b^*b)\|_{\mathfrak Z}
= \|a\|^2_{op}\|b\|^2_{\mathfrak A}.
\end{eqnarray*}
Since $\tau$ is unital, it is easy to see that $\|1\|_{\mathfrak A} = 1$
and so $\|a\|_{\mathfrak A} \leq \|a\|_{op}$ for all $a \in {\mathfrak A}.$
\end{proof}  
Of course, even if $\frk Z=\comp$ one usually has strict containment
$\frk A\subset {\frk A}^\dagger:=\mathcal H_{\mathfrak A}.$
\begin{rems*}
We denote by $\pi(a)$ the operator ``left multiplication by $a$'' and note that
by axioms (vi) and (vii) $\pi(a)$ is adjointable with adjoint $\pi(a^*)$
and hence $\pi(a)$ is a $\mathfrak Z$-module mapping. That is,
$$a(bz)=(ab)z\;for\;a,b\in\mathcal A\, ,\,z\in\mathfrak Z.$$

We denote by $\pi^{\prime}(a)$ the operator ``right multiplication by $a$''
and note that by axioms (v),(vi), and (vii) that $\pi^{\prime}(a)$ is
also bounded and adjointable with adjoint $\pi^{\prime}(a^*)$ and therefore 
is also a $\mathfrak Z$-module mapping. That is, 
$$(bz)a=(ba)z\;for\;a,b\in\mathcal A\, ,\,z\in\mathfrak Z.$$
A little playing with the axioms and using the fact that $\mathfrak Z$
is abelian yields the further useful identity:
$$(az)^*=a^*z^*\;for\;a\in \mathcal A\, ,\,z\in\mathfrak Z.$$

Whenever $\cal A$ is a $\mathfrak{Z}$-Hilbert algebra,
 we will use the suggestive notation $\mathcal{H}_{\mathcal A}$
in place of $\mathcal A^{\dagger}$
for the Paschke dual of $\mathcal A.$ That is,
$$\mathcal{H}_{\mathcal A}=\mathcal A^{\dagger}=
\{\theta :\mathcal A \to \mathfrak Z\,|\,\theta\;is\;a\;
bounded\;\mathfrak Z{\text -}module\; map\}.$$
By Theorem 3.2 of \cite{Pa}, $\mathcal{H}_{\mathcal A}$ is a self-dual Hilbert
$\mathfrak Z$-module. For $\xi\in \mathcal{H}_{\mathcal A}$ and $a\in\mathcal A$
we have $\xi(a)=\bra\xi,\hat a\ket$ where $\hat a\in \mathcal{H}_{\mathcal A}$ is
given by $\hat a(b)=\bra a,b\ket$ for $b\in\mathcal A.$ We identify
$a$ with $\hat a\in \mathcal{H}_{\mathcal A}$ so that $\mathcal A\subseteq
\mathcal{H}_{\mathcal A}$ and so, of course, ${\mathcal A}^{-}\subseteq
\mathcal{H}_{\mathcal A}.$  By Corollary 3.7 of \cite{Pa} each $\pi(a)$ (respectively, 
$\pi^{\prime}(a)$) extends 
uniquely to an element of ${\mathcal L}(\mathcal{H}_{\mathcal A})$ which we will 
also denote by
$\pi(a)$ (respectively, $\pi^{\prime}(a)$) and moreover, the map:
$$\mathcal A\stackrel{\pi}{\to}
{\mathcal L}(\mathcal{H}_{\mathcal A})$$ is a $*$-monomorphism. Similarly, the
map:
$$\mathcal A\stackrel{\pi^{\prime}}{\to}
{\mathcal L}(\mathcal{H}_{\mathcal A})$$ is a $*$-anti-monomorphism.

We note that with this notation, axiom (viii) ensures that $\mathcal A^2$ is
$weak^*$-dense in $\mathcal{H}_{\mathcal A}$ by Proposition \ref{density} part (iii).  
\end{rems*}

\begin{prop}\label{TypeI}
Let $\mathcal A$ be a $\mathfrak Z$-Hilbert algebra where $\mathfrak Z$
is an abelian von Neumann algebra.
For $z\in \mathfrak Z$ and $\xi\in \mathcal{H}_{\mathcal A}$ the mapping
$\xi \mapsto z\cdot\xi:=\xi z$ embeds $\mathfrak Z$ into ${\mathcal L}
(\mathcal{H}_{\mathcal A})$.
With this embedding we have $$\mathfrak Z = Z({\mathcal L}(\mathcal{H}_{\mathcal A})),$$
the centre of ${\mathcal L}(\mathcal{H}_{\mathcal A}).$ Moreover, ${\mathcal L}
(\mathcal{H}_{\mathcal A})$ is a Type $I$
von Neumann algebra.
\end{prop}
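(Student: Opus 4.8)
The plan is to dispose of the two elementary assertions by direct module computations and then to reduce the centre and type statements to the fibre/diagonalisation structure of a self-dual module over an abelian von Neumann algebra.

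First I would verify the embedding. For $z\in\frk Z$ write $\rho(z)$ for the map $\xi\mapsto\xi z$ on $\mathcal H_{\mathcal A}$. Since $\frk Z$ is abelian, $\rho(z)(\xi a)=\xi(za)=\xi(az)=(\rho(z)\xi)a$ for $a\in\frk Z$, so $\rho(z)$ is a module map; the estimate $\bra\xi z,\xi z\ket=\bra\xi,\xi\ket z^*z\le\|z\|^2\bra\xi,\xi\ket$ shows it is bounded, and $\bra\rho(z)\xi,\eta\ket=z^*\bra\xi,\eta\ket=\bra\xi,\eta\ket z^*=\bra\xi,\rho(z^*)\eta\ket$ (again using commutativity) shows $\rho(z)^*=\rho(z^*)$. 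Thus $\rho$ is a $*$-homomorphism of $\frk Z$ into $\mathcal L(\mathcal H_{\mathcal A})$; it is injective because $\rho(z)=0$ gives $\bra\xi,\eta\ket z=\bra\xi,\eta z\ket=0$ for all $\xi,\eta$, and the $\bra\xi,\eta\ket$ are uw-dense in $\frk Z$ by axiom (iv), forcing $z=0$. The inclusion $\rho(\frk Z)\subseteq Z(\mathcal L(\mathcal H_{\mathcal A}))$ is then immediate: every $T\in\mathcal L(\mathcal H_{\mathcal A})$ is by definition a $\frk Z$-module map, so $T(\xi z)=(T\xi)z$, i.e. $T\rho(z)=\rho(z)T$.

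For the reverse inclusion and the type statement I would exploit that $\mathcal H_{\mathcal A}$ is a \emph{self-dual} Hilbert module over the \emph{abelian} algebra $\frk Z$. The key device is a generalised orthonormal basis: a family $\{e_i\}\subseteq\mathcal H_{\mathcal A}$ with $\bra e_i,e_j\ket=\delta_{ij}p_i$ for projections $p_i\in\frk Z$ and with $\xi=\sum_i e_i\bra e_i,\xi\ket$ ($weak^*$-convergent) for every $\xi$; such a family is produced by taking a maximal orthogonal system and using self-duality (Paschke \cite{Pa}, Theorem 3.2) to see that its span exhausts $\mathcal H_{\mathcal A}$. Relative to such a basis each $T\in\mathcal L(\mathcal H_{\mathcal A})$ becomes an $\frk Z$-valued matrix $t_{ij}=\bra e_i,Te_j\ket$, identifying $\mathcal L(\mathcal H_{\mathcal A})$ with a von Neumann algebra of matrices over $\frk Z$ — equivalently, with the decomposable operators $\int_\Omega^\oplus B(H_\omega)\,d\mu(\omega)$ under $\frk Z\cong L^\infty(\Omega,\mu)$. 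This makes both conclusions transparent: such an algebra is manifestly Type $I$ (each fibre $B(H_\omega)$ is Type $I$), and a matrix $(t_{ij})$ commuting with all the matrix units $\theta_{e_i,e_j}\in\mathcal L(\mathcal H_{\mathcal A})$ must be diagonal with a single common diagonal entry $z\in\frk Z$, whence $T=\rho(z)$ and $Z(\mathcal L(\mathcal H_{\mathcal A}))\subseteq\frk Z$.

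I expect the existence and properties of the generalised orthonormal basis (equivalently, the direct-integral decomposition of $\mathcal H_{\mathcal A}$ together with the fact that every adjointable module endomorphism is decomposable) to be the main obstacle: this is precisely where self-duality and fullness (axiom (iv)) must be used in earnest, and it is not formal. A self-contained alternative for the centre that avoids the basis is to commute a central $T$ with the rank-one operators $\theta_{\xi,\eta}(\zeta)=\xi\bra\eta,\zeta\ket$, obtaining $(T\xi)\bra\eta,\zeta\ket=\xi\bra\eta,T\zeta\ket$ for all $\xi,\eta,\zeta$, from which fullness extracts a fixed $z$ with $T\xi=\xi z$; but the Type $I$ assertion still seems to require the fibrewise picture, so I would develop the basis/direct-integral decomposition carefully and read off both statements from it.
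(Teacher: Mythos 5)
Your proposal is correct, but it takes a genuinely different route from the paper. The paper disposes of both hard assertions by quoting Rieffel's Morita theory: by Corollary 7.10 of \cite{R}, $\frk Z$ and ${\cal L}(\cal{H}_{\cal A})$ are Morita equivalent, so Theorem 8.11 of \cite{R} gives Type $I$ at once; and for the centre it observes that a central $T$ is an adjointable ${\cal L}(\cal{H}_{\cal A})$-linear map for the left Hilbert ${\cal L}(\cal{H}_{\cal A})$-module structure $\bra\xi,\eta\ket_{{\cal L}(\cal{H}_{\cal A})}=\xi\otimes\overline\eta$, whence Corollary 7.10 of \cite{R} forces $T\xi=\xi z$ for some $z\in\frk Z$ --- no basis, no matrix units. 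You instead work hands-on from Paschke's structure theorem for self-dual modules over a $W^*$-algebra (his Theorem 3.12, giving the family $\{e_i\}$ with $\bra e_i,e_j\ket=\delta_{ij}p_i$), and your matrix-unit computation does go through: commuting $T$ with $\theta_{e_i,e_j}$ gives $Te_i=e_i t_{ii}$ with $p_ip_jt_{ii}=p_ip_jt_{jj}$, and since axiom (iv) forces $\sup_i p_i=1$, the consistent family $\{t_{ii}\}$ patches (after disjointification, using that $\frk Z$ is abelian and $\|t_{ii}\|\le\|T\|$) to a single $z$ with $T=\rho(z)$; likewise each $\theta_{e_i,e_i}$ is a projection with abelian corner $\theta_{e_i,e_i}{\cal L}(\cal{H}_{\cal A})\theta_{e_i,e_i}=\frk Zp_i\theta_{e_i,e_i}$ and $\sum_i\theta_{e_i,e_i}=1$, which yields Type $I$ directly. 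Two cautions: your ``equivalently, decomposable operators $\int_\Omega^\oplus B(H_\omega)\,d\mu$'' gloss needs separability hypotheses that the paper deliberately avoids (it worries elsewhere about non-separable $\frk Z$), so treat it as heuristic and rest the proof on the matrix/abelian-projection argument, which needs no direct integrals; and you correctly identify the basis theorem as the real cost --- what your approach buys is a self-contained, concrete picture of ${\cal L}(\cal{H}_{\cal A})$ at the price of redoing Paschke's Theorem 3.12, while the paper's approach buys brevity at the price of importing Rieffel's Morita machinery wholesale.
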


\begin{proof}[\bf Proof]
It is easy to check that this mapping embeds $\mathfrak Z$ into 
${\mathcal L}(\mathcal{H}_{\mathcal A})$ and since each $T\in {\mathcal L}
(\mathcal{H}_{\mathcal A})$ is
$\mathfrak Z$-linear we have that $\mathfrak Z \hookrightarrow
Z({\mathcal L}(\mathcal{H}_{\mathcal A})).$ Now by Corollary 7.10 of \cite{R},
$\mathfrak Z$ and ${\mathcal L}(\mathcal{H}_{\mathcal A})$ are Morita equivalent 
in the sense of \cite{R} and so by
Theorem 8.11 of \cite{R}, ${\mathcal L}(\mathcal{H}_{\mathcal A})$ is a Type $I$ 
von Neumann algebra.

Now by the construction of Corollary 7.10 of \cite{R}, $\mathcal{H}_{\mathcal A}$
becomes a left Hilbert ${\mathcal L}(\mathcal{H}_{\mathcal A})$-module with the 
inner product:
$$\bra\xi ,\eta\ket_{{\mathcal L}(\mathcal{H}_{\mathcal A})}(\mu)=\xi\bra\eta ,
\mu\ket_{\mathfrak Z}
\;for\;\xi,\eta,\mu\in \mathcal{H}_{\mathcal A}.$$ That is,
$\bra\xi ,\eta\ket_{{\mathcal L}(\mathcal{H}_{\mathcal A})}$ is the ``finite-rank'' 
operator
$\xi\otimes\overline{\eta}$ in ${\mathcal L}(\mathcal{H}_{\mathcal A}).$ Then, for 
$T\in Z({\mathcal L}(\mathcal{H}_{\mathcal A})),$
\begin{eqnarray*}
\bra T\xi ,\eta\ket_{{\mathcal L}(\mathcal{H}_{\mathcal A})}&=&
(T\xi)\otimes\overline{\eta}=
T(\xi\otimes\overline{\eta})\\
&=&(\xi\otimes\overline{\eta})T=\xi\otimes \overline{T^*\eta}
=\bra\xi ,T^*\eta\ket_{{\mathcal L}(\mathcal{H}_{\mathcal A})}.
\end{eqnarray*}
Thus, such a $T$ is adjointable and clearly ${\mathcal L}
(\mathcal{H}_{\mathcal A})$-linear.
By Corollary 7.10 of \cite{R}, $T$ must be of the form $T\xi=\xi z=
z\cdot\xi$ for some $z\in\mathfrak Z.$ That is, 
$\mathfrak Z = Z({\mathcal L}(\mathcal{H}_{\mathcal A})).$
\end{proof}

\begin{tech*}
The fact that $\cal L(\mathcal{H}_{\cal A})$ is a type I von Neumann algebra with
centre $\frk Z$ is one key idea which makes the theory of $\mathfrak{Z}$-Hilbert 
algebras possible. That is, if
$\mathfrak R$ is a $*$-subalgebra of ${\mathcal L}(\mathcal{H}_{\mathcal A})$ 
which contains 
$\mathfrak Z$, then $\mathfrak R$ is uw-closed
if and only if $\mathfrak R=\mathfrak R^{\prime\prime}$ where $^{\prime}$
denotes commutant {\bf within} ${\mathcal L}(\mathcal{H}_{\mathcal A})$. This follows from
compl\'{e}ment 13, III.7 of \cite{Dix} and allows us to use commutation
(pure algebra) to determine inclusion or equality of certain algebras.
\end{tech*}

\section{ COMMUTATION THEOREM for $\mathfrak Z$-HILBERT ALGEBRAS}

Throughout this section $\mathfrak Z$ is an Abelian von Neumann algebra
and $\mathcal A$ is a $\mathfrak Z$-Hilbert Algebra with Paschke dual
$\mathcal{H}_{\mathcal A}.$ Given the machinery we have developed for  $\mathfrak Z$-Hilbert Algebras, the proof of the commutation theorem below follows the outline of the classical case quite closely. 

\begin{lemma}
If $T$ is a nonzero operator in ${\mathcal L}(\mathcal{H}_{\mathcal A})$ then there 
exists $a \in \mathcal A$ with $T\pi(a) \neq 0.$
\end{lemma}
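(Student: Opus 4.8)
The plan is to prove the contrapositive: if $T\pi(a)=0$ for every $a\in\mathcal A$, then $T=0$. The mechanism is that left multiplications sweep out $\mathcal A^2$, which we already know to be weak$^*$-dense in $\mathcal{H}_{\mathcal A}$, combined with the fact that $T$, being adjointable, is weak$^*$-continuous.

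First I would unwind what the assumption $T\pi(a)=0$ says on the copy of $\mathcal A$ sitting inside $\mathcal{H}_{\mathcal A}$. Under the identification $b\mapsto\hat b$ recorded in the remarks after Proposition \ref{TypeI}, left multiplication satisfies $\pi(a)\hat b=\widehat{ab}$, so the hypothesis forces $T\widehat{ab}=0$ for all $a,b\in\mathcal A$; that is, $T$ annihilates the image of $\mathcal A^2$ in $\mathcal{H}_{\mathcal A}$.

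Next I would invoke the density already noted in those same remarks: axiom (viii), together with Proposition \ref{density}(ii)--(iii), guarantees that $\mathcal A^2$ is weak$^*$-dense in $\mathcal{H}_{\mathcal A}$. It then remains to see that $T$ is weak$^*$-continuous. Since $\mathcal{H}_{\mathcal A}$ is self-dual (Theorem 3.2 of \cite{Pa}), every $T\in{\mathcal L}(\mathcal{H}_{\mathcal A})$ is adjointable, and an adjointable operator is automatically continuous for the weak$^*$-topology recalled after Definition \ref{dual}: for $\tau\in\mathcal{H}_{\mathcal A}$ and $\omega\in\frk Z_*$ one has $\omega(\bra\tau,T\theta\ket)=\omega(\bra T^*\tau,\theta\ket)$, which is again one of the defining weak$^*$-functionals in the variable $\theta$. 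A weak$^*$-continuous operator that vanishes on a weak$^*$-dense subspace is zero, so $T=0$, contradicting $T\neq0$.

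The routine parts here (the identity $\pi(a)\hat b=\widehat{ab}$ and the density bookkeeping) are immediate from the definitions and the cited remarks. The one point that genuinely deserves care is the weak$^*$-continuity of $T$, since this is where the self-dual structure of $\mathcal{H}_{\mathcal A}$ is actually used; I expect this to be the main (though mild) obstacle, and I would handle it by explicitly reading off continuity from the form of the weak$^*$-functionals rather than quoting it as folklore.
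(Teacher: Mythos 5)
Your proof is correct and is essentially the paper's argument: both proceed by contraposition from $T(\mathcal A^2)=\{0\}$, both rest on adjointability of $T$ together with axiom (viii) and Proposition \ref{density} to conclude $T=0$. The only (cosmetic) difference is packaging: the paper tests $\bra T^*\xi,ab\ket=\bra\xi,T(ab)\ket=0$ against the seminorm-density of $\mathcal A^2$ (part (ii)) to get $T^*\xi=0$ for every $\xi$, whereas you express the same duality as weak$^*$-continuity of $T$ and quote weak$^*$-density (part (iii)), exactly as the paper's own remark after Example \ref{Hilbex} records it.
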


\begin{proof}[\bf Proof]
If $T(\mathcal A^2) = \{0\},$ then for all $\xi \in \mathcal{H}_{\mathcal A},$
$\bra T^*\xi,ab\ket=\bra\xi,T(ab)\ket =0.$
Hence, for each positive $\omega \in \mathfrak Z_*$ we have
$$0=\omega(\bra ab,T^*\xi\ket)=\bra ab,T^*\xi \ket_\omega.$$
Then by Definition \ref{Hilbalg} part (viii) and Proposition \ref{density} part (ii) we must
have $T^*\xi =0$ for all $\xi\in \mathcal{H}_{\mathcal A}.$ That is, $T^*=0$ and hence
$T=0.$

Therefore, there exists $a,b\in\mathcal A$ with
$$0\neq T(ab)=T(\pi(a)b)=(T\pi(a))(b),\;so\;T\pi(a)\neq 0.$$ 
\end{proof}

Since $\mathcal L (\mathcal{H}_{\mathcal A})$ is a von Neumann algebra it has a God-given 
ultraweak (uw) topology. This is the topology we refer to in the
following lemma.

\begin{lemma}\label{vNd}
With the standing assumptions of this section, we have

(i) $(\pi(\mathcal A))^{-uw}=(\pi(\mathcal A))^{\prime\prime}$
and 

(ii) $\mathfrak Z \subseteq (\pi(\mathcal A))^{-uw}.$
\end{lemma}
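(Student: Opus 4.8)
The plan is to prove the two assertions of Lemma \ref{vNd} in order, using the structural results already established. For part (i), the claim that $(\pi(\mathcal A))^{-uw}=(\pi(\mathcal A))^{\prime\prime}$, I would invoke the bicommutant theorem in the form appropriate to our setting. The von Neumann density/bicommutant theorem states that a $*$-subalgebra of $\mathcal L(\mathcal H_{\mathcal A})$ that contains the identity is uw-closed precisely when it equals its own double commutant. Here $\pi(\mathcal A)$ is a $*$-subalgebra of $\mathcal L(\mathcal H_{\mathcal A})$ by the Remarks following Example \ref{Hilbex} (where $\pi$ was shown to be a $*$-monomorphism). The subtlety is that $\pi(\mathcal A)$ need not contain the identity operator on the nose, so I would first establish part (ii), namely $\mathfrak Z\subseteq(\pi(\mathcal A))^{-uw}$, since $1\in\mathfrak Z$; combined with part (ii) the algebra $(\pi(\mathcal A))^{-uw}$ is a unital uw-closed $*$-subalgebra, and the bicommutant theorem then applies to give $(\pi(\mathcal A))^{-uw}=((\pi(\mathcal A))^{-uw})^{\prime\prime}=(\pi(\mathcal A))^{\prime\prime}$, using that commutants are unchanged under uw-closure and under passing to the generated algebra.

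For part (ii), the goal is to show that each $z\in\mathfrak Z$, acting on $\mathcal H_{\mathcal A}$ by $\xi\mapsto\xi z$ as in Proposition \ref{TypeI}, lies in the uw-closure of $\pi(\mathcal A)$. The natural strategy is to produce an explicit net in $\pi(\mathcal A)$ converging uw to $z$, built from an approximate identity for $\mathcal A$. Intuitively, if $\{e_\lambda\}$ were a suitable approximate unit, then $\pi(e_\lambda)$ should approximate the identity, and $\pi(e_\lambda)\cdot z$ (or rather the elements $\pi(e_\lambda z)$, using the module structure and the identity $(az)=\ldots$ from the Remarks) would approximate $z$. The key technical device is axiom (viii) together with Proposition \ref{density}: the weak$^*$-density of $\mathcal A^2$ in $\mathcal H_{\mathcal A}$ lets me control the action of these operators on a dense set of vectors, and the boundedness (axiom (vii)) keeps the net bounded so that uw-convergence can be checked on the weak$^*$-dense set.

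The step I expect to be the main obstacle is part (ii): manufacturing the approximating net in $\pi(\mathcal A)$ and verifying uw-convergence to $z$ when $\mathcal A$ may lack a norm-bounded approximate identity in the ordinary sense. Since we work with the Paschke-dual completion $\mathcal H_{\mathcal A}$ rather than a norm completion, ordinary approximate-identity arguments must be recast in terms of the seminorm topology $\{\|\cdot\|_\omega\}$ of Proposition \ref{density}. Concretely, I would fix $z\in\mathfrak Z$, a finite set of functionals $\omega_1,\ldots,\omega_m\in\mathfrak Z_*^+$, vectors $\xi_1,\ldots,\xi_n\in\mathcal H_{\mathcal A}$, and $\varepsilon>0$ defining a basic uw-neighbourhood of $z$; using axiom (viii) and Proposition \ref{density}(ii) I would approximate the relevant vectors by elements of $\mathcal A^2$, then exploit the Hilbert-algebra axioms (v)--(vii) to rewrite the pairing $\bra\xi_k, z\xi_\ell\ket$ as $\bra\xi_k,\pi(a)\xi_\ell\ket$ for a suitable $a\in\mathcal A$ depending on $z$ and the approximants. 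The delicate point is uniform control of the error across all $\omega_i$ simultaneously, which I would handle by passing to $\omega=\omega_1+\cdots+\omega_m$ and using the contraction $\mathcal H_\omega\to\mathcal H_{\omega_i}$ exactly as in the proof of Proposition \ref{density}(ii)--(iii). Once part (ii) is secured, part (i) is a short formal consequence of the bicommutant theorem as sketched above.
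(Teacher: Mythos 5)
Your proposal inverts the paper's logical order --- you attempt to prove (ii) directly and then deduce (i) from it, whereas the paper proves (i) first and obtains (ii) as a one-line consequence --- and this inversion is where the argument breaks. A preliminary problem is that the ``bicommutant theorem'' you invoke is misstated: $\mathcal{H}_{\mathcal A}$ is a self-dual Hilbert $\mathfrak Z$-module, not a Hilbert space, so all commutants here are taken \emph{within} the von Neumann algebra $\mathcal L(\mathcal{H}_{\mathcal A})$, and for such relative commutants it is false that a unital uw-closed $*$-subalgebra equals its double commutant. Indeed, inside $\mathcal L(\mathcal{H}_{\mathcal A})$ itself the subalgebra $\comp 1$ has $(\comp 1)^{\prime}=\mathcal L(\mathcal{H}_{\mathcal A})$ and hence $(\comp 1)^{\prime\prime}=Z(\mathcal L(\mathcal{H}_{\mathcal A}))=\mathfrak Z$ by Proposition \ref{TypeI}, which is strictly larger than $\comp 1$ whenever $\mathfrak Z\neq\comp 1$. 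The correct statement --- the Key Idea recorded after Proposition \ref{TypeI}, via compl\'{e}ment 13, III.7 of \cite{Dix} --- requires the subalgebra to contain the entire centre $\mathfrak Z$ of the type I algebra $\mathcal L(\mathcal{H}_{\mathcal A})$, not merely the identity. Your step (i) survives this misquotation only because, once (ii) is granted, $(\pi(\mathcal A))^{-uw}$ does contain $\mathfrak Z$; so everything rests on your direct construction in (ii).

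That construction is the genuine gap, and it is circular. To place $z\in(\pi(\mathcal A))^{-uw}$ you must, for each basic uw-neighbourhood, produce $a\in\mathcal A$ making $\omega_i(\bra(\pi(a)-z)\xi_k,\eta_k\ket)$ small for finitely many functionals and vectors. Approximating the $\xi_k$ by elements of $\mathcal A^2$ via axiom (viii) and Proposition \ref{density} controls the pairing only on the approximants; the error on the remainders is of size governed by $\|\pi(a)\|$, so you need the approximating operators to be \emph{uniformly} norm-bounded (this is exactly the boundedness hypothesis that makes dense-set arguments legitimate, cf. Proposition \ref{conv}(3)). But a $\mathfrak Z$-Hilbert algebra has no a priori bounded approximate identity: the bounded nets $\pi(\xi_\alpha)\to 1$ used throughout the later sections (e.g., in Proposition \ref{Tbdd} and Theorem \ref{Tr}) are themselves obtained from Lemma \ref{vNd} together with Kaplansky density, so invoking such a net here presupposes the lemma you are proving, and your sketch supplies no substitute for the uniform bound. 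The paper sidesteps approximation entirely by an algebraic device: pass to $alg(\pi(\mathcal A),\mathfrak Z)$, which contains $\mathfrak Z$, so Dixmier's compl\'{e}ment gives $[alg(\pi(\mathcal A),\mathfrak Z)]^{\prime\prime}=[alg(\pi(\mathcal A),\mathfrak Z)]^{-uw}$; since $\mathfrak Z$ is central in $\mathcal L(\mathcal{H}_{\mathcal A})$, adjoining it does not change commutants, whence $(\pi(\mathcal A))^{\prime\prime}=[alg(\pi(\mathcal A),\mathfrak Z)]^{\prime\prime}$. Then $(\pi(\mathcal A))^{-uw}$ is a uw-closed $*$-ideal in the von Neumann algebra $[alg(\pi(\mathcal A),\mathfrak Z)]^{-uw}$, hence equals $E[alg(\pi(\mathcal A),\mathfrak Z)]^{-uw}$ for a central projection $E$, and the nondegeneracy lemma preceding this one (for $T\neq 0$ there is $a\in\mathcal A$ with $T\pi(a)\neq 0$) forces $E=1$. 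This proves (i), and (ii) is then immediate since $\mathfrak Z=Z(\mathcal L(\mathcal{H}_{\mathcal A}))$ lies in every commutant, in particular in $(\pi(\mathcal A))^{\prime\prime}=(\pi(\mathcal A))^{-uw}$. If you want to keep your outline, you must either prove the existence of a uniformly bounded net in $\pi(\mathcal A)$ converging ultraweakly to $1$ by some independent means, or adopt the ideal-plus-central-projection argument; as written, part (ii) does not go through.
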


\begin{proof}[\bf Proof]
Since $\mathfrak Z$ is the centre of $\mathcal L(\mathcal{H}_{\mathcal A})$ 
by Proposition \ref{TypeI} we see that
$$(\pi(\mathcal A))^{\prime} = [alg\{\pi(\mathcal A),\mathfrak Z\}]^\prime.$$
Moreover, since $\mathcal L(\mathcal{H}_{\mathcal A})$ is type $I$ with centre 
$\mathfrak Z$ and $\mathfrak Z \subseteq alg(\pi(\mathcal A),\mathfrak Z)$,
we have by compl\'{e}ment 13, III.7 of \cite{Dix} that
$$[alg(\pi(\mathcal A),\mathfrak Z)]^{\prime\prime} =
[alg(\pi(\mathcal A),\mathfrak Z)]^{-uw}.$$
Hence,

\noindent (1)\hspace{1.3in}$(\pi(\mathcal A))^{\prime\prime}=
[alg(\pi(\mathcal A),\mathfrak Z)]
^{\prime\prime} =
[alg(\pi(\mathcal A),\mathfrak Z)]^{-uw}.$

Now, $\pi(\mathcal A)$ is a $*$-ideal in the $*$-algebra $alg(\pi(\mathcal A),
\mathfrak Z)$ so that $(\pi(\mathcal A))^{-uw}$ is a $*$-ideal in
$[alg(\pi(\mathcal A),\mathfrak Z)]^{-uw}$ so that there exists a central
projection $E$ in $[alg(\pi(\mathcal A),\mathfrak Z)]^{-uw}$ with
$$(\pi(\mathcal A))^{-uw}=E[alg(\pi(\mathcal A),\mathfrak Z)]^{-uw}.$$
If $E\neq 1$ then $1-E \neq 0$ but $(1-E)\pi(\mathcal A) = \{0\}$,
contradicting the previous lemma. Hence,

\noindent(2)\hspace{1.9in}$(\pi(\mathcal A))^{-uw}=[alg(\pi(\mathcal A),
\mathfrak Z)]^{-uw}.$

Equations (1) and (2) imply part (i). Part (ii) follows since $\mathfrak Z$
is contained in any commutant.
\end{proof}

\begin{lemma}\label{*map}
The map $*$ extends to a conjugate-linear isometry of $\mathcal{H}_\mathcal A$
(also denoted by $*$) by defining $\xi^*(a):=(\xi(a^*))^*$ for $\xi\in \mathcal{H}_
{\mathcal A}$ and $a\in \mathcal A.$ This extension satisfies
$$\bra\xi ,\eta\ket^*= \bra\xi^* ,\eta^*\ket=\bra\eta ,\xi\ket,$$ 
for all $\xi,\eta\in \mathcal{H}_\mathcal A.$
\end{lemma}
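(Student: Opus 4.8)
The plan is to prove the statement in three stages: first check that the formula $\xi^*(a):=(\xi(a^*))^*$ lands in $\mathcal{H}_{\mathcal A}$ and defines a conjugate-linear involution; then, for each positive normal functional $\omega$, identify the effect of $*$ on the GNS-type Hilbert spaces $\mathcal{H}_\omega$ underlying Paschke's construction; and finally read off the inner-product identity by testing against all $\omega\in\mathfrak Z_*^+$. For the first stage, additivity of $\xi\mapsto\xi^*$ is immediate, and the module property follows from the computation $\xi^*(az)=(\xi((az)^*))^*=(\xi(a^*z^*))^*=(\xi(a^*)z^*)^*=z(\xi(a^*))^*=\xi^*(a)z$, using the identity $(az)^*=a^*z^*$ from the remarks after Example \ref{Hilbex}, that $\xi$ is a module map, and that $\mathfrak Z$ is abelian. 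Boundedness uses axiom (v): since $\bra a^*,a^*\ket=\bra a,a\ket$ gives $\|a^*\|=\|a\|$, we get $\|\xi^*(a)\|=\|\xi(a^*)\|\le\|\xi\|\,\|a\|$, so $\xi^*\in\mathcal{H}_{\mathcal A}$. Checking the formula against Paschke's twisted scalar action $(\lambda\cdot\xi)(a)=\bar\lambda\,\xi(a)$ shows $(\lambda\cdot\xi)^*=\bar\lambda\cdot\xi^*$, so $*$ is conjugate-linear, and $(\xi^*)^*(a)=(\xi^*(a^*))^*=\xi(a)$ shows it is an involution. On the embedded copy of $\mathcal A$, axioms (ii) and (v) give $\hat a^*=\widehat{a^*}$, so the new $*$ restricts to the old one.

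The heart of the argument is the inner-product identity, which I would reduce to a statement on each $\mathcal{H}_\omega$. Fix $\omega\in\mathfrak Z_*^+$. By axiom (v) the involution on $\mathcal A$ is $\|\cdot\|_\omega$-isometric and kills $N_\omega$, so it descends to a conjugate-linear isometric involution (an anti-unitary) $J_\omega$ on $\mathcal{H}_\omega$ with $J_\omega(a+N_\omega)=a^*+N_\omega$. The key step is the relation $(\xi^*)_\omega=J_\omega\xi_\omega$: using $\omega(z^*)=\overline{\omega(z)}$ for the positive functional $\omega$, the defining relation $\omega(\xi(a^*))=\bra\xi_\omega,a^*+N_\omega\ket_\omega$, and the anti-unitarity identity $\overline{\bra u,J_\omega v\ket}=\bra J_\omega u,v\ket$, one computes $\omega(\xi^*(a))=\overline{\omega(\xi(a^*))}=\overline{\bra\xi_\omega,J_\omega(a+N_\omega)\ket_\omega}=\bra J_\omega\xi_\omega,a+N_\omega\ket_\omega$, and uniqueness of the representing vector gives the claim.

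Feeding this into Paschke's formula $\omega(\bra\sigma,\theta\ket)=\bra\sigma_\omega,\theta_\omega\ket_\omega$ from \cite{Pa} yields
$$\omega(\bra\xi^*,\eta^*\ket)=\bra J_\omega\xi_\omega,J_\omega\eta_\omega\ket_\omega=\bra\eta_\omega,\xi_\omega\ket_\omega=\omega(\bra\eta,\xi\ket).$$
Since $\mathfrak Z_*^+$ separates the points of $\mathfrak Z$ (every normal functional is a linear combination of four positive ones, and $\mathfrak Z_*$ separates points of $\mathfrak Z=(\mathfrak Z_*)^*$), this forces $\bra\xi^*,\eta^*\ket=\bra\eta,\xi\ket$. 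Combining with axiom (ii), $\bra\eta,\xi\ket=\bra\xi,\eta\ket^*$, gives the full chain of equalities. The isometry then follows by taking $\eta=\xi$, since $\|\xi^*\|^2=\|\bra\xi^*,\xi^*\ket\|=\|\bra\xi,\xi\ket\|=\|\xi\|^2$.

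The step I expect to be the main obstacle is establishing $(\xi^*)_\omega=J_\omega\xi_\omega$ cleanly, since it requires keeping straight the conjugate-linearity of both $*$ and $J_\omega$ against Paschke's twisted scalar action and the convention that inner products are linear in the second variable, and it leans on the nontrivial fact from \cite{Pa} that the extended inner product on $\mathcal{H}_{\mathcal A}$ is computed on each $\mathcal{H}_\omega$ by $\omega(\bra\sigma,\theta\ket)=\bra\sigma_\omega,\theta_\omega\ket_\omega$. Once that bookkeeping is in place, the separation argument over $\mathfrak Z_*^+$ is routine.
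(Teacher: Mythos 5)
Your proof is correct, but it reaches the key identity $\bra\xi^*,\eta^*\ket=\bra\eta,\xi\ket$ by a genuinely different route from the paper. After the same elementary verifications (that $\xi^*$ is a bounded $\frk Z$-module map with $\|\xi^*\|\leq\|\xi\|$, hence isometric since $\xi^{**}=\xi$, and that the new $*$ extends the old one on $\widehat{\cal A}$ via axioms (ii) and (v)), the paper stays entirely at the module level: it observes that $\frk Z$ is itself a self-dual Hilbert $\frk Z$-module and invokes the unique extension property (Proposition 3.6 of \cite{Pa}) — both $\eta\mapsto\bra\xi,\eta\ket$ and $\eta\mapsto\bra\xi^*,\eta^*\ket^*$ are bounded $\frk Z$-module maps $\cal{H}_{\cal A}\to\frk Z$ agreeing with $\xi$ on $\cal A$, so they coincide on all of $\cal{H}_{\cal A}$, and conjugate-symmetry of the extended inner product (Theorem 3.2 of \cite{Pa}) finishes. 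You instead unwind Paschke's construction fiberwise: your relation $(\xi^*)_\omega=J_\omega\xi_\omega$ exhibits the extended involution as acting by the anti-unitaries $J_\omega$ on each localization $\cal{H}_\omega$, and the identity drops out of anti-unitarity plus separation by $\frk Z_*^+$. Your bookkeeping is sound — the module-map computation via $(az)^*=a^*z^*$ and commutativity of $\frk Z$, the hermiticity $\omega(z^*)=\overline{\omega(z)}$, and the uniqueness of the representing vector $\theta_\omega$ are all used correctly — and the localization formula $\omega(\bra\sigma,\theta\ket)=\bra\sigma_\omega,\theta_\omega\ket_\omega$ that you flag as the crux is indeed part of the content of Paschke's Theorem 3.2; the paper itself uses it implicitly (for instance in the proof of Proposition \ref{density}, part (iii)). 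The trade-off: the paper's uniqueness argument is shorter and avoids all positivity and separation bookkeeping, while your argument is more computational but makes the structure visible — the extended $*$ is a field of anti-unitaries over the positive normal functionals of $\frk Z$ — which also illuminates the role the operator $J$ of Definition \ref{J} plays later in the commutation theorem.
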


\begin{proof}[\bf Proof]
It is easy to see that $\xi^*$ is a bounded $\mathfrak Z$-module map and
that $\|\xi^*\|\leq \|\xi\|.$ Since $\xi^{**}=\xi$ we see that $*$ is isometric
on $\mathcal{H}_\mathcal A.$ By axioms (ii) and (v) we have for $a,b\in\mathcal A,$
$$(\hat{b})^*(a)=(\hat{b}(a^*))^*=\bra b,a^*\ket^*=\bra a^*,b\ket=\bra b^*,a\ket
=\widehat{b^*}(a),$$
so that this $*$ really is an extension from $\mathcal A$ to $\mathcal{H}_\mathcal A.$
Moreover, using the definition of module multiplication given in Definition \ref{dual}
it is easy to check that $(\xi z)^*=\xi^*z^*$ for all $z\in \mathfrak Z$
and $\xi\in \mathcal{H}_\mathcal A.$ 

We observe that $\mathfrak Z$ is a self-dual Hilbert $\mathfrak Z$-module
with the inner product $\bra z_1 ,z_2\ket=z_1^*z_2:$ for, if $\theta:
\mathfrak Z \to \mathfrak Z$ is a bounded $\mathfrak Z$-module map
then $\theta(z)=\theta(1)z=\bra\theta(1)^*,z\ket.$

Now if $\xi\in \mathcal{H}_\mathcal A$, then by Proposition 3.6 of \cite{Pa},
$\xi$ extends uniquely to a bounded $\mathfrak Z$-module mapping:
$\mathcal{H}_{\mathcal A} \to \mathfrak Z.$ But, using the first paragraph of the
proof one checks that $\eta\mapsto\bra\xi ,\eta\ket$
and $\eta \mapsto\bra\xi^* ,\eta^*\ket^*$ are two such extensions. Hence,
$$\bra\xi ,\eta\ket=\bra\xi^* ,\eta^*\ket^*$$
as claimed.

The equality, $\bra\xi ,\eta\ket^*=\bra\eta ,\xi\ket$ follows from axiom (ii)
since $\mathcal{H}_\mathcal A$ is a (self-dual) Hilbert $\mathfrak Z$-module by 
Theorem 3.2 of \cite{Pa}. 
\end{proof}

\begin{defn}\label{J}
The isometry $\eta\mapsto\eta^*: \cal{H}_{\cal{A}}\to\cal{H}_{\cal{A}}$ of the previous lemma will be 
denoted by $J.$ That is, $J(\eta)=\eta^*$ for all $\eta\in\cal{H}.$
\end{defn}

\begin{rems*}
The unique extension of Proposition 3.6 of \cite{Pa} used 
in the previous proof will be used several more times in this paper under 
the name ``unique extension property.''
\end{rems*}

\begin{lemma}
With the standing assumptions of this section,

(1) $\mathfrak Z \subseteq (\pi^{\prime}(\mathcal A))^{-uw}=
(\pi^{\prime}(\mathcal A))^{\prime\prime},$

(2) $\pi(\mathcal A) \subseteq (\pi^{\prime}(\mathcal A))^{\prime}\;and$

(3) $\pi^{\prime}(\mathcal A) \subseteq (\pi(\mathcal A))^{\prime}.$
\end{lemma}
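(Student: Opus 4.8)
The plan is to use the isometry $J$ of Definition \ref{J} as a symmetry that interchanges the two representations. First I would record the intertwining relation
$$J\pi(a)J=\pi^{\prime}(a^*)\qquad(a\in\mathcal A).$$
On the dense subalgebra $\mathcal A\subseteq\mathcal{H}_{\mathcal A}$ this is immediate: for $b\in\mathcal A$ we have $J\pi(a)J(b)=(ab^*)^*=ba^*=\pi^{\prime}(a^*)(b)$. Both sides are adjointable, hence $weak^*$-continuous, and $\mathcal A$ is $weak^*$-dense by Proposition \ref{density}(iii), so the relation holds on all of $\mathcal{H}_{\mathcal A}$. Since $J^2=\mathrm{id}$ and $J$ is a $\mathfrak Z$-antiunitary by Lemma \ref{*map}, the conjugation $\Phi:=\mathrm{Ad}(J)$, $\Phi(T)=JTJ$, is a conjugate-linear $*$-automorphism of $\mathcal L(\mathcal{H}_{\mathcal A})$ with $\Phi^{2}=\mathrm{id}$. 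It carries $\pi(\mathcal A)$ onto $\pi^{\prime}(\mathcal A)$ (because $a\mapsto a^*$ is a bijection of $\mathcal A$), and, using $(\xi z)^*=\xi^*z^*$, it fixes $\mathfrak Z$ setwise via $z\mapsto z^*$.

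For parts (2) and (3) I would first note that they are one and the same statement: each asserts that every $\pi(a)$ commutes with every $\pi^{\prime}(b)$. This is just associativity, since for $a,b,c\in\mathcal A$
$$\pi(a)\pi^{\prime}(b)c=a(cb)=(ac)b=\pi^{\prime}(b)\pi(a)c,$$
and the identity again passes from $weak^*$-dense $\mathcal A$ to all of $\mathcal{H}_{\mathcal A}$ by $weak^*$-continuity of the adjointable operators involved. Hence $\pi(\mathcal A)\subseteq(\pi^{\prime}(\mathcal A))^{\prime}$ and, equivalently, $\pi^{\prime}(\mathcal A)\subseteq(\pi(\mathcal A))^{\prime}$.

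For part (1) I would transport Lemma \ref{vNd} through $\Phi$. Any multiplicative bijection preserves commutants, so $\Phi(S^{\prime})=\Phi(S)^{\prime}$ and therefore $\Phi(S^{\prime\prime})=\Phi(S)^{\prime\prime}$; taking $S=\pi(\mathcal A)$ gives $\Phi\bigl((\pi(\mathcal A))^{\prime\prime}\bigr)=(\pi^{\prime}(\mathcal A))^{\prime\prime}$. As $\Phi(\mathfrak Z)=\mathfrak Z$ and $\mathfrak Z\subseteq(\pi(\mathcal A))^{\prime\prime}$ by Lemma \ref{vNd}, we obtain $\mathfrak Z\subseteq(\pi^{\prime}(\mathcal A))^{\prime\prime}$. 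For the equality $(\pi^{\prime}(\mathcal A))^{-uw}=(\pi^{\prime}(\mathcal A))^{\prime\prime}$ I would apply $\Phi$ to the equality in Lemma \ref{vNd}(i); this needs $\Phi$ to be a uw-homeomorphism, i.e.\ the normality of conjugation by the antiunitary $J$, which follows directly from the $weak^*$-description of $\mathcal{H}_{\mathcal A}$ (Proposition 3.8 of \cite{Pa}) together with Lemma \ref{*map}.

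The genuine content here is topological bookkeeping: every identity is trivial on $\mathcal A$, and the work lies in the passage to $\mathcal{H}_{\mathcal A}$. Two facts, both consequences of the self-duality of $\mathcal{H}_{\mathcal A}$ (Theorem 3.2 of \cite{Pa}), carry this through: adjointable operators are automatically $weak^*$-continuous, so they are determined by their restriction to the $weak^*$-dense set $\mathcal A$; and $J$ is $weak^*$-continuous, whence $\Phi$ is normal. If one wishes to sidestep the normality of $\Phi$, part (1) can instead be proved word-for-word as Lemma \ref{vNd} with $\pi^{\prime}$ replacing $\pi$; the only new ingredient is the analogue ``$T\neq0\Rightarrow T\pi^{\prime}(a)\neq0$ for some $a\in\mathcal A$,'' which holds by the identical argument because $\mathcal A^{2}=\mathrm{span}\{ba\}$ remains dense by axiom (viii).
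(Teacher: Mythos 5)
Your proposal is correct, and for parts (2) and (3) it is essentially the paper's argument: the paper checks the commutation relation on $\mathcal A$ (where it is trivial associativity) and invokes the unique extension property (Proposition 3.6 of \cite{Pa}) to pass to $\mathcal{H}_{\mathcal A}$; your passage via $weak^*$-continuity of adjointable maps plus $weak^*$-density of $\mathcal A$ (Proposition \ref{density}(iii)) is an equivalent bookkeeping device, since on a self-dual module adjointability gives $weak^*$-continuity directly from the defining functionals $\theta\mapsto\omega(\bra\tau,\theta\ket)$. Where you genuinely diverge is part (1): the paper simply reruns the proof of Lemma \ref{vNd} with $\pi^{\prime}$ in place of $\pi$ (its entire justification is ``this is the same proof''), implicitly using the $\pi^{\prime}$-analogue of the nondegeneracy lemma, which is exactly the fallback you note at the end. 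Your primary route instead transports Lemma \ref{vNd} through $\Phi=\mathrm{Ad}(J)$, and you correctly identify the two obligations this incurs: the intertwining relation $J\pi(a)J=\pi^{\prime}(a^*)$ (which you verify on $\mathcal A$ and extend, legitimately, since $J\pi(a)J$ is a bounded adjointable $\mathfrak Z$-module map by the identity $(\xi z)^*=\xi^*z^*$ from Lemma \ref{*map}) and the uw-continuity of $\Phi$, which does follow from Proposition \ref{conv}(2) together with uw-continuity of the involution on $\mathfrak Z$. Note also that the ordering is unproblematic: $J$ and Lemma \ref{*map} precede this lemma in the paper, so there is no circularity, and your $J$-conjugation argument in effect anticipates the later corollary $J\,\mathcal U(\mathcal A)\,J=\mathcal V(\mathcal A)$ — it buys a single symmetry principle in place of case-by-case repetition, at the modest cost of the normality verification that the paper's verbatim-repetition route avoids entirely.
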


\begin{proof}[\bf Proof]
(1) This is the same proof as Lemma \ref{vNd}.

(2) and (3) By the unique extension property, it suffices to see that
$\pi^{\prime}(a)\pi(b) = \pi(b)\pi^{\prime}(a)$
on the space $\mathcal A \subseteq \mathcal{H}_{\mathcal A}$. This is trivial to check.
\end{proof}

\subsection {Bounded elements in $\mathcal{H}_{\mathcal A}$}

Let $\xi\in \mathcal{H}_{\cal A}$ and suppose that the map 
$$a\mapsto\pi^{\prime}(a)\xi:\cal A \to \cal{H}_{\cal A}$$ is bounded. 
We note that by the remarks
following example \ref{Hilbex}, $\pi(az)=\pi(a)z=z\pi(a)$ and 
$\pi^{\prime}(az)=\pi^{\prime}(a)z=z\pi^{\prime}(a),$ for all $a\in\cal A\;
and\;z\in\frk Z.$ Therefore,
$$(az)\mapsto\pi^{\prime}(az)\xi=z\pi^{\prime}(a)\xi=(\pi^{\prime}(a)\xi)z$$
so that this bounded map is also $\frk Z$-linear. Hence by the unique 
extension property this map extends uniquely to a bounded module mapping
$\cal{H}_{\cal A} \to \cal{H}_{\cal A}$ which we denote by $\pi(\xi).$ That is,
$\pi(\xi)a=\pi^{\prime}(a)\xi$ for all $a\in \cal A.$ By Proposition 3.4
of \cite{Pa} $\pi(\xi)$ is adjointable and 
$\pi(\xi)\in\cal L(\cal{H}_{\cal A}).$ Such an element $\xi\in \cal{H}_\cal A$ is called
$left-bounded$ and the set of all such elements is denoted $\cal A_l$. Clearly,
$\cal A \subseteq \cal A_l.$

Similarly, we let $\cal A_r = \{\eta\in \cal{H}_{\cal A}\,|\,\pi^{\prime}(\eta)\in
\cal L(\cal{H}_{\cal A})\}.$ Where, of course, $\pi^{\prime}(\eta)a=\pi(a)\eta$
for all $a\in\cal A.$

\begin{prop}\label{lbdd}
With the standing assumptions of this section,

(1) $\pi(\cal A_l) \subseteq (\pi^{\prime}(\cal A))^\prime$ and similarly
$\pi^{\prime}(\cal A_r) \subseteq (\pi(\cal A))^\prime ,$

(2) $\pi(\cal A_l)$ is a left ideal in $(\pi^{\prime}(\cal A))^{\prime}$ and
$T\pi(\xi)=\pi(T\xi) \;for \;\xi\in\cal A_l\;and\;T\in 
(\pi^{\prime}(\cal A))^{\prime}.$ In particular, $\pi(\eta)\pi(\xi)=
\pi(\pi(\eta)\xi)\;for\;
\eta,\xi\in\cal A_l.$ Similarly, $\pi^{\prime}(\cal A_r)$ is a left ideal 
in $(\pi(\cal A))^{\prime}$, etc.

(3) $\cal A_l$ is an associative algebra with the multiplication
$\xi\eta=\pi(\xi)\eta$ and $\pi:\cal A_l \to \cal L(\cal{H}_\cal A)$ is a 
monomorphism. Similarly, $\cal A_r$ is an associative algebra with the
multiplication $\xi\eta=\pi^{\prime}(\eta)\xi,$ and $\pi^{\prime}$ is
an anti-monomorphism.

(4) $\cal A_l$ is invariant under $*$ and $\pi(\xi^*)=\pi(\xi)^*$ so that
$\pi(\cal A_l)$ is a $*$-ideal in $(\pi^{\prime}(\cal A))^\prime$
and $\pi$ is a $*$-monomorphism. A similar statement holds for $\cal A_r$.
\end{prop}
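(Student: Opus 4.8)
The plan is to reduce each of the four assertions to an identity between bounded $\frk Z$-module maps that I can verify on the weak*-dense subspace $\cal A\subseteq\cal H_{\cal A}$ and then promote to all of $\cal H_{\cal A}$ by the unique extension property, together with the companion principle that an element of $\cal H_{\cal A}$ is determined by its inner products $\bra\cdot,b\ket$ against $b\in\cal A$ (again by the weak*-density of $\cal A$, Proposition \ref{density}(iii)). I will freely use that on $\cal A$ the map $\pi$ is a homomorphism and $\pi^{\prime}$ an anti-homomorphism, so $\pi^{\prime}(a)\pi^{\prime}(b)=\pi^{\prime}(ba)$, that $\pi(a)^*=\pi(a^*)$ and $\pi^{\prime}(a)^*=\pi^{\prime}(a^*)$, and the defining relations $\pi(\xi)a=\pi^{\prime}(a)\xi$ and $\pi^{\prime}(\eta)a=\pi(a)\eta$. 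For (1), given $\xi\in\cal A_l$ I would check $\pi(\xi)\pi^{\prime}(a)=\pi^{\prime}(a)\pi(\xi)$ on $b\in\cal A$: the left side is $\pi(\xi)(ba)=\pi^{\prime}(ba)\xi$ and the right side is $\pi^{\prime}(a)\pi^{\prime}(b)\xi=\pi^{\prime}(ba)\xi$; as both are bounded module maps agreeing on $\cal A$, the unique extension property forces $\pi(\xi)\in(\pi^{\prime}(\cal A))^\prime$. The statement for $\cal A_r$ is the mirror image.

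For (2), fix $T\in(\pi^{\prime}(\cal A))^\prime$ and $\xi\in\cal A_l$; for $a\in\cal A$ one has $\pi^{\prime}(a)(T\xi)=T\pi^{\prime}(a)\xi=T(\pi(\xi)a)=(T\pi(\xi))a$, so $a\mapsto\pi^{\prime}(a)(T\xi)$ agrees on $\cal A$ with the bounded map $a\mapsto(T\pi(\xi))a$. Hence $T\xi\in\cal A_l$ and, by uniqueness, $\pi(T\xi)=T\pi(\xi)$, which says precisely that $\pi(\cal A_l)$ is a left ideal in $(\pi^{\prime}(\cal A))^\prime$; specializing $T=\pi(\eta)\in(\pi^{\prime}(\cal A))^\prime$ gives $\pi(\eta)\pi(\xi)=\pi(\pi(\eta)\xi)$. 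For (3) this same identity shows the product $\xi\eta:=\pi(\xi)\eta$ lands in $\cal A_l$ and that $\pi(\xi\eta)=\pi(\xi)\pi(\eta)$, so $\pi$ is multiplicative, and associativity of the product is inherited from composition of operators via $(\xi\eta)\zeta=\pi(\pi(\xi)\eta)\zeta=\pi(\xi)\pi(\eta)\zeta=\xi(\eta\zeta)$.

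Injectivity of $\pi$ is the first place density is essential: if $\pi(\xi)=0$ then $\pi^{\prime}(a)\xi=0$ for all $a$, whence $\bra\xi,ba^*\ket=\bra\pi^{\prime}(a)\xi,b\ket=0$ for all $a,b\in\cal A$; since the $ba^*$ span $\cal A^2$, which is weak*-dense by axiom (viii) and Proposition \ref{density}(iii), I get $\bra\xi,\xi\ket=0$ and so $\xi=0$ by axiom (i). The hard part is (4), because $*$-invariance of $\cal A_l$ is not a boundedness fact I can simply read off — $\cal A_l$ and $\cal A_r$ need not coincide a priori — so I must extract boundedness from the adjoint identity itself. Using $J$ of Lemma \ref{*map} and $\pi^{\prime}(a)^*=\pi^{\prime}(a^*)$, I would compute for $a,b\in\cal A$ that $\bra\pi(\xi)^*a,b\ket=\bra a,\pi^{\prime}(b)\xi\ket=\bra ab^*,\xi\ket=\bra\xi^*,ba^*\ket$, while also $\bra\pi^{\prime}(a)\xi^*,b\ket=\bra\xi^*,ba^*\ket$; since these agree for every $b\in\cal A$ and $\cal A$ is weak*-dense, $\pi^{\prime}(a)\xi^*=\pi(\xi)^*a$ in $\cal H_{\cal A}$. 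Thus $a\mapsto\pi^{\prime}(a)\xi^*$ coincides with the bounded map $a\mapsto\pi(\xi)^*a$, which simultaneously establishes $\xi^*\in\cal A_l$ and $\pi(\xi^*)=\pi(\xi)^*$. Finally a left ideal closed under $*$ is automatically two-sided (from $\pi(\xi)T=(T^*\pi(\xi^*))^*$), so $\pi(\cal A_l)$ is a $*$-ideal and $\pi$ a $*$-monomorphism; the $\cal A_r$ statements follow by interchanging $\pi$ and $\pi^{\prime}$.
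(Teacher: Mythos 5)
Your proposal is correct and follows essentially the same route as the paper's proof: the same commutation check on $\cal A$ promoted by the unique extension property for (1) and (2), the same injectivity argument via $\bra\xi,ba^*\ket=0$ together with axiom (viii) and Proposition \ref{density} for (3), and the same adjoint computation using Lemma \ref{*map} to extract boundedness of $a\mapsto\pi^{\prime}(a)\xi^*$ for (4). The only cosmetic differences are your explicit ``left ideal closed under $*$ is two-sided'' remark and your omission of the identity $(\xi\eta)^*=\eta^*\xi^*$, which the paper derives as a byproduct but which is not needed for the stated claims.
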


\begin{proof}[\bf Proof]
(1) By the unique extension property, it suffices to check that if 
$\xi\in\cal A_l,\;and\;b\in\cal A$ then 
$\pi(\xi)\pi^{\prime}(b)=\pi^{\prime}(b)\pi(\xi)$ on the space $\cal A.$
To this end let $a\in\cal A$, then:
$$(\pi(\xi)\pi^{\prime}(b))(a)=\pi(\xi)(ab)=\pi^{\prime}(ab)(\xi)=
\pi^{\prime}(b)\pi^{\prime}(a)(\xi)=\pi^{\prime}(b)\pi(\xi)(a),$$
as required.

(2)If $\xi\in\cal A_l$, $T\in (\pi^{\prime}(\cal A))^{\prime}$ 
and $a\in \cal A$, then:
$$\pi(T\xi)a=\pi^{\prime}(a)T\xi=T\pi^{\prime}(a)\xi=T\pi(\xi)a.$$
That is, $T\xi\in\cal A_l$ and $\pi(T\xi)=T\pi(\xi)$ by the unique 
extension property.

(3)By (2), $\xi\eta:=\pi(\xi)\eta$ is in $\cal A_l$ if $\xi,\eta\in\cal A_l.$
Moreover, by (2) $\pi(\xi\eta)=\pi(\xi)\pi(\eta)$. Since $\pi:\cal A_l\to
\cal L(\cal{H}_{\cal A})$ is clearly linear, it suffices to see that $\pi$ is
also one-to-one. But if $\pi(\xi)=0$, then for all $a,b\in\cal A$ we have
$$0=\bra\pi(\xi)a,b\ket_{\omega}=\bra\pi^{\prime}(a)\xi,b\ket_{\omega}=
\bra\xi,ba^*\ket_{\omega}$$ for all positive $\omega\in\frk Z_*.$ That is,
$\xi=0$ by axiom (viii) and Proposition \ref{density}.

(4)Let $\xi\in\cal A_l$ and let $a,b\in\cal A.$ Using Lemma \ref{*map}  and the
fact that $\cal{H}_{\cal A}$ is a Hilbert $\frk Z$-module, we get the
following calculation:
\begin{eqnarray*}
\bra\pi(\xi)^*a,b\ket&=&\bra b,\pi(\xi)^*a\ket^*=\bra\pi(\xi)b,a\ket^*
=\bra\pi^{\prime}(b)\xi,a\ket^*\\
&=&\bra\xi,ab^*\ket^*=\bra\xi^*,ba^*\ket=
\bra\xi^*,\pi^{\prime}(a^*)b\ket=\bra\pi^{\prime}(a)\xi^*,b\ket\\
&=&\bra\pi(\xi^*)a,b\ket.
\end{eqnarray*}
Thus, as module maps $\pi(\xi)^*a$ and $\pi(\xi^*)a$ agree for all $b\in\cal A$
and so $\pi(\xi)^*a=\pi(\xi^*)a$ for all $a\in\cal A.$ That is, $\xi^*$ is
left-bounded and $\pi(\xi^*)=\pi(\xi)^*.$ Moreover, for $\xi,\eta\in\cal A_l$
$$\pi((\xi\eta)^*)=[\pi(\xi\eta)]^*=[\pi(\xi)\pi(\eta)]^*=\pi(\eta)^*
\pi(\xi)^*=\pi(\eta^*)\pi(\xi^*)=\pi(\eta^*\xi^*)$$
and so $(\xi\eta)^*=\eta^*\xi^*$ as $\pi$ is one-to-one.
\end{proof}

\begin{cor}\label{comm1}With the standing assumptions of this section ,

(1)\;$(\pi(\cal A_l))^{\prime\prime}=\pi(\cal A_l)^{-uw}=
(\pi^{\prime}(\cal A))^{\prime},$ and

(2)\;$(\pi^{\prime}(\cal A_r))^{\prime\prime}=\pi^{\prime}(\cal A_r)
^{-uw}=(\pi(\cal A))^{\prime}.$
\end{cor}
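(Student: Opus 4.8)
The plan is to prove both equalities by the device already used in the proof of Lemma \ref{vNd}: exhibit the relevant set of operators as a two-sided $*$-ideal in a von Neumann algebra and pin down its ultraweak closure by a central projection. For part (1) I would set $\frk M:=(\pi^{\prime}(\cal A))^{\prime}$, which, being a commutant inside the type $I$ algebra $\cal L(\mathcal{H}_{\cal A})$ (Proposition \ref{TypeI}), is a von Neumann algebra. By Proposition \ref{lbdd} parts (1), (2) and (4), $\pi(\cal A_l)$ lies in $\frk M$ and is a $*$-ideal there; combining the left-ideal property of (2) with the $*$-invariance of (4) (writing $\xi T=(T^{*}\xi^{*})^{*}$) shows it is in fact two-sided. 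I would record this as the opening observation.

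Since multiplication in $\cal L(\mathcal{H}_{\cal A})$ is separately uw-continuous, $\pi(\cal A_l)^{-uw}$ is then a uw-closed two-sided $*$-ideal of $\frk M$, and the standard structure theory of such ideals furnishes a central projection $E\in\frk M$ with $\pi(\cal A_l)^{-uw}=E\frk M$. The crux is to force $E=1$, and here I would transcribe the annihilator argument of Lemma \ref{vNd}: if $E\neq 1$, then $1-E$ is a nonzero operator in $\cal L(\mathcal{H}_{\cal A})$ annihilating $\pi(\cal A_l)$, so in particular $(1-E)\pi(a)=0$ for every $a\in\cal A$, since $\cal A\subseteq\cal A_l$. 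But the opening (unlabelled) lemma of this section produces some $a\in\cal A$ with $(1-E)\pi(a)\neq 0$, a contradiction. Hence $E=1$ and $\pi(\cal A_l)^{-uw}=\frk M=(\pi^{\prime}(\cal A))^{\prime}$.

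The third member of the chain then comes for free. Because the commutant of a set agrees with the commutant of its uw-closure, $(\pi(\cal A_l))^{\prime\prime}=(\pi(\cal A_l)^{-uw})^{\prime\prime}=((\pi^{\prime}(\cal A))^{\prime})^{\prime\prime}=(\pi^{\prime}(\cal A))^{\prime}$, the last equality being $(\pi^{\prime}(\cal A))^{\prime\prime\prime}=(\pi^{\prime}(\cal A))^{\prime}$; this proves (1). For part (2) I would run the mirror-image argument with $\frk M:=(\pi(\cal A))^{\prime}$ and the two-sided $*$-ideal $\pi^{\prime}(\cal A_r)$, invoking the ``Similarly'' clauses of Proposition \ref{lbdd} and the symmetric containment $\cal A\subseteq\cal A_r$. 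The single new ingredient needed is the right-handed analogue of the opening lemma: every nonzero $T$ satisfies $T\pi^{\prime}(a)\neq 0$ for some $a\in\cal A$. I would obtain this from the opening lemma through the conjugation $J$ of Definition \ref{J}, using the identity $\pi^{\prime}(a)=J\pi(a^{*})J$ (verified on $\cal A$ by the unique extension property, since both sides send $b$ to $ba$). As $J$ is an isometric involution, $JTJ\neq 0$, so the opening lemma gives $b\in\cal A$ with $(JTJ)\pi(b)\neq 0$, whence $T\pi^{\prime}(b^{*})=J[(JTJ)\pi(b)]J\neq 0$.

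I expect the main obstacle to be verification rather than strategy: confirming that $\pi(\cal A_l)$ (resp. $\pi^{\prime}(\cal A_r)$) is genuinely a \emph{two-sided} ideal, so that the $E\frk M$ description of its uw-closure applies, and nailing the identity $\pi^{\prime}(a)=J\pi(a^{*})J$ precisely enough to transport the opening lemma to the right-handed setting. Beyond these two points the proof is a faithful copy of the Lemma \ref{vNd} template, with Proposition \ref{TypeI} guaranteeing that commutation inside $\cal L(\mathcal{H}_{\cal A})$ behaves exactly as in the scalar Hilbert-algebra case.
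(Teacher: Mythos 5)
Your proposal is correct, and it differs from the paper's own proof at two genuine junctures, so a comparison is worthwhile. The paper's argument is considerably shorter: having from Proposition \ref{lbdd} that $\pi(\cal A_l)$ is a $*$-ideal in $(\pi^{\prime}(\cal A))^{\prime}$, it does not reconstruct the central-projection argument but simply quotes Lemma \ref{vNd}(ii) to get $1\in\frk Z\subseteq\pi(\cal A)^{-uw}\subseteq\pi(\cal A_l)^{-uw}$ --- an ultraweakly closed ideal containing the identity is the whole algebra --- and then derives $(\pi(\cal A_l))^{\prime\prime}=\pi(\cal A_l)^{-uw}$ from Dixmier's compl\'ement 13, III.7, which applies because $\frk Z\subseteq\pi(\cal A_l)^{-uw}$ and $\cal L(\cal{H}_{\cal A})$ is type $I$ with centre $\frk Z$ (Proposition \ref{TypeI}). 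You instead re-run the internal machinery of Lemma \ref{vNd} from scratch (uw-closed two-sided ideal $=E\frk M$, then the annihilator lemma that opens this section forces $E=1$); this is sound --- your verification that the $*$-closed left ideal is automatically two-sided via $\pi(\xi)T=\pi((T^{*}\xi^{*})^{*})$ is exactly right --- but it duplicates work the paper has already banked in Lemma \ref{vNd}. Your handling of the bicommutant is, if anything, more elementary than the paper's: once $\pi(\cal A_l)^{-uw}=(\pi^{\prime}(\cal A))^{\prime}$ is known, the purely formal identities $(\pi(\cal A_l))^{\prime}=(\pi(\cal A_l)^{-uw})^{\prime}$ (separate uw-continuity of multiplication) and $S^{\prime\prime\prime}=S^{\prime}$ give the remaining equality without ever invoking the relative bicommutant theorem, whereas the paper needs Dixmier's compl\'ement plus a sandwich of inclusions. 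For part (2), your identity $\pi^{\prime}(a)=J\pi(a^{*})J$ is precisely the one the paper itself verifies later (in the proof of Corollary \ref{comm3}), and since $J$ and Lemma \ref{*map} precede the present corollary there is no circularity; one small simplification you could have made is that the detour through $J$ is avoidable: if $T\pi^{\prime}(a)=0$ for all $a\in\cal A$, then $T(ba)=T\pi^{\prime}(a)b=0$ for all $a,b\in\cal A$, i.e.\ $T(\cal A^{2})=\{0\}$, which is exactly the hypothesis that the proof of the opening lemma shows forces $T=0$ via axiom (viii) and Proposition \ref{density}. With these remarks, every step of your proposal checks out and it constitutes a complete proof of both parts.
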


\begin{proof}[\bf Proof]
(1)\;By Proposition \ref{lbdd}, $\pi(\cal A_l)^{-uw}$ is a 
$*$-ideal in $(\pi^{\prime}(\cal A))^{\prime}.$ But by Lemma 4.2,\\
$1\in\frk Z\subseteq\pi(\cal A)^{-uw}\subseteq\pi(\cal A_l)^{-uw}$
and so $\pi(\cal A_l)^{-uw}=(\pi^{\prime}(\cal A))^{\prime}.$
Now, since $\frk Z\subseteq(\pi(\cal A_l))^{-uw}$ we have by
compl\'{e}ment 13, III.7 of \cite{Dix} that 
$$(\pi(\cal A_l)^{-uw})^{\prime\prime}=\pi(\cal A_l)^{-uw}.$$
But then, since commutants are always ultraweakly closed:
$$(\pi(\cal A_l))^{\prime\prime}=(\pi(\cal A_l)^{\prime\prime})^{-uw}
\supseteq(\pi(\cal A_l))^{-uw}=(\pi(\cal A_l)^{-uw})^{\prime\prime}
\supseteq(\pi(\cal A_l))^{\prime\prime}.$$
The proof of (2) is similar.
\end{proof}

\begin{prop}\label{comm2}
With the standing assumptions of this section, $\cal A_l =\cal A_r$ and

(1)\;$\pi^{\prime}(\xi)a=[\pi(\xi^*)a^*]^*$ for $\xi\in\cal A_l$, $a\in\cal A.$

(2)\;$\pi(\xi)a=[\pi^{\prime}(\xi^*)a^*]^*$ for $\xi\in\cal A_r$, $a\in\cal A.$
\end{prop}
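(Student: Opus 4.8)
The plan is to prove $\cal A_l = \cal A_r$ by establishing the two adjoint-type formulas (1) and (2), and using them to move any left-bounded element to a right-bounded one (and conversely) via the conjugation $J$ from Definition \ref{J}. The key observation is that the $*$-operation interchanges the roles of left and right multiplication: formally $\pi(\xi)$ and $\pi^{\prime}(\xi)$ should be related by conjugating with $J$, since $J$ already implements $a \mapsto a^*$ on $\cal A$ and intertwines the inner product by Lemma \ref{*map}. So the first thing I would do is fix $\xi \in \cal A_l$ and \emph{define} a candidate right-multiplication operator on $\cal A$ by the formula in (1), namely $a \mapsto [\pi(\xi^*)a^*]^*$, then verify it is bounded and $\frk Z$-linear so that it extends (by the unique extension property) to an element of $\cal L(\cal{H}_{\cal A})$.

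\textbf{First} I would check that the assignment $a \mapsto [\pi(\xi^*)a^*]^*$ is bounded: the map $a \mapsto a^*$ is an isometry of $\cal A \subseteq \cal{H}_{\cal A}$ by Lemma \ref{*map}, $\pi(\xi^*)$ is bounded since $\xi \in \cal A_l$ implies $\xi^* \in \cal A_l$ by Proposition \ref{lbdd}(4), and the outer $*$ is again isometric; hence the composite is bounded. \textbf{Next} I would verify this map agrees with $\pi^{\prime}(\xi)$ where the latter already makes sense, i.e.\ I would show on elements $a \in \cal A$ that $[\pi(\xi^*)a^*]^*$ behaves like right multiplication by $\xi$. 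The cleanest route is to test against $\cal A$ in the inner product: for $a, b \in \cal A$ one computes $\bra [\pi(\xi^*)a^*]^*, b\ket$ by applying the identity $\bra \zeta^*, \eta^*\ket = \bra \eta, \zeta\ket^* = \bra\eta,\zeta\ket^*$ from Lemma \ref{*map}, reducing it to $\bra \pi(\xi^*)a^*, b^*\ket^*$, then using adjointability of $\pi(\xi^*)$ (Proposition \ref{lbdd}) and axiom (vi) to rewrite it as $\bra \xi, \cdots\ket$ matching $\bra\pi^{\prime}(a)\xi, \cdots\ket$ or $\bra \pi(b)\xi,\cdots\ket$. Once the two module maps agree on all $b \in \cal A$, the unique extension property forces the operator defined by (1) to be exactly $\pi^{\prime}(\xi)$, which shows $\pi^{\prime}(\xi) \in \cal L(\cal{H}_{\cal A})$, i.e.\ $\xi \in \cal A_r$.

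\textbf{The symmetric statement} $\cal A_r \subseteq \cal A_l$ with formula (2) follows by running the identical argument with the roles of $\pi$ and $\pi^{\prime}$ interchanged (equivalently, by applying the result already proved to $\xi^*$ and conjugating by $J$ once more, since $J^2 = \mathrm{id}$). Together these give $\cal A_l = \cal A_r$. \textbf{The main obstacle} I anticipate is bookkeeping with the conjugate-linear $*$ and the placement of adjoints: the formula in (1) nests three $*$-operations, and the inner product on $\cal{H}_{\cal A}$ is linear in the second slot with $\bra\xi,\eta\ket^* = \bra\eta,\xi\ket$, so the order in which conjugates are applied must be tracked carefully — it is easy to end up off by a $J$ or to misapply axiom (v) versus (vi). The conceptual content is light, but getting every adjoint on the correct side, and confirming that the extension produced by the unique extension property genuinely coincides with $\pi^{\prime}(\xi)$ rather than merely some bounded operator, is where the care is needed.
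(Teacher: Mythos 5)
Your proposal is correct and follows essentially the same route as the paper: the paper likewise establishes (1) by the single inner-product computation $\bra\pi^{\prime}(\xi)a,b\ket=\bra\pi(a)\xi,b\ket=\bra\xi,a^*b\ket=\bra\xi^*,b^*a\ket^*=\bra\pi(\xi^*)a^*,b^*\ket^*=\bra[\pi(\xi^*)a^*]^*,b\ket$ for all $b\in\cal A$, using Lemma \ref{*map} and Proposition \ref{lbdd} exactly as you propose, with boundedness of $a\mapsto[\pi(\xi^*)a^*]^*$ (via $\xi^*\in\cal A_l$ and the isometry $J$) giving $\xi\in\cal A_r$. The reverse inclusion and (2) are handled symmetrically, just as in your sketch.
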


\begin{proof}[\bf Proof]
(1)\;Let $\xi\in\cal A_l.$ Then for $a,b\in\cal A,$
\begin{eqnarray*}
\bra\pi^{\prime}(\xi)a,b\ket&=&\bra\pi(a)\xi,b\ket=\bra\xi,a^*b\ket\\
&=&\bra\xi^*,b^*a\ket^*=\bra\pi^{\prime}(a^*)\xi^*,b^*\ket^*=
\bra\pi(\xi^*)a^*,b^*\ket^*\\
&=&\bra[\pi(\xi^*)a^*]^*,b\ket.
\end{eqnarray*}
Therefore, $\xi\in\cal A_r$ so that $\cal A_l\subseteq\cal A_r$ and (1) holds.
Similarly, $\cal A_r\subseteq\cal A_l$ and (2) holds.
\end{proof}

\begin{cor}\label{comm3}
For all $\xi\in\cal A_l =\cal A_r$ and $\eta\in \cal{H}_\cal A,$

(1)\;$\pi^{\prime}(\xi)\eta=[\pi(\xi^*)\eta^*]^*$ and

(2)\;$\pi(\xi)\eta=[\pi^{\prime}(\xi^*)\eta^*]^*.$
\end{cor}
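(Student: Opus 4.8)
The plan is to recognize both asserted identities as genuine operator equations on all of $\cal{H}_\cal A$ and then reduce them, via the unique extension property, to the equalities on the dense subalgebra $\cal A$ that are already established. Writing $J$ for the conjugate-linear isometry $\eta\mapsto\eta^*$ of Definition \ref{J}, statement (1) is exactly the operator identity $\pi^{\prime}(\xi)=J\,\pi(\xi^*)\,J$ and statement (2) is $\pi(\xi)=J\,\pi^{\prime}(\xi^*)\,J$ on $\cal{H}_\cal A$. First I would record that every operator here is well defined and bounded: since $\cal A_l=\cal A_r$ (Proposition \ref{comm2}) and $\cal A_l$ is $*$-invariant (Proposition \ref{lbdd}(4)), all of $\pi(\xi),\pi^{\prime}(\xi),\pi(\xi^*),\pi^{\prime}(\xi^*)$ lie in $\cal L(\cal{H}_\cal A)$, and because $J$ is isometric the composites $J\pi(\xi^*)J$ and $J\pi^{\prime}(\xi^*)J$ are bounded.

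The key technical point is to check that these composites are honest $\frk Z$-module maps, so that the unique extension property applies to them. For $\eta\in\cal{H}_\cal A$ and $z\in\frk Z$ I would compute, using $(\nu z)^*=\nu^*z^*$ from Lemma \ref{*map}, the $\frk Z$-linearity of $\pi(\xi^*)$, and $z^{**}=z$:
$$J\pi(\xi^*)J(\eta z)=J\pi(\xi^*)\big((J\eta)z^*\big)=J\big((\pi(\xi^*)J\eta)z^*\big)=\big(J\pi(\xi^*)J\eta\big)z,$$
so that $J\pi(\xi^*)J$ is a bounded $\frk Z$-module endomorphism of $\cal{H}_\cal A$, and likewise for $J\pi^{\prime}(\xi^*)J$.

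With both sides of each identity now known to be bounded $\frk Z$-module maps $\cal{H}_\cal A\to\cal{H}_\cal A$, it remains only to see that they agree on $\cal A$. But this is precisely the content of Proposition \ref{comm2}: for $a\in\cal A$ one has $a^*=Ja$, so part (1) reads $\pi^{\prime}(\xi)a=[\pi(\xi^*)a^*]^*=J\pi(\xi^*)Ja$, and part (2) reads $\pi(\xi)a=J\pi^{\prime}(\xi^*)Ja$. Since a bounded $\frk Z$-module map on $\cal{H}_\cal A$ is determined by its restriction to $\cal A$ (the unique extension property), the two sides of each identity coincide on all of $\cal{H}_\cal A$, which proves (1) and (2).

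I expect the main obstacle to be exactly the module-map verification of the middle paragraph, together with the conceptual temptation to argue by norm-continuity. The whole point of this theory is that $\cal A$ is \emph{not} norm-dense in $\cal{H}_\cal A=\cal A^\dagger$, so one cannot pass from $\cal A$ to $\cal{H}_\cal A$ by ordinary continuity; the correct mechanism is the module-theoretic unique extension property. This in turn forces one to track the conjugate-linear twist by $J$ carefully --- in particular the passage $z^*\mapsto z$ in the computation above --- in order to confirm that $J\pi(\xi^*)J$ is genuinely $\frk Z$-linear in the module variable rather than conjugate-linear.
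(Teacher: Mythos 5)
Your proof is correct and takes essentially the same route as the paper: both reformulate the statements as the operator identities $\pi^{\prime}(\xi)=J\pi(\xi^*)J$ and $\pi(\xi)=J\pi^{\prime}(\xi^*)J$, observe (via $J(\eta z)=(J\eta)z^*$) that these composites are bounded $\frk Z$-module maps, use Proposition \ref{comm2} for agreement on $\cal A$, and conclude by the unique extension property. Your middle paragraph simply makes explicit the $\frk Z$-linearity check that the paper leaves implicit.
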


\begin{proof}[\bf Proof]
(1)\;Recall $J:\cal{H}_{\cal A}\to \cal{H}_{\cal A}$ is the conjugate-linear isometry 
$J\eta=\eta^*.$ As noted in the proof of Lemma \ref{*map},
$J(\eta z)=(J\eta)z^*$ for $z\in\frk Z.$ Now, by part (1) of the previous
proposition, we see that for $\xi\in\cal A_l=\cal A_r$, $\pi^{\prime}(\xi)$
and $J\pi(\xi^*)J$ agree on $\cal A.$ Since both of these maps are bounded
$\mathfrak Z$-module maps they agree on $\cal{H}_\cal A$ by uniqueness. This proves
part (1). The proof of part (2) is similar.
\end{proof}
\begin{prop}\label{comm4}
Let $\xi,\eta\in\cal A_l=\cal A_r$, then we have:

(1)\;$\pi(\xi)\eta=\pi^{\prime}(\eta)\xi$ so that the two multiplications 
of Proposition \ref{lbdd}
agree, and

(2)\;$\pi(\xi)\pi^{\prime}(\eta)=\pi^{\prime}(\eta)\pi(\xi).$
\end{prop}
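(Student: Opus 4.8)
The plan is to prove (1) first and then deduce (2) from it, in each case checking the desired identity on the dense subspace $\mathcal A\subseteq\mathcal{H}_{\mathcal A}$ and extending by the unique extension property, since both sides of each equation are bounded $\mathfrak Z$-module maps on $\mathcal{H}_{\mathcal A}$. Throughout I use that $\xi,\eta\in\mathcal A_l=\mathcal A_r$ (Proposition \ref{comm2}), so that $\pi(\xi),\pi^{\prime}(\xi),\pi(\eta),\pi^{\prime}(\eta)$ are all defined and adjointable, with $\pi(\xi)^*=\pi(\xi^*)$ and $\pi^{\prime}(\eta)^*=\pi^{\prime}(\eta^*)$ (Proposition \ref{lbdd}(4) and its $\mathcal A_r$-analogue), and that $\mathcal A_l$ is invariant under $*$. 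I also record the two defining relations $\pi(\xi^*)c=\pi^{\prime}(c)\xi^*$ and $\pi^{\prime}(\eta^*)c=\pi(c)\eta^*$ of left- and right-bounded elements, valid for $c\in\mathcal A$.

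For (1): since an element of $\mathcal{H}_{\mathcal A}=\mathcal A^\dagger$ is determined by its values on $\mathcal A$, it suffices to show $\langle c,\pi(\xi)\eta\rangle=\langle c,\pi^{\prime}(\eta)\xi\rangle$ for every $c\in\mathcal A$. I would compute each side by transferring the operators to the other slot of the inner product via adjointness and the two defining relations above: the left side collapses to $\langle\xi^*,\pi^{\prime}(c^*)\eta\rangle$ and the right side to $\langle\eta^*,\pi(c^*)\xi\rangle$. These superficially different expressions are then reconciled by taking adjoints and invoking the conjugation symmetry of the inner product (Lemma \ref{*map}) together with the identity $(\pi^{\prime}(c^*)\eta)^*=\pi(c)\eta^*$ supplied by Corollary \ref{comm3}(1); a short manipulation, finished with axiom (ii), turns $\langle\xi^*,\pi^{\prime}(c^*)\eta\rangle$ into $\langle\eta^*,\pi(c^*)\xi\rangle$, giving the equality.

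For (2): by the unique extension property it is enough to verify $\pi(\xi)\pi^{\prime}(\eta)a=\pi^{\prime}(\eta)\pi(\xi)a$ for $a\in\mathcal A$. Starting from the left, $\pi^{\prime}(\eta)a=\pi(a)\eta$, and the crucial step is $\pi(\xi)\pi(a)=\pi(\pi(\xi)a)=\pi(\pi^{\prime}(a)\xi)$, obtained from the left-ideal identity $T\pi(a)=\pi(Ta)$ of Proposition \ref{lbdd}(2) with $T=\pi(\xi)\in(\pi^{\prime}(\mathcal A))^{\prime}$. This rewrites the left side as $\pi(\pi^{\prime}(a)\xi)\eta$; since $\pi^{\prime}(a)\xi\in\mathcal A_l=\mathcal A_r$ (Proposition \ref{lbdd}(2) for $\mathcal A_r$), part (1) converts this to $\pi^{\prime}(\eta)\pi^{\prime}(a)\xi=\pi^{\prime}(\eta)\pi(\xi)a$, which is exactly the right side.

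The main obstacle is part (1). The naive approach—approximate $\eta$ by elements of $\mathcal A$ and pass to the limit in $\pi(\xi)a=\pi^{\prime}(a)\xi$—fails because $\eta\mapsto\pi^{\prime}(\eta)$ is not continuous on $\mathcal A_r$, so the defining identity cannot simply be transported from $\mathcal A$ to all of $\mathcal A_r$; moreover a direct application of Corollary \ref{comm3} only re-expresses the identity as itself with $\xi,\eta$ replaced by $\eta^*,\xi^*$, so it is genuinely circular. The inner-product computation is the substitute, and its success hinges entirely on the conjugate-linear symmetry encoded by $J$ (Lemma \ref{*map} and Corollary \ref{comm3}) to match the two expressions. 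Once (1) is in hand, part (2) is comparatively routine, the only trap being that applying (1) to both sides at once merely interchanges them; this is avoided by first performing the Proposition \ref{lbdd}(2) rewriting of $\pi(\xi)\pi(a)$ and invoking (1) only afterwards.
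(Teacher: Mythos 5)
Your proof is correct and takes essentially the same route as the paper's: part (1) is the same chain of identities, verified against arbitrary elements of $\mathcal A$ using adjointness, the defining relations of left- and right-bounded elements, Lemma \ref{*map}, and Corollary \ref{comm3} (merely read in a slightly permuted order), and an element of $\mathcal{H}_{\mathcal A}$ is indeed determined by such inner products. Your part (2) is the paper's argument verbatim: rewrite $\pi(\xi)\pi(a)=\pi(\pi(\xi)a)$ via Proposition \ref{lbdd}(2) and then invoke part (1) for the left-bounded element $\pi(\xi)a$.
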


\begin{proof}[\bf Proof]
(1)\;Fix $a\in \cal A$, then:
\begin{eqnarray*}
\bra\pi(\xi)\eta,a\ket &=& \bra(\pi(\xi)\eta)^*,a^*\ket^*
= \bra\pi^{\prime}(\xi^*)\eta^*,a^*\ket^*=
\bra\eta^*,\pi^{\prime}(\xi)a^*\ket^*=\bra\eta^*,\pi(a^*)\xi\ket^*\\
&=& \bra\pi(a)\eta^*,\xi\ket^*=\bra\pi^{\prime}(\eta^*)a,\xi\ket^*=
\bra a,\pi^{\prime}(\eta)\xi\ket^*
= \bra\pi^{\prime}(\eta)\xi,a\ket
\end{eqnarray*}
so that (1) holds.

(2)\;Again fix $a\in\cal A$ then,
\begin{eqnarray*}
\pi(\xi)\pi^{\prime}(\eta)a &=& \pi(\xi)\pi(a)\eta
= \pi(\pi(\xi)a)\eta\;\;by\;\ref{lbdd}(2)\\
&=& \pi^{\prime}(\eta)(\pi(\xi)a)
=\pi^{\prime}(\eta)\pi(\xi)a.
\end{eqnarray*}
\end{proof}

\begin{not*}
Since $\cal A_l=\cal A_r$ (even as $*$-algebras) we now use the notation
$\cal A_b$ to denote the $*$-algebra of {\bf bounded} elements in $\cal{H}_\cal A.$ 
\end{not*}

\begin{thm}\label{Comm}{\bf [Commutation Theorem]}
Let $\cal A$ be a $\frk Z$-Hilbert Algebra over the abelian von Neumann
algebra $\frk Z.$ Then,

(1)\;$\pi(\cal A)^{-uw}=(\pi(\cal A))^{\prime\prime}=
(\pi(\cal A_b))^{\prime\prime}=\pi(\cal A_b)^{-uw}=
(\pi^{\prime}(\cal A_b))^\prime =(\pi^{\prime}(\cal A))^\prime$ and

(2)\;$\pi^{\prime}(\cal A)^{-uw}=(\pi^{\prime}(\cal A))^{\prime\prime}=
(\pi^{\prime}(\cal A_b))^{\prime\prime}=\pi^{\prime}(\cal A_b)^{-uw}=
(\pi(\cal A_b))^\prime =(\pi(\cal A))^\prime.$
\end{thm}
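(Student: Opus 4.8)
The plan is to assemble the Commutation Theorem from the pieces already established, closing the chain of inclusions and equalities through the bounded elements $\cal A_b=\cal A_l=\cal A_r$. The two parts are symmetric, so I would prove (1) in detail and note that (2) follows by the identical argument with $\pi$ and $\pi^{\prime}$ interchanged (using Proposition \ref{comm4} and Corollary \ref{comm3} to supply the symmetry).

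The core of (1) is the string
\[
\pi(\cal A)^{-uw}=(\pi(\cal A))^{\prime\prime}=
(\pi(\cal A_b))^{\prime\prime}=\pi(\cal A_b)^{-uw}=
(\pi^{\prime}(\cal A_b))^\prime =(\pi^{\prime}(\cal A))^\prime .
\]
I already have three of these equalities essentially for free. Lemma \ref{vNd}(i) gives $\pi(\cal A)^{-uw}=(\pi(\cal A))^{\prime\prime}$, and since $\frk Z\subseteq\pi(\cal A)^{-uw}\subseteq\pi(\cal A_b)^{-uw}$, complément 13, III.7 of \cite{Dix} (as used in Corollary \ref{comm1}) gives $(\pi(\cal A_b))^{\prime\prime}=\pi(\cal A_b)^{-uw}$. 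Corollary \ref{comm1}(1) already identifies $\pi(\cal A_l)^{-uw}=\pi(\cal A_b)^{-uw}=(\pi^{\prime}(\cal A))^{\prime}$, which pins down the last equality in the chain. So the real content is squeezing the middle: showing $(\pi(\cal A))^{\prime\prime}=(\pi(\cal A_b))^{\prime\prime}$ and that $(\pi^{\prime}(\cal A_b))^\prime$ coincides with $(\pi^{\prime}(\cal A))^\prime$.

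For the first of these I would argue that $\pi(\cal A)\subseteq\pi(\cal A_b)$ (since $\cal A\subseteq\cal A_b$) gives $\supseteq$ at the level of double commutants, while the reverse follows because $\pi(\cal A_b)\subseteq\pi(\cal A_l)^{-uw}=\pi(\cal A)^{-uw}=(\pi(\cal A))^{\prime\prime}$, so taking double commutants of $\pi(\cal A_b)$ cannot enlarge the algebra beyond $(\pi(\cal A))^{\prime\prime}$. For the commutant equality $(\pi^{\prime}(\cal A_b))^\prime=(\pi^{\prime}(\cal A))^\prime$, the inclusion $\cal A\subseteq\cal A_b$ gives $\pi^{\prime}(\cal A)\subseteq\pi^{\prime}(\cal A_b)$, hence $(\pi^{\prime}(\cal A_b))^\prime\subseteq(\pi^{\prime}(\cal A))^\prime$; the reverse inclusion comes from the symmetric version of Proposition \ref{lbdd}(1)–(2), namely that $\pi^{\prime}(\cal A_r)$ is a left ideal in $(\pi(\cal A))^{\prime}$ sitting inside $(\pi(\cal A))^{\prime}$, together with $\frk Z\subseteq(\pi^{\prime}(\cal A))^{-uw}$, so that everything commuting with $\pi^{\prime}(\cal A)$ automatically commutes with its uw-closure $\pi^{\prime}(\cal A_b)^{-uw}=(\pi(\cal A))^{\prime}$.

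The step I expect to be the main obstacle is verifying that passing from $\cal A$ to the genuinely larger algebra $\cal A_b$ of bounded elements does not change any of the von Neumann algebras involved — i.e.\ that $\pi(\cal A_b)$, though strictly bigger than $\pi(\cal A)$ as a set, generates the same uw-closed algebra. The cleanest route is to funnel everything through Corollary \ref{comm1}, which already did the hard analytic work (invoking the type $I$ structure and the Dixmier complément result) of showing $\pi(\cal A_l)^{-uw}=(\pi^{\prime}(\cal A))^\prime$. Once I quote that, the Commutation Theorem is really a bookkeeping exercise in reconciling the $\cal A$-level and $\cal A_b$-level closures, with the only subtlety being to keep the commutant-versus-bicommutant bookkeeping honest and to remember that $\frk Z$ lies in all of these algebras so that complément 13, III.7 of \cite{Dix} applies uniformly.
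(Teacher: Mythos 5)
There is a genuine gap, and it occurs at exactly the two places you identify as ``the real content.'' Your proof that $(\pi(\cal A))^{\prime\prime}=(\pi(\cal A_b))^{\prime\prime}$ rests on the chain $\pi(\cal A_b)\subseteq\pi(\cal A_l)^{-uw}=\pi(\cal A)^{-uw}$, but the middle equality is not available from anything proved before the theorem: Corollary \ref{comm1}(1) identifies $\pi(\cal A_l)^{-uw}$ with $(\pi^{\prime}(\cal A))^{\prime}$, while Lemma \ref{vNd} identifies $\pi(\cal A)^{-uw}$ with $(\pi(\cal A))^{\prime\prime}$, and the assertion that these two von Neumann algebras coincide \emph{is} the commutation theorem you are trying to prove. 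The same circularity occurs in your argument for $(\pi^{\prime}(\cal A))^{\prime}\subseteq(\pi^{\prime}(\cal A_b))^{\prime}$: an operator commuting with $\pi^{\prime}(\cal A)$ does commute with the uw-closure $\pi^{\prime}(\cal A)^{-uw}=(\pi^{\prime}(\cal A))^{\prime\prime}$, but to conclude that it commutes with $\pi^{\prime}(\cal A_b)$ you need $\pi^{\prime}(\cal A_b)\subseteq(\pi^{\prime}(\cal A))^{\prime\prime}$, and identifying $(\pi^{\prime}(\cal A))^{\prime\prime}$ with $\pi^{\prime}(\cal A_b)^{-uw}=(\pi(\cal A))^{\prime}$ (Corollary \ref{comm1}(2)) is precisely part (2) of the theorem. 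In both places you silently assume that enlarging $\cal A$ to $\cal A_b$ does not enlarge the uw-closure, which is the statement at issue.

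What is missing is the one ingredient that breaks the circle: Proposition \ref{comm4}(2), namely $\pi(\xi)\pi^{\prime}(\eta)=\pi^{\prime}(\eta)\pi(\xi)$ for $\xi,\eta\in\cal A_b$, which in turn rests on the genuinely hard fact $\cal A_l=\cal A_r$ (Proposition \ref{comm2}, proved via the $J$-operator). You invoke Proposition \ref{comm4} only as supplying ``symmetry'' for part (2), but it is the pivot of the paper's proof of part (1): it gives $\pi(\cal A_b)\subseteq(\pi^{\prime}(\cal A_b))^{\prime}$, hence $(\pi(\cal A_b))^{\prime\prime}\subseteq(\pi^{\prime}(\cal A_b))^{\prime\prime\prime}=(\pi^{\prime}(\cal A_b))^{\prime}$; combined with Corollary \ref{comm1}(1), which gives the reverse containment $(\pi^{\prime}(\cal A_b))^{\prime}\subseteq(\pi^{\prime}(\cal A))^{\prime}=\pi(\cal A_b)^{-uw}=(\pi(\cal A_b))^{\prime\prime}$, this pins down the middle of the chain; then taking one commutant in Corollary \ref{comm1}(2) yields $(\pi(\cal A))^{\prime\prime}=(\pi^{\prime}(\cal A_b))^{\prime\prime\prime}=(\pi^{\prime}(\cal A_b))^{\prime}$, which together with Lemma \ref{vNd} closes the loop. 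Your intermediate containments are in fact all \emph{true}, but as written they are derived from the conclusion rather than from Proposition \ref{comm4}(2); inserting that proposition where you currently appeal to ``$\pi(\cal A_l)^{-uw}=\pi(\cal A)^{-uw}$'' repairs the argument and essentially reproduces the paper's proof.
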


\begin{proof}[\bf Proof]
(1)\;By part (1) of Corollary \ref{comm1}, we have 
$$(\pi(\cal A_b))^{-uw}=(\pi(\cal A_b))^{\prime\prime}=
(\pi^{\prime}(\cal A))^{\prime}\supseteq (\pi^{\prime}(\cal A_b))^\prime.$$ 
However, by part (2) of the previous corollary, we have
$$(\pi(\cal A_b))^{\prime\prime}\subseteq (\pi^{\prime}(\cal A_b))^
{\prime\prime\prime}=(\pi^{\prime}(\cal A_b))^\prime.$$
Hence, 
$$(\pi(\cal A_b))^{-uw}=(\pi(\cal A))^{\prime\prime}
=(\pi^{\prime}(\cal A))^{\prime}=(\pi^{\prime}(\cal A_b))^\prime.$$
On the other hand, by part (2) of Corollary \ref{comm1}:
$$(\pi(\cal A))^{\prime\prime}=(\pi^{\prime}(\cal A_b))^{\prime\prime\prime}=
(\pi^{\prime}(\cal A_b))^\prime.$$
Since $\pi(\cal A)^{-uw}=(\pi(\cal A))^{\prime\prime}$ by Lemma \ref{vNd},
we are done.

The proof of (2) is similar.
\end{proof}

\begin{defn}
We define the {\bf left von Neumann algebra of $\cal A$} to be 
$$\cal U(\cal A):=(\pi(\cal A))^{\prime\prime}.$$ We define the 
{\bf right von Neumann algebra of $\cal A$} to be
$$\cal V(\cal A):=(\pi^{\prime}(\cal A))^{\prime\prime}.$$
\end{defn}

\begin{cor}
Let $\cal A$ be a $\frk Z$-Hilbert algebra over the abelian von Neumann
algebra $\frk Z.$ Then, for all $\xi,\eta\in\cal A_b,$ with $J$ as in 
Definition \ref{J}

\noindent(1)\hspace{1.6in}$J\pi(\xi)J=\pi^{\prime}(J\xi)\;
and\;J\pi^{\prime}(\xi)J=\pi(J\xi).$

\noindent(2)\hspace{1.6in}$J\cal U(\cal A)J=\cal V(\cal A)\;
and\;J\cal V(\cal A)J=\cal U(\cal A).$
\end{cor}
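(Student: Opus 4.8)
The plan is to derive part (1) directly from Corollary \ref{comm3} together with the fact that $J$ is an involution ($\eta^{**}=\eta$), and then to obtain part (2) by a pure commutant computation, re-expressing $\cal U(\cal A)$ and $\cal V(\cal A)$ as commutants via the Commutation Theorem \ref{Comm} so as to \emph{avoid} any appeal to ultraweak continuity of conjugation by $J$.

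For (1), fix $\xi\in\cal A_b$ and an arbitrary $\eta\in\cal{H}_\cal A$. Since $J\eta=\eta^*$ I would write $J\pi(\xi)J\eta=(\pi(\xi)\eta^*)^*$ and apply Corollary \ref{comm3}(2) with $\eta$ replaced by $\eta^*$, obtaining $\pi(\xi)\eta^*=[\pi^{\prime}(\xi^*)\eta]^*$; taking adjoints and using $\eta^{**}=\eta$ gives $J\pi(\xi)J\eta=\pi^{\prime}(\xi^*)\eta=\pi^{\prime}(J\xi)\eta$. As $\eta$ was arbitrary, the first identity of (1) follows, and the second is identical with Corollary \ref{comm3}(1) in place of (2).

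The key preliminary for (2) is that $\Phi(T):=JTJ$ is a conjugate-linear, multiplicative bijection of $\cal L(\cal{H}_\cal A)$ onto itself with $\Phi^2=\mathrm{id}$. Here I would verify that $JTJ$ is again an adjointable bounded $\frk Z$-module map: it is bounded because $J$ is isometric; it is a module map because $J(\eta z)=(J\eta)z^*$ (noted in the proof of Lemma \ref{*map}) forces $JTJ(\eta z)=(JTJ\eta)z$; and using the identity $\bra J\alpha,J\beta\ket=\bra\beta,\alpha\ket$ of Lemma \ref{*map} one computes $\bra JTJ\xi,\eta\ket=\bra\xi,JT^*J\eta\ket$, so $JTJ$ is adjointable with $(JTJ)^*=JT^*J$. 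Multiplicativity and $\Phi^2=\mathrm{id}$ are then immediate from $J^2=\mathrm{id}$, since $\Phi(S)\Phi(T)=JSJ\,JTJ=JSTJ=\Phi(ST)$. Being a multiplicative bijection, $\Phi$ preserves commutants: $T$ commutes with $S$ iff $\Phi(T)$ commutes with $\Phi(S)$, so $\Phi(\cal S^{\prime})=\Phi(\cal S)^{\prime}$ for any subset $\cal S\subseteq\cal L(\cal{H}_\cal A)$.

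With these tools (2) is short. Since $\mathcal A$ is a $*$-algebra, part (1) gives $J\pi(\cal A)J=\pi^{\prime}(J\cal A)=\pi^{\prime}(\cal A)$ and $J\pi^{\prime}(\cal A)J=\pi(\cal A)$. Using the Commutation Theorem \ref{Comm} to write $\cal U(\cal A)=(\pi(\cal A))^{\prime\prime}=(\pi^{\prime}(\cal A))^{\prime}$, I would compute
$$J\,\cal U(\cal A)\,J=\Phi\big((\pi^{\prime}(\cal A))^{\prime}\big)=\big(\Phi(\pi^{\prime}(\cal A))\big)^{\prime}=(\pi(\cal A))^{\prime}=\cal V(\cal A),$$
the last equality again by Theorem \ref{Comm}; then $J\,\cal V(\cal A)\,J=\cal U(\cal A)$ follows by applying $\Phi$ once more and using $\Phi^2=\mathrm{id}$. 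The one place demanding care is the third paragraph: confirming that $\Phi$ genuinely lands in $\cal L(\cal{H}_\cal A)$ and respects adjoints, where the conjugate-linearity of $J$ and the precise inner-product identities of Lemma \ref{*map} must be tracked correctly. The elegant point, which sidesteps the potential obstacle of proving $\Phi$ ultraweakly continuous, is that commutant-preservation needs only that $\Phi$ is a multiplicative bijection; everything after that is formal.
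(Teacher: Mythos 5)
Your proof is correct, and for part (2) it takes a genuinely different route from the paper. For part (1) you and the paper do the same thing: the paper disposes of it with a one-line citation (nominally to Corollary \ref{comm1}, evidently meaning Corollary \ref{comm3}), and your unwinding --- apply Corollary \ref{comm3} at $\eta^*$, take $*$, use $\eta^{**}=\eta$ --- is exactly the intended argument. For part (2) the paper argues element-wise: taking $T\in\cal U(\cal A)=(\pi^{\prime}(\cal A_b))^{\prime}$, it computes directly, using part (1) and $J^2=1$, that $JTJ\pi(\xi)\eta=\pi(\xi)JTJ\eta$ for all $\xi\in\cal A_b$, $\eta\in\cal H_{\cal A}$, giving $J\cal U(\cal A)J\subseteq\cal V(\cal A)$, then the symmetric inclusion, then equality from $J^2=1$. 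You instead promote $\Phi=\mathrm{Ad}\,J$ to a conjugate-linear multiplicative involution of $\cal L(\cal H_{\cal A})$ and use the purely formal identity $\Phi(\cal S^{\prime})=\Phi(\cal S)^{\prime}$ (which does need the surjectivity of $\Phi$, supplied by $\Phi^2=\mathrm{id}$), so that $J\cal U(\cal A)J=\bigl(\Phi(\pi^{\prime}(\cal A))\bigr)^{\prime}=(\pi(\cal A))^{\prime}=\cal V(\cal A)$ in one stroke via Theorem \ref{Comm}; note you only need (1) on $\cal A\subseteq\cal A_b$ here, whereas the paper works with $\cal A_b$. What your route buys is twofold: it replaces the double-$J$ insertion trick with a structural statement that is reusable, and --- more substantively --- it makes explicit a step the paper leaves tacit, namely that $JTJ$ really lies in $\cal L(\cal H_{\cal A})$ (bounded, $\frk Z$-linear via $J(\eta z)=(J\eta)z^*$, and adjointable with $(JTJ)^*=JT^*J$ via $\bra J\alpha,J\beta\ket=\bra\beta,\alpha\ket$); without this, even the paper's inclusion $J\cal U(\cal A)J\subseteq(\pi(\cal A_b))^{\prime}$ is not literally meaningful, since commutants are taken inside $\cal L(\cal H_{\cal A})$. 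What the paper's computation buys in exchange is brevity and independence from any global structure on $\Phi$. Your closing observation is also apt: since commutant-preservation is algebraic, no ultraweak continuity of $\mathrm{Ad}\,J$ is needed in either approach.
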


\begin{proof}[\bf Proof]
Item (1) is just Corollary \ref{comm1}. 

To see item (2), let $T\in\cal U(\cal A)=(\pi^{\prime}(\cal A_b))^\prime.$
Then for $\xi\in\cal A_b$ and $\eta\in \cal{H}_\cal A$ we get:
\begin{eqnarray*}
JTJ\pi(\xi)\eta &=& JTJ\pi(\xi)J\eta^*\\
&=& JT\pi^{\prime}(J\xi)\eta^*=J\pi^{\prime}(J\xi)T\eta^*
=J\pi^{\prime}(J\xi)JJTJ\eta\\
&=& \pi(\xi)JTJ\eta.
\end{eqnarray*}
Therefore, $J\cal U(\cal A)J\subseteq(\pi(\cal A_b))^{\prime}=\cal V(\cal A).$
Similarly, $J\cal V(\cal A)J\subseteq\cal U(\cal A).$ Since $J^2=1$, we're
done.
\end{proof}

\begin{rems*}
At this point we could show that $\cal A_b$ is a $\frk Z$-Hilbert
algebra satisfying\\$\cal{H}_{\cal A_b} =\cal{H}_\cal A$, $\cal U(\cal A_b)=\cal U(\cal A)$,
and $\cal V(\cal A_b)=\cal V(\cal A).$ Since we don't appear to need this now,
we defer the statement and proof to Proposition \ref{full}.
\end{rems*}

\section{ CENTRE-VALUED TRACES}

With the same hypotheses and notation of the previous section we show how
to construct a natural $\frk Z$-valued trace on the von Neumann algebra, 
$\cal U(\cal A).$ We first remind the reader of Paschke's results that
both $\cal{H}_\cal A$ and $\cal L(\cal{H}_\cal A)$ are dual spaces, and that since
$\cal L(\cal{H}_\cal A)$ is a von Neumann algebra, its $weak^*$-topology must also
be its $uw$-topology since pre-duals for von Neumann algebras are unique.

\begin{tech*}
The problem of convergence is one of our main headaches. 
The topology of Proposition \ref{density} (closely related to a topology introduced
by Paschke \cite{Pa}) and Proposition 3.10 of \cite{Pa} are
exactly what is needed to prove the following result which is used
several times in the remainder of this paper.
\end{tech*}
\begin{prop}\label{conv}
If $\cal A$ is a pre-Hilbert $\frk Z$-module (not necessarily a 
$\frk Z$-Hilbert Algebra) with Paschke dual $\cal{H}_{\cal A}$, then:

\noindent(1)\;A bounded net $\{\xi_\alpha\}$ in $\cal{H}_\cal A$ converges $weak^*$ 
to $\xi\in \cal{H}_\cal A \Longleftrightarrow$\\
\indent$\bra\eta,\xi_\alpha\ket\to\bra\eta,\xi\ket$ ultraweakly in
$\frk Z$ for all $\eta\in \cal{H}_\cal A.$

\noindent(2)\;A net $\{T_\alpha\}$ in $\cal L(\cal{H}_\cal A)$ converges
ultraweakly to $T\in\cal L(\cal{H}_\cal A) \Longleftrightarrow$\\ 
\indent$\bra T_{\alpha}\xi,\eta\ket\to\bra T\xi,\eta\ket$ ultraweakly in 
$\frk Z$ for all $\xi,\eta\in \cal{H}_\cal A.$

\noindent(3)\;A {\bf bounded} net $\{T_\alpha\}$ in $\cal L(\cal{H}_\cal A)$ converges
ultraweakly to $T\in\cal L(\cal{H}_\cal A) \Longleftrightarrow$\\ 
\indent$\bra T_{\alpha}a,b\ket\to\bra Ta,b\ket$ ultraweakly in 
$\frk Z$ for all $a,b\in \cal A.$

\end{prop}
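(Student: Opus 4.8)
The plan is to read each of the three equivalences as a statement about convergence against the predual functionals that Paschke attaches to $\cal{H}_{\cal A}$ and to $\cal L(\cal{H}_{\cal A})$. Since $\frk Z$ is a von Neumann algebra, ``$z_\alpha\to z$ ultraweakly in $\frk Z$'' means $\omega(z_\alpha)\to\omega(z)$ for every $\omega\in\frk Z_*$; thus each right-hand condition is really a family of scalar convergence statements indexed by $\omega\in\frk Z_*$ together with the relevant module vectors. I would prove (1) and (2) in parallel, each as an instance of ``a net converges in the $weak^*$/ultraweak topology iff it converges against the generating inner-product functionals,'' and then derive (3) from (2) by exploiting boundedness to pass from $\cal{H}_{\cal A}$ to its dense subspace $\cal A$.

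For (1), Proposition 3.8 of \cite{Pa} describes the $weak^*$-topology on $\cal{H}_{\cal A}$ as the topology generated by the functionals $\theta\mapsto\omega(\langle\eta,\theta\rangle)$ with $\eta\in\cal{H}_{\cal A}$ and $\omega\in\frk Z_*$. The forward direction is then immediate: $weak^*$-convergence of $\xi_\alpha$ to $\xi$ gives $\omega(\langle\eta,\xi_\alpha\rangle)\to\omega(\langle\eta,\xi\rangle)$ for every $\eta,\omega$, which is exactly $\langle\eta,\xi_\alpha\rangle\to\langle\eta,\xi\rangle$ ultraweakly for all $\eta$. For the converse, convergence against every $\eta$ and $\omega$ is convergence against the entire generating family; since the net is bounded, this suffices to force $weak^*$-convergence, because on a norm-bounded set the $weak^*$-topology is recovered from any total family of functionals, even one that only densely spans the predual.

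For (2), $\cal L(\cal{H}_{\cal A})$ is a von Neumann algebra by the self-duality of $\cal{H}_{\cal A}$ (Theorem 3.2 and Proposition 3.10 of \cite{Pa}), so its unique predual determines the ultraweak topology. Each map $T\mapsto\omega(\langle T\xi,\eta\rangle)$ is a normal functional lying in this predual, whence $T_\alpha\to T$ ultraweakly immediately yields $\langle T_\alpha\xi,\eta\rangle\to\langle T\xi,\eta\rangle$ ultraweakly for all $\xi,\eta$. The content is the converse, and here the main obstacle is to show that these inner-product functionals generate $\cal L(\cal{H}_{\cal A})_*$; this is precisely what Paschke's self-duality machinery (Proposition 3.10 of \cite{Pa}) is designed to supply, and once it is in hand, convergence against the whole generating family forces ultraweak convergence of $T_\alpha$ to $T$.

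Finally I would obtain (3) from (2) using boundedness to shrink the test vectors to $\cal A$. Fix $\omega\in\frk Z_*^+$ and write $\langle x,y\rangle_\omega:=\omega(\langle x,y\rangle)$ for the induced pre-inner product of Proposition \ref{density}; each $T_\alpha$ descends to the completion $\cal{H}_\omega$ with $\|T_\alpha\|_\omega\le\|T_\alpha\|$, since $T_\alpha^*T_\alpha\le\|T_\alpha\|^2 1$ gives $\omega(\langle T_\alpha x,T_\alpha x\rangle)\le\|T_\alpha\|^2\,\omega(\langle x,x\rangle)$. Assuming $\|T_\alpha\|\le M$ and given $\xi,\eta\in\cal{H}_{\cal A}$, choose $a,b\in\cal A$ with $\|\xi-a\|_\omega$ and $\|\eta-b\|_\omega$ small, as permitted by Proposition \ref{density}; a Cauchy--Schwarz estimate in $\langle\cdot,\cdot\rangle_\omega$ bounds $|\langle T_\alpha\xi,\eta\rangle_\omega-\langle T_\alpha a,b\rangle_\omega|$ by $M(\|\xi-a\|_\omega\|\eta\|_\omega+\|a\|_\omega\|\eta-b\|_\omega)$ uniformly in $\alpha$, with the same bound for $T$. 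Since $\langle T_\alpha a,b\rangle_\omega\to\langle Ta,b\rangle_\omega$ by hypothesis, a three-$\varepsilon$ argument upgrades convergence from $a,b\in\cal A$ to all $\xi,\eta\in\cal{H}_{\cal A}$, reducing (3) to (2); the uniform-in-$\alpha$ control coming from $M$ is exactly where the boundedness hypothesis enters.
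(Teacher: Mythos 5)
Your proposal is correct and takes essentially the same route as the paper's proof: items (1) and (2) rest on Paschke's identification of the $weak^*$ topologies on $\mathcal{H}_{\mathcal A}$ and on the von Neumann algebra $\mathcal{L}(\mathcal{H}_{\mathcal A})$ via the functionals $\theta\mapsto\omega(\bra\eta,\theta\ket)$ and $T\mapsto\omega(\bra T\xi,\eta\ket)$ (Remark 3.9 and Proposition 3.10 of \cite{Pa}), and item (3) is deduced from item (2) by the standard $\epsilon/3$-argument using the density statement of Proposition \ref{density}(iv) together with the uniform bound on the net. The details you supply, such as the estimate $\omega(\bra T_\alpha x,T_\alpha x\ket)\leq\|T_\alpha\|^2\,\omega(\bra x,x\ket)$ making each $T_\alpha$ act contractively (up to the bound $M$) on $\mathcal{H}_\omega$, are exactly what the paper leaves implicit.
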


\begin{proof}[\bf Proof]

Item (1) is just Remark 3.9 of \cite{Pa} and works for any self-dual Hilbert
module over a von Neumann algebra.

Item (2) follows immediately from the definition of the $weak^*$ topology on
$\cal L(\cal{H}_\cal A)$ in Remark 3.9 and the proof of Proposition 3.10 of
\cite{Pa}. This result also holds for any self-dual Hilbert
module over a von Neumann algebra.

Item (3) follows from item (2) and the usual $\epsilon/3$-argument using item
(iv) of Proposition \ref{density}.
\end{proof}

Since $\pi(\cal A_b^2)$ is going to be the domain of definition of our
$\frk Z$-valued trace on $\cal U(\cal A)$, we need a condition on an
operator $T\in\cal U(\cal A)$ (involving $\frk Z$-valued inner products)
to be an element of $\pi(\cal A_b).$

\begin{rem*}
In Example \ref{Hilbex} where our $\frk Z$-Hilbert algebra is itself a von Neumann
algebra $\frk A$ with $\frk Z\subseteq Z(\frk A)$ and a faithful, tracial,
uw-continuous $\frk Z$-trace $\tau:\frk A\to \frk Z$, one can use item (3)
in Proposition \ref{conv} to show that $\pi(\frk A)=(\pi(\frk A))^{\prime\prime},$
as expected.
\end{rem*}

\begin{prop}\label{Tbdd}
If $T\in\cal U(\cal A)$ then $T\in\pi(\cal A_b)$ if and only if 
$$\{\bra T\xi,T\xi\ket\,|\,\xi\in\cal A_b\;and\;\|\pi(\xi)\|\leq 1\}
\;is\;bounded\;above\;in\;\frk Z_{+}.$$
In this case, $T=\pi(\eta)$ where $z=\bra\eta,\eta\ket,$ and $z$ is the 
supremum of this set in $\frk Z_{+}.$
\end{prop}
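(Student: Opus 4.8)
The statement is an "if and only if" characterizing membership in $\pi(\cal A_b)$ by a boundedness condition on the set of $\frk Z$-valued inner products $\{\bra T\xi,T\xi\ket\}$. The plan is to prove the two directions separately, with the "if" direction carrying the real content.

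Let me think about the forward direction first. If $T = \pi(\eta)$ for some $\eta \in \cal A_b$, then for $\xi \in \cal A_b$ we have $T\xi = \pi(\eta)\xi = \eta\xi$ (the $\cal A_b$-multiplication from Proposition \ref{lbdd}(3)). I want to bound $\bra T\xi,T\xi\ket = \bra \pi(\eta)\xi,\pi(\eta)\xi\ket$. Using that $\pi(\eta)$ is adjointable with adjoint $\pi(\eta^*)$ (Proposition \ref{lbdd}(4)) and module-map properties, this equals $\bra\xi,\pi(\eta)^*\pi(\eta)\xi\ket$; since $\pi(\eta)^*\pi(\eta) \leq \|\pi(\eta)\|^2 I$ as operators in $\cal L(\cal{H}_\cal A)$, I should get $\bra T\xi,T\xi\ket \leq \|\pi(\eta)\|^2\bra\xi,\xi\ket$. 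Then I need $\bra\xi,\xi\ket$ itself bounded in $\frk Z_+$ when $\|\pi(\xi)\|\leq 1$: this follows because $\bra\xi,\xi\ket = \bra\pi(\xi)\hat 1,\pi(\xi)\hat 1\ket$-type reasoning, or more directly since $\|\bra\xi,\xi\ket\|_{\frk Z} = \|\xi\|^2 \leq \|\pi(\xi)\|^2 \leq 1$ via the $\cal{H}_\cal A$-norm estimate. So the set is bounded above by $\|\pi(\eta)\|^2\cdot 1$, giving the forward implication.

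The converse is the hard part. Assume the set $S = \{\bra T\xi,T\xi\ket : \xi\in\cal A_b, \|\pi(\xi)\|\leq 1\}$ is bounded above in $\frk Z_+$. Since $\frk Z$ is an abelian von Neumann algebra, bounded-above upward-directed (or at least bounded) subsets have suprema; I expect to first check $S$ is upward directed or, failing that, to take the supremum $z$ of $S$ in $\frk Z_+$ as an operator least upper bound. The goal is to produce an element $\eta \in \cal A_b$ with $T = \pi(\eta)$ and $\bra\eta,\eta\ket = z$. The natural candidate for $\eta$ is $T\hat 1$ or, more precisely, a suitable limit: I would try to define $\eta$ as a $weak^*$-limit in $\cal{H}_\cal A$ of a net built from $T$ acting on an approximate identity, using Proposition \ref{conv} and the completeness of bounded balls in $\cal{H}_\cal A$ from Proposition \ref{density}(i). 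The boundedness hypothesis on $S$ is exactly what guarantees the relevant net stays in a ball of radius $\sqrt{\|z\|}$, so that the $weak^*$-completeness applies and the limit $\eta$ lands in $\cal{H}_\cal A$. One then checks $\eta$ is left-bounded (i.e. $\eta\in\cal A_b$) by verifying $\pi(\eta)$ is bounded, and that $\pi(\eta)=T$ by testing both sides against elements of $\cal A$ using the unique extension property: one computes $\pi(\eta)a = \pi'(a)\eta$ and matches it with $Ta$ for $a\in\cal A$.

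The main obstacle I anticipate is establishing the existence of the supremum $z$ and, relatedly, showing the candidate net converges to an element that is genuinely \emph{left-bounded} rather than merely an element of $\cal{H}_\cal A$. The subtlety is that boundedness of the set of inner products $\bra T\xi,T\xi\ket$ controls the $\|\cdot\|_\omega$-norms of the vectors $T\xi$ uniformly, but one must translate this into the operator boundedness of $\pi(\eta)$ on all of $\cal A$, not just the existence of $\eta$ as a vector. I expect the argument to hinge on identifying $\bra\eta,\eta\ket$ with $\sup S = z$: the inequality $\bra\eta,\eta\ket \leq z$ should come from lower semicontinuity of $\xi\mapsto\bra\xi,\xi\ket$ under $weak^*$-limits, while $\bra\eta,\eta\ket \geq z$ should come from the fact that $\eta$ realizes the relevant suprema. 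Throughout, the replacement of the scalar supremum argument (routine in the classical Hilbert-algebra setting) by an operator-valued supremum in $\frk Z_+$ is where the "subtlety" flagged in the paper's Key Idea will actually bite, so I would state carefully which order-theoretic property of $\frk Z$ (monotone completeness of the self-adjoint part) is being invoked.
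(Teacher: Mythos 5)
Your converse direction is essentially the paper's own argument: one takes a net $\{\pi(\xi_\alpha)\}$ in $\pi(\cal A_b)$ with $\|\pi(\xi_\alpha)\|\leq 1$ converging ultraweakly to $1$, uses the hypothesis to bound $\|T\xi_\alpha\|\leq\|z\|^{1/2}$, extracts a $weak^*$-limit $\eta$ of $\{T\xi_\alpha\}$ in the dual space $\cal{H}_{\cal A}$, and verifies $\bra Ta,\mu\ket=\bra\pi(\eta)a,\mu\ket$ for $a\in\cal A$, $\mu\in\cal{H}_{\cal A}$ via Proposition \ref{conv}. Your anticipated obstacle about left-boundedness dissolves exactly as you suggest: once $\pi^{\prime}(a)\eta=Ta$ for all $a\in\cal A$, the boundedness of $\pi(\eta)$ is inherited from the bounded operator $T$, so $\eta\in\cal A_b$ for free.

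The genuine gap is in your forward direction. The step $\bra T\xi,T\xi\ket\leq\|\pi(\eta)\|^2\bra\xi,\xi\ket$ is fine, but the follow-up claim that $\bra\xi,\xi\ket$ is uniformly bounded when $\|\pi(\xi)\|\leq 1$ (via $\|\xi\|\leq\|\pi(\xi)\|$, or via pairing with a cyclic vector $\hat 1$) fails: it holds only when $\cal A$ is unital with $\|\hat 1\|\leq 1$, as in Example \ref{Hilbex}, whereas the proposition must apply to the non-unital crossed-product algebra $\cal A=C_c(\real,A_{\frk Z})$ whose trace is semifinite. Already for $\frk Z=\comp$ and $\cal A=C_c(\real)$ with convolution, $\|\pi(\xi)\|=\|\hat\xi\|_\infty$ can stay $\leq 1$ while $\bra\xi,\xi\ket=\|\hat\xi\|_2^2\to\infty$, so the set $\{\bra\xi,\xi\ket\,|\,\|\pi(\xi)\|\leq 1\}$ is unbounded and your bound $\|\pi(\eta)\|^2\cdot 1$ is invalid. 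The correct estimate, which is what the paper proves, places the operator norm on $\xi$ and the vector norm on $\eta$ by flipping the product through the $*$-isometry of Lemma \ref{*map}: $\bra\eta\xi,\eta\xi\ket=\bra\xi^*\eta^*,\xi^*\eta^*\ket=\bra\pi(\xi\xi^*)\eta^*,\eta^*\ket\leq\|\pi(\xi\xi^*)\|\bra\eta^*,\eta^*\ket\leq\bra\eta,\eta\ket$. This failure also propagates to the last sentence of the proposition: the half $z\leq\bra\eta,\eta\ket$ of the identification $z=\bra\eta,\eta\ket$ requires exactly the bound $\bra T\xi,T\xi\ket\leq\bra\eta,\eta\ket$, which your version does not supply; and the reverse half is obtained in the paper not by semicontinuity but by an exact computation, $\bra T\xi_\alpha,T\xi_\alpha\ket=\bra\pi(\xi_\alpha)^2\eta^*,\eta^*\ket\to\bra\eta,\eta\ket$ ultraweakly along a self-adjoint approximate net, so that $\bra\eta,\eta\ket$ is realized as a limit of elements of the set.
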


\begin{proof}[\bf Proof]
$(\Longleftarrow)$ Let $z$ be an upper bound for this set in $\frk Z_{+}.$
Let $\{\pi(\xi_{\alpha})\}$ be a net in $\pi(\cal A_b)$
converging ultraweakly to $1$ and norm bounded by $1$. Then, 
$$\|T\xi_{\alpha}\|=\|\bra T\xi_{\alpha},T\xi_{\alpha}\ket\|^{1/2}\leq
\|z\|^{1/2}$$
so that $\{T\xi_{\alpha}\}$ is a bounded net in the dual space
$\cal{H}_\cal A$ and so we can assume that it converges $weak^*$ to some 
$\eta\in \cal{H}_\cal A.$
That is,
$$T\xi_{\alpha}\stackrel{w^*}{\to}\eta\;and\;\pi(T\xi_{\alpha})=
T\pi(\xi_{\alpha})\stackrel{uw}{\to}T.$$
By Proposition \ref{conv} we see that for all $a\in\cal A$ and all $\mu\in \cal{H}_\cal A$:
\begin{eqnarray*}
\bra Ta,\mu\ket &=& \lim_{\alpha}\bra\pi(T\xi_{\alpha})a,\mu\ket
= \lim_{\alpha}\bra\pi^{\prime}(a)T\xi_{\alpha},\mu\ket
= \lim_{\alpha}\bra T\xi_{\alpha},\pi^{\prime}(a^*)\mu\ket\\
&=& \bra\eta,\pi^{\prime}(a^*)\mu\ket
= \bra\pi(\eta)a,\mu\ket.
\end{eqnarray*}
So, $Ta=\pi(\eta)a$ for all $a\in\cal A$ and hence $T=\pi(\eta)$ where 
$\eta\in\cal A_b.$

$(\Longrightarrow)$ On the other hand, if $T=\pi(\eta)$ for some 
$\eta\in\cal A_b$, then for all $\xi\in\cal A_b$ with $\|\pi(\xi)\|\leq 1$
we get by Proposition 2.6 of \cite{Pa}: 
\begin{eqnarray*}
\bra T\xi,T\xi\ket &=& \bra\eta\xi,\eta\xi\ket=\bra\xi^*\eta^*,\xi^*\eta^*\ket\\
&=& \bra\pi(\xi\xi^*)\eta^*,\eta^*\ket\leq \|\pi(\xi\xi^*)\|\bra\eta,\eta\ket
\leq\bra\eta,\eta\ket\in\frk Z. 
\end{eqnarray*}

Now, since $\frk Z$ is abelian, the supremum of any finite set of
self-adjoint elements exists and so the supremum of the bounded set, 
$\{\bra T\xi,T\xi\ket\,|\,\xi\in\cal A_b\;and\;\|\pi(\xi)\|\leq 1\}$ 
can be written as the limit of a bounded increasing net of elements in 
$\frk Z_+$ 
which exists (in $\frk Z_+$) by Vigier's Theorem. We let $z_0$ be this 
supremum. Then, if $T=\pi(\eta)$ for $\eta\in\cal A_b$ we see by the second 
part of the above argument that $z_0\leq\bra\eta,\eta\ket.$

On the other hand, If we choose the net $\{\xi_{\alpha}\}$ as in the first part 
of the above argument to also satisfy $\xi_{\alpha}^*=\xi_{\alpha}$, then:
\begin{eqnarray*}
\bra T\xi_{\alpha},T\xi_{\alpha}\ket &=& 
\bra\eta\xi_{\alpha},\eta\xi_{\alpha}\ket
=\bra\xi_{\alpha}\eta^*,\xi_{\alpha}\eta^*\ket\\
&=& \bra\pi(\xi_{\alpha})^2\eta^*,\eta^*\ket\stackrel{uw}{\longrightarrow}
\bra\eta^*,\eta^*\ket
=\bra\eta,\eta\ket.
\end{eqnarray*}
That is $\bra\eta,\eta\ket\geq z_0,$ and we're done.
\end{proof}

\begin{lemma}\label{Ideal}
Let $\cal I=\pi(\cal A_b)^2:=span\{\pi(\xi)\pi(\eta)\,|\,\xi,\eta\in\cal A_b\}.$
Then $\cal I$ is an uw dense $*$-ideal in $\cal U(\cal A)$ and
$\cal I_+ = \{\pi(\xi^*)\pi(\xi)\,|\,\xi\in\cal A_b\}$.
\end{lemma}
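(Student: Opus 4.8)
The plan is to derive everything from the fact, already available, that $\pi(\cal A_b)$ is a two-sided $*$-ideal in $\cal U(\cal A)$. Indeed, by Proposition \ref{lbdd}(2),(4) together with the Commutation Theorem \ref{Comm} (which identifies $(\pi^{\prime}(\cal A))^{\prime}$ with $\cal U(\cal A)$), $\pi(\cal A_b)$ is a left ideal in $\cal U(\cal A)$ that is closed under $*$; a $*$-closed left ideal is automatically two-sided, since $xa=(a^*x^*)^*$. Because $\cal A_b$ is an algebra and $\pi$ is multiplicative (Proposition \ref{lbdd}(3)), $\pi(\cal A_b)$ is a subalgebra, so $\cal I=\pi(\cal A_b)^2\subseteq\pi(\cal A_b)$. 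From this, $\cal I$ inherits the two-sided $*$-ideal property directly: for $T\in\cal U(\cal A)$ one has $T\pi(\xi)\pi(\eta)=(T\pi(\xi))\pi(\eta)\in\pi(\cal A_b)\cdot\pi(\cal A_b)=\cal I$ (and symmetrically on the right), while $(\pi(\xi)\pi(\eta))^*=\pi(\eta^*)\pi(\xi^*)\in\cal I$.

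For uw-density, I would use a central-projection argument in the style of Lemma \ref{vNd} and the first Lemma of this section. The uw-closure $\cal I^{-uw}$ is a uw-closed two-sided ideal of the von Neumann algebra $\cal U(\cal A)$, hence $\cal I^{-uw}=z\,\cal U(\cal A)$ for a central projection $z\in\cal U(\cal A)$. If $z\neq 1$, then $1-z$ is a nonzero operator in $\cal L(\cal{H}_\cal A)$, so by the first Lemma of this section there is $a\in\cal A$ with $(1-z)\pi(a)\neq 0$; applying that same Lemma to the nonzero operator $(1-z)\pi(a)$ yields $b\in\cal A$ with $(1-z)\pi(a)\pi(b)\neq 0$. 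Since $a,b\in\cal A\subseteq\cal A_b$, we have $\pi(a)\pi(b)=\pi(ab)\in\cal I$, so $(1-z)\cal I\neq 0$; but $(1-z)\cal I\subseteq(1-z)\cal I^{-uw}=(1-z)z\,\cal U(\cal A)=0$, a contradiction. Hence $z=1$ and $\cal I$ is uw-dense.

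Finally, for the identity $\cal I_+=\{\pi(\xi^*)\pi(\xi)\mid\xi\in\cal A_b\}$, the inclusion $\supseteq$ is immediate, since $\pi(\xi^*)\pi(\xi)=\pi(\xi)^*\pi(\xi)\ge 0$ lies in $\cal I$ by construction. For $\subseteq$, take $T\in\cal I_+$ and set $S=T^{1/2}\in\cal U(\cal A)_+$; I would show $S\in\pi(\cal A_b)$ by verifying the criterion of Proposition \ref{Tbdd}, namely that $\{\bra S\mu,S\mu\ket=\bra\mu,T\mu\ket\mid\mu\in\cal A_b,\ \|\pi(\mu)\|\le 1\}$ is bounded above in $\frk Z_+$. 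Once this is done, $S=\pi(\zeta)$ for some $\zeta\in\cal A_b$, and self-adjointness of $S$ forces $\pi(\zeta)=\pi(\zeta)^*=\pi(\zeta^*)$, hence $\zeta=\zeta^*$ by injectivity of $\pi$, giving $T=S^2=\pi(\zeta)^*\pi(\zeta)=\pi(\zeta^*)\pi(\zeta)$ as required.

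The main obstacle is exactly this boundedness verification, and I expect to handle it by exploiting membership in $\cal I$ rather than merely in $\pi(\cal A_b)$. Writing $T=\sum_{i=1}^{k}\pi(\xi_i)^*\pi(\eta_i)$ (possible since $\cal I$ is a $*$-ideal), one gets for $\|\pi(\mu)\|\le 1$ that $\bra\mu,T\mu\ket=\sum_i\bra\pi(\xi_i)\mu,\pi(\eta_i)\mu\ket$, which I would bound by the module Cauchy--Schwarz inequality of \cite{Pa} applied in the direct-sum module, using the forward estimate $\bra\pi(\xi_i)\mu,\pi(\xi_i)\mu\ket\le\bra\xi_i,\xi_i\ket$ from Proposition \ref{Tbdd}. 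This yields $(\bra\mu,T\mu\ket)^2\le\|\sum_i\bra\xi_i,\xi_i\ket\|\cdot\sum_i\bra\eta_i,\eta_i\ket$, a bound independent of $\mu$, so that $\bra\mu,T\mu\ket$ is dominated by a fixed scalar multiple of $1$ in $\frk Z_+$. The delicate point throughout is that the naive estimate through $\|\pi(\mu)\|\le1$ alone fails because $\|\bra\mu,\mu\ket\|$ is unbounded on this set; it is the cyclic rearrangement built into the $\cal I$-representation of $T$ (the analogue of $\mathrm{Tr}(\xi^*T\xi)=\mathrm{Tr}(T\xi\xi^*)$) that rescues the argument.
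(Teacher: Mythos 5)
Your proof is correct, and for the substantive claim $\mathcal I_+=\{\pi(\xi^*)\pi(\xi)\,|\,\xi\in\mathcal A_b\}$ it takes a genuinely different route from the paper. The paper verifies that $\mathcal I_0:=\{\pi(\xi^*)\pi(\xi)\}$ satisfies the hypotheses of Lemme 1 of I.1.6 of \cite{Dix} (unitary invariance; heredity, i.e.\ $0\le T\le S\in\mathcal I_0\Rightarrow T\in\mathcal I_0$; closure under sums), concludes that $\mathcal I_0$ is the positive part of the ideal $span\,\mathcal I_0$, and then identifies $span\,\mathcal I_0=\mathcal I$ via the polarization identity $\pi(\xi)\pi(\eta^*)=\frac14\sum_{k=0}^3 i^k\pi(\xi+i^k\eta)\pi((\xi+i^k\eta)^*)$. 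You instead take an arbitrary $T\in\mathcal I_+$, write $T=\sum_i\pi(\xi_i)^*\pi(\eta_i)$, and bound $\bra\mu,T\mu\ket$ for $\|\pi(\mu)\|\le1$ directly, via Paschke's Cauchy--Schwarz in the finite direct-sum module together with the estimate $\bra\pi(\xi_i)\mu,\pi(\xi_i)\mu\ket=\bra\mu^*\xi_i^*,\mu^*\xi_i^*\ket\le\|\pi(\mu)\|^2\bra\xi_i,\xi_i\ket\le\bra\xi_i,\xi_i\ket$ --- which is exactly the estimate the paper uses inside its steps (ii) and (iii); both proofs then funnel through the same engine, Proposition \ref{Tbdd} applied to $T^{1/2}$, with injectivity of $\pi$ forcing $\zeta=\zeta^*$. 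Your version is self-contained (no appeal to Dixmier's lemma and no polarization), at the cost of the slightly fancier direct-sum Cauchy--Schwarz; the paper's version delegates the order-theoretic bookkeeping to a standard lemma and obtains, as a by-product, that $\mathcal I_+$ is hereditary in $\mathcal U(\mathcal A)_+$, a fact your argument does not deliver on its own since your $T$ must a priori lie in the span (though nothing downstream appears to need more than the lemma as stated plus Proposition \ref{Tbdd}). Your treatment of the other two claims is also sound: the $*$-ideal property follows, as you say, from Proposition \ref{lbdd} and Theorem \ref{Comm}, and your uw-density argument --- a central projection $z$ with $\mathcal I^{-uw}=z\,\mathcal U(\mathcal A)$, killed by two applications of the section's first lemma --- is a legitimate expansion of the paper's one-line citation and mirrors the technique of Lemma \ref{vNd}. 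One cosmetic point: the bound you produce is the fixed element $\bigl(\bigl\|\sum_i\bra\xi_i,\xi_i\ket\bigr\|\cdot\sum_i\bra\eta_i,\eta_i\ket\bigr)^{1/2}\in\frk Z_+$ (using that the square root is operator monotone and $\bra\mu,T\mu\ket\ge0$), which already suffices for Proposition \ref{Tbdd}; replacing it by a scalar multiple of $1$ is harmless but unnecessary.
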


\begin{proof}[\bf Proof]
It follows from Proposition \ref{lbdd} and Theorem \ref{Comm} that $\cal I$ is an uw dense 
$*$-ideal in $\cal U(\cal A)$. Let $\cal I_0=\{\pi(\xi^*)\pi(\xi)\,|\,
\xi\in\cal A_b\}.$ We verify that $\cal I_0$ satisfies the conditions of 
Lemme 1 of I.1.6 of \cite{Dix}.

(i)\;$\cal I_0$ is unitarily invariant in $\cal U(\cal A)$ since $\pi(\cal A_b)$
is an ideal in $\cal U(\cal A).$

(ii)\;Let $\eta\in\cal A_b$ and let $T\in \cal U(\cal A)_+$ with
$0\leq T\leq\pi(\eta^*)\pi(\eta).$ Then for each $\xi\in\cal A_b$ with
$\|\pi(\xi)\|\leq 1$ we get:
\begin{eqnarray*}
\bra T^{1/2}\xi, T^{1/2}\xi\ket &=& \bra T\xi,\xi\ket\leq
\bra\pi(\eta^*)\pi(\eta)\xi,\xi\ket\\
&=& \bra\eta\xi,\eta\xi\ket=\bra\xi^*\eta^*,\xi^*\eta^*\ket
\leq \|\pi(\xi^*)\|^2\bra\eta^*,\eta^*\ket
\leq \bra\eta,\eta\ket.
\end{eqnarray*}
By Proposition \ref{Tbdd}, $T^{1/2}=\pi(\mu)$ for some $\mu\in\cal A_b.$ That is,
$T=\pi(\mu^*)\pi(\mu)\in\cal I_0.$ 

(iii)\;If $S=\pi(\eta^*\eta)$ and $T=\pi(\mu^*\mu)$ are in $\cal I_0$, 
then for all $\xi\in\cal A_b$ with $\|\pi(\xi)\|\leq 1$ we have:
\begin{eqnarray*}
\bra (S+T)^{1/2}\xi,(S+T)^{1/2}\xi\ket &=& \bra S\xi,\xi\ket +
\bra T\xi,\xi\ket
=\bra\pi(\eta^*\eta)\xi,\xi\ket + \bra\pi(\mu^*\mu)\xi,\xi\ket\\
&\leq& \cdots\leq\bra\eta,\eta\ket + \bra\mu,\mu\ket.
\end{eqnarray*}
Again by Proposition \ref{Tbdd}, $(S+T)^{1/2}=\pi(\gamma)$ for some 
$\gamma\in\cal A_b,$ and so $S+T=\pi(\gamma^*\gamma)\in \cal I_0.$

Hence, $\cal I_0=\cal J_+$ the positive part of an ideal $\cal J$ and
$\cal J = span\cal I_0.$ Clearly, $\cal J\subseteq\cal I.$
On the other hand, if $\xi,\eta\in\cal A_b$ then
$$\pi(\xi)\pi(\eta^*)=\frac{1}{4}\sum_{k=0}^{3}i^k\pi(\xi+i^k\eta)
\pi((\xi+i^k\eta)^*)\;is\;in\;\cal J.$$
Thus, $\cal I\subseteq \cal J,$ and so they are equal. That is,
$$\{\pi(\xi^*)\pi(\xi)\,|\,\xi\in\cal A_b\}=\cal I_0=\cal J_+=\cal I_+.$$ 
\end{proof}

\begin {cor}\label{Ispan}
With the above hypotheses, 
$$\cal I:=span\{\pi(\xi)\pi(\eta)\,|\,\xi,\eta\in\cal A_b\}
=\{\pi(\xi)\pi(\eta)\,|\,\xi,\eta\in\cal A_b\}.$$
\end{cor}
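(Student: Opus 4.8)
The plan is to show that the span in the definition of $\cal I$ is in fact already achieved by single products $\pi(\xi)\pi(\eta)$, by using the previous lemma to absorb a finite sum into one such product. First I would recall from Lemma \ref{Ideal} that $\cal I = \cal J$ where $\cal J = \operatorname{span}\cal I_0$ and $\cal I_0 = \{\pi(\xi^*)\pi(\xi)\,|\,\xi\in\cal A_b\}$, and that $\cal I_+ = \cal I_0$. The key structural fact established there is that $\cal I_0$ is closed under addition: if $\pi(\eta^*\eta)$ and $\pi(\mu^*\mu)$ lie in $\cal I_0$, then so does their sum. Thus every positive element of $\cal I$ is \emph{already} a single product $\pi(\xi^*)\pi(\xi)$, not merely a finite sum of such.

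The main step is then to reduce an arbitrary element of $\cal I$ to a single product. Given $T = \sum_{k=1}^N \pi(\xi_k)\pi(\eta_k)$, I would first write $T = T_1 - T_2 + i(T_3 - T_4)$ using the decomposition of each $\pi(\xi_k)\pi(\eta_k)$ into its self-adjoint and skew parts; each $T_j$ is a finite sum of elements of $\cal I_0$, hence lies in $\cal I_0$ itself by the additivity just recalled, so $T_j = \pi(\zeta_j^*)\pi(\zeta_j) = \pi(\zeta_j^*\zeta_j)$ for some $\zeta_j \in \cal A_b$. It remains to combine these four single products into one. Since $\cal A_b$ is a $*$-algebra (Proposition \ref{comm2} and the Notation following Proposition \ref{comm4}) and $\pi$ is a $*$-homomorphism on $\cal A_b$ by Proposition \ref{lbdd}(4), a product $\pi(\xi)\pi(\eta) = \pi(\xi\eta) = \pi(\xi)\pi(\eta)$ where $\xi\eta\in\cal A_b$; so one can always rewrite any single term $\pi(\xi)\pi(\eta)$ with prescribed factors, and in particular a difference of two products $\pi(\zeta_1^*\zeta_1) - \pi(\zeta_2^*\zeta_2)$ can be folded back into the positive cone: the polarization identity displayed at the end of the proof of Lemma \ref{Ideal},
$$\pi(\xi)\pi(\eta^*)=\frac{1}{4}\sum_{k=0}^{3}i^k\pi(\xi+i^k\eta)\pi((\xi+i^k\eta)^*),$$
shows conversely that any single product $\pi(\xi)\pi(\eta)$ is a (complex) combination of four elements of $\cal I_0$; running this backwards, the four $\pi(\zeta_j^*\zeta_j)$ assemble into a single $\pi(\xi)\pi(\eta)$ with $\xi,\eta\in\cal A_b$.

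The cleanest route, which I expect to be the actual argument, is to observe that $\cal A_b$ is a unital-in-the-multiplier-sense $*$-algebra on which $\pi$ is a $*$-isomorphism onto $\pi(\cal A_b)$, an ideal in the von Neumann algebra $\cal U(\cal A)$. Then $\cal I = \pi(\cal A_b)^2$, and since $\pi(\cal A_b)$ is an algebra (closed under multiplication, as $\cal A_b$ is), $\pi(\cal A_b)^2 = \pi(\cal A_b)\cdot\pi(\cal A_b)$ consists of sums of products $\pi(\xi)\pi(\eta) = \pi(\xi\eta)$, each itself of the form $\pi(\zeta)$ for $\zeta=\xi\eta\in\cal A_b$. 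Writing any such $\pi(\zeta)$ using the Cauchy--Schwarz/factorization available in the $\cal A_b$-algebra — concretely, using that $\cal I_0 = \cal I_+$ and the additive closure of $\cal I_0$ — lets me express a finite sum as one term.

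The hard part will be the bookkeeping in the folding step: confirming that the difference and imaginary recombination of the four single products $\pi(\zeta_j^*\zeta_j)$ genuinely returns to a single product $\pi(\xi)\pi(\eta)$ with $\xi,\eta\in\cal A_b$, rather than leaving a residual sum. This rests entirely on the additivity of $\cal I_0$ proved in Lemma \ref{Ideal}(iii) together with the $*$-algebra structure on $\cal A_b$, so the corollary is essentially a formal consequence of the previous lemma; I anticipate no analytic difficulty, only the need to track the polarization carefully so that every intermediate factor lands back in $\cal A_b$.
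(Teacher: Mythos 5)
There is a genuine gap, and it sits exactly where you flagged it: the ``folding'' step. The polarization identity expresses a single product $\pi(\xi)\pi(\eta^*)$ as a complex combination of four positive elements; it does not run backwards. Once you have $T=\pi(\zeta_1^*\zeta_1)-\pi(\zeta_2^*\zeta_2)+i\,\pi(\zeta_3^*\zeta_3)-i\,\pi(\zeta_4^*\zeta_4)$ with four unrelated $\zeta_j$, nothing in Lemma \ref{Ideal} reassembles this into one product: part (iii) of that lemma closes the positive cone $\cal I_0$ under sums, but says nothing about differences or complex combinations, and the set of single products in a $*$-algebra is not in general closed under subtraction. (A smaller slip earlier: the self-adjoint and skew parts of $\pi(\xi_k)\pi(\eta_k)$ are self-adjoint but not positive, so they are not ``finite sums of elements of $\cal I_0$'' until you first polarize and then split into positive and negative parts using spectral projections and the ideal property of $\cal I$ --- fixable, but it does not rescue the final assembly.) Likewise your ``cleanest route'' observation that any $T\in\cal I$ equals $\pi(\zeta)$ for the single element $\zeta=\sum_k\xi_k\eta_k\in\cal A_b$ is correct but merely restates the problem: you now need to factor $\zeta$ (equivalently $T$) as a product of two elements, and no factorization device appears anywhere in your argument.

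The paper supplies exactly that device: polar decomposition in the von Neumann algebra $\cal U(\cal A)$. For $T\in\cal I$ write $T=V|T|$ with the partial isometry $V\in\cal U(\cal A)$; since $\cal I$ is a two-sided ideal, $|T|=V^*T\in\cal I_+$, so by Lemma \ref{Ideal} one has $|T|=\pi(\xi)\pi(\xi^*)$ for some $\xi\in\cal A_b$; and then part (2) of Proposition \ref{lbdd} --- the fact that $\cal A_b$ is stable under the action of all of $(\pi^{\prime}(\cal A))^{\prime}=\cal U(\cal A)$, with $S\pi(\xi)=\pi(S\xi)$ --- absorbs the partial isometry into the first factor: $T=V\pi(\xi)\pi(\xi^*)=\pi(V\xi)\pi(\xi^*)$, a single product. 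The two ingredients missing from your proposal are thus the polar decomposition (which converts the additive closure of the positive cone into a factorization of \emph{arbitrary} elements of $\cal I$) and the module property of $\cal A_b$ over $\cal U(\cal A)$; with them the corollary is a three-line argument, while without them the purely algebraic polarization bookkeeping cannot close.
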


\begin{proof}[\bf Proof]
Let $T\in\cal I$ and let $T=V|T|$ be the polar decomposition of $T$ in
$\cal U(\cal A).$ Then $|T|=V^*T\in\cal I_+.$ Hence, 
$$T=V|T|=V\pi(\xi)\pi(\xi^*)=\pi(V\xi)\pi(\xi^*)$$
by part (2) of Proposition \ref{lbdd}.
\end{proof}

\begin{rems*}
At this point we can define a ``trace'' on the ideal $\cal I$ in the usual
way:
$$\tau(\pi(\xi\eta)):=\bra\xi^*,\eta\ket,$$
as in the following theorem. However, in order to connect this up with
Dixmier's ``trace op\'eratorielle'' \cite{Dix} which includes unbounded
operators affiliated with $\frk Z$ in its range (and also includes
a notion of normal) we are forced to work a little harder. 
\end{rems*}

\begin{thm}\label{Tr}
Let $\cal A$ be a $\frk Z$-Hilbert algebra over the abelian von Neumann
algebra $\frk Z.$ Let $\cal I=\pi(\cal A_b^2)$ be the canonical uw dense
$*$-ideal in $\cal U(\cal A)= (\pi(\cal A))^{\prime\prime}$, the left
von Neumann algebra of $\cal A.$ Then, $\tau:\cal I\to\frk Z$ defined by
$$\tau(\pi(\xi\eta))=\bra\xi^*,\eta\ket$$
is a well-defined positive $\frk Z$-linear mapping which is:

\noindent(1)\;{\bf faithful}, i.e., $\tau(T)=0$ and 
$T\geq 0 \Longrightarrow T=0\;and,$

\noindent(2)\;{\bf tracial}, i.e., 
$\tau(TS)=\tau(ST)\;for\;T\in\cal U(\cal A)\;and\;
S\in\cal I.$
\end{thm}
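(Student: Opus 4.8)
The map $\tau:\cal I\to\frk Z$, $\tau(\pi(\xi\eta)):=\bra\xi^*,\eta\ket$, is a well-defined positive $\frk Z$-linear mapping that is faithful and tracial.

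Let me think about how I'd prove each piece.

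**Well-definedness.** The worry is that $\pi(\xi\eta) = \pi(\xi'\eta')$ but $\bra\xi^*,\eta\ket \neq \bra(\xi')^*,\eta'\ket$. Since $\pi$ is injective on $\cal A_b$ (Prop 4.11/lbdd(3)), $\pi(\xi\eta)=\pi(\xi'\eta')$ iff $\xi\eta = \xi'\eta'$ in $\cal A_b$. And the multiplication is $\xi\eta = \pi(\xi)\eta$. So I need: whenever $\pi(\xi)\eta = \pi(\xi')\eta'$, the $\frk Z$-inner products agree. I'd try to find a formula for $\bra\xi^*,\eta\ket$ purely in terms of the operator $T=\pi(\xi\eta)$. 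Natural candidate: relate $\bra\xi^*,\eta\ket$ to pairings $\bra a, Tb\ket$ for $a,b\in\cal A$, or express $\tau(T)$ via the supremum in Prop 4.13 (Tbdd). Actually the cleanest route: use Cor 5.2 (comm4) giving $\pi(\xi)\eta = \pi'(\eta)\xi$, and axiom (v) plus Lemma 4.6 (*map) to symmetrize. I expect $\bra\xi^*,\eta\ket$ is genuinely a function of $T$; I'd verify this by expanding $\bra a,\pi(\xi\eta)b\ket$ and showing it recovers $\bra\xi^*,\eta\ket$ after suitable manipulation.

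**$\frk Z$-linearity and positivity.** Linearity in the $\cal A_b$-arguments is immediate from bilinearity of $\bra\cdot,\cdot\ket$ over $\frk Z$ combined with the ideal structure; the subtlety is only additivity across different decompositions, which reduces to well-definedness. For positivity I'd use Lemma 5.4 (Ideal): every $T\in\cal I_+$ has the form $\pi(\xi^*)\pi(\xi)=\pi(\xi^*\xi)$ for $\xi\in\cal A_b$. Then $\tau(T)=\bra(\xi^*)^*,\xi\ket=\bra\xi,\xi\ket\geq 0$ by positivity of the $\frk Z$-valued inner product (Def 3.1(i)), so $\tau$ maps $\cal I_+$ into $\frk Z_+$.

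**Faithfulness (1).** If $T=\pi(\xi^*\xi)\geq 0$ and $\tau(T)=\bra\xi,\xi\ket=0$, then by Def 3.1(i) $\xi=0$, hence $T=\pi(0)=0$. This is essentially immediate once positivity is set up via the $\cal I_+$ normal form.

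**Traciality (2).** This is the main obstacle. I must show $\tau(TS)=\tau(ST)$ for $T\in\cal U(\cal A)$, $S\in\cal I$. First reduce $S$: by Cor 5.3 (Ispan) write $S=\pi(\xi)\pi(\eta)=\pi(\xi\eta)$, and further it suffices to handle $S=\pi(\zeta^*)\pi(\zeta)$ by polarization since both sides are $\frk Z$-linear in $S$. Then $TS=T\pi(\xi)\pi(\eta)=\pi(T\xi)\pi(\eta)$ by lbdd(2), so $\tau(TS)=\bra(T\xi)^*,\eta\ket$. Meanwhile $ST=\pi(\xi)\pi(\eta)T$; I'd move $T$ inside using adjointability $\pi(\eta)T=(T^*\pi(\eta)^*)^* = (T^*\pi(\eta^*))^*$ together with the *-structure from Prop 4.11(4), reducing $\tau(ST)$ to another inner product. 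The key identity I expect to need is that for $T\in\cal U(\cal A)=(\pi'(\cal A_b))'$ (Commutation Theorem 5.8), $T$ commutes with all $\pi'(\zeta)$, which lets me transfer $T$ between the two factors and convert left-boundedness into right-boundedness via $\cal A_l=\cal A_r$. I'd compute both sides as $\frk Z$-inner products and match them using Lemma 4.6, axioms (v)–(vi), and Cor 5.5 (comm4(2)). The delicate point is justifying each manipulation on $\cal{H}_\cal A$ rather than just on $\cal A$, invoking the unique extension property throughout. I anticipate the trace identity ultimately collapses to symmetry of the form $\bra(T\xi)^*,\eta\ket = \bra\xi^*, (\text{something involving }T)\eta\ket$ that mirrors the classical Hilbert-algebra computation, with $T$'s commutation with the right regular representation doing the essential work.
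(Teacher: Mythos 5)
Your treatment of positivity, faithfulness, and traciality tracks the paper's proof closely. Positivity and faithfulness go exactly as you say, via the normal form $\cal I_+=\{\pi(\xi^*)\pi(\xi)\,|\,\xi\in\cal A_b\}$ from Lemma \ref{Ideal}. For traciality the paper carries out precisely the manipulations you sketch: $\tau(TS)=\bra(T\xi)^*,\eta\ket$ using $\pi(T\xi)=T\pi(\xi)$ from Proposition \ref{lbdd}(2), then $\bra(T\xi)^*,\eta\ket=\bra T\xi,\eta^*\ket^*=\bra\xi,T^*(\eta^*)\ket^*=\bra\xi^*,(T^*(\eta^*))^*\ket=\tau(\pi(\xi)[T^*\pi(\eta^*)]^*)=\tau(\pi(\xi)\pi(\eta)T)=\tau(ST)$, where the star-symmetry $\bra\xi,\eta\ket^*=\bra\xi^*,\eta^*\ket$ is Lemma \ref{*map} and the commutation of $T$ with right multiplications does the work, as you anticipated. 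Your polarization reduction to $S=\pi(\zeta^*)\pi(\zeta)$ is unnecessary (the chain above handles arbitrary $S=\pi(\xi\eta)$ directly) but harmless.

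The genuine gap is in well-definedness. You correctly reduce, via injectivity of $\pi$ on $\cal A_b$, to showing that $\bra\xi^*,\eta\ket$ depends only on the product element $\xi\eta$ and not on its factorization, but none of the routes you float closes this. The supremum of Proposition \ref{Tbdd} recovers only $\bra\eta,\eta\ket$ for positive operators $\pi(\eta^*\eta)$, and expanding $\bra a,\pi(\xi\eta)b\ket=\bra ab^*,\xi\eta\ket$ merely re-identifies the element $\xi\eta$, which injectivity already gave you; it does not produce $\bra\xi^*,\eta\ket$. The missing device, which the paper supplies, is a net $\{\xi_\alpha\}$ in $\cal A_b$ with $\pi^{\prime}(\xi_\alpha)\to 1$ ultraweakly (available because $\pi^{\prime}(\cal A_b)^{-uw}=(\pi(\cal A))^{\prime}\ni 1$ by Corollary \ref{comm1}). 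Then, using $\pi(\xi^*)\xi_\alpha=\pi^{\prime}(\xi_\alpha)\xi^*$ from Proposition \ref{comm4}(1), $\pi(\xi^*)=\pi(\xi)^*$ from Proposition \ref{lbdd}(4), and the convergence criterion of Proposition \ref{conv}, one gets $\bra\xi^*,\eta\ket=\mathrm{uw}\lim_{\alpha}\bra\pi^{\prime}(\xi_\alpha)\xi^*,\eta\ket=\mathrm{uw}\lim_{\alpha}\bra\xi_\alpha,\xi\eta\ket$, a formula manifestly depending only on $\xi\eta$ and hence only on $T$; $\frk Z$-linearity then follows at once. Without this approximate-identity limit your well-definedness step remains an announced hope rather than a proof.
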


\begin{proof}[\bf Proof]
To see that $\tau$ is well-defined, fix a net $\{\xi_{\alpha}\}$ in
$\cal A_b$ with  $\pi^{\prime}(\xi_{\alpha})\to 1$ ultraweakly. Let 
$T=\pi(\xi\eta)\in\cal I$. Then the element $\xi\eta\in\cal A_b^2$ is 
unique since $\pi$ is one-to-one (of course, its representation as a product
is not unique). Now,
$$\tau(T)=\bra\xi^*,\eta\ket=uw\lim_{\alpha}
\bra\pi^{\prime}(\xi_{\alpha})
\xi^*,\eta\ket=uw\lim_{\alpha}\bra\xi_{\alpha},\xi\eta\ket.$$
That is, $\tau(T)$ is uniquely determined by $T$. Thus, $\tau(T)$ is
well-defined and $\frk Z$-linear.

If $T\in\cal I_+$, then $T=\pi(\xi^*\xi)$ by Lemma \ref{Ideal} and  
$\tau(T)=\bra\xi,\xi\ket\geq 0$ so that $\tau$ is positive. Clearly,
$\tau(T)=0 \Longrightarrow \xi=0 \Longrightarrow \pi(\xi)=0 \Longrightarrow
T=0.$ That is, $\tau$ is faithful.

To see that $\tau$ is tracial, let $S=\pi(\xi\eta)\in\cal I$ and let
$T\in \cal U(\cal A)$. Then,
\begin{eqnarray*}
\tau(TS) &=& \tau(T\pi(\xi)\pi(\eta))=\tau(\pi(T\xi)\pi(\eta))
=\bra(T\xi)^*,\eta\ket = \bra T\xi,\eta^*\ket^*\\
&=& \bra\xi,T^*(\eta^*)\ket^* = \bra\xi^*,(T^*(\eta^*))^*\ket
= \tau(\pi(\xi)\pi(T^*(\eta^*))^*) = \tau(\pi(\xi)[T^*\pi(\eta^*)]^*)\\
&=& \tau(\pi(\xi)\pi(\eta)T) = \tau(ST).
\end{eqnarray*}
\end{proof}

\section{ TRACES OP\'ERATORIELLES}

We recall here J. Dixmier's definition of a ``$\frk Z$-trace''
\cite{Dix}. We begin by paraphrasing (and translating) Dixmier's discussion
of the formal set-up. 

Let $\frk A$ be a von Neumann algebra and let $\frk Z$ be a von Neumann
subalgebra of the centre of $\frk A.$ In this section we fix a locally 
compact Hausdorff space $X$, a positive measure $\nu$ on $X$, and an 
isomorphism of $L^\infty(X,\nu)$ with $\frk Z$ (see th\'eor\`eme 1 of I.7
of \cite{Dix}). Then $\frk Z_+$ is embedded in the set, $\hat{\frk Z}_+$,
of nonnegative measureable functions on $X$ which are not necessarily 
finite-valued. Of course, we identify functions in $\hat{\frk Z}_+$ which
are equal $\nu$-almost everywhere. As mentioned before, any bounded 
increasing net in $\frk Z_+$ has a supremum in $\frk Z_+$. It is clear 
that the same thing holds for the set $\hat{\frk Z}_+$.

\begin{defn}\label{ZTr}
With the above notation, we define a $\frk Z$-trace on $\frk A_+$
to be a mapping\\$\phi:\frk A_+\to\hat{\frk Z}_+$ which satisfies:

(i)\;If $S,T\in\frk A_+$ then $\phi(S+T)=\phi(S)+\phi(T),$

(ii)\;If $S\in\frk A_+$ and $T\in\frk Z_+$ then
$\phi(TS)=T\phi(S),$ and

(iii)\;If $S\in\frk A_+$ and $U$ is a unitary in $\frk A$ then
$\phi(USU^*)=\phi(S).$

\noindent We call $\phi$ {\bf faithful} if $S\in\frk A_+\;and\;\phi(S)=0
\Longrightarrow S=0.$

\noindent We call $\phi$ {\bf finite} if $\phi(S)\in\frk Z_+$ for all 
$S\in\frk A_+.$

\noindent We call $\phi$ {\bf semifinite} if for each nonzero $S\in\frk A_+$ 
there exists a nonzero $T\in\frk A_+$ with $T\leq S$ and $\phi(T)\in\frk Z_+.$

\noindent We call $\phi$ {\bf normal} if for every bounded increasing net 
$\{S_{\alpha}\}$ in $\frk A_+$ with supremum $S\in\frk A_+$, $\phi(S)$ is the 
supremum of the increasing net $\{\phi(S_{\alpha})\}$ in $\hat{\frk Z}_+.$
\end{defn}

We now show that if $\cal A$ is a $\frk Z$-Hilbert algebra then there is a 
natural $\frk Z$-trace on the von Neumann algebra $\cal U(\cal A)$ constructed
in the usual way.

\begin{thm}\label{HalgtoDix}(cf., Th\'eor\`eme 1, I.6.2 of \cite{Dix})
Let $\cal A$ be a $\frk Z$-Hilbert algebra over the abelian von Neumann
algebra $\frk Z$ and let $\tau:\cal I=\pi(\cal A_b^2)\to\frk Z$ be the 
tracial mapping defined in Theorem \ref{Tr}. Then $\tau$ restricted to $\cal I_+$
extends to a mapping $\bar\tau:\cal U(\cal A)_+\to\hat{\frk Z}_+$ via:
$$\bar\tau(T)=\sup\{\tau(S)\,|\,S\in\cal I_+,\,S\leq T\}.$$
This extension is a faithful, normal, semifinite $\frk Z$-trace in
the sense of Dixmier and moreover, $$\{T\in\cal U(\cal A)_+\,|\,\bar\tau(T)\in
\frk Z_+\}=\cal I_+.$$
Clearly, $\bar\tau$ is the unique normal extension of $\tau.$
\end{thm}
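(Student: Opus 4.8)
The plan is to mimic Dixmier's passage from a Hilbert algebra to its trace (Th\'eor\`eme 1, I.6.2 of \cite{Dix}), replacing $\comp$ throughout by $\frk Z$ and numerical suprema by suprema in $\frk Z_+$ and $\hat{\frk Z}_+$. First I would record that $\tau$ is monotone on $\cal I_+$: if $0\le R\le S$ with $R,S\in\cal I_+$ then $S-R\in\cal I_+$ (since $\cal I$ is an ideal and $S-R\ge0$), so $\tau(S)=\tau(R)+\tau(S-R)\ge\tau(R)$ by positivity of $\tau$ (Theorem \ref{Tr}). Hence $T$ is the largest member of $\{S\in\cal I_+:S\le T\}$ when $T\in\cal I_+$, so $\bar\tau$ genuinely extends $\tau|_{\cal I_+}$ and is order preserving on $\cal U(\cal A)_+$. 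Axioms (ii) and (iii) of Definition \ref{ZTr} transfer from the $\frk Z$-linearity and traciality of $\tau$ together with the $*$-ideal property of $\cal I$ (Lemma \ref{Ideal}): for (iii) the map $R\mapsto U^*RU$ is an order bijection of $\{R\in\cal I_+:R\le USU^*\}$ onto $\{R\in\cal I_+:R\le S\}$ preserving $\tau$, and (ii) follows similarly, handling non-invertible $z\in\frk Z_+$ by approximation. For additivity (i) the inequality $\bar\tau(S)+\bar\tau(T)\le\bar\tau(S+T)$ is immediate from $R_1+R_2\le S+T$; the reverse uses the standard von Neumann algebra decomposition of any $0\le R\le S+T$ as $R=R_1+R_2$ with $0\le R_1\le S$ and $0\le R_2\le T$, after which the heredity of $\cal I_+$ (Lemma \ref{Ideal}(ii), via $R_1,R_2\le R$) gives $R_1,R_2\in\cal I_+$ and $\tau(R)=\tau(R_1)+\tau(R_2)\le\bar\tau(S)+\bar\tau(T)$. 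Semifiniteness follows by producing, for $0\ne S\ge0$, a nonzero spectral projection $P$ with $\epsilon P\le S$, choosing (by the first lemma of Section 4) $a\in\cal A$ with $P\pi(a)\ne0$, and noting that $P\pi(a)\pi(a^*)P\in\cal I_+$ is nonzero and bounded above by a multiple of $S$.

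Next I would prove $\{T\in\cal U(\cal A)_+:\bar\tau(T)\in\frk Z_+\}=\cal I_+$. The inclusion $\supseteq$ is clear from $\bar\tau|_{\cal I_+}=\tau$. For $\subseteq$, suppose $\bar\tau(T)=z\in\frk Z_+$. For $\xi\in\cal A_b$ with $\|\pi(\xi)\|\le1$, Proposition \ref{lbdd}(2) gives $T\pi(\xi)=\pi(T\xi)$, so $\pi(\xi^*)T\pi(\xi)=\pi(\xi^*(T\xi))$ and $\tau(\pi(\xi^*)T\pi(\xi))=\bra\xi,T\xi\ket$; by traciality this equals $\tau(T^{1/2}\pi(\xi)\pi(\xi^*)T^{1/2})$, and the latter operator lies in $\cal I_+$ and is $\le T$, whence $\bra\xi,T\xi\ket\le z$. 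Since $\bra T^{1/2}\xi,T^{1/2}\xi\ket=\bra\xi,T\xi\ket$, Proposition \ref{Tbdd} forces $T^{1/2}=\pi(\mu)$ for a self-adjoint $\mu\in\cal A_b$, so $T=\pi(\mu^*\mu)\in\cal I_+$ by Lemma \ref{Ideal}.

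The heart of the argument, and what I expect to be the main obstacle, is normality; I would route it through the reformulation $\bar\tau(S)=\sup\{\bra\xi,S\xi\ket:\xi\in\cal A_b,\ \|\pi(\xi)\|\le1\}$ valid for all $S\in\cal U(\cal A)_+$. The inequality $\sup_\xi\bra\xi,S\xi\ket\le\bar\tau(S)$ is the computation of the previous paragraph (each $\bra\xi,S\xi\ket$ is the $\tau$-value of an element of $\cal I_+$ dominated by $S$); the reverse reduces, since $\bra\xi,R\xi\ket\le\bra\xi,S\xi\ket$ whenever $R\le S$, to the identity $\tau(R)=\sup_\xi\bra\xi,R\xi\ket$ for $R=\pi(\mu^*\mu)\in\cal I_+$. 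For this I would take an increasing approximate unit $\pi(\xi_\delta)\nearrow1$ of positive contractions in the uw-dense ideal $\pi(\cal A_b)$; using $\pi(\mu)\xi_\delta=\pi^\prime(\xi_\delta)\mu$ (Proposition \ref{comm4}) and $\pi^\prime(\xi_\delta)=J\pi(\xi_\delta)J\nearrow1$, one gets $\bra\xi_\delta,R\xi_\delta\ket=\bra\mu,\pi^\prime(\xi_\delta)^2\mu\ket\nearrow\bra\mu,\mu\ket=\tau(R)$ by normality of the vector functional $\bra\mu,\cdot\,\mu\ket$ (Proposition \ref{conv}). With the reformulation in hand, normality of $\bar\tau$ becomes the interchange of suprema $\sup_\xi\sup_\beta=\sup_\beta\sup_\xi$ in $\hat{\frk Z}_+$ applied to a bounded increasing net $S_\beta\nearrow S$, each $\bra\xi,\cdot\,\xi\ket$ being normal. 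The delicate points throughout are that these suprema live in $\hat{\frk Z}_+$ rather than in $[0,\infty]$ and that convergence must be controlled in the topology of Proposition \ref{density}; this is exactly where the scalar proof of \cite{Dix} needs the most adaptation.

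Finally, for uniqueness of the normal extension I would show that every $T\in\cal U(\cal A)_+$ is the supremum of $\{S\in\cal I_+:S\le T\}$. This set is upward directed, since $S_1\vee S_2\le S_1+S_2\in\cal I_+$ forces $S_1\vee S_2\in\cal I_+$ by heredity; its supremum $T_0$ satisfies $T_0\le T$, and if $T-T_0\ne0$ then semifiniteness yields a nonzero $R\in\cal I_+$ with $R\le T-T_0$, whence $S+R\le T$ and so $S\le T_0-R$ for every $S$ in the set, giving $T_0\le T_0-R$, a contradiction. Thus $T_0=T$, and any normal $\frk Z$-trace agreeing with $\tau$ on $\cal I_+$ must then coincide with $\bar\tau$ on all of $\cal U(\cal A)_+$.
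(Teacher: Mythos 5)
Your additivity step rests on a false claim. You invoke ``the standard von Neumann algebra decomposition of any $0\le R\le S+T$ as $R=R_1+R_2$ with $0\le R_1\le S$ and $0\le R_2\le T$,'' but the positive cone of a von Neumann algebra has this Riesz decomposition property only in the abelian case. Concretely, in $M_2(\comp)$ take $S=e_{11}$, $T=e_{22}$, and let $R$ be the rank-one projection onto $\comp\,(e_1+e_2)$, so $R\le S+T=1$; any $0\le R_1\le e_{11}$ satisfies $\bra R_1e_2,e_2\ket=0$, hence (by positivity) $R_1e_2=0$ and $R_1=\lambda e_{11}$, and similarly $R_2=\mu e_{22}$, so $R_1+R_2$ is diagonal and can never equal $R$. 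Your heredity observation (Lemma \ref{Ideal}(ii)) cannot rescue this: the issue is not whether $R_1,R_2$ lie in $\cal I_+$ but whether they exist at all. The correct substitute, which is what the paper uses, is the conjugation form of the decomposition: write $S^{1/2}=A(S+T)^{1/2}$ and $T^{1/2}=B(S+T)^{1/2}$ with $E=A^*A+B^*B$ the range projection of $S+T$; then for $R\in\cal I_+$ with $R\le S+T$ one has $ARA^*\le S$ and $BRB^*\le T$, both in $\cal I_+$ since $\cal I$ is an ideal, and \emph{traciality} of $\tau$ (Theorem \ref{Tr}(2)) yields $\tau(R)=\tau(ER)=\tau(A^*AR)+\tau(B^*BR)=\tau(ARA^*)+\tau(BRB^*)\le\bar\tau(S)+\bar\tau(T)$. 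So additivity is recoverable, but not by the route you state.

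A secondary gap: your treatment of axiom (ii) ``by approximation'' for non-invertible $z_0\in\frk Z_+$ is circular as ordered, since approximating $z_0$ from below by $z_0e_n$ (with $e_n$ the spectral projection where $z_0\ge 1/n$) needs either normality --- which you only prove afterwards --- or else additivity together with a pointwise argument on $X$ handling the null set of $z_0$, which is exactly the paper's route; as sketched it does not stand alone. The remainder is sound: part (vi) coincides with the paper's argument via Proposition \ref{Tbdd}, the uniqueness/semifiniteness arguments are fine, and your normality proof via the reformulation $\bar\tau(S)=\sup\{\bra\xi,S\xi\ket:\xi\in\cal A_b,\ \|\pi(\xi)\|\le1\}$, with the interchange of suprema in $\hat{\frk Z}_+$, is a genuinely different and attractive alternative to the paper's two-stage argument (normality on $\cal I_+$ via Kaplansky's Cauchy--Schwarz and a $weak^*$-compactness argument, then cutting the general net by the characteristic functions $z_N$ of $\{x\,|\,\sup_\alpha\bar\tau(T_\alpha)(x)\le N\}$), provided you verify the $J$-conjugation and convergence details using Propositions \ref{conv} and \ref{comm4}. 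Repair the additivity step and reorder (or localize) the $\frk Z$-linearity argument, and the proposal goes through.
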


\begin{proof}[\bf Proof]
This proof is similar in outline to Th\'eor\`eme 1, I.6.2 of \cite{Dix}.
However, there are many complications (some subtle) in this degree of
generality. At least it is clear that $\bar\tau$ extends $\tau.$

(i) {\bf $\bar\tau$ is additive.} Trivially, we have for $T_1,T_2\in\cal U(\cal A)_+$
$$\bar\tau(T_1)+\bar\tau(T_2)\leq \bar\tau(T_1+T_2).$$
On the other hand, let $T=T_1+T_2$ for $T_1,T_2\in\cal U(\cal A)$. Then by
p. 86 of \cite{Dix}, $T_1^{1/2}=AT^{1/2}$ and $T_2^{1/2}=BT^{1/2}$ for
$A,B\in\cal U(\cal A)$ and $E=A^*A+B^*B$ is the range projection of $T$.
Now, if $0\leq S\leq T$ with $S\in\cal M_+$ then
$$ASA^*\leq ATA^*=(AT^{1/2})(AT^{1/2})^*=T_1^{1/2}T_1^{1/2}=T_1,$$
and similarly, $BSB^*\leq T_2.$ Since $\cal I$ is an ideal, $ASA^*$ and $BSB^*$
are in $\cal I_+$. Thus, since $ES=S$,
\begin{eqnarray*}
\tau(S) &=& \tau(ES)=\tau(A^*AS) +\tau(B^*BS)\\
&=& \tau(ASA^*)+\tau(BSB^*)\\
&\leq& \bar\tau(T_1)+\bar\tau(T_2).
\end{eqnarray*}
Taking the supremum over all such $S$ yields the other inequality:
$$\bar\tau(T)\leq\bar\tau(T_1)+\bar\tau(T_2).$$

(ii) {\bf $\bar\tau$ is $\frk Z_+$-linear.} Unlike the scalar case this is not
completely trivial.

If $E$ is a projection in $\frk Z_+$ and $T\in\cal U(\cal A)_+$, then one
easily checks that:
$$(S\in \cal I_+\;and\;S\leq ET)\Longleftrightarrow(S=ER\;for\;R\in\cal I_+\;
with\;R\leq T).$$
Applying the definition of $\bar\tau$, we get $\bar\tau(ET)=E\bar\tau(T).$

Now, if $z_0\in\frk Z_+$ and if there exists $z_1\in\frk Z_+$ with 
$z_1z_0=E$ the range projection of $z_0$ then again one shows that:
$$(S\in\cal I_+\;and\;S\leq z_0T)\Longleftrightarrow(S=z_0R\;for\;R\in\cal I_+\;
with\;R\leq T).$$
Hence, $\bar\tau(z_0T)=z_0\bar\tau(T)$ if $z_0$ is bounded away from $0$
on its range projection.

Now for an arbitrary $z_0\in\frk Z_+$ and $T\in\cal U(\cal A)_+$ we work
pointwise on $X$ where we have identified $\frk Z=L^\infty(X.\nu).$ So, fix
$x\in X$. There are two cases. If $z_0(x)=0$, then 
$[z_0\bar\tau(T)](x)=z_0(x)\bar\tau(T)(x)=0.$ On the other hand, if $S\leq z_0T$
and $S\in\cal I_+$ then $S=ES$ where $E$, the range projection of $z_0$, 
satisfies $E(x)=0,$ then:
$$\tau(S)(x)=\tau(ES)(x)=(E\tau(S))(x)=E(x)\tau(S)(x)=0.$$
Taking the supremum over such $S$ we get $\bar\tau(z_0T)(x)=0$ That is,
$$if\;z_0(x)=0,\; then\; \bar\tau(z_0T)(x)=[z_0\bar\tau(T)](x)=0.$$

In the second case, $z_0(x)>0,$ so that we can write $z_0=z_1+z_2$ in 
$\frk Z_+$ where $z_1$ is bounded away from $0$ on its support (which
contains $x$) and $z_2(x)=0.$ Then:
\begin{eqnarray*}
\bar\tau(z_0T)(x) &=& [\bar\tau(z_1T)+\bar\tau(z_2T)](x)
=[z_1\bar\tau(T)+\bar\tau(z_2T)](x)\\
&=& z_1(x)\bar\tau(T)(x)+\bar\tau(z_2T)(x)
= z_0(x)\bar\tau(T)(x)+0
= [z_0\bar\tau(T)](x).
\end{eqnarray*}
Hence, $\bar\tau(z_0T)=z_0\bar\tau(T).$ 

(iii) {\bf $\bar\tau$ is unitarily invariant.} This follows easily from
Theorem \ref{Tr} part (2).

(iv) {\bf $\bar\tau$ is faithful.} If $\bar\tau(T)=0$, then the only 
$S\in\cal I_+$ with $S\leq T$ is $S=0.$ However, if $\{\pi(\xi_{\alpha})\}$
is a net in $\pi(\cal A_b)$ converging ultraweakly to $1$ and having norm
$\leq 1$ then:
$$0\leq T^{1/2}\pi(\xi_{\alpha}\xi_{\alpha}^*)T^{1/2} \leq T.$$
But, $T^{1/2}\pi(\xi_{\alpha}\xi_{\alpha}^*)T^{1/2}$ is in $\cal I_+$ and
converges ultraweakly to $T$. Hence, $T=0.$

(v) {\bf $\bar\tau$ is semifinite.} This is the same argument as in part (iv).

(vi) {\bf $\{T\in\cal U(\cal A)_+\,|\,\bar\tau(T)\in\frk Z_+\}=\cal I_+.$}
Clearly, $\cal I_+$ is contained in this set. So, suppose $\bar\tau(T)
=z\in\frk Z_+.$ We apply Proposition \ref{Tbdd}. That is, let $\xi\in \cal A_b$
satisfy $\|\pi(\xi)\|\leq 1.$ Then,
$$\pi\left[(T^{1/2}(\xi))(T^{1/2}(\xi))^*\right]=T^{1/2}\pi(\xi\xi^*)T^{1/2}
\leq T$$
and so,
$$\tau\left(\pi\left[(T^{1/2}(\xi))(T^{1/2}(\xi))^*\right]\right)\leq 
\bar\tau(T)=z.$$
But,
$$\tau\left(\pi\left[(T^{1/2}(\xi))(T^{1/2}(\xi))^*\right]\right)=
\bra (T^{1/2}(\xi))^*,(T^{1/2}(\xi))^*\ket=\bra T^{1/2}(\xi),T^{1/2}(\xi)\ket.$$
Therefore, by Proposition \ref{Tbdd}, $T^{1/2}=\pi(\eta)$ for some 
$\eta\in\cal A_b$ and so $T=\pi(\eta^*\eta)\in\cal I_+.$

(vii) {\bf $\bar\tau$ is normal.} We first show that $\bar\tau$ satisfies the
normality condition when the relevant operators are all in $\cal I_+.$ That is,
suppose that $\{\pi(\xi_{\alpha}^*\xi_{\alpha})\}$ is an increasing net in
$\cal I_+$ with least upper bound $\pi(\xi^*\xi)$ also in $\cal I_+.$ 
Now for any $\eta\in\cal A_b$ we have by the polar decomposition theorem
that $|\pi(\eta)|=V\pi(\eta)=\pi(V\eta)$ and that $V\eta\in\cal A_b.$ Hence,
for any $\eta\in\cal A_b,$ 
$$\pi(\eta^*\eta)=|\pi(\eta)|^2=\pi((V\eta)^2)\;and\;\pi(V\eta)\geq 0.$$
Thus we can assume that $\xi_{\alpha}$ and $\xi$ are self-adjoint
and that $\pi(\xi_{\alpha})\geq 0$ and $\pi(\xi)\geq 0.$ Then,
$\pi(\xi_{\alpha})=(\pi(\xi_{\alpha}^*\xi_{\alpha}))^{1/2}$ and
$\pi(\xi)=(\pi(\xi^*\xi))^{1/2}.$

Now, $\pi(\xi_{\alpha}^2)\to\pi(\xi^2)$ in the strong operator topology
by Vigier's theorem and by the proof of Th\'eor\`eme 1 of I.6.2 of \cite{Dix}
we also have $\pi(\xi_{\alpha})\to\pi(\xi)$ in the strong operator topology.
As the square root function is operator monotone, this implies that
$\pi(\xi)=\sup_{\alpha}\pi(\xi_{\alpha}).$

It easily follows that $\|\xi_{\alpha}\|\leq \|\xi\|$ for all $\alpha.$
Since $\cal{H}_{\cal A}$ is a dual space, we can find a subnet $\{\xi_{\beta}\}$
which converges weak$*$ to some $\zeta\in \cal{H}_{\cal A}.$ To see that
$\zeta=\xi$, let $\lambda,\mu\in\cal A_b$ then by Proposition \ref{conv}:
$$\bra\zeta,\lambda\mu\ket=\lim_{\beta}\bra\xi_{\beta},\lambda\mu\ket
=\lim_{\beta}\bra\pi(\xi_{\beta})\mu^*,\lambda\ket=
\bra\pi(\xi)\mu^*,\lambda\ket=\bra\xi,\lambda\mu\ket.$$
Thus, $\zeta$ and $\xi$ define the same $\frk Z$-valued mapping
on $\cal A_b^2\supseteq\cal A^2$ and therefore the same mapping on $\cal A$.
That is, $\zeta=\xi.$

Now, since $\tau$ is positive we have
$$\tau(\pi(\xi^*\xi))\geq\sup_{\alpha}\tau(\pi(\xi_{\alpha}^*\xi_{\alpha})).$$
On the other hand, by Kaplansky's Cauchy-Schwarz inequality \cite{K}
(which holds since $\frk Z$ is abelian) we have:
$$|\bra\xi_{\beta},\xi\ket|\leq
\bra\xi_{\beta},\xi_{\beta}\ket^{1/2}\bra\xi,\xi\ket^{1/2}\;for\;all\;\beta.$$
Since $\xi$ and $\xi_{\beta}$ are self-adjoint it is seen
that $\bra\xi_{\beta},\xi\ket$ is also self-adjoint and so in fact
$$\bra\xi_{\beta},\xi\ket\leq
\bra\xi_{\beta},\xi_{\beta}\ket^{1/2}\bra\xi,\xi\ket^{1/2}\;for\;all\;\beta.$$
Hence,
\begin{eqnarray*}
\bra\xi,\xi\ket &=& uw\lim_{\beta}\bra\xi_{\beta},\xi\ket\leq\sup_{\beta}
\bra\xi_{\beta},\xi_{\beta}\ket^{1/2}\bra\xi,\xi\ket^{1/2}\\
&\leq&(\sup_{\alpha}\bra\xi_{\alpha},\xi_{\alpha}\ket^{1/2})
\bra\xi,\xi\ket^{1/2}.
\end{eqnarray*}
Since $\frk Z$ is abelian this implies that 
$$\bra\xi,\xi\ket^{1/2}\leq\sup_{\alpha}\bra\xi_{\alpha},\xi_{\alpha}\ket^{1/2}
\;and\;so\;\bra\xi,\xi\ket\leq\sup_{\alpha}\bra\xi_{\alpha},\xi_{\alpha}\ket.$$
That is,
$$\tau(\pi(\xi^*\xi))\leq
\sup_{\alpha}\tau(\pi(\xi_{\alpha}^*\xi_{\alpha})),\;and\;so\;they\;are\;
equal.$$

Now, we let $\{T_{\alpha}\}$ be an increasing net in $\cal U(\cal A)_+$ with
supremum $T\in\cal U(\cal A)_+.$ We define 
$f=\sup_{\alpha}(\bar\tau(T_{\alpha}))$, in $\hat{\frk Z}_+.$ 
Let $E=\{x\in X\,|\,f(x)=+\infty\}.$ 
Since $\bar\tau(T_{\alpha})\leq\bar\tau(T)$ for all $\alpha$,
we have $f\leq\bar\tau(T).$ Hence $f$ agrees with $\bar\tau(T)$ on the 
measureable set $E.$ The complement of $E$ is the countable union of the
measureable sets $E_N:=\{x\in X\,|\,f(x)\leq N\}$, so it suffices to see that
$f$ agrees with $\bar\tau(T)$ (almost everywhere) on each $E_N.$ To this end,
let $z_N$ be the characteristic function of $E_N.$ Clearly, $z_N\in
\frk Z_+$ and $z_{N}T=\sup_{\alpha}z_{N}T_{\alpha}$ in $\cal U(\cal A)_+.$
Now, for each $\alpha$, 
$$\bar\tau(z_{N}T_{\alpha})=z_{N}\bar\tau(T_{\alpha})\leq z_{N}f\leq Nz_N\in
\frk Z_+.$$
So, by an earlier part of the proof, there exists $\xi_{\alpha}=\xi_{\alpha}^*
\in\cal A_b$ with 
$z_{N}T_{\alpha}=\pi(\xi_{\alpha}^*\xi_{\alpha})$ and 
$\bra\xi_{\alpha},\xi_{\alpha}\ket\leq Nz_N.$
Now, for each $\eta\in\cal A_b$ with $\|\pi(\eta)\|\leq 1$ we have:
\begin{eqnarray*}
\bra z_{N}T^{1/2}\eta,z_{N}T^{1/2}\eta\ket &=& \bra z_{N}T\eta,\eta\ket
=\lim_{\alpha}\bra z_{N}T_{\alpha}\eta,\eta\ket=
\lim_{\alpha}
\bra\xi_{\alpha}\eta,\xi_{\alpha}\eta\ket\\
&=& \lim_{\alpha}\bra\eta^*\xi_{\alpha},\eta^*\xi_{\alpha}\ket=
\lim_{\alpha}\bra\pi(\eta\eta^*)\xi_{\alpha},\xi_{\alpha}\ket
\leq \sup_{\alpha}\bra\xi_{\alpha},\xi_{\alpha}\ket\leq Nz_N.
\end{eqnarray*}
Therefore, by Proposition \ref{Tbdd} there exists a $\zeta\in\cal A_b$ with
$z_{N}T^{1/2}=\pi(\zeta).$ Moreover,
$$\sup_{\alpha}\pi(\xi_{\alpha}^*\xi_{\alpha})=\sup_{\alpha}z_{N}T_{\alpha}=
z_{N}T=\pi(\zeta^*\zeta).$$
Hence by the first part of the proof of normality of $\bar\tau$,
$$\bar\tau(z_{N}T)=\bar\tau(\pi(\zeta^*\zeta))=\sup_{\alpha}\bar\tau(\pi(
\xi_{\alpha}^*\xi_{\alpha}))=\sup_{\alpha}\bar\tau(z_{N}T_{\alpha}).$$
That is, for $x\in E_N$ we have:
\begin{eqnarray*}
f(x) &=& (z_{N}f)(x)=(z_{N}\sup_{\alpha}\bar\tau(T_{\alpha}))(x)\\
&=& (\sup_{\alpha}\bar\tau(z_{N}T_{\alpha}))(x)=(\bar\tau(z_{N}T))(x)\\
&=& (z_{N}\bar\tau(T))(x)=\bar\tau(T)(x)\; \text {as required}.
\end{eqnarray*}
\end{proof}

\begin{rems*}
In the above setting we want to observe that $\cal A_b$ is also a 
$\frk Z$-Hilbert algebra and that $\cal U(\cal A)=\cal U(\cal A_b)$, etc. 
It turns out that the only subtle point is the fact that 
$\cal{H}_{\cal A}=\cal{H}_{\cal A_b}!$
\end{rems*}

\begin{lemma}\label{complete}
Suppose ${\bf X}\subseteq{\bf Y}\subseteq{\bf X}^{\dagger}$ as pre-Hilbert
$\frk B$-modules where $\frk B$ is a von Neumann algebra. Then, in fact,
${\bf X}^{\dagger}={\bf Y}^{\dagger}.$
\end{lemma}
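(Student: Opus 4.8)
The plan is to analyze the natural restriction map $r\colon{\bf Y}^\dagger\to{\bf X}^\dagger$, $\theta\mapsto\theta|_{\bf X}$, which is well defined and $\frk B$-linear because ${\bf X}\subseteq{\bf Y}$ carries the inner product inherited from ${\bf X}^\dagger$. I would show that $r$ is a bijection preserving the $\frk B$-valued inner product; once this is established, $r$ is an isomorphism of self-dual Hilbert $\frk B$-modules and we may identify ${\bf Y}^\dagger$ with ${\bf X}^\dagger$ as claimed. The two halves of the bijection correspond to the two inclusions in the hypothesis: injectivity will use the density statement of Proposition \ref{density}, while surjectivity will use Paschke's unique extension property applied to the inclusion ${\bf Y}\subseteq{\bf X}^\dagger$.

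For injectivity, first note that for any $\theta\in{\bf Y}^\dagger$ and $y,y'\in{\bf Y}$ the basic Paschke estimate gives $|\omega(\theta(y)-\theta(y'))|\le\|\omega\|^{1/2}\|\theta\|\,\|y-y'\|_\omega$ for every $\omega\in\frk B_*^+$; hence $\theta$ is continuous from the seminorm topology on ${\bf Y}$ to the ultraweak topology on $\frk B$. Since ${\bf Y}\subseteq{\bf X}^\dagger$ and ${\bf X}$ is dense in ${\bf X}^\dagger$ in this topology by Proposition \ref{density}(ii), each $y\in{\bf Y}$ is a limit of a net $\{x_\alpha\}\subseteq{\bf X}$ with $\|x_\alpha-y\|_\omega\to0$ for all $\omega$. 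Thus if $\theta|_{\bf X}=0$ then $\theta(y)=\text{uw-}\lim_\alpha\theta(x_\alpha)=0$, so $\theta=0$ and $r$ is injective.

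For surjectivity, take $\phi\in{\bf X}^\dagger$. By the unique extension property (Proposition 3.6 of \cite{Pa}), $\phi$ extends uniquely to a bounded $\frk B$-module map $\tilde\phi\colon{\bf X}^\dagger\to\frk B$. Restricting $\tilde\phi$ to ${\bf Y}\subseteq{\bf X}^\dagger$ produces $\theta:=\tilde\phi|_{\bf Y}\in{\bf Y}^\dagger$ with $r(\theta)=\tilde\phi|_{\bf X}=\phi$, so $r$ is onto and $\phi\mapsto\tilde\phi|_{\bf Y}$ is its inverse. To see that $r$ also preserves inner products, fix $\omega\in\frk B_*^+$ and observe that the density above shows ${\bf X}/N_\omega$ is dense in ${\bf Y}/N_\omega$, so both have the same Hilbert space completion $\mathcal{H}_\omega$. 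The vector representing $\theta\in{\bf Y}^\dagger$ is characterized by $\omega(\theta(y))=\bra\theta_\omega,y+N_\omega\ket_\omega$, and the vector representing $r(\theta)=\theta|_{\bf X}$ by the same equation restricted to $x\in{\bf X}$; by density these vectors coincide in $\mathcal{H}_\omega$. Therefore $\omega(\bra\theta_1,\theta_2\ket_{\bf Y})=\omega(\bra r\theta_1,r\theta_2\ket_{\bf X})$ for all $\omega$, which forces $\bra\theta_1,\theta_2\ket_{\bf Y}=\bra r\theta_1,r\theta_2\ket_{\bf X}$.

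I expect the main obstacle to be the bookkeeping around the two seminorm completions: one must be certain that the representing vectors $\theta_\omega$ computed inside ${\bf Y}$ and inside ${\bf X}$ really live in one and the same space $\mathcal{H}_\omega$, which is exactly what the density in Proposition \ref{density} delivers, and that the continuity estimate above lets the ultraweak limits pass through $\theta$. Everything else reduces to routine verification that $r$ and $\phi\mapsto\tilde\phi|_{\bf Y}$ are mutually inverse $\frk B$-module maps.
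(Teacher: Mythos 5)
Your proof is correct, and it runs in the opposite direction from the paper's, with different key tools. The paper constructs the extension map $\theta\mapsto\tilde\theta$ from ${\bf X}^\dagger$ into ${\bf Y}^\dagger$ by representing $y\mapsto\bra\theta,y\ket_{{\bf X}^\dagger}$ as an element of ${\bf Y}^\dagger$, and then exploits the self-duality of ${\bf X}^\dagger$ (Theorem 3.2 of \cite{Pa}) twice: once to show the embedding preserves inner products, which yields the chain ${\bf Y}\subseteq{\bf X}^\dagger\subseteq{\bf Y}^\dagger$, and a second time to show every $\mu\in{\bf Y}^\dagger$ arises as $\tilde{\check{\mu}}$; the seminorm topology never appears. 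You instead analyze the restriction map $r:{\bf Y}^\dagger\to{\bf X}^\dagger$, getting injectivity from the density of ${\bf X}$ in ${\bf X}^\dagger$ (hence in ${\bf Y}$) in the topology of Proposition \ref{density}, combined with the estimate $|\omega(\theta(y))|\leq\|\omega\|^{1/2}\|\theta\|\,\|y\|_\omega$ (which does suffice, since every element of $\frk B_*$ is a combination of four positive functionals), and surjectivity from the unique extension property (Proposition 3.6 of \cite{Pa}) --- the same device the paper itself deploys elsewhere, e.g.\ in Lemma \ref{extendrep}. Your identification of the representing vectors $\theta_\omega$ inside the common completion $\mathcal{H}_\omega$ is the right mechanism for inner-product preservation, because Paschke's extended inner product on a dual module is exactly characterized by $\omega(\bra\theta_1,\theta_2\ket)=\bra(\theta_1)_\omega,(\theta_2)_\omega\ket_\omega$; note also that the two-sided inverse property you assert follows formally from injectivity of $r$ once $r(\tilde\phi|_{\bf Y})=\phi$ is checked, so no extra verification is needed there. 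As for what each approach buys: the paper's argument is shorter and purely algebraic (inner products and a double application of self-duality), while yours is more analytic but makes explicit \emph{why} restriction to ${\bf X}$ loses no information --- a point the paper leaves implicit in the equation $\mu=\tilde{\check{\mu}}$ --- and it invokes self-duality only through the off-the-shelf extension property rather than through bespoke manipulations in ${\bf X}^{\dagger\dagger}$.
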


\begin{proof}[\bf Proof]
If $\theta\in{\bf X}^{\dagger}$ then  
$y\mapsto\bra\theta,y\ket_{{\bf X}^{\dagger}}:{\bf Y}\to\frk B$ is a bounded 
$\frk B$-module
map and so there is a unique $\tilde\theta\in{\bf Y}^{\dagger}$ so that:

\hspace{1.9in}$\bra\tilde\theta,\hat{y}\ket_{{\bf Y}^{\dagger}}=
\bra\theta,y\ket_{{\bf X}^{
\dagger}}\;for\;all\;y\in{\bf Y}.$\hspace{1.8in} (1)

\noindent That is, $\theta\mapsto\tilde\theta$ embeds ${\bf X}^{\dagger}$ in
${\bf Y}^{\dagger}$. We first show that this embedding preserves inner products.

Now, given $\eta\in{\bf X}^{\dagger}$, then 
$\theta\mapsto\bra\tilde\eta,\tilde\theta\ket_{{\bf Y}^{\dagger}}:
{\bf X}^{\dagger}\to\frk B$ is an element of ${\bf X}^{\dagger\dagger}=
{\bf X}^{\dagger}$ and so there exists a unique $\gamma\in{\bf X}^{\dagger}$
so that 

\hspace{1.9in}$\bra\gamma,\theta\ket_{{\bf X}^{\dagger}}=
\bra\tilde\eta,\tilde\theta\ket_{{\bf Y}^{\dagger}}
\;for\;all\;\theta\in{\bf X}^{\dagger}.$\hspace{1.8in} (2)

\noindent In particular, for all $x\in{\bf X}$ we get
$$\bra\gamma,x\ket_{{\bf X}^{\dagger}}=\bra\tilde\eta,\hat x\ket_
{{\bf Y}^{\dagger}}=\bra\eta,x\ket_{{\bf X}^{\dagger}}\;by\;equation\;(1).$$
Hence, $\gamma=\eta$, and equation (2) becomes:
$$\bra\eta,\theta\ket_{{\bf X}^{\dagger}}=
\bra\tilde\eta,\tilde\theta\ket_{{\bf Y}^{\dagger}}
\;for\;all\;\eta,\theta\in{\bf X}^{\dagger}.$$
That is, ${\bf X}^{\dagger}$ is a pre-Hilbert $\frk B$-submodule of
${\bf Y}^{\dagger}$ and we have:
$${\bf Y}\subseteq{\bf X}^{\dagger}\subseteq{\bf Y}^{\dagger}$$
as pre-Hilbert $\frk B$-modules.

Now, for each $\mu\in{\bf Y}^{\dagger}$ the map 
$\theta\mapsto\bra\mu,\tilde\theta\ket_{{\bf Y}^{\dagger}}:
{\bf X}^{\dagger}\to\frk B$ defines a unique element $\check\mu\in
{\bf X}^{\dagger}$ satisfying:
$$\bra\mu,\tilde\theta\ket_{{\bf Y}^{\dagger}}=\bra\check\mu,\theta\ket_
{{\bf X}^{\dagger}}=
\bra\tilde{\check{\mu}},\tilde\theta\ket_{{\bf Y}^{\dagger}}
\;for\;all\;\theta\in{\bf X}^{\dagger}.$$
But since ${\bf Y}\subseteq{\bf X}^{\dagger}$ we must have
$$\mu=\tilde{\check{\mu}}.$$
That is, $\tilde{}:{\bf X}^{\dagger}\to{\bf Y}^{\dagger}$ is onto.
\end{proof}

\begin{prop}\label{full}
Let $\cal A$ be a $\frk Z$-Hilbert algebra over the abelian von Neumann
algebra $\frk Z$. Then, $\cal A_b$ is also a $\frk Z$-Hilbert algebra and

\noindent (1) $\cal{H}_{\cal A_b} = \cal{H}_{\cal A},$

\noindent (2) $\cal U(\cal A_b)=\cal U(\cal A)$ and $\cal V(\cal A_b)=
\cal V(\cal A),$

\noindent (3) $(\cal A_b)_b=\cal A_b.$
\end{prop}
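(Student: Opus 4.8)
The plan is to treat part (1) as the crux and derive everything else from it, exactly as the preceding remark suggests. First I would record the chain $\mathcal{A} \subseteq \mathcal{A}_b \subseteq \mathcal{H}_{\mathcal{A}} = \mathcal{A}^{\dagger}$ of pre-Hilbert $\mathfrak{Z}$-modules: the inclusion $\mathcal{A}\subseteq\mathcal{A}_b$ was noted when $\mathcal{A}_b$ was introduced, $\mathcal{A}_b\subseteq\mathcal{H}_{\mathcal{A}}$ holds by definition, and $\mathcal{A}_b$ is a $\mathfrak{Z}$-submodule of $\mathcal{H}_{\mathcal{A}}$ whose inner products span a uw-dense subset of $\mathfrak{Z}$ (they already do for $\mathcal{A}$). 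Lemma \ref{complete}, applied with $\mathbf{X}=\mathcal{A}$ and $\mathbf{Y}=\mathcal{A}_b$, then yields $\mathcal{A}^{\dagger}=\mathcal{A}_b^{\dagger}$, i.e. $\mathcal{H}_{\mathcal{A}_b}=\mathcal{H}_{\mathcal{A}}$. This is the one genuinely subtle point; once it is in hand the remaining assertions are bookkeeping.

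Next I would verify that $\mathcal{A}_b$ satisfies the axioms of Definition \ref{Hilbalg}. Axioms (i)--(iv) hold because $\mathcal{A}_b$ is a $\mathfrak{Z}$-submodule of $\mathcal{H}_{\mathcal{A}}$ closed under the product $\xi\eta=\pi(\xi)\eta$ and the involution $\xi\mapsto\xi^*$ (Propositions \ref{lbdd}, \ref{comm2} and \ref{comm4}), with the uw-density of inner products inherited from $\mathcal{A}$. Axiom (v) is the identity $\bra\xi^*,\eta^*\ket=\bra\eta,\xi\ket$ of Lemma \ref{*map} restricted to $\mathcal{A}_b$; axiom (vi) follows from $\pi(\xi)^*=\pi(\xi^*)$ (Proposition \ref{lbdd}(4)); and axiom (vii) is immediate since $\pi(\xi)\in\mathcal{L}(\mathcal{H}_{\mathcal{A}})$ is bounded for $\xi\in\mathcal{A}_b$. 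For axiom (viii) I would avoid arguing directly about $\mathcal{A}_b^2$ and instead chain three density facts. Since $\mathcal{A}$ is dense in $\mathcal{H}_{\mathcal{A}}$ in the seminorm topology (Proposition \ref{density}(ii)) and $\mathcal{A}\subseteq\mathcal{A}_b\subseteq\mathcal{H}_{\mathcal{A}}$, the subspace $\mathcal{A}$ is dense in $\mathcal{A}_b$; since $\mathcal{A}^2$ is seminorm-dense in $\mathcal{A}$ (axiom (viii) for $\mathcal{A}$) and $\mathcal{A}^2\subseteq\mathcal{A}_b^2$ (the two products agreeing on $\mathcal{A}$), transitivity gives $\mathcal{A}_b^2$ seminorm-dense in $\mathcal{A}_b$.

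For part (2) I would first note that the left and right regular representations of $\mathcal{A}_b$, viewed as a Hilbert algebra in its own right, coincide on $\mathcal{H}_{\mathcal{A}_b}=\mathcal{H}_{\mathcal{A}}$ with the operators $\pi(\xi)$ and $\pi'(\xi)$ already constructed: the $\mathcal{A}_b$-product is $\xi\eta=\pi(\xi)\eta=\pi'(\eta)\xi$ (Proposition \ref{comm4}(1)), so these representations agree with $\pi$ and $\pi'$ on the dense subspace $\mathcal{A}$ and hence everywhere by the unique extension property. Thus $\mathcal{U}(\mathcal{A}_b)=(\pi(\mathcal{A}_b))^{\prime\prime}$ and $\mathcal{V}(\mathcal{A}_b)=(\pi^{\prime}(\mathcal{A}_b))^{\prime\prime}$, with commutants computed in the same algebra $\mathcal{L}(\mathcal{H}_{\mathcal{A}})$. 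The Commutation Theorem \ref{Comm} applied to $\mathcal{A}$ already records $(\pi(\mathcal{A}))^{\prime\prime}=(\pi(\mathcal{A}_b))^{\prime\prime}$ and $(\pi^{\prime}(\mathcal{A}))^{\prime\prime}=(\pi^{\prime}(\mathcal{A}_b))^{\prime\prime}$, which is precisely $\mathcal{U}(\mathcal{A})=\mathcal{U}(\mathcal{A}_b)$ and $\mathcal{V}(\mathcal{A})=\mathcal{V}(\mathcal{A}_b)$.

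Finally, for part (3) the inclusion $\mathcal{A}_b\subseteq(\mathcal{A}_b)_b$ is the general fact that a $\mathfrak{Z}$-Hilbert algebra lies inside its left-bounded elements. For the reverse, I would take a left-bounded element $\xi$ of $\mathcal{A}_b$, so that $\pi_{\mathcal{A}_b}(\xi)\in\mathcal{L}(\mathcal{H}_{\mathcal{A}})$ satisfies $\pi_{\mathcal{A}_b}(\xi)a=\pi^{\prime}(a)\xi$ for all $a\in\mathcal{A}_b$; restricting to $a\in\mathcal{A}$ shows that $a\mapsto\pi^{\prime}(a)\xi$ is bounded on $\mathcal{A}$ with bound $\|\pi_{\mathcal{A}_b}(\xi)\|$, which is exactly the defining condition for $\xi\in\mathcal{A}_l=\mathcal{A}_b$. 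The only real work is part (1), dispatched by Lemma \ref{complete}; everything else is a routine application of results already established in this section.
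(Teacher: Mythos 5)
Your proposal is correct and takes essentially the same route as the paper: part (1) via Lemma \ref{complete} applied to $\mathcal{A}\subseteq\mathcal{A}_b\subseteq\mathcal{H}_{\mathcal{A}}$, the axiom checks for (i)--(vii) citing the same results (Propositions \ref{lbdd}, \ref{comm4} and Lemma \ref{*map}), and axiom (viii) by the identical density chain $\mathcal{A}^2\subseteq\mathcal{A}_b^2\subseteq\mathcal{A}_b\subseteq\mathcal{H}_{\mathcal{A}_b}=\mathcal{H}_{\mathcal{A}}$. Where the paper merely states that items (2) and (3) ``follow easily,'' your explicit verifications --- identifying the regular representations of $\mathcal{A}_b$ with $\pi,\pi'$ via the unique extension property and invoking Theorem \ref{Comm} for (2), and the restriction-to-$\mathcal{A}$ argument for (3) --- are precisely the intended routine details, and they are sound.
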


\begin{proof}[\bf Proof]
Since $\frk Z \subseteq \cal L(\cal{H}_{\cal A})$ and $\pi(\cal A_b)$ is a left 
ideal in $\cal L(\cal{H}_{\cal A})$, we see that $\cal A_b$ is a pre-Hilbert 
$\frk Z$-submodule of $\cal{H}_{\cal A}$ containing $\cal A.$ Hence, by the previous
lemma, $\cal{H}_{\cal A_b} = \cal{H}_{\cal A}.$ 

Thus, axioms (i), (ii), (iii), and (iv) are automatically satisfied.

That $\cal A_b$ is a $*$-algebra follows from Proposition \ref{lbdd}. Now,
axiom (v) follows from Lemma \ref{*map}. Axiom (vi) follows from part (4) of
Proposition \ref{lbdd} since $\pi(\xi^*) = \pi(\xi)^*$ for $\xi\in\cal A_b.$
Axiom (vii) follows from the definition of $\cal A_b$ and part (3) of
Proposition \ref{lbdd}.

To see axiom (viii), we first note that 
$$\cal A^2\subseteq\cal A_b^2\subseteq \cal A_b\subseteq \cal{H}_{\cal A_b} 
= \cal{H}_{\cal A}.$$
Since $\cal A^2$ is dense in $\cal A$ by definition and $\cal A$ is dense in 
$\cal{H}_{\cal A}$ by Proposition \ref{density}, it follows that $\cal A_b^2$ is dense in 
$\cal{H}_{\cal A_b}$ and hence in $\cal A_b.$

Thus, $\cal A_b$ is also a $\frk Z$-Hilbert algebra, and items (2) and (3)
follow easily.
\end{proof}

\section{$\frk Z$-HILBERT ALGEBRAS from $\frk Z$-TRACES}

Here we suppose that $\phi$ is a faithful normal semifinite $\frk Z$-trace
(in Dixmier's sense) on the von Neumann algebra $\frk A,$ where 
$\frk Z$ is a von Neumann 
subalgebra of the centre of $\frk A.$ We abuse notation and also let $\phi$
denote the unique linear extension of the original $\phi$ from 
$$\cal I_+=\{x\in\frk A\,|\,\phi(x)\in\frk Z_+\}$$
to the ideal $\cal I = span \cal I_+,$ defined in Proposition 1 of 
III.4.1 of \cite{Dix}. Then, by I.1.6 of \cite{Dix} the space 
$\cal A = \{x\in\frk A\,|\,\phi(x^*x)\in\frk Z_+\}$ is an ideal in $\frk A$
with $\cal A^2=\cal I.$

\begin{prop}\label{prods}
With the above hypotheses, the ideal $$\cal A=\{x\in\frk A\,|\,\phi(x^*x)\in
\frk Z_+\}$$ is a $\frk Z$-Hilbert algebra, with the $\frk Z$-valued inner
product $\bra x,y\ket = \phi(x^*y).$
\end{prop}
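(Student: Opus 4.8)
The plan is to check, in turn, each of the axioms of Definition \ref{Hilbalg} (which subsumes those of Definition \ref{Hilbmod}) for $\cal A$ equipped with $\bra x,y\ket=\phi(x^*y)$. First I would record the structural facts already supplied by the set-up: $\cal A$ is a two-sided $*$-ideal of $\frk A$ (self-adjoint because $\phi(xx^*)=\phi(x^*x)$ by traciality, and a right ideal because $\phi((xa)^*xa)=\phi(xaa^*x^*)\le\|a\|^2_{op}\phi(x^*x)\in\frk Z_+$), so $\cal A$ is a complex $*$-algebra, and for $x,y\in\cal A$ we have $x^*y\in\cal A^2=\cal I$, the domain on which the (linearly extended) $\phi$ takes values in $\frk Z$; hence $\bra\cdot,\cdot\ket$ really is $\frk Z$-valued. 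The routine axioms then fall out of the properties of $\phi$: positivity and faithfulness (i) from $x\in\cal A\Rightarrow\phi(x^*x)\in\frk Z_+$ together with faithfulness of $\phi$; the symmetry (ii) from $\phi(x^*y)^*=\phi(y^*x)$; the module identity (iii) from the $\frk Z$-linearity of $\phi$; axiom (v) is exactly traciality, $\phi(ab^*)=\phi(b^*a)$; axiom (vi) is the trivial identity $\phi((ab)^*c)=\phi(b^*a^*c)$; and axiom (vii) follows, just as in the proof of Example \ref{Hilbex}, from $b^*a^*ab\le\|a\|^2_{op}\,b^*b$ with positivity of $\phi$ and the definition of the module norm $\|b\|^2=\|\phi(b^*b)\|$.

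For axiom (iv) I would show that $\mathrm{span}\{\phi(x^*y)\}=\phi(\cal I)$ is uw-dense in $\frk Z$ by exploiting semifiniteness. This set is a self-adjoint $\frk Z$-submodule of $\frk Z$ (since $\phi(x^*y)z=\phi(x^*(yz))$ and $\phi(x^*y)^*=\phi(y^*x)$), so its ultraweak closure is $\frk Z E$ for some projection $E\in\frk Z$. If $E\neq1$, then $1-E$ is a nonzero positive element, and semifiniteness yields a nonzero $T\in\frk A_+$ with $T\le 1-E$ and $\phi(T)\in\frk Z_+$. Then $T^{1/2}\in\cal A$ and $\phi(T)=\bra T^{1/2},T^{1/2}\ket\in\phi(\cal I)\subseteq\frk Z E$, so $E\phi(T)=\phi(T)$; on the other hand $T=(1-E)T(1-E)$ gives, by $\frk Z$-linearity, $\phi(T)=(1-E)\phi(T)$, whence $E\phi(T)=0$. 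Thus $\phi(T)=0$ and $T=0$ by faithfulness, a contradiction. Therefore $E=1$.

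The hard part will be axiom (viii). As the discussion preceding Definition \ref{Hilbalg} warns, $\cal A^2$ need not be \emph{norm}-dense in $\cal A$, so the easy criterion of the remark after Definition \ref{Hilbalg} is unavailable and I must argue directly in the topology of the seminorms $\{\|\cdot\|_\omega:\omega\in\frk Z_*^+\}$. Given $x\in\cal A$, put $b=x^*x\in\cal I_+$ and $f_n=|x|\bigl(\tfrac1n+|x|\bigr)^{-1}$ with $|x|=b^{1/2}$. Since $|x|\in\cal A$ (because $\phi(|x|^2)=\phi(x^*x)\in\frk Z_+$) and $(\tfrac1n+|x|)^{-1}\in\frk A$, the ideal property gives $f_n\in\cal A$, and hence $xf_n\in\cal A\cdot\cal A\subseteq\cal A^2$. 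As everything is a function of $b$, one computes $(x-xf_n)^*(x-xf_n)=(1-f_n)b(1-f_n)=b(1-f_n)^2$, which decreases to $0$ as $f_n\uparrow p$, the support projection of $b$. I would then invoke normality of $\phi$: since $b\bigl(1-(1-f_n)^2\bigr)=b\,f_n(2-f_n)\uparrow bp=b$, normality gives $\phi(b(1-f_n)^2)=\phi(b)-\phi\bigl(b\,f_n(2-f_n)\bigr)\downarrow 0$ ultraweakly in $\frk Z_+$, and applying the normal functional $\omega\in\frk Z_*^+$ yields $\|x-xf_n\|_\omega^2=\omega\bigl(\phi(b(1-f_n)^2)\bigr)\to 0$.

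Because this convergence holds for each individual $\omega$, and any basic neighbourhood in the seminorm topology involves only finitely many $\omega$'s, a single $n$ can be chosen to beat all of them; thus $x$ lies in the seminorm-closure of $\cal A^2$, which is axiom (viii). The main obstacle is precisely this step: the naive norm-density can fail, and the correct replacement forces the regularization $f_n\in\cal A$ (so that $xf_n$ factors through $\cal A^2$) combined with a monotone-convergence/normality argument to obtain convergence in each $\|\cdot\|_\omega$ rather than in the module norm. Once (i)--(viii) are in hand, $\cal A$ is a $\frk Z$-Hilbert algebra, completing the proof.
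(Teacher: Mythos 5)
Your proof is correct, and for axioms (i)--(vii) and the semifiniteness argument for axiom (iv) it coincides with the paper's proof essentially line for line (the paper cites I.1.6 of Dixmier for the ideal structure $\cal A^2=\cal I$ rather than re-deriving it, and handles (iv) by exactly your projection-and-contradiction argument). Where you genuinely diverge is axiom (viii). The paper first uses semifiniteness a second time to show that $\cal A$ is ultraweakly dense in $\frk A$, and then, for each $\omega\in\frk Z_*^+$, scalarizes: $\phi_\omega=\omega\circ\phi$ is a normal semifinite scalar trace (Proposition 2 of III.4.3 of Dixmier), its GNS space is $\cal{H}_\omega$, and normality of $\pi_\omega$ converts uw-density of $\cal A$ in $\frk A$ into strong density, producing $a\in\cal A$ with $\|ab-b\|_\omega<\epsilon$. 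You instead use the canonical regularization $f_n=|x|(\tfrac1n+|x|)^{-1}\in\cal A$ and apply normality of $\phi$ itself to the monotone identity $\phi(b(1-f_n)^2)=\phi(b)-\phi(bf_n(2-f_n))\downarrow 0$ (the subtraction being legitimate precisely because $\phi(b)\in\frk Z_+$ is finite). Your route buys two things: the approximating sequence $xf_n$ is independent of $\omega$, so a single $n$ controls any finite set of seminorms without the summing trick $\omega=\omega_1+\cdots+\omega_m$ that the paper's one-$\omega$-at-a-time argument tacitly requires; and it avoids both the GNS machinery and the citation to Dixmier's scalarization, at the cost of a functional-calculus computation. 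The paper's route, in exchange, needs no spectral manipulation and reuses standard scalar-trace theory, but invokes semifiniteness twice where you need it only for axiom (iv). Your caution that norm-density of $\cal A^2$ may fail, so that only the weaker seminorm version of axiom (viii) should be proved, is exactly the point flagged in the Key Idea preceding Definition \ref{Hilbalg}.
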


\begin{proof}[\bf Proof]
Since $\cal A$ is an ideal in $\frk A$ it is certainly a right $\frk Z$-module.
Axiom (i) is just the statement that $\phi$ is faithful. Axiom (ii) follows 
since the extended $\phi$ is clearly self-adjoint. Axiom (iii) follows as 
the original $\phi$ is $\frk Z_+$-linear.

To see that Axiom (iv) holds requires a little thought. First, it is clear that 
$span(\phi(\cal A^2))$ is an ideal in $\frk Z.$ Therefore, its u.w.-closure
is an ideal in $\frk Z$ of the form $E\frk Z$ for some projection $E\in\frk Z.$
If $(1-E)\neq 0$ then since $\phi$ is semifinite there exists $x\in\frk A_+$
with $0\neq x\leq(1-E)$ and $\phi(x)\in\frk Z_+$ so that $x^{1/2}\in \cal A$. 
But then,
$$0\neq\phi(x)=\phi((1-E)x)=(1-E)\phi(x)$$
lies in $E\frk Z$, a contradiction. Hence $E=1$ and the span of the inner
products is u.w.-dense in $\frk Z.$

Axiom (v) follows from the tracial property of Proposition 1 of III.4.1 of 
\cite{Dix}. Axiom (vi) is trivial, and Axiom (vii) is proved as in Example \ref{Hilbex}.

To see Axiom (viii) we first show that $\cal A$ is u.w.-dense in $\frk A.$
Now the ultraweak closure of $\cal A$ is an u.w. closed ideal in $\frk A$
and so has the form $F\frk A$ for some projection $F$ in $Z(\frk A)$.
If $(1-F)\neq 0$ then since $\phi$ is semifinite there exists $y\in\frk A_+$
with $0\neq y\leq(1-F)$ and $\phi(y)\in \frk Z_+$ so that $y^{1/2}\in\cal A$.
But then $y\in\cal A$ and so $y\leq F$, a contradiction as $y\neq 0.$
Thus $F=1$ and $\cal A$ is u.w.-dense in $\frk A.$

Now, given $\omega\geq 0$ in the predual of $\frk Z$, we have that 
$\phi_{\omega}:=\omega\circ\phi$ is a normal, semifinite trace on $\frk A$ by 
Proposition 2 of III.4.3 of \cite{Dix}. Moreover, the GNS Hilbert space of the 
normal representation $\pi_{\omega}$ of $\frk A$ induced by $\phi_{\omega}$ is 
the same as the Hilbert space $\cal{H}_{\omega}$ of section 3. For $a,b\in\cal A$,
we have $\pi_{\omega}(a)(b+N_{\omega})=ab+N_{\omega}.$ Since $\pi_{\omega}$
is normal, $\pi_{\omega}(\cal A)$ is u.w.-dense in $\pi_{\omega}(\frk A).$
Therefore, it is also s.o.-dense and hence given any $b\in\cal A$ and 
$\epsilon>0$ there exists $a\in\cal A$ with
$$\|\pi_{\omega}(a)(b+N_{\omega}) - (b+N_{\omega})\|_{\omega} < \epsilon.$$
That is, $\|ab-b\|_{\omega} < \epsilon,$ and Axiom (viii) is satisfied.
\end{proof}

In this setting, each $x\in\frk A$ defines an operator, $\tilde x$, on the ideal
$\cal A = \{a\in \frk A\,|\,\phi(a^*a)\in\frk Z_+\}$ via $\tilde x(a)=xa.$
Clearly, $\tilde x$ is $\frk Z$-linear, and it is easy to check that
$\tilde x$ is a bounded $\frk Z$-module map on $\cal A,$ and therefore 
extends uniquely to a bounded module map on $\cal{H}_{\cal A},$ also denoted by
$\tilde x.$ As left multiplications commute with right multiplications,
we see that $\tilde x\in(\pi^{\prime}(\cal A))^{\prime} = \cal U(\cal A),$
by the Commutation Theorem \ref{Comm}.

\begin{lemma}\label{semifin}
Let $\cal A$ be an u.w.-dense $*$-ideal in the von Neumann algebra $\frk A$.
Then, each $T\in\frk A_+$ is the increasing limit of a net in $\cal A_+.$ 
\end{lemma}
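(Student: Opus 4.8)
The plan is to reduce everything to the construction of an increasing approximate unit for $\cal A$ that increases strongly to $1$, and then to transport it onto $T$ by conjugation. Concretely, first I would produce an increasing net $\{e_\alpha\}\subseteq\cal A_+$ with $0\le e_\alpha\le 1$ and $e_\alpha\nearrow 1$ in the strong operator topology. Granting this, for $T\in\frk A_+$ I set $S_\alpha:=T^{1/2}e_\alpha T^{1/2}$. Since $\cal A$ is a two-sided ideal and $e_\alpha\in\cal A$, each $S_\alpha$ lies in $\cal A_+$; the net is increasing because conjugation by the fixed positive element $T^{1/2}$ preserves order; and since multiplication is jointly strong-operator continuous on bounded sets, $S_\alpha\to T^{1/2}\,1\,T^{1/2}=T$ strongly. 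A bounded increasing net converging strongly converges to its supremum, so $T=\sup_\alpha S_\alpha$, as required.

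For the approximate unit I would take $\Lambda:=\{a\in\cal A_+ : \|a\|<1\}$, directed by the usual order, and use $\Lambda$ itself as the index set. The key point is that $\Lambda$ is upward directed. To see this, note that $f(a):=a(1-a)^{-1}=(1-a)^{-1}-1$ maps $\Lambda$ into $\cal A_+$: the resolvent $(1-a)^{-1}$ lies in $\frk A$ because $\|a\|<1$, and then $a(1-a)^{-1}\in\cal A$ since $\cal A$ is an ideal. Because $s\mapsto -s^{-1}$ is operator monotone, $f$ is order preserving, with order-preserving inverse $g(c):=c(1+c)^{-1}$ mapping $\cal A_+$ back into $\Lambda$ (again staying in $\cal A$ because it is an ideal). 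Given $a,b\in\Lambda$, the element $g\!\left(f(a)+f(b)\right)$ then lies in $\Lambda$ and dominates both $a$ and $b$; hence $\Lambda$ is directed and, by Vigier's theorem, the identity net on $\Lambda$ increases strongly to some $p=\sup\Lambda\in\frk A_+$ with $0\le p\le 1$.

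The crux is to show $p=1$, and this is exactly where ultraweak density of $\cal A$ enters. For each $a\in\cal A_+$ the elements $a(1/n+a)^{-1}\in\Lambda$ (using that $\cal A$ is an ideal) increase strongly to the range projection $s(a)$ of $a$, so $s(a)\le p$ for every $a\in\cal A_+$. Let $q$ be the supremum of all these range projections; then $q\le p$. If $q\neq 1$, then $(1-q)a=(1-q)s(a)a=0$ for every $a\in\cal A_+$, and hence for every $a\in\cal A$, since $\cal A$ is a $*$-ideal and therefore contains the positive and negative parts of its self-adjoint elements (these are obtained by multiplying by spectral projections lying in $\frk A$). Passing to ultraweak limits along a net in $\cal A$ converging to $1$ would then give $1-q=0$, a contradiction; thus $q=1$ and therefore $p=1$. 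This finishes the construction of the approximate unit and hence the lemma. The main obstacle is precisely this last step: assembling the range projections of elements of $\cal A$ and using ultraweak density to force their supremum to be $1$. The operator-monotonicity bookkeeping needed to keep every auxiliary element inside the non-closed ideal $\cal A$ is the other point requiring care, but it is routine once one consistently exploits that $\cal A$ is a two-sided $*$-ideal stable under multiplication by resolvents and spectral projections drawn from $\frk A$.
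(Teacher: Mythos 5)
Your proof is correct and follows essentially the same route as the paper: the paper also works with the increasing net $\{a\in\cal A_+\,|\,\|a\|<1\}$ (citing the proof of Theorem 1.4.2 of Pedersen for its directedness, which you reprove via the standard $a\mapsto a(1-a)^{-1}$ trick), applies Vigier's theorem, and then conjugates by $T^{1/2}$, using that $\cal A$ is an ideal. The one point of divergence is the step identifying the supremum of the approximate unit as $1$: the paper invokes the Kaplansky density theorem, whereas you argue directly with the range projections $s(a)=\sup_n a(\tfrac{1}{n}+a)^{-1}$ together with ultraweak density of $\cal A$ and separate ultraweak continuity of multiplication --- both are valid, and your variant is self-contained at the cost of a slightly longer argument.
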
 

\begin{proof}[\bf Proof]
It follows from the proof of Theorem 1.4.2 of \cite{Ped} that
$\{a\in\cal A_+\,|\,\|a\|<1\}$ is an increasing net in the usual ordering
of positive elements and hence converges in $\frk A_+$ by Vigier's Theorem. 
By the Kaplansky Density Theorem there is a subnet 
of this one converging ultraweakly to the identity in $\frk A$, and therefore
this net converges ultraweakly to $1\in\frk A.$
  
Thus, if $T\in\frk A_+$, the net $\{T^{1/2}aT^{1/2}\,|\,a\in\cal A_+\;
{\rm and}\;\|a\|<1\}$ is an
increasing net in $\cal A_+$ converging ultraweakly to $T$.
\end{proof}

\begin{thm}\label{isom}
Let $\phi$ be a faithful normal semifinite $\frk Z$-trace
on the von Neumann algebra $\frk A,$ where $\frk Z$ is a von Neumann 
subalgebra of the centre of $\frk A.$ Let $\cal A = 
\{a\in \frk A\,|\,\phi(a^*a)\in\frk Z_+\}$ be the corresponding $\frk Z$-Hilbert
algebra. Then the mapping $x\mapsto\tilde x :\frk A\to\cal U(\cal A)$ is
an isomorphism of von Neumann algebras.
\end{thm}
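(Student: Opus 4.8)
The plan is to show that $x\mapsto\tilde x$ is an injective, unital, normal $*$-homomorphism of $\frk A$ into $\cal U(\cal A)$ and then to read off surjectivity from the facts, already available to us, that $\cal A$ is uw-dense in $\frk A$ and that $\cal U(\cal A)=(\pi(\cal A))^{-uw}$. That $\tilde{\phantom{x}}$ lands in $\cal U(\cal A)=(\pi^{\prime}(\cal A))^{\prime}$ and is a unital $*$-homomorphism was recorded just before the statement (via the Commutation Theorem \ref{Comm}), and for $a\in\cal A$ the operator $\tilde a$ is precisely left multiplication $\pi(a)$, so $\widetilde{\cal A}=\pi(\cal A)$. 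For \emph{injectivity} I would use the approximate identity provided by Lemma \ref{semifin}: choose a net $\{a_i\}\subseteq\cal A_+$ with $\|a_i\|<1$ and $a_i\to 1$ ultraweakly. If $\tilde x=0$ then $xa_i=0$ in $\mathcal H_{\cal A}$; since $xa_i\in\cal A$ and $\langle xa_i,xa_i\rangle=\phi((xa_i)^*(xa_i))$, faithfulness of $\phi$ forces $xa_i=0$ in $\frk A$, and passing to the ultraweak limit gives $x=x\cdot 1=\mathrm{uw}\text{-}\lim_i xa_i=0$.

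The heart of the argument is \emph{normality} of $x\mapsto\tilde x$, and this is the step I expect to require the most care. Let $x_\lambda\nearrow x$ be a bounded increasing net in $\frk A_+$. As the map is a positive $*$-homomorphism, $\{\tilde x_\lambda\}$ is a bounded increasing net in $\cal U(\cal A)_+$, hence converges ultraweakly to its supremum $S\le\tilde x$, and it remains only to prove $S=\tilde x$. Since $\{\tilde x_\lambda\}$ is bounded I may apply Proposition \ref{conv}(3): it suffices to check $\langle\tilde x_\lambda a,b\rangle\to\langle\tilde x a,b\rangle$ ultraweakly in $\frk Z$ for all $a,b\in\cal A$, where $\langle\tilde x_\lambda a,b\rangle=\phi(a^*x_\lambda b)$. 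By polarization it is enough to take $b=a$. Then $a^*x_\lambda a\nearrow a^*x a$ is a bounded increasing net lying in $\cal I_+$, where $\phi$ is $\frk Z_+$-valued, with supremum $a^*x a\in\cal I_+$; the standing hypothesis that $\phi$ is a \emph{normal} $\frk Z$-trace then gives $\phi(a^*x_\lambda a)\nearrow\phi(a^*x a)$, and a bounded increasing net in $\frk Z_+$ converges ultraweakly to its supremum, so $\langle\tilde x_\lambda a,a\rangle\to\langle\tilde x a,a\rangle$ ultraweakly. This is exactly the place where the centre-valued, normal nature of the trace must be invoked, and where the translation between the ultraweak topology of $\frk A$ and that of $\cal L(\mathcal H_{\cal A})$ (Proposition \ref{conv}) does the real work. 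It follows that $\tilde x_\lambda\to\tilde x$ ultraweakly, hence $S=\tilde x$ and the map is normal.

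Finally I would close the argument with \emph{surjectivity}. An injective unital normal $*$-homomorphism between von Neumann algebras has ultraweakly closed range (a standard fact: its kernel is a uw-closed ideal and the image is a von Neumann subalgebra), so $\widetilde{\frk A}$ is a von Neumann subalgebra of $\cal L(\mathcal H_{\cal A})$ and $x\mapsto\tilde x$ is a uw-homeomorphism onto it. Since $\widetilde{\frk A}\supseteq\widetilde{\cal A}=\pi(\cal A)$ and $\widetilde{\frk A}$ is uw-closed, Lemma \ref{vNd}(i) and the definition of $\cal U(\cal A)$ give $\widetilde{\frk A}\supseteq(\pi(\cal A))^{-uw}=(\pi(\cal A))^{\prime\prime}=\cal U(\cal A)$. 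Combined with the inclusion $\widetilde{\frk A}\subseteq\cal U(\cal A)$ from well-definedness, this yields $\widetilde{\frk A}=\cal U(\cal A)$. Hence $x\mapsto\tilde x$ is a normal $*$-isomorphism of $\frk A$ onto $\cal U(\cal A)$, which is the asserted isomorphism of von Neumann algebras.
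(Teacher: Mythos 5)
Your proof is correct, but it reaches surjectivity by a genuinely different mechanism than the paper. The paper proves surjectivity by hand: given $T\in\cal U(\cal A)_+$, it uses Lemma \ref{semifin} (applied to the uw-dense $*$-ideal $\pi(\cal A)$ in $\cal U(\cal A)$) to produce a net $\{b_\alpha\}$ in $\cal A_+$ with $\pi(b_\alpha)\nearrow T$, obtains the candidate preimage $x=\sup_\alpha b_\alpha\in\frk A_+$ by Vigier's theorem, and then verifies $\tilde x=T$ by scalarizing: for $\omega\in\frk Z_*^+$ the composition $\omega\circ\phi$ is a \emph{normal scalar} trace (Proposition 2 of III.4.3 of \cite{Dix}), so $y\mapsto\omega(\phi(yca^*))$ is uw-continuous for $ca^*\in\cal A^2=\cal I$, and Proposition \ref{conv}(2) closes the loop. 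You instead prove that $x\mapsto\tilde x$ is \emph{normal} as a map, invoking the Dixmier-normality of $\phi$ directly (via Proposition \ref{conv}(3), polarization, and the observation that $a^*x_\lambda a\in\cal I_+$ because $\cal A$ is an ideal), and then deduce surjectivity from the standard structure theorem that an injective unital normal $*$-homomorphism between von Neumann algebras has uw-closed range, together with $\widetilde{\cal A}=\pi(\cal A)$ and Lemma \ref{vNd}(i), which gives $\widetilde{\frk A}\supseteq(\pi(\cal A))^{-uw}=\cal U(\cal A)$. What your route buys: it avoids constructing preimages altogether, and it records explicitly that the isomorphism is normal (hence a uw-homeomorphism), a fact the paper obtains only implicitly. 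What it costs: you import one external theorem (image of a normal morphism is a von Neumann subalgebra, usually proved via uw-continuity of normal $*$-homomorphisms, uw-compactness of the image of the unit ball, and Kaplansky density) that the paper's self-contained argument does not need --- the paper gets by with normality only of the scalarized traces $\omega\circ\phi$ and the convergence machinery of Proposition \ref{conv}. Your parenthetical justification of that external fact ("the image is a von Neumann subalgebra") is slightly circular as phrased, since that is precisely the fact being invoked, but the theorem itself is standard and your use of it is legitimate.
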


\begin{proof}[\bf Proof]
It is clear the the mapping is a $*$-homomorphism. Since $\cal A$ is u.w.-dense
in $\frk A$, the mapping is also one-to-one. Hence, it suffices to see that the
mapping is onto $\cal U(\cal A).$ So, let $T\in\cal U(\cal A)_+.$ Since
$\pi(\cal A)$ is an u.w.-dense $*$-ideal in $\cal U(\cal A)$, there
is a net, $\{b_{\alpha}\}$ in $\cal A_+$ with $\pi(b_{\alpha})$
increasing to $T$ in $\cal U(\cal A)\subseteq\cal L(\cal{H}_{\cal A}).$
Since, $\{b_{\alpha}\}$ is an increasing net in $\cal A_+\subseteq\frk A_+$
bounded by $\|T\|$, it converges to an element $x\in\frk A_+.$ To see that
$\tilde x=T$ it suffices to see that $\omega(\bra Ta,c\ket)=
\omega(\bra xa,c\ket)$
for all $a,c\in\cal A$ and $\omega\geq 0$ in $\frk Z_*.$

Now, since $\omega\circ\phi$ is a normal scalar trace on $\frk A$ by
Proposition 2 of III.4.3 of \cite{Dix} and since $ca^*\in\cal A^2=\cal I$
is contained in the ideal of definition of this normal scalar trace,
the map $$y\mapsto\omega\circ\phi(yca^*):\frk A\to\comp$$
is a normal (and so u.w.-continuous) linear functional on $\frk A.$
Hence,
\begin{eqnarray*}
\omega(\bra xa,c\ket) &=& \omega(\phi(a^*xc)) = \omega(\phi(xca^*))\\
&=& \lim_{\alpha}\omega(\phi(b_{\alpha}ca^*)) = \lim_{\alpha}\omega
(\bra\pi(b_{\alpha})a,c\ket).
\end{eqnarray*}
But, by Proposition \ref{conv} part (2) this last term equals $\omega(\bra Ta,c\ket)$
since $\pi(b_{\alpha})\stackrel{uw}{\to} T.$ 
\end{proof}

\section{The $\frk Z$-TRACE on the CROSSED PRODUCT von NEUMANN ALGEBRA}

Let $(A,Z,\tau,\alpha)$ be a $4$-tuple as in Section 1. We also assume that $Z$ has a faithful 
state, $\omega$ to apply Proposition \ref{extension} so that 
$\bar\omega=\omega\circ\tau$ is a faithful tracial state on $A$
and representing $A$ on the GNS Hilbert space $\cal{H}_{\bar\omega}$ we obtain
$\frk A= A^{\prime\prime}$ and $\frk Z=Z^{\prime\prime}$
and a $\frk Z$-trace 
$\bar\tau : \frk A \to \frk Z$ extending $\tau$ and an extension of
$\alpha$ to an ultraweakly continuous action $\bar\alpha :\real \to
Aut(\frk A)$ which leaves $\bar\tau$ invariant.

\begin{rem}\label{AZ}
The following construction of the $\frk Z$-trace on the crossed product algebra works
in much greater generality: the action of $\real$ on $A$ leaving $\tau$
invariant can be replaced by an action of a unimodular locally compact group
$G$ on $A$ leaving $\tau$ invariant. We leave the minor modifications to 
the interested reader. All the results up to the end of section 8.5 work
in this generality.
\end{rem}
 
We let $A_{\frk Z}$ denote the $C^*$-subalgebra of $\frk A$ generated by $A$ and
$\frk Z$. Clearly,
$$A_{\frk Z}=
\left\{\sum_{i=1}^n a_{i}z_{i}| a_{i}\in A,
 z_{i} \in \frk Z\right\}^{-\|\cdot\|}.$$
 
It is clear that:
\vspace{.1in}\\
(1)\;$A_{\frk Z}$ contains $A$ and $\frk Z$ and is therefore ultraweakly dense in 
$\frk A$.\vspace{.1in}\\
(2)\; $\bar\tau : A_{\frk Z} \to \frk Z$ is a faithful, unital 
$\frk Z$-trace, and \vspace{.1in}\\
(3)\; $\bar\alpha : \real\to Aut(A_{\frk Z})$ is a {\bf norm}-continuous
action on $A_{\frk Z}$ leaving $\bar\tau$ invariant and leaving $\frk Z$ pointwise fixed.

\begin{tech*}
The introduction of this hybrid algebra $A_{\frk Z}$ allows us to treat $\frk Z$
as scalars and use {\bf norm}-continuity in most of our calculations. This
permits the use of $C^*$-algebra crossed products and is a considerable 
simplification. We note also that one cannot simply use the {\bf space}
of {\bf norm}-continuous functions $C_c(\real,\frk A)$ below since
$\bar\alpha$-twisting the multiplication might take us out of the realm of
{\bf norm}-continuity. However, as a vector space (and pre-Hilbert 
$\frk Z$-module), $C_c(\real,\frk A)$ will have its uses.
\end{tech*}

With this set-up and notation, we define:
\begin{defn}\label{ZHilb}
$$\cal A = C_c(\real,A_{\frk Z}),$$
the space of {\bf norm}-continuous compactly supported functions from $\real$ to
$A_{\frk Z}$. We require {\bf norm}-continuity so that $\cal A$ becomes 
a $*$-algebra with the usual 
$\bar\alpha$-twisted multiplication:
$$x\cdot y(s) = \int x(t)\bar\alpha_t(y(s-t))dt,$$
and involution:
$$x^*(s) = \bar\alpha_s((x(-s))^*).$$
\end{defn}

Moreover, $\cal A$ becomes a (right) pre-Hilbert $\frk Z$-module with the inner 
product:
$$\bra x,y\ket = \int \bar\tau(x(s)^*y(s))ds$$ and $\frk Z$-action:
$$(xz)(s)=x(s)z.$$ 

Axioms (i), (ii), and (iii) are routine calculations. To see axiom (iv)
we observe that the set of inner products $\{\bra x,y\ket\; |\; x,y\in \cal A\}$ is
exactly equal to $\frk Z$. It comes as no surprise that $\cal A$ is, in fact,
a $\frk Z$-Hilbert algebra.

\begin{rem*}
We will also have occasion to use the completion of $\cal A$ in the vector-valued Banach
$L^2$ norm:
$$\|x\|_2 =\left(\int\|x(s)\|^2ds\right)^{1/2}.$$
We define this completion to be $L^2({\bf R},A_{\mathfrak{Z}})$ and observe that since 
$\|x\|_{\cal{A}} \leq \|x\|_2 $, we have a natural inclusion: 
$$L^2({\bf R},A_{\mathfrak{Z}})\hookrightarrow 
\cal{A}^{-\|\cdot\|_{\cal{A}}}\subset \cal{H}_\cal{A}.$$ 
\end{rem*}

\begin{prop}\label{ZHilb2}
With the above inner product and $\frk Z$-action, the $*$-algebra $\cal A$
is a $\frk Z$-Hilbert algebra.
\end{prop}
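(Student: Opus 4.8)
The pre-Hilbert $\frk Z$-module axioms (i)--(iv) having been checked in the discussion above, the proof reduces to verifying the four Hilbert-algebra axioms (v)--(viii) of Definition \ref{Hilbalg}. Axioms (v) and (vi) are purely formal identities, and the plan is to expand both sides using the definitions of the involution $x^*(s)=\bar\alpha_s((x(-s))^*)$ and of the twisted product, and then reconcile them using three facts: that $\bar\tau$ is tracial, that $\bar\tau$ is $\bar\alpha$-invariant, and that Lebesgue measure is translation-invariant (with Fubini). Concretely, for (v) one computes $x^*(s)^*y^*(s)=\bar\alpha_s(x(-s)(y(-s))^*)$, applies $\bar\alpha$-invariance and the trace property inside $\bar\tau$, and substitutes $s\mapsto -s$ to land on $\langle y,x\rangle$. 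For (vi) one writes $\langle xy,c\rangle$ and $\langle y,x^*c\rangle$ as double integrals and matches them by the substitution $s\mapsto s+t$ followed by $t\mapsto -t$ and a single application of the $\bar\alpha$-invariance of $\bar\tau$; notably, traciality is not needed here.

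The substantive step is axiom (vii): boundedness of $y\mapsto xy$ in the module norm $\|y\|_{\cal A}^2=\|\langle y,y\rangle\|_{\frk Z}$ for fixed $x\in\cal A$. The plan is to regard the twisted convolution as a vector-valued integral $xy=\int y_t\,dt$, where $y_t\in\cal A$ is defined by $y_t(s)=x(t)\bar\alpha_t(y(s-t))$; since $t\mapsto y_t$ is norm-continuous with compact support, this is a Bochner integral in the module-norm completion, and the Minkowski integral inequality gives $\|xy\|_{\cal A}\le\int\|y_t\|_{\cal A}\,dt$. It then remains to estimate a single slice. Using $x(t)^*x(t)\le\|x(t)\|_{op}^2\,1$ together with the positivity and unitality of $\bar\tau$ and the $\bar\alpha$-invariance of $\bar\tau$, one obtains $\langle y_t,y_t\rangle\le\|x(t)\|_{op}^2\,\langle y,y\rangle$ in $\frk Z_+$ (the shift $s\mapsto s-t$ being absorbed by translation-invariance of the measure and of $\bar\tau$), whence $\|y_t\|_{\cal A}\le\|x(t)\|_{op}\,\|y\|_{\cal A}$. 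Combining, $\|xy\|_{\cal A}\le\bigl(\int\|x(t)\|_{op}\,dt\bigr)\,\|y\|_{\cal A}$, and the integral is finite because $x$ is norm-continuous with compact support. I expect this estimate --- in particular, justifying that $xy$ really is the Bochner integral of the $y_t$ and the passage to the one-variable bound --- to be the main obstacle.

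Finally, for axiom (viii) the plan is to invoke the remark following Definition \ref{Hilbalg}, which reduces (viii) to norm-density of $\cal A^2$ in $\cal A$ in the module norm. Since $A$ is unital and $\frk Z$ contains the unit, $A_{\frk Z}$ is unital, so one can build an approximate identity $e_\lambda\in\cal A$ of the form $e_\lambda(t)=g_\lambda(t)\,1$, where $\{g_\lambda\}\subset C_c(\real)$ is a standard nonnegative approximate identity for $L^1(\real)$ with supports shrinking to $\{0\}$. Then $e_\lambda\cdot y\in\cal A^2$ for every $y\in\cal A$, and $(e_\lambda\cdot y)(s)=\int g_\lambda(t)\bar\alpha_t(y(s-t))\,dt\to y(s)$ uniformly in $s$ as the supports shrink, by the norm-continuity of $\bar\alpha$ on $A_{\frk Z}$ and the compact support of $y$. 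This forces $e_\lambda\cdot y\to y$ in $\|\cdot\|_2$, hence in $\|\cdot\|_{\cal A}$ since $\|\cdot\|_{\cal A}\le\|\cdot\|_2$, establishing the required density and completing the verification that $\cal A$ is a $\frk Z$-Hilbert algebra.
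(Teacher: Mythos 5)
Your proof is correct, and on the one substantive axiom it takes a genuinely different route from the paper. The paper also dismisses (v) and (vi) as routine and handles (viii) exactly as you do --- reducing to $\|\cdot\|_2$-density of $\cal A^2$ via the inequality $\|\cdot\|_{\cal A}\le\|\cdot\|_2$ and the remark after Definition \ref{Hilbalg}; you merely make the scalar approximate identity $e_\lambda(t)=g_\lambda(t)1$ explicit where the paper says ``it is easy to see.'' For axiom (vii), however, the paper does not estimate at all: it realizes left multiplication as the integrated form of a covariant pair, defining $\pi_{\cal A}(a)$ and $U_t$ as bounded adjointable $\frk Z$-module maps on $\cal A$, verifying covariance, embedding $\cal L(\cal A)\hookrightarrow\cal L(\cal{H}_{\cal A})$ by Corollary 3.7 of \cite{Pa}, and obtaining a representation $\pi_{\cal A}\times U$ of the $C^*$-crossed product $A_{\frk Z}\rtimes\real$ with $(\pi_{\cal A}\times U)(x)y=x\cdot y$, so that boundedness of $\pi(x)$ comes for free from the crossed-product norm. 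Your direct generalized-Minkowski argument --- slicing $x\cdot y=\int y_t\,dt$ as a Bochner integral, bounding each slice by $\bra y_t,y_t\ket\le\|x(t)\|_{op}^2\bra y,y\ket$ via $x(t)^*x(t)\le\|x(t)\|_{op}^2\,1$, positivity and $\bar\alpha$-invariance of $\bar\tau$, and translation invariance --- is sound (the Bochner integral exists in $L^2(\real,A_{\frk Z})$ by norm-continuity and compact support, agrees pointwise with the twisted convolution, and the norm inequality passes to $\|\cdot\|_{\cal A}$ since $\|\cdot\|_{\cal A}\le\|\cdot\|_2$), and it buys the explicit bound $\|\pi(x)\|\le\int\|x(t)\|_{op}\,dt$ by elementary means. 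What it does not buy is the identification $\pi(x)=(\pi_{\cal A}\times U)(x)$, which the paper leans on later: Proposition \ref{lvN} cites precisely this proof to get $\cal U(\cal A)=[(\pi_{\cal A}\times U)(A_{\frk Z}\rtimes\real)]^{\prime\prime}$, and the induced representation of Definition \ref{Indrep} is built from the same covariant pair, so on your route those facts would need to be established separately afterwards.
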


\begin{proof}[\bf Proof]
Axioms (v) and (vi) are routine calculations. Since $\cal A$ contains all the 
scalar-valued functions in $C_c(\real)$, it is easy to see that $\cal A^2$
is dense in $\cal A$ in the vector-valued $L^2$ norm: 

Since $\|x\|_{\cal{A}}\leq \|x\|_2$ , $\cal A^2$
is dense in $\cal A$ in the $\frk Z$-Hilbert algebra norm and so axiom (viii) is
satisfied by the Remark after Definition \ref{Hilbalg}.

Axiom (vii) requires a little more thought. We will show that the left regular
representaion of the $*$-algebra $\cal A$ on the pre-Hilbert $\frk Z$-module
$\cal A$ is the integrated form of a covariant pair of representations 
$(\pi_{\cal A},U)$ of the system $(A_{\frk Z},\real,\bar\alpha)$ inside the 
von Neumann algebra, $\cal L(\cal{H}_{\cal A}).$ To this end we represent $A_{\frk Z}$
on the $\frk Z$-module $\cal A = C_c(\real,A_{\frk Z})$ via:
$$[\pi_{\cal A}(a)x](s) = ax(s)\;\;for\; a\in A_{\frk Z},\;x\in\cal A,\;
s\in\real.$$

Similarly, we represent $\real$ on $\cal A$ via:
$$[U_t(x)](s) = \bar\alpha_t(x(s-t))\;\;for\;t,s\in \real,\;x\in\cal A.$$

One easily checks that these are representations as bounded, adjointable 
$\frk Z$-module mappings. Now, for fixed $x\in\cal A$ the map $t\mapsto U_t(x)$ 
is $\|\cdot\|_2$-norm continuous and so $\|\cdot\|_{\cal A}$-norm continuous:
by item (3) of Proposition \ref{conv} this easily implies that 
$$t\mapsto U_t:\real\to\cal L(\cal{H}_{\cal A})$$
is an ultraweakly continuous representation. Morever, the following are easily 
verified:

(1)\;$\|\pi_{\cal A}(a)\|\leq \|a\|$ for $a\in A_{\frk Z},$

(2)\;$\bra U_t(x),U_t(y)\ket = \bra x,y\ket$ for $t\in\real,\;\;x,y\in\cal A,$

(3)\;$\pi_{\cal A}(a)^* = \pi_{\cal A}(a^*)$ and $U_t^* = U_{-t}$ for
$a\in A_{\frk Z},\;\;t\in\real,$ and

(4)\;$U_t\pi_{\cal A}(a)U_t^* = \pi_{\cal A}(\bar\alpha_t(a))$ for $t\in\real$
and $a\in\ A_{\frk Z}.$ This is the covariance condition.

Combining this covariant pair of representations of the system,
$(A_{\frk Z},\real,\bar\alpha)$ in $\cal L(\cal A)$ with the $*$-monomorphism
embedding $\cal L(\cal A)\hookrightarrow \cal L(\cal{H}_{\cal A})$ (by Corollary 3.7 
of \cite{Pa}) we obtain a representation $\pi_{\cal A}\times U$ of the 
$C^*$-algebra 
$A_{\frk Z}\rtimes\real$ in the von Neumann algebra $\cal L(\cal{H}_{\cal A})$. One 
then easily checks that for $x\in\cal A\subset A_{\frk Z}\rtimes\real$ and 
$y\in\cal A\subset \cal{H}_{\cal A}$ that:
$$\left[(\pi_{\cal A}\times U)(x)(y)\right](s) = 
\int x(t)\bar\alpha_t(y(s-t))dt = (x\cdot y)(s).$$
That is, left-multiplication by $x$ on the $\frk Z$-module $\cal A$ is bounded
in the $\frk Z$-module norm and axiom (vii) is satisfied.
\end{proof}

\begin{lemma}\label{technical} 
If $\cal A = C_c(\real, A_{\frk Z})$ as above, then the following
hold.\\
(1) The norm-decreasing embedding: 
$(\cal A,\|\cdot\|_2)\to (\cal{H}_{\cal A},\|\cdot\|_{\frk Z})$ extends by continuity
to a norm-decreasing embedding of  
$L^2(\real,A_{\frk Z})$ into $\cal{H}_{\cal A}.$ 
Moreover, $L^2(\real,A_{\frk Z})$ is a $\frk Z$-module and the $\frk Z$-valued inner product on
$\cal{H}_{\cal A}$ restricts to $L^2(\real,A_{\frk Z})$ so that it is, in fact,  a pre-Hilbert 
$\frk Z$-module.  \\
(2) If
$x\in L^2(\real,A_{\frk Z})\subseteq \cal{H}_{\cal A}$ and $y\in \cal A$ then in the
$\frk Z$-Hilbert algebra notation, the element:
$$\pi(x)y:=\pi^{\prime}(y)x \in \cal{H}_{\cal A}$$ is identical to the element
$x\cdot y\in L^2(\real,A_{\frk Z})$ given by the twisted convolution:
$$(x\cdot y)(s) = \int x(t)\bar\alpha_t(y(s-t))dt.$$
(3) If $x , y \in L^2(\real,A_{\frk Z})$ and if $\pi(x)$ and $\pi(y)$
are bounded, then the operator $\pi(x)^*\pi(y)$ is in the ideal of
definition of the $\frk Z$-trace, $\sigma$ on $\cal U(\cal A)$, and
$$\sigma[\pi(x)^*\pi(y)]=\bra x,y\ket=
\int\bar\tau(x(t)^*y(t))dt.$$
\end{lemma}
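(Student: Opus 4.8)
The plan is to prove the three parts in order, the engine in each case being the contractivity of the embedding $(\cal A,\|\cdot\|_2)\hookrightarrow(\cal{H}_{\cal A},\|\cdot\|_{\frk Z})$ recorded before Proposition \ref{ZHilb2}, together with Young's inequality for the $\bar\alpha$-twisted convolution. For (1), I would first note that $\|x\|_{\cal A}\leq\|x\|_2$ makes the inclusion a contraction; since $\cal{H}_{\cal A}$ is complete in $\|\cdot\|_{\frk Z}$ (it is a self-dual Hilbert $\frk Z$-module by Theorem 3.2 of \cite{Pa}) and $\cal A$ is by definition $\|\cdot\|_2$-dense in $L^2(\real,A_{\frk Z})$, this contraction extends uniquely and continuously to all of $L^2(\real,A_{\frk Z})$. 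To pin down the inner product I would observe that the form $\bra x,y\ket:=\int\bar\tau(x(s)^*y(s))ds$ satisfies $\|\bra x,y\ket\|\leq\|x\|_2\,\|y\|_2$ (via $\|\bar\tau(x(s)^*y(s))\|\leq\|x(s)\|\,\|y(s)\|$ and the scalar Cauchy--Schwarz inequality), hence is $\|\cdot\|_2$-continuous; as it agrees with the $\cal{H}_{\cal A}$-inner product on the dense subspace $\cal A$, the two agree throughout $L^2(\real,A_{\frk Z})$. Injectivity of the extended embedding then follows since $\bra x,x\ket=\int\bar\tau(x(s)^*x(s))ds=0$ forces the nonnegative $\frk Z_+$-valued integrand to vanish a.e., whence $x(s)=0$ a.e. by faithfulness of $\bar\tau$; the $\frk Z$-module axioms pass to the completion because right multiplication by $z\in\frk Z$ is $\|\cdot\|_2$-contractive.

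For (2), fix $y\in\cal A$, compactly supported. Since each $\bar\alpha_t$ is isometric one has $\|(x\cdot y)(s)\|\leq\int\|x(t)\|\,\|y(s-t)\|dt$, so Young's inequality yields $\|x\cdot y\|_2\leq\|x\|_2\,\|y\|_1$, where $\|y\|_1:=\int\|y(u)\|du<\infty$; thus right twisted-convolution by $y$ is bounded on $L^2(\real,A_{\frk Z})$. Now pick $x_n\in\cal A$ with $x_n\to x$ in $\|\cdot\|_2$. On one hand $\pi^{\prime}(y)x_n=x_n\cdot y\to x\cdot y$ in $\|\cdot\|_2$, hence in $\cal{H}_{\cal A}$ by part (1); on the other hand $\pi^{\prime}(y)$ is a bounded module map, so $\pi^{\prime}(y)x_n\to\pi^{\prime}(y)x$ in $\cal{H}_{\cal A}$. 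Uniqueness of limits then gives $\pi(x)y:=\pi^{\prime}(y)x=x\cdot y$.

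For (3), the hypothesis that $\pi(x)$ and $\pi(y)$ are bounded means precisely $x,y\in\cal A_b$. By Proposition \ref{lbdd} we have $\pi(x)^*=\pi(x^*)$ and $\pi(x^*)\pi(y)=\pi(\pi(x^*)y)=\pi(x^*y)$ with $x^*y\in\cal A_b^2$, so $\pi(x)^*\pi(y)=\pi(x^*y)$ lies in the ideal of definition $\cal I=\pi(\cal A_b^2)$ of Lemma \ref{Ideal}. Applying the defining formula of the trace $\sigma$ from Theorem \ref{Tr} with $\xi=x^*$ and $\eta=y$ gives $\sigma(\pi(x^*y))=\bra(x^*)^*,y\ket=\bra x,y\ket$, which by part (1) equals $\int\bar\tau(x(t)^*y(t))dt$, as required.

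The hard part will be the measure-theoretic step in (1): justifying that a vanishing $\frk Z_+$-valued integral of a nonnegative integrand forces that integrand to vanish almost everywhere, which I would handle by passing to the identification $\frk Z\cong L^\infty(X,\nu)$ and arguing pointwise on $X$. The remaining subtlety, running through all three parts, is bookkeeping the two distinct topologies $\|\cdot\|_2$ and $\|\cdot\|_{\frk Z}$ and verifying that the two limiting processes in (2) are genuinely compatible under the contractive — but non-isometric in $\|\cdot\|_2$ — embedding; everything else is formal given Propositions \ref{conv} and \ref{lbdd} and Theorem \ref{Tr}.
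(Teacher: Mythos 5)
Your proposal is correct and takes essentially the same route as the paper's proof: part (1) by the contraction $\|x\|_{\cal A}\leq\|x\|_2$ together with a density/Cauchy-sequence argument for the inner product, part (2) by the same two-limit comparison (Young's inequality $\|x_n\cdot y-x\cdot y\|_2\leq\|x_n-x\|_2\|y\|_1$ against boundedness of $\pi^{\prime}(y)$ in $\|\cdot\|_{\cal A}$), and part (3) by identifying $\pi(x)^*\pi(y)=\pi(x^*y)\in\pi(\cal A_b^2)$ and applying the trace formula of Theorem \ref{Tr}. The extra details you supply (injectivity of the extended embedding in (1), and the explicit use of Proposition \ref{lbdd} in (3)) are points the paper leaves implicit, and you handle them correctly.
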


\begin{proof}[\bf Proof]
The first statement of item (1) follows trivially from the inequality
$\|x\|_{\cal{A}}\leq \|x\|_2.$\\
\hspace*{.2in} To see the second statement of item (1),
suppose $\{x_n\}$ is a sequence in $\cal A$ which is Cauchy in the $\|\cdot\|_2$ norm and that $z\in\frk Z.$
Then $\|x_n z-x_m z\|_2 \leq\|x_n-x_m\|_2 \|z\|\to 0,$ so that $L^2(\real,A_{\frk Z})$ is a $\frk Z$-module.
Similarly, if  $\{x_n\}$ and  $\{y_n\}$ are sequences in $\cal A$ which are Cauchy in the $\|\cdot\|_2$ norm,
then by the Cauchy-Schwarz inequality:
\begin{eqnarray*}\|\bra x_n,y_n\ket - \bra x_m,y_m\ket\|&=& \|\bra x_n -x_m,y_n\ket - \bra x_m,y_m -y_n\ket\|\\
&\leq& \|x_n-x_m\|_{\cal A} \|y_n\|_{\cal A} + \|x_m\|_{\cal A} \|y_m -y_n\|_{\cal A}\\
&\leq& \|x_n-x_m\|_2 \|y_n\|_2 + \|x_m\|_2 \|y_m -y_n\|_2.
\end{eqnarray*}
Therefore, the $\frk Z$-valued inner product on $\cal{H}_{\cal{A}}$ restricts to a $\frk Z$-valued inner product on
$L^2(\real,A_{\frk Z}).$\\ 
\hspace*{.2in}To see the item (2), 
let $\{x_n\}$
be a sequence in $\cal A$ with $\|x_n - x\|_2\to 0$. Then:
$$\|x_n\cdot y - x\cdot y\|_{\cal A}\leq \|x_n\cdot y - x\cdot y\|_2\leq
\|x_n - x\|_2\|y\|_1\to 0.$$
On the other hand, since $x_n$ and $y$ are both in  
$\cal A$ we have that $\pi^{\prime}(y)x_n = x_n\cdot y$ by definition and so:
$$\|x_n\cdot y - \pi(x)y\|_{\cal A} = 
\|\pi^{\prime}(y)x_n-\pi^{\prime}(y)x\|_{\cal A}\leq\|\pi^{\prime}(y)\|\,
\|x_n-x\|_{\cal A}\leq \|\pi^{\prime}(y)\|\, \|x_n-x\|_2\to 0.$$
So, $\pi(x)y = x\cdot y.$

Item (3) follows from from the definition of the trace (Theorem \ref{Tr}) and 
item (1).
\end{proof}

\begin{lemma}\label{extendrep}
The representation $\pi_{\cal A}:A_{\frk Z}\to\cal L(\cal{H}_{\cal A})$ extends
to an ultraweakly continuous representation (also denoted $\pi_{\cal A}$)
of $\frk A$ in $\cal L(\cal{H}_{\cal A})$.
\end{lemma}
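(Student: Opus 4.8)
The plan is to define the extension by ultraweak continuity, exploiting that $A_{\frk Z}$ is ultraweakly dense in $\frk A$ together with the Kaplansky density theorem, and to carry out all convergence arguments through the criterion of Proposition \ref{conv}(3), which lets us test ultraweak convergence of bounded nets of operators only against the inner products $\bra\cdot\,y,w\ket$ with $y,w\in\cal A$. First I note that $\pi_{\cal A}$ is contractive on $A_{\frk Z}$, since $\|\pi_{\cal A}(a)\|\le\|a\|$ was verified in Proposition \ref{ZHilb2}. Now fix $x\in\frk A$. By Kaplansky density there is a net $\{a_i\}$ in $A_{\frk Z}$ with $\|a_i\|\le\|x\|$ and $a_i\to x$ ultraweakly. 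Then $\{\pi_{\cal A}(a_i)\}$ is a norm-bounded net in the von Neumann algebra $\cal L(\cal H_{\cal A})$, whose unit ball is ultraweakly compact, so the net has ultraweak cluster points; the goal is to show there is exactly one, depending only on $x$, which we will call $\pi_{\cal A}(x)$.

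The key computation identifies these cluster points. For $y,w\in\cal A$ the definition of $\pi_{\cal A}$ and of the inner product on $\cal A$ give
$$\bra \pi_{\cal A}(a_i)y,w\ket=\int\bar\tau\big(y(s)^*\,a_i^*\,w(s)\big)\,ds.$$
Pairing with an arbitrary $\omega\in\frk Z_*$ and interchanging $\omega$ with the $\frk Z$-valued integral yields
$$\omega\big(\bra \pi_{\cal A}(a_i)y,w\ket\big)=\int\omega\big(\bar\tau(y(s)^*a_i^*w(s))\big)\,ds.$$
Since $\omega\circ\bar\tau$ is ultraweakly continuous on $\frk A$ and $b\mapsto y(s)^*bw(s)$ is ultraweakly continuous for each fixed $s$, while $a_i^*\to x^*$ ultraweakly, the integrand converges pointwise in $s$ to $\omega(\bar\tau(y(s)^*x^*w(s)))$, and it is dominated by the integrable, compactly supported function $\|\omega\|\,\|x\|\,\|y(s)\|\,\|w(s)\|$. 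The dominated convergence theorem then forces convergence of the integrals, so any cluster point $T$ of $\{\pi_{\cal A}(a_i)\}$ satisfies $\bra Ty,w\ket=\int\bar\tau(y(s)^*x^*w(s))\,ds$ for all $y,w\in\cal A$. Because $\cal A$ is dense in $\cal H_{\cal A}$ (Proposition \ref{density}), these pairings determine $T$ uniquely; hence the net converges ultraweakly to a single operator $\pi_{\cal A}(x)$, independent of the approximating net, with $\|\pi_{\cal A}(x)\|\le\|x\|$ and with the explicit formula $\bra \pi_{\cal A}(x)y,w\ket=\int\bar\tau(y(s)^*x^*w(s))\,ds$.

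It remains to check that this extension is an ultraweakly continuous $*$-homomorphism. Ultraweak continuity on bounded nets follows by repeating the same $\omega$-pairing and dominated-convergence argument verbatim for an arbitrary bounded net $x_\beta\to x$ in $\frk A$, using the established formula for $\bra\pi_{\cal A}(x_\beta)y,w\ket$ and Proposition \ref{conv}(3). The algebraic properties then come from density plus the separate ultraweak continuity of multiplication and of the adjoint on von Neumann algebras: $\pi_{\cal A}(x^*)=\pi_{\cal A}(x)^*$ by passing to limits in $\pi_{\cal A}(a_i^*)=\pi_{\cal A}(a_i)^*$, and multiplicativity follows in two steps, first fixing $y\in A_{\frk Z}$ and letting $x$ range over $\frk A$ (using that $a_iy\to xy$ ultraweakly and lies in $A_{\frk Z}$), then fixing $x\in\frk A$ and letting $y$ range over $\frk A$ (using that $xb_j\to xy$ ultraweakly and invoking the ultraweak continuity just established). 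I expect the genuine difficulty to be exactly the interchange of the ultraweak limit with the $\real$-integral defining the inner product: this is not a bounded-operator statement in $\frk Z$ but must be pushed through the predual $\frk Z_*$ and controlled by the compact support and uniform bound of the integrand, which is what the dominated-convergence step accomplishes.
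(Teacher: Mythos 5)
Your strategy is viable and genuinely different from the paper's, but one load-bearing step is justified by a false theorem: the dominated convergence theorem does not hold for \emph{nets}, and Kaplansky density hands you a net $\{a_i\}$, not a sequence. (Standard counterexample: on $[0,1]$ the net of indicator functions $\chi_F$ of finite sets $F$, directed by inclusion, converges pointwise to $1$ and is dominated by $1$, yet $\int\chi_F\,d\lambda=0$ for every $F$.) So pointwise convergence of $s\mapsto\omega(\bar\tau(y(s)^*a_i^*w(s)))$ together with the bound $\|\omega\|\,\|x\|\,\|y(s)\|\,\|w(s)\|$ does not by itself give convergence of the integrals, and this is exactly the step on which your identification of the cluster points of $\{\pi_{\cal A}(a_i)\}$ --- and hence the whole construction of the extension --- rests. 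You correctly diagnosed the interchange of the ultraweak limit with the integral as the crux, but the tool you invoked for it fails in the net setting.

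The gap is technical rather than strategic, and can be closed in several ways. (a) Your integrands $g_i(s)=\omega(\bar\tau(y(s)^*a_i^*w(s)))$ are uniformly bounded, supported in a common compact set, and uniformly equicontinuous (because $\|a_i\|\leq\|x\|$ while $y,w$ are norm-continuous); pointwise convergence of a uniformly equicontinuous net on a compact set is automatically uniform, which does yield convergence of the integrals. (b) Alternatively, $s\mapsto(\omega\circ\bar\tau)(y(s)^*\,\cdot\,w(s))$ is a norm-continuous, compactly supported map into $\frk A_*$, so $b\mapsto\int\omega(\bar\tau(y(s)^*b\,w(s)))\,ds$ is a Bochner integral of normal functionals and hence itself normal; applied to $a_i^*\to x^*$ this gives the needed convergence at once. (c) The paper sidesteps the interchange entirely: it tests convergence only on elementary tensors $x=cf$, $y=bg$ with $c,b\in\frk A$ and $f,g\in C_c(\real)$, where $\bra x,\pi_{\cal A}(a_n)y\ket=\bar\tau(a_nbc^*)\int\bar f(t)g(t)\,dt$, so the integral factors out and only ultraweak continuity of $\bar\tau$ and of multiplication by fixed elements is used, after which an $\epsilon/3$-argument with $\|\cdot\|_2$-density of such combinations in $C_c(\real,\frk A)$ finishes. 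Note also that the paper's construction of the extension itself is different in kind from yours: it defines $[\pi_{\cal A}(a)x](s)=ax(s)$ directly for \emph{all} $a\in\frk A$ on the larger pre-Hilbert $\frk Z$-module $C_c(\real,\frk A)\subset\cal{H}_{\cal A}$, uses Lemma \ref{complete} to see that $\cal{H}_{\cal A}$ is also the Paschke dual of that module (so the operators extend uniquely), and then obtains normality by showing the unit ball of $\pi_{\cal A}(\frk A)$ is ultraweakly closed and invoking Cor.\ I.4.1 of \cite{Dix}; this makes the $*$-homomorphism property manifest from the formula and spares the paper your separate limit arguments for the adjoint and multiplicativity --- which, for the record, are fine in your version once the convergence step above is repaired.
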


\begin{proof}[\bf Proof]
We first observe that the space of {\bf norm}-continuous functions, 
$C_c(\real,\frk A)\subset \cal{H}_{\cal A}$ in a natural way.
That is if $x\in C_c(\real,\frk A)$, then for $y\in \cal A$ the map:
$$y\mapsto \int\bar\tau((x(t))^*y(t))dt$$
is a bounded $\frk Z$-module mapping from $\cal A$ to $\frk Z$ and so defines
a unique element in $\cal{H}_{\cal A}$. If we abuse notation and denote this element
in $\cal{H}_{\cal A}$ by $x$, then we get the formula:
$$\bra x,y\ket = \int\bar\tau((x(t))^*y(t))dt.$$ 
Clearly, $\cal A = C_c(\real,A_{\frk Z})\subset C_c(\real,\frk A)
\subset \cal{H}_{\cal A}$. The extension of $\pi_{\cal A}$ to $\frk A$
is now obvious:
$$[\pi_{\cal A}(a)x](s) = ax(s)\;\;for\; a\in \frk A,\;x\in C_c(\real,\frk A),
\;s\in\real.$$
It is easy to check that this is a well-defined extension to $\frk A$ as
$\frk Z$-module mappings on the $\frk Z$-submodule  $C_c(\real,\frk A)
\subset \cal{H}_{\cal A}$. These $\pi_{\cal A}(a)$ extend uniquely to 
$\frk Z$-module mappings on $\cal{H}
_{\cal A}$ since $\cal{H}_{\cal A}$ is also the Paschke 
dual of $C_c(\real,\frk A)$ by Lemma \ref{complete}.

To see that $\pi_{\cal A}: \frk A \to \cal L(\cal{H}_{\cal A})$ is normal, it suffices
to see that $\pi_{\cal A}(\frk A)$ is ultraweakly closed in $\cal L(\cal{H}_{\cal A})$
by Cor. I.4.1 of \cite{Dix}. To this end, it suffices to see that the unit
ball in $\pi_{\cal A}(\frk A)$ is ultraweakly closed. So, let $\{a_n\}$
be a net in $\frk A$ with $\|a_n\|=\|\pi_{\cal A}(a_n)\|\leq 1$ and
$$\pi_{\cal A}(a_n)\to T \;{\rm ultraweakly\;in}\; \cal L(\cal{H}_{\cal A}).$$
Since the unit ball in $\frk A$ is ultraweakly compact we can assume 
(by choosing a subnet if necessary) that there is an $a\in\frk A$ such that
$a_n\to a$ ultraweakly. By Proposition \ref{conv} part (3), we have for all
$x,y\in C_c(\real,\frk A)$
$$\bra x,\pi_{\cal A}(a_n)y\ket\to \bra x,Ty\ket\;{\rm ultraweakly\; in}\; \frk Z.$$

On the other hand, if $x=cf$ and $y=bg$ for $c,b\in\frk A$ and 
$f,g\in C_c(\real)$ then one easily calculates that:
$$\bra x,\pi_{\cal A}(a_n)y\ket=\bar\tau(a_nbc^*)\int \bar f(t)g(t)dt$$
which converges ultraweakly in $\frk Z$ to $\bra x,\pi_{\cal A}(a)y\ket$. Thus,
for all such $x,y$ we have:
$$\bra x,\pi_{\cal A}(a)y\ket=\bra x,Ty\ket.$$
Clearly, the same equation holds for all finite linear combinations of such 
$x$ and $y$. Since such combinations are $\|\cdot\|_2$-dense in 
$C_c(\real,\frk A)$ (and so $\|\cdot\|_{\frk Z}$-dense) we have the equation 
holding for all $x,y\in C_c(\real,\frk A)$. Hence, for all 
$y\in C_c(\real,\frk A)$ we have:
$$\pi_{\cal A}(a)y=Ty.$$
Since $\pi_{\cal A}(a)$ leaves the pre-Hilbert $\frk Z$-module
$C_c(\real,\frk A)$ invariant, Proposition 3.6 of \cite{Pa} implies that 
$T=\pi_{\cal A}(a)$ as required.

\end{proof}

\begin{tech*}
Now, the natural embedding of the $\frk Z$-module, 
$L^2(\real)\otimes_{alg}A_{\frk Z}$ into $L^2(\real,A_{\frk Z})$ induces an
embedding: $L^2(\real,A_{\frk Z})\hookrightarrow L^2(\real)\otimes_{\frk Z} A_{\frk Z}$
where the latter is {\bf defined} to be the completion of the algebraic tensor
product in the pre-Hilbert ${\frk Z}$-module norm, \cite{L}. Thus we get a series of 
inclusions of pre-Hilbert $\frk Z$-modules each of which is strict unless $A$ is 
finite-dimensional:
$$L^2(\real)\otimes_{alg}A_{\frk Z}\subset L^2(\real,A_{\frk Z})\subset
L^2(\real)\otimes_{\frk Z} A_{\frk Z}\subset \cal{H}_{\cal A}.$$
One could insert another (generally strict) series of containments:
$$L^2(\real)\otimes_{\frk Z} A_{\frk Z}\subset L^2(\real)\otimes_{\frk Z} \frk A\subset 
\cal{H}_{\cal A}.$$
Or, even the diagram of containments:
\begin{eqnarray*}
C_c(\real,A_{\frk Z})\;\; =&\hspace{.3in}\cal A\hspace{.3in} &=\;\;
C_c(\real,A_{\frk Z})\\
\cup \hspace{.5in}&\hspace{.5in}&\hspace{.5in}\cap\\
C_c(\real)\otimes_{alg}A_{\frk Z} \subset&L^2(\real)\otimes_{alg}A_{\frk Z}
&\subset L^2(\real,A_{\frk Z})
\end{eqnarray*}
In general, one might be able to realize $\cal{H}_{\cal A}$ as some sort of collection
of measurable $L^2$-functions from $\real$ into the $\frk Z$-module
$\cal{H}_{A_{\frk Z}}=\cal{H}_{\frk A}$; however, 
this does not seem particularly useful, so
we refrain from exploring this idea further. The important point is that each
of these $\frk Z$-modules has the {\bf same} Paschke dual $\cal{H}_{\cal A}$ and
so we can define operators in $\cal L(\cal{H}_{\cal A})$ by defining bounded
adjointable $\frk Z$-module mappings on any one of them by Corollary 3.7
of \cite{Pa}. Of course any one such operator may or may not leave the other
$\frk Z$-modules invariant.
\end{tech*}

\begin{prop}\label{lvN}
Let $\cal A= C_c(\real,A_{\frk Z})$. Then,

\noindent (1)\;For $x\in\cal A$ we have 
$\pi(x)=(\pi_{\cal A}\times U)(x) = \int\pi_{\cal A}(x(t))U_t dt$, where the 
integral converges in the norm of $\cal L(\cal{H}_{\cal A})$.

\noindent (2)\;$\cal U(\cal A)=
[(\pi_{\cal A}\times U)(A_{\frk Z}\rtimes\real)]^{\prime\prime}=
[\pi_{\cal A}(\frk A) \cup \{U_t\}_{t\in\real}]^{\prime\prime}$.

\noindent (3)\;$\cal U(\cal A)=
[(\pi_{\cal A}\times U)(A\rtimes\real)]^{\prime\prime}.$ 
\end{prop}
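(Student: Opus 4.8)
The plan is to use part (1) to re-express $\cal U(\cal A)=(\pi(\cal A))^{\prime\prime}$ through the integrated form, part (2) to pass to the generators $\pi_{\cal A}(\frk A)$ and $\{U_t\}$, and part (3) to observe that restricting to $A$ already recaptures the central algebra $\frk Z$.

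For part (1), I would first identify $\pi(x)$ with $(\pi_{\cal A}\times U)(x)$. The proof of Proposition \ref{ZHilb2} shows $[(\pi_{\cal A}\times U)(x)](y)=x\cdot y$ for $x,y\in\cal A$, while Lemma \ref{technical}(2) gives $\pi(x)y=x\cdot y$ for the same $x,y$; these are two bounded adjointable $\frk Z$-module maps agreeing on $\cal A$, hence equal on $\cal H_{\cal A}$ by the unique extension property. For the integral formula I would invoke the standard contractivity estimate $\|(\pi_{\cal A}\times U)(x)\|\le\int\|x(t)\|\,dt$ (immediate from $\|U_t\|=1$ and $\|\pi_{\cal A}(a)\|\le\|a\|$). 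Since $x\in C_c(\real,A_{\frk Z})$ is uniformly continuous with compact support, the step functions $x_P=\sum_i\chi_{I_i}\otimes x(t_i)$ converge to $x$ in the $L^1$-norm, so $(\pi_{\cal A}\times U)(x_P)=\sum_i\pi_{\cal A}(x(t_i))\int_{I_i}U_t\,dt$ converges in operator norm to $(\pi_{\cal A}\times U)(x)$; this is what it means for the integral to converge in the norm of $\cal L(\cal H_{\cal A})$.

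For part (2), part (1) gives $\pi(\cal A)=(\pi_{\cal A}\times U)(\cal A)$, and since $\cal A=C_c(\real,A_{\frk Z})$ is $C^*$-norm dense in $A_{\frk Z}\rtimes\real$ and $\pi_{\cal A}\times U$ is a norm-continuous $*$-representation, $(\pi_{\cal A}\times U)(\cal A)$ is norm-dense in $(\pi_{\cal A}\times U)(A_{\frk Z}\rtimes\real)$; the two therefore share the double commutant $\cal U(\cal A)$. The standard generation argument identifies this with $[\pi_{\cal A}(A_{\frk Z})\cup\{U_t\}]^{\prime\prime}$: the inclusion $\subseteq$ is the integral formula of part (1), and $\supseteq$ follows because, with an approximate point mass $\phi_n\in C_c(\real)$ at $t_0$ and using the unit of $A_{\frk Z}$, the operators $\int\phi_n(t)U_t\,dt\to U_{t_0}$ and (taking $t_0=0$) $\pi_{\cal A}(a)\int\phi_n(t)U_t\,dt\to\pi_{\cal A}(a)$ ultraweakly. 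Finally, Lemma \ref{extendrep} extends $\pi_{\cal A}$ to a normal representation of $\frk A$ with $\pi_{\cal A}(\frk A)$ ultraweakly closed; as $A_{\frk Z}$ is ultraweakly dense in $\frk A$ and $\pi_{\cal A}$ is normal, $\pi_{\cal A}(A_{\frk Z})$ is ultraweakly dense in $\pi_{\cal A}(\frk A)$, whence $[\pi_{\cal A}(A_{\frk Z})\cup\{U_t\}]^{\prime\prime}=[\pi_{\cal A}(\frk A)\cup\{U_t\}]^{\prime\prime}$.

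For part (3), I would rerun the generation argument of part (2) with the unital algebra $A$ in place of $A_{\frk Z}$ to get $R:=[(\pi_{\cal A}\times U)(A\rtimes\real)]^{\prime\prime}=[\pi_{\cal A}(A)\cup\{U_t\}]^{\prime\prime}$. The one genuinely new point is that $R$ already contains $\frk Z$, although $A$ itself need not. A direct computation shows that for $z\in\frk Z$ and $x\in\cal A$ one has $[\pi_{\cal A}(z)x](s)=z\,x(s)=x(s)z$, so $\pi_{\cal A}$ restricts on $\frk Z$ to the central embedding of Proposition \ref{TypeI}, giving $\pi_{\cal A}(\frk Z)=\frk Z=Z(\cal L(\cal H_{\cal A}))$. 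Since $Z\subseteq A$ we have $\pi_{\cal A}(Z)\subseteq R$, and because $\pi_{\cal A}$ is unital and normal with $\frk Z=\overline{Z}^{\,uw}$, the algebra $\pi_{\cal A}(Z)$ is ultraweakly dense in $\pi_{\cal A}(\frk Z)=\frk Z$; hence $\frk Z=\pi_{\cal A}(Z)^{\prime\prime}\subseteq R$. With both $\pi_{\cal A}(A)$ and $\frk Z$ inside the von Neumann algebra $R$ I recover all of $\pi_{\cal A}(A_{\frk Z})$, so $R\supseteq[\pi_{\cal A}(A_{\frk Z})\cup\{U_t\}]^{\prime\prime}=\cal U(\cal A)$ by part (2), while $R\subseteq\cal U(\cal A)$ is obvious. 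I expect the principal difficulty to be the analytic justification of the norm-convergent integral in part (1): because $U$ is only ultraweakly (equivalently strongly) continuous and not norm-continuous, the convergence cannot come from naive Riemann sums in the $U_t$ but must be extracted from the $L^1$-contractivity of the integrated form; the second delicate point is verifying that $\pi_{\cal A}(Z)^{\prime\prime}$ is genuinely all of $\frk Z$, which is exactly what makes the reduction from $A_{\frk Z}$ to $A$ possible.
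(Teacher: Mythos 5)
Your proposal is correct, and while its skeleton matches the paper's, the mechanism at the two crucial steps is genuinely different. For part (1) and the first equality of part (2) you and the paper coincide; the paper merely asserts the integral formula as standard for any crossed product, and your step-function/$L^1$-contractivity reading, with $(\pi_{\cal A}\times U)(\chi_I\otimes a)=\pi_{\cal A}(a)\int_I U_t\,dt$ and the explicit caveat that naive Riemann sums in the $U_t$ cannot converge in norm (since $t\mapsto U_t$ is only strongly continuous), is exactly the right way to make that assertion precise. The real divergence is in the second equality of (2): you recover the generators as ultraweak limits $\pi(\phi_n\otimes a)\to\pi_{\cal A}(a)$ and $\pi(\phi_n\otimes 1)\to U_{t_0}$ for approximate point masses $\phi_n$ — a classical crossed-product generation argument, which is fine but silently needs Proposition \ref{conv}(3) to certify ultraweak convergence of these bounded nets in $\cal L(\cal{H}_{\cal A})$ — whereas the paper avoids limit arguments on that side entirely: it invokes the Commutation Theorem \ref{Comm} to write $\cal U(\cal A)=(\pi^{\prime}(\cal A))^{\prime}$, checks by direct computation that $\pi_{\cal A}(A_{\frk Z})$ and the $U_t$ commute with all right multiplications (then passes to $\pi_{\cal A}(\frk A)$ by normality, as you do), and obtains the reverse inclusion by showing that any $T$ commuting with $\pi_{\cal A}(\frk A)\cup\{U_t\}_{t\in\real}$ commutes with every $\pi(x)$ via the norm-convergent integral of part (1). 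Your route is the textbook one; the paper's buys cleaner convergence bookkeeping by exploiting the module-commutant machinery built for exactly this purpose. For part (3) the paper is terser: Lemma \ref{extendrep} gives $\pi_{\cal A}(\frk A)=\pi_{\cal A}(A)^{\prime\prime}$ inside the double commutant of $(\pi_{\cal A}\times U)(A\rtimes\real)$, and since $\frk Z\subseteq\frk A$ the centre is subsumed without comment. Your explicit recovery of $\frk Z$ via $\pi_{\cal A}(Z)^{\prime\prime}=\pi_{\cal A}(\frk Z)=\frk Z$ is correct, but note that it is automatic in this framework: commutants are taken within $\cal L(\cal{H}_{\cal A})$, whose centre is $\frk Z$ by Proposition \ref{TypeI}, so every commutant — in particular your $R$ — contains $\frk Z$ for free; your normality argument nonetheless has the merit of not depending on that convention.
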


\begin{proof}[\bf Proof]
To see item (1) we note that in the proof of Proposition \ref{AZ} it was shown 
that for $x,y\in \cal A$:
$$\pi(x)y=(\pi_{\cal A}\times U)(x)y.$$
By Proposition 3.6 of \cite{Pa} this implies that 
$\pi(x)=(\pi_{\cal A}\times U)(x)$ as elements of $\cal L(\cal{H}_{\cal A})$.
The second equality in item (1) is true for any crossed product when $x$ is
a compactly supported continuous function from the group into the $C^*$-algebra.

\noindent To see item (2) we first note that by item (1):
\begin{eqnarray*}
(\pi_{\cal A}\times U)(A_{\frk Z}\rtimes\real) &=& (\pi_{\cal A}\times U)
(C_c(\real,A_{\frk Z}))^{-\|\cdot\|}\\
&=& (\pi_{\cal A}\times U)(\cal A)^{-\|\cdot\|}\\
&=& \pi(\cal A)^{-\|\cdot\|}.
\end{eqnarray*}
Hence,
$$\cal U(\cal A)=[\pi(\cal A)]^{\prime\prime}=
[\pi(\cal A)^{-\|\cdot\|}]^{\prime\prime}=
[(\pi_{\cal A}\times U)(A_{\frk Z}\rtimes\real)]^{\prime\prime}.$$
Now, by the Commutation Theorem (\ref{Comm}):
$$\cal U(\cal A) = \left (\pi^{\prime}(\cal A)\right )^{\prime}.$$
and it is an easy calculation that $\pi_{\cal A}(A_{\frk Z})\subset
\left (\pi^{\prime}(\cal A)\right )^{\prime}.$ Since the representation
$\pi_{\cal A}$ is ultraweakly continuous on $\frk A$ and $A_{\frk Z}$ is
ultraweakly dense in $\frk A$ we see that:
$$\pi_{\cal A}(\frk A)=\pi_{\cal A}(A_{\frk Z})^{-u.w.}\subset
\left (\pi^{\prime}(\cal A)\right )^{\prime}=\cal U(\cal A).$$
It is a straightforward calculation (since the operators $U_t$ leave $\cal A$
invariant) that :
$$ \{U_t\}_{t\in\real}\subset
\left (\pi^{\prime}(\cal A)\right )^{\prime}=\cal U(\cal A).$$
Thus, $$[\pi_{\cal A}(\frk A) \cup \{U_t\}_{t\in\real}]^{\prime\prime}\subset
\cal U(\cal A).$$

On the other hand, if 
$T\in [\pi_{\cal A}(\frk A) \cup \{U_t\}_{t\in\real}]^{\prime},$ then
$T\in [\pi_{\cal A}(A_{\frk Z}) \cup \{U_t\}_{t\in\real}]^{\prime}$ and
by the full force of item (1), we see that
$T\in \left (\pi(\cal A)\right )^{\prime}=\cal U(\cal A)^{\prime}$ by
Theorem \ref{Comm}. That is,
$$[\pi_{\cal A}(\frk A) \cup \{U_t\}_{t\in\real}]^{\prime}\subset
\cal U(\cal A)^{\prime}\;or$$
$$[\pi_{\cal A}(\frk A) \cup \{U_t\}_{t\in\real}]^{\prime\prime}\supset
\cal U(\cal A)$$
as required.

To see item (3), we observe that since $A$ is ultraweakly dense in $\frk A$,
Lemma \ref{extendrep} implies that $\pi_{\cal A}(\frk A)=\pi_{\cal A}(A)^{\prime\prime}
\subset [(\pi_{\cal A}\times U)(A\rtimes\real)]^{\prime\prime}.$ Since
$\{U_t\}_{t\in\real} \subset 
[(\pi_{\cal A}\times U)(A\rtimes\real)]^{\prime\prime}$, we have by item (2) 
that $\cal U(\cal A)\subset 
[(\pi_{\cal A}\times U)(A\rtimes\real)]^{\prime\prime}$. The other containment
is trivial.
\end{proof}

\begin{defn}\label{Indrep}{\bf The Induced Representation.}
Now, there is another representaion of $\cal A= C_c(\real,A_{\frk Z})$ (and
hence $A_{\frk Z}\rtimes\real$) on $\cal{H}_{\cal A}$ which is unitarily 
equivalent to $\pi = \pi_{\cal A}\times U$. In the remainder of the paper we will
use the standard notation for this representation, namely $Ind:$ see below.
Later when we define the notion of index, we will use the notation $Index$ to avoid confusion.
To define the representation $Ind$ we first define a single 
unitary $V\in \cal L(\cal{H}_{\cal A})$ via:
$$(V\xi)(t)= 
\bar\alpha_{t}^{-1}(\xi(t))\;\;for\;\;\xi\in L^2(\real,A_{\frk Z}).$$
One easily checks that $V$ is a bounded, adjointable, $\frk Z$-module
mapping on the $\frk Z$-module $L^2(\real,A_{\frk Z})$ and therefore on 
$\cal{H}_{L^2(\real,A_{\frk Z})} = \cal{H}_{\cal A}$ by the previous remarks. One easily 
checks that for $a\in A_{\frk Z}$ , $t\in\real$ and $\xi\in 
L^2(\real,A_{\frk Z})$
$$V\pi_{\cal A}(a)V^*=\tilde\pi(a)\hspace{.4in}and\hspace{.4in}VU_tV^*=
\lambda_t,$$
where 
$$(\tilde\pi(a)\xi)(s)=\bar\alpha_s^{-1}(a)\xi(s)\hspace{.3in}and\hspace{.3in}
(\lambda_t\xi)(s)=\xi(s-t).$$
Another straightforward calculation shows that for $x,\xi\in\cal A$ 
$$(V\pi(x)V^*\xi)(s)=\int\bar\alpha_s^{-1}(x(t))\xi(s-t)dt,$$
and that this formula easily extends to $\xi\in L^2(\real,A_{\frk Z})$.

Now, if $x\in L^2(\real,A_{\frk Z})$, $\pi(x)$ is bounded and $\xi\in\cal A$,
then using the formula of item (2) in lemma \ref{technical} one easily calculates that
we obtain the same formula, namely
$$(V\pi(x)V^*\xi)(s)=\int\bar\alpha_s^{-1}(x(t))\xi(s-t)dt.$$
Since this representation of $A_{\frk Z}\rtimes\real$, $x\mapsto V\pi(x)V^*$ is 
induced from the left
multiplication of $A_{\frk Z}$ on itself via the action of $\real$ on 
$A_{\frk Z}$, we denote it by $Ind(x)$. That is,
$$Ind(x):=V\pi(x)V^*.$$
Now, the von Neumann algebra, $\cal U(\cal A)$ contains the
representations $\pi_{\cal A}$ of $A_{\frk Z}$ and $U$ of $\real$ which 
integrate to give the representation $\pi=\pi_{\cal A}\times U$
of $\cal A$ (and hence of $A_{\frk Z}\rtimes \real$) in $\cal U(\cal A)$. We 
define the von Neumann algebra $$\cal M=V\cal U(\cal A)V^*$$
in $\cal L(\cal{H}_{\cal A})$ which also has centre $\frk Z$ and is unitarily 
equivalent to $\cal U(\cal A)$ but for which 
the machinery of $\frk Z$-Hilbert algebras is not directly applicable. 
$\cal M$ is generated by the representations, 
$\tilde\pi(\cdot):=V\pi_{\cal A}(\cdot)V^*$ of $A_{\frk Z}$ and
$\lambda_{(\cdot)}:=VU_{(\cdot)}V^*$ of $\real$. The integrated representation
$\tilde\pi\times\lambda$ is, of course, $Ind$. The trace on $\cal M$
is denoted by $\hat\tau$ and is defined on 
$\cal M^{\hat\tau}:=V\cal U(\cal A)^{\sigma}V^*$ via:
$$\hat\tau(T):=\sigma(V^*TV).$$
It follows from item (3) of Lemma \ref{technical} that if $x , y \in L^2(\real,A_{\frk Z})$ 
and if $\pi(x)$ and $\pi(y)$
are bounded, then the operator $Ind (x)^*Ind (y)$ is in the ideal of
definition of the $\frk Z$-trace, $\hat\tau$ on $\cal M$, and
$$\hat\tau[Ind (x)^*Ind (y)]=\hat\tau[V\pi(x)^*\pi(y)V^*]=\bra x,y\ket=
\int\bar\tau(x(t)^*y(t))dt.$$

\end{defn}

\begin{defn}\label{Hilbtran}{\bf The Hilbert Transform.}
The Hilbert Transform, $H_{\real}$ on $L^2(\real)$ is defined for $\xi\in 
L^2(\real)$ by:
$$H_{\real}(\xi) = (\hat\xi sgn\check),$$
where $\hat{}$, $\check{}$ are the usual Fourier transform and inverse 
transform and $sgn$ is the usual signum function on $\real$.

Then, $H_{\real}$ is a self-adjoint unitary, so that $H_{\real}^2=1$ and
$P_{\real}:=\frac{1}{2}(H_{\real}+1)$ is the projection onto the Hardy
space, $\cal{H}^2(\real)$. By \cite{L}, $H:=H_{\real}\otimes 1$ and 
$P:=P_{\real}\otimes 1$ define bounded adjointable $\frk Z$-module
maps on $L^2(\real)\otimes_{alg}A_{\frk Z}$ (and therefore on $\cal{H}_{\cal A}$)
with the same properties. That is, $H^2=1$ and $P=\frac{1}{2}(H+1)$ satisfies
$P=P^*=P^2$.
\end{defn}

In the lemma below, we identify $L^2(\real)$ with $L^2(\real)\cdot 1_A$
inside $L^2(\real,A_{\frk Z})$.

\begin{lemma}\label{Hilbconvo}
The operators $H$ and $P$ are in $\cal M$. In fact, if we define for
$\epsilon >0$ the function $f_{\epsilon}$ in 
$L^2(\real)\subset L^2(\real,A_{\frk Z})\subset \cal{H}_{\cal A}$ via:
$$f_{\epsilon}(t)=\frac{1}{\pi it}\;\;for\;\;|t|\geq\epsilon$$
then the $\pi(f_{\epsilon})$ (technically, $\pi(f_{\epsilon}\cdot 1_{A})$)
are uniformly bounded and as $\epsilon\to 0$
$$Ind(f_{\epsilon})=V\pi(f_{\epsilon})V^*\to H\;\;strongly \;\; on\;\; 
L^2(\real)\otimes \bar{A}_{\frk Z},$$
and so
$$Ind(f_{\epsilon})=V\pi(f_{\epsilon})V^*\to H\;\;ultraweakly \;\; on\;\;
\cal{H}{\cal A}.$$
\end{lemma}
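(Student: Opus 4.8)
The plan is to realize each $\pi(f_\epsilon)$ as a unitary conjugate of ordinary scalar convolution by $f_\epsilon$ acting diagonally in the $A_{\frk Z}$-variable, and then to import the classical behaviour of the truncated Hilbert kernels through the module machinery. Since $f_\epsilon\in L^2(\real)$ but is not compactly supported, $f_\epsilon\notin\cal A$, and the first point to settle — which I expect to be the main obstacle — is that $f_\epsilon$ is left-bounded with a bound independent of $\epsilon$. I would define $T_\epsilon$ directly as the scalar convolution operator $(T_\epsilon\xi)(s)=\int f_\epsilon(t)\xi(s-t)\,dt$ on $L^2(\real,A_{\frk Z})$ and prove it is a bounded adjointable $\frk Z$-module map with $\|T_\epsilon\|\le M:=\sup_{\epsilon,\xi}|\hat f_\epsilon(\xi)|$. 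The clean route is the module Fourier transform $\mathcal F$ in the $\real$-variable: a direct computation with $\bar\tau$ and the scalar Plancherel formula shows $\mathcal F$ preserves the $\frk Z$-valued inner product, hence extends to a unitary module map, and conjugates $T_\epsilon$ into multiplication by the scalar function $\hat f_\epsilon$. Since $\langle\hat f_\epsilon g,\hat f_\epsilon g\rangle=\int|\hat f_\epsilon(\xi)|^2\,\bar\tau(g(\xi)^*g(\xi))\,d\xi\le M^2\langle g,g\rangle$ in $\frk Z_+$, multiplication by $\hat f_\epsilon$ has module-norm $\le M$, and the classical uniform bound $\sup_{\epsilon,\xi}|\hat f_\epsilon(\xi)|<\infty$ on the truncated conjugate kernels yields the required estimate. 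By Corollary 3.7 of \cite{Pa}, $T_\epsilon$ then extends to $\cal L(\cal{H}_{\cal A})$.

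Next I would identify $T_\epsilon$ with $Ind(f_\epsilon)$ and locate $\pi(f_\epsilon)$ inside $\cal U(\cal A)$. By Lemma \ref{technical}(2), for $y\in\cal A$ one has $\pi(f_\epsilon)y=f_\epsilon\cdot y$ (twisted convolution), and a short calculation with the definition of $V$ gives $V(f_\epsilon\cdot y)=f_\epsilon*(Vy)=T_\epsilon(Vy)$; the $\bar\alpha$-factors cancel precisely because $f_\epsilon$ is scalar-valued. Hence $\pi(f_\epsilon)$ and $V^*T_\epsilon V$ agree on $\cal A$, and since both are $\frk Z$-module maps the unique extension property forces $\pi(f_\epsilon)=V^*T_\epsilon V$. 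In particular $f_\epsilon\in\cal A_b$ with $\|\pi(f_\epsilon)\|=\|T_\epsilon\|\le M$, so $\pi(f_\epsilon)\in\cal U(\cal A)$ and $Ind(f_\epsilon)=V\pi(f_\epsilon)V^*=T_\epsilon\in\cal M$.

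For strong convergence I would reduce to the classical scalar case. On an elementary tensor $\eta\otimes a$ (with $\eta\in L^2(\real)$, $a\in A_{\frk Z}$) both $Ind(f_\epsilon)=T_\epsilon$ and $H=H_{\real}\otimes 1$ act only in the $\real$-variable, so $(T_\epsilon-H)(\eta\otimes a)=(C_{f_\epsilon}\eta-H_{\real}\eta)\otimes a$, whose module-norm squared equals $\|C_{f_\epsilon}\eta-H_{\real}\eta\|_{L^2}^2\,\|\bar\tau(a^*a)\|$. This tends to $0$ by the classical strong convergence of the truncated Hilbert transforms $C_{f_\epsilon}\to H_{\real}$ on $L^2(\real)$. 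Linearity extends this to finite sums of elementary tensors, and the uniform bound $M$ together with the density of such finite sums propagates module-norm convergence to all of $L^2(\real)\otimes\bar A_{\frk Z}$ by the usual $\epsilon/3$ argument.

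Finally, to pass from strong to ultraweak convergence I would invoke Proposition \ref{conv}(3): the net $\{Ind(f_\epsilon)\}$ is uniformly bounded, and for $a,b$ in the dense (in $\|\cdot\|_2$, hence in $\|\cdot\|_{\frk Z}$) set of finite sums of elementary tensors, module-norm convergence $Ind(f_\epsilon)a\to Ha$ gives $\langle Ind(f_\epsilon)a,b\rangle\to\langle Ha,b\rangle$ in the $\frk Z$-norm, a fortiori ultraweakly; the uniform bound then extends this to all $a,b\in\cal A$. Hence $Ind(f_\epsilon)\to H$ ultraweakly. Since each $Ind(f_\epsilon)\in\cal M$ and $\cal M$ is ultraweakly closed, $H\in\cal M$, and therefore $P=\frac{1}{2}(H+1)\in\cal M$ because $1\in\cal M$. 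I expect the uniform boundedness of the first paragraph to be the only genuinely non-routine step; everything after it is bookkeeping with the already-established module machinery.
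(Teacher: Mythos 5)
Your proof is correct, and its overall skeleton matches the paper's: establish uniform boundedness of the truncated convolutions as $\frk Z$-module maps, get strong convergence to $H$ on the completed module $L^2(\real)\otimes_{\frk Z}A_{\frk Z}$ by checking elementary tensors and running the usual $\epsilon/3$ argument, upgrade to ultraweak convergence on $\cal{H}_{\cal A}$ via Proposition \ref{conv}(3) (legitimately applied with $L^2(\real)\otimes_{\frk Z}A_{\frk Z}$ in place of $\cal A$, since Lemma \ref{complete} gives them the same Paschke dual), and identify $Ind(f_\epsilon)$ with untwisted scalar convolution on a common dense domain, the twisting dying because $f_\epsilon$ is scalar-valued. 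Where you genuinely diverge is the boundedness step, which you correctly flag as the crux: the paper disposes of it by citation, taking the uniform operator bounds and strong convergence of $\lambda(f_\epsilon)$ on scalar $L^2(\real)$ from \cite{DM} and then invoking \cite{L} to know that $\lambda(f_\epsilon)\otimes 1$ and $H_{\real}\otimes 1$ are bounded adjointable module maps with the same norms; you instead build a module Fourier transform, verify it preserves the $\frk Z$-valued inner product via scalar Plancherel on elementary tensors, and diagonalize $T_\epsilon$ as multiplication by the scalar multiplier $\hat f_\epsilon$, so the module bound $\|T_\epsilon\|\leq\sup_{\epsilon,\xi}|\hat f_\epsilon(\xi)|$ drops out of positivity of $\bar\tau$. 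Your route is more self-contained (it replaces Lance's tensor-extension lemma with an explicit multiplier computation, though you still import the classical uniform bound on $\hat f_\epsilon$, which is the same analytic content as the \cite{DM} citation), and it is also more careful than the paper on a point the paper waves through as ``a trivial calculation'': you show directly via Lemma \ref{technical}(2) and the unique extension property that $f_\epsilon\in\cal A_b$ with $\pi(f_\epsilon)=V^*T_\epsilon V$, which is what actually licenses writing $Ind(f_\epsilon)=V\pi(f_\epsilon)V^*\in\cal M$. What the paper's version buys is brevity; what yours buys is an explicit, quantitative bound and independence from the exterior tensor product machinery.
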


\begin{proof}[\bf Proof]
It follows from \cite{DM} that left convolution by the functions $f_{\epsilon}$,
$\lambda(f_{\epsilon})$,
are uniformly bounded on $L^2(\real)$ and converge strongly to $H_{\real}$. It
is trivial then that $\lambda(f_{\epsilon})\otimes 1$ converges strongly
to $H_{\real}\otimes 1$ on $L^2(\real)\otimes_{alg} A_{\frk Z}$. Since these 
operators are all uniformly bounded, adjointable $\frk Z$-module maps by
\cite{L}, we see by the usual $\delta/3$-argument, that their extensions to the 
completion, $L^2(\real)\otimes_{\frk Z} 
A_{\frk Z}$ satisfy:
$$\lambda(f_{\epsilon})\otimes 1\to H_{\real}\otimes 1 = H
\;\;strongly\;\;on\;\;L^2(\real)\otimes_{\frk Z} 
A_{\frk Z}.$$ It now follows from item (3) of Lemma \ref{conv} 
(with $L^2(\real)\otimes_{\frk Z} A_{\frk Z}$ in place of $\cal A$) and 
Key Problem 8 that 
$$\lambda(f_{\epsilon})\otimes 1\to H
\;\;ultraweakly\;\;on\;\;\cal{H}_{L^2(\real)\otimes_{\frk Z} A_{\frk Z}}=\cal{H}_{\cal A}.$$
It remains to see that $\lambda(f_{\epsilon})\otimes 1 = Ind(f_{\epsilon})$ on
$\cal{H}_{\cal A}.$ Now the former is initially defined on 
$L^2(\real)\otimes_{alg} A_{\frk Z}$ while the latter is initially defined on
$V(\cal A)=\cal A$. Since they are both defined on the common dense domain
$C_c(\real)\otimes A_{\frk Z}$, it suffices to check equality there. This is a
trivial calculation.
\end{proof}

\begin{rem}\label{formalconvo}
It follows from the previous lemma that for $\xi\in \cal A$
$$H(\xi) = norm\lim_{\epsilon\to 0} V\pi(f_{\epsilon})V^*\xi.$$
And since
$$V\pi(f_{\epsilon})V^*\xi(s) = 
\int f_{\epsilon}(t)\xi(s-t)dt
=\int_{|t|\geq 0} \frac{1}{\pi it}\xi(s-t)dt\;\;for\;\;s\in\real,$$
we can formally write:
$$(H\xi)(s)=\int\frac{1}{\pi it}\xi(s-t)dt\;\;for\;\;
\;\;\xi\in\cal A\;\;and\;\;s\in\real$$
where we understand the integral to be the principal-value integral converging
in the norm of $\cal{H}_{\cal A}.$
\end{rem}

\section{\bf The INDEX THEOREM}

We quickly recap for the benefit of the reader what we've done so far. 

We begin with a unital 
$C^*$-algebra $A$ and a unital $C^*$-subalgebra, $Z$ of the centre of $A.$ 
We assume that we have a faithful, unital $Z$-trace $\tau$ and a 
continuous action 
$\alpha:\real\to Aut(A)$ leaving $\tau$ and hence $Z$ invariant. In short, the 4-tuple $(A,Z,\tau,\alpha)$
is our object of study. As {\bf Standing Assumptions}, we will assume that we have
a concrete $*$-representation of $A$ on a Hilbert space $\mathcal H$
which carries a {\bf faithful}, unital u.w.-continuous $\frk Z$-trace
$\bar\tau:\frk A \to \frk Z$ extending $\tau$ where as before $\frk A$ 
and $\frk Z$ denote respectively, the
ultraweak closures of $A$ and $Z$ on $\mathcal H$. Since $A$ is 
concretely represented on this Hilbert space, we do not carry
a special notation for this representation. Moreover there is an 
ultraweakly continuous action 
$\bar\alpha:\real\to Aut(\frk A)$ extending
$\alpha$ and leaving $\bar\tau$ and $\frk Z$ invariant. If $Z$ has a faithful
state, $\omega$ then the GNS representation of the state
$\bar\omega=\omega\circ\tau$ gives us a 
representation of $A$ satisfying the {\bf Standing Assumptions} by Proposition \ref{extension}.

We defined $A_{\frk Z}$ to be the $C^*$-subalgebra of $\frk A$ generated by
$A$ and $\frk Z$, so that $\bar\alpha$ restricts to a norm-continuous action of 
$\real$ on $A_{\frk Z}$ and $\bar\tau$ restricts to a faithful, unital 
$\frk Z$-trace on $A_{\frk Z}$ . We defined 
$\cal A = C_c(\real,A_{\frk Z})$ to be a $*$-algebra with the usual
$\bar\alpha$-twisted convolution multiplication. There is a natural (right)
pre-Hilbert $\frk Z$-module structure on $\cal A$ making it into a
$\frk Z$-Hilbert algebra as defined in section 3. We defined $\cal{H}_{\cal A}$
to be the Paschke dual of all bounded $\frk Z$-module mappings from $\cal A$
to $\frk Z$ (i.e., all $\frk Z$-linear ``$\frk Z$-valued functionals'' on 
$\cal A$). Then $\cal L(\cal{H}_{\cal A})$ is a type I von Neumann algebra with
centre $\frk Z$. The point of this set-up is that the von Neumann subalgebra
$\cal U(\cal A)$ of $\cal L(\cal{H}_{\cal A})$ 
generated by the left multiplications 
$\pi(x)$ of $\cal A$ on $\cal{H}_{\cal A}$ contains $\frk Z$ in its centre and
has a faithful, normal semifinite 
$\frk Z$-trace $\sigma$, defined on the two-sided ideal, 
$\cal U(\cal A)^{\sigma}=\pi(\cal A_b^2)$ via:
$$\sigma(\pi(\xi\eta))=\bra\xi^*,\eta\ket,$$
for $\xi,\eta\in\cal A_b$ the (full) $\frk Z$-Hilbert algebra of (left) bounded
elements in $\cal{H}_{\cal A}$.

At this point we look at a von Neumann algebra $$\cal M=V\cal U(\cal A)V^*$$
in $\cal L(\cal{H}_{\cal A})$ which also contains $\frk Z$ in its centre. 
$\cal M$ is generated by representations, 
$\tilde\pi(\cdot):=V\pi_{\cal A}(\cdot)V^*$ of $A_{\frk Z}$ and
$\lambda_{(\cdot)}:=VU_{(\cdot)}V^*$ of $\real$. The integrated representation
$\tilde\pi\times\lambda$ is denoted by $Ind$. The canonical trace on $\cal M$
is denoted by $\hat\tau$ and has domain of definition: 
$$\cal M^{\hat\tau}=\{S\in\cal M| S=
V\pi(\xi\eta)V^*\;\; some\;\;\xi,\eta\in \cal A_b\}.$$ 
And for $S=V\pi(\xi\eta)V^*$,
$$\hat\tau(S)=\bra\xi^*,\eta\ket.$$
In particular, 
if $x , y \in L^2(\real,A_{\frk Z})$ with $\pi(x)$ and $\pi(y)$
bounded, then the operator $Ind (x)^*Ind (y)$ is in the ideal of
definition of the $\frk Z$-trace, $\hat\tau$ on $\cal M$, and
$$\hat\tau[Ind (x)^*Ind (y)]=
\int\bar\tau(x(t)^*y(t))dt.$$

\begin{defn}\label{Toeplitz}
We consider the semifinite von Neumann algebra,
$$\cal N:= P\cal M P$$
with the faithful, normal, semifinite $\frk Z$-trace obtained by restricting
$\hat\tau$. For $a\in A$ we define the {\bf Toeplitz} operator 
$$T_a:=P\tilde\pi(a)P\in\cal N.$$
\end{defn}

We recall from Section 1 that $\delta$ is the infinitesimal generator of
$\alpha$ on $A$ and that $$a\mapsto \frac{1}{2 \pi i}\tau(\delta(a)a^{-1}):
dom(\delta)^{-1} \to Z_{sa}$$
is a group homomorphism which is constant on connected components
and so extends uniquely to a group homomorphism $A^{-1} \to Z_{sa}$
which is constant on connected components and is $0$ on $Z^{-1}.$
With this convention and all the above notation, we state our index theorem.
Much of the work that we have done so far is to make sense of the
the statement of the following theorem and to make sense of the 
index calculations of \cite{CMX} and \cite{PhR} in this generality. It is 
interesting that the conclusions of the theorem are insensitive to the
choice of a suitable representation of $A$ satisfying the standing 
assumptions. In particular, if the representation is chosen using 
Proposition \ref{extension}, the conclusions of the theorem are insensitive to the
choice of a faithful state on $Z$.

\begin{thm}\label{index}
Let $A$ be a unital $C^*$-algebra and let $Z\subseteq Z(A)$ be a unital
$C^*$-subalgebra of the centre of $A.$ Let $\tau:A \to Z$ be a faithful,
unital $Z$-trace which is invariant under a continuous
action $\alpha$ of $\real$. Then for any $a\in A^{-1}\cap dom(\delta)$, 
the Toeplitz operator
$T_a$ is Fredholm relative to the trace $\hat\tau$ on 
$\cal N=P(Ind(A\rtimes\real)^{\prime\prime})P$, and
$$\hat\tau{\text -}Index(T_a)=\frac{-1}{2\pi i}\tau(\delta(a)a^{-1}).$$
\end{thm}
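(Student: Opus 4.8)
The plan is to follow the scalar argument of \cite{PhR}, replacing $\comp$ by $\frk Z$ and norm-convergence by the module/ultraweak convergence supplied by the $\frk Z$-Hilbert algebra machinery. First I would reduce to the unitary case: writing $a=u|a|$ with $u$ unitary and $|a|=(a^*a)^{1/2}$ positive invertible, the factor $|a|$ is connected to $1$ through positive invertibles, and once both $a\mapsto\hat\tau\text{-}Index(T_a)$ and $wind_\alpha$ are known to be locally constant homomorphisms (vanishing on that component) it suffices to treat $a=u$ unitary in $dom(\delta)$. The essential structural fact I would exploit is that on $\cal M$ the translations $\lambda_t=VU_tV^*$ of Definition \ref{Indrep} form a one-parameter unitary group $\lambda_t=e^{itD}$ whose generator $D$ satisfies $P=\chi_{[0,\infty)}(D)$ and, by differentiating $\lambda_t\tilde\pi(a)\lambda_{-t}=\tilde\pi(\bar\alpha_t(a))$, obeys the derivation relation $[D,\tilde\pi(a)]=-i\,\tilde\pi(\delta(a))$ for $a\in dom(\delta)$. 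Thus $T_u=P\tilde\pi(u)P$ is exactly a Hardy-space Toeplitz operator with a smooth symbol.

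For Fredholmness I would exhibit $T_{u^{-1}}$ as a parametrix. Inserting $P+(1-P)=1$ gives $P-T_{u^{-1}}T_u=P\tilde\pi(u^{-1})(1-P)\tilde\pi(u)P$ and $P-T_uT_{u^{-1}}=P\tilde\pi(u)(1-P)\tilde\pi(u^{-1})P$, so everything reduces to showing the Hankel-type operators $(1-P)\tilde\pi(u)P$ lie in the $\hat\tau$-Hilbert--Schmidt ideal of $\cal M$. Since $[P,\tilde\pi(u)]=\tfrac12[H,\tilde\pi(u)]$ is, by Remark \ref{formalconvo}, the principal-value integral operator with operator kernel $\tfrac{1}{\pi i(s-s')}\bigl(\bar\alpha_{s'}^{-1}(u)-\bar\alpha_s^{-1}(u)\bigr)$, and since $s\mapsto\bar\alpha_s^{-1}(u)$ is differentiable with derivative governed by $\delta(u)$, the diagonal singularity is removed for $u\in dom(\delta)$. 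The $\hat\tau$-Hilbert--Schmidt estimate is then read off from the trace formula $\hat\tau[Ind(x)^*Ind(y)]=\int\bar\tau(x(t)^*y(t))\,dt$ of Definition \ref{Indrep} together with item (3) of Lemma \ref{technical}, exactly as in \cite{PhR}. This makes $T_u$ $\hat\tau$-Fredholm in $\cal N=P\cal M P$.

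For the index itself I would use that, the defects now lying in the trace ideal, $\hat\tau\text{-}Index(T_u)=\hat\tau(P-T_{u^{-1}}T_u)-\hat\tau(P-T_uT_{u^{-1}})$, and then simplify by traciality of $\hat\tau$ (Theorem \ref{Tr}) to a trace of the form $\pm\hat\tau(uPu^{-1}-P)$, the sign being fixed by the scalar normalization $wind_\alpha(U)=-1$ in the Kronecker example. Because $uPu^{-1}=\chi_{[0,\infty)}(uDu^{-1})$ and, by the derivation relation, $uDu^{-1}-D$ is a multiple of $\tilde\pi(\delta(u)u^{-1})$, this is a spectral-flow-of-$D$ computation: the density of states of $D$ paired against the perturbation $\tilde\pi(\delta(u)u^{-1})$ yields $\hat\tau(uPu^{-1}-P)=\tfrac{-1}{2\pi i}\bar\tau(\delta(u)u^{-1})$, and since $\bar\tau$ extends $\tau$ with $\delta(u)u^{-1}\in A$ this equals $\tfrac{-1}{2\pi i}\tau(\delta(u)u^{-1})=-wind_\alpha(u)$. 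To upgrade from unitaries to all of $A^{-1}\cap dom(\delta)$ I would note that both $\hat\tau\text{-}Index(T_\bullet)$ and $-wind_\alpha$ are homomorphisms constant on connected components: the former because the $\hat\tau$-Fredholm index is norm-continuous and additive (via $T_{ab}-T_aT_b$ lying in the trace ideal), the latter by Theorem \ref{wind}.

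The hard part will be making the $\frk Z$-valued trace computation $\hat\tau(uPu^{-1}-P)=\tfrac{-1}{2\pi i}\bar\tau(\delta(u)u^{-1})$ rigorous with $\frk Z$ in place of $\comp$: the principal-value integral of Remark \ref{formalconvo} and the limit $Ind(f_\epsilon)\to H$ of Lemma \ref{Hilbconvo} converge only ultraweakly, or in the seminorms of Proposition \ref{density}, so one must justify interchanging $\hat\tau$ with these limits and with the defining integral, controlling all error terms through the $\frk Z$-valued Cauchy--Schwarz inequality, the normality of $\hat\tau$, and the convergence criteria of Proposition \ref{conv}. An equivalent route, and the one signalled in the abstract, is to fibre over the spectrum $X$ of $\frk Z\cong L^\infty(X,\nu)$: at $\nu$-almost every $x$ the data degenerate to the scalar Kronecker-type system of slope $\eta(x)$, $T_u$ becomes a genuine scalar Hardy--Toeplitz operator, and \cite{PhR} supplies its index, after which assembling the measurable field recovers the $\frk Z$-valued formula. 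Justifying this fibering is itself delicate precisely because $\cal{H}_{\cal A}$ is not literally a direct integral of $L^2$-sections, which is the obstruction the $\frk Z$-Hilbert algebra theory was developed to bypass.
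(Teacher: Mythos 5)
Your route to Fredholmness is essentially the paper's own: the identification $[H,\tilde\pi(a)]=Ind(x)$ with $x(t)=(\pi i t)^{-1}(\alpha_t(a)-a)\in C_0(\real,A_{\frk Z})\cap L^2(\real,A_{\frk Z})$ is Lemma \ref{commutator}, the resulting membership of $P-T_{a^{-1}}T_a$ and $P-T_aT_{a^{-1}}$ in $\cal N^{\hat\tau}$ is Corollary \ref{commutator2}, and your index formula $\hat\tau{\text -}Index(T_a)=\hat\tau([T_a,T_{a^{-1}}])$ is Corollary A4 of the Appendix (H\"{o}rmander's formula with $n=1$). Your preliminary reduction to unitary symbols is not in the paper and is not needed there, since Proposition \ref{commutator4} evaluates $\hat\tau([T_a,T_b])$ for arbitrary commuting $a,b\in dom(\delta)$; it could be made legitimate, but only by also invoking stability of $dom(\delta)$ under the holomorphic functional calculus to guarantee $u=a|a|^{-1}\in dom(\delta)$, which you do not address.

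The genuine gap is in your evaluation of the index. First, the passage ``by traciality'' from $\hat\tau([T_u,T_{u^{-1}}])$ to $\pm\hat\tau(\tilde\pi(u)P\tilde\pi(u^{-1})-P)$ is not licit as written: $\tilde\pi(u)P\tilde\pi(u^{-1})-P$ does not lie in $\cal M^{\hat\tau}$; only its two diagonal corners $-P\tilde\pi(u)(1-P)\tilde\pi(u^{-1})P$ and $(1-P)\tilde\pi(u)P\tilde\pi(u^{-1})(1-P)$ are trace class (each being a product of two $\hat\tau$-Hilbert--Schmidt Hankels), so only a symmetrized, conditional trace makes sense. This is exactly the difficulty the paper isolates in its Discussion: the operator $T=\tilde\pi(a)H\tilde\pi(b)-\tilde\pi(b)H\tilde\pi(a)$ lies only in $\cal M_2^{\hat\tau}$, and equation (7), $\hat\tau([T_a,T_b])=\frac{1}{4}\hat\tau(PTP+(1-P)T(1-P))$, must be evaluated without pretending $T$ is trace class. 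Second, your proposed resolution --- that ``the density of states of $D$ paired against the perturbation $\tilde\pi(\delta(u)u^{-1})$ yields $\hat\tau(uPu^{-1}-P)=\frac{-1}{2\pi i}\bar\tau(\delta(u)u^{-1})$'' --- is essentially the content of the theorem, not a tool available for proving it: no $\frk Z$-valued spectral-flow or density-of-states formula has been established anywhere in this framework, and proving one would require precisely the interchange-of-limits analysis you defer to the final paragraph. The paper closes this hole concretely and differently: it writes $T=Ind(y)$ with $y(t)=(\pi it)^{-1}(a\alpha_t(b)-b\alpha_t(a))$ continuous at $0$ (Lemma \ref{commutator3}), observes that the scalar approximate identity satisfies $Ind(f_n)P=PInd(f_n)$, proves $\hat\tau(S)=\mathrm{uw}\lim_n\hat\tau(Ind(f_n)S)$ for $S\in\cal M^{\hat\tau}$ (Lemma \ref{approxid}), and then collapses $\hat\tau(PInd(f_n)TP+(1-P)Ind(f_n)T(1-P))$ to $\hat\tau(Ind(f_n)Ind(y))=\int f_n(t)\,\tau(\cdots)\,dt\to\frac{1}{4\pi i}\tau(\delta(b)a-\delta(a)b)$. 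Your alternative fibering route is likewise not a proof: in the paper, fibering appears only a posteriori, through morphisms of $4$-tuples (Corollary \ref{IndMorph}), and, as you yourself concede, realizing $\cal{H}_{\cal A}$ as a measurable field of scalar systems over the spectrum of $\frk Z$ is exactly the decomposition the $\frk Z$-Hilbert-algebra machinery was built to avoid. So neither of your two proposed endgames completes the argument; the approximate-identity computation (or an equivalent substitute) is the missing essential step.
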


We follow the second proof of \cite{CMX}, Section 25.2 (cf section 3 of \cite{PhR}. 
Now relative to the decomposition $1=P+(1-P)$ we see that 
$$\tilde\pi(a)=\left[\begin{array}{cc} T_a & B\\C & D\end{array}\right],$$
where
$$B=P\tilde\pi(a)(1-P)=P[P,\tilde\pi(a)]=\frac{1}{2}P[H,\tilde\pi(a)],$$
and similarly,
$$C=\frac{1}{2}[H,\tilde\pi(a)]P.$$
Thus, we are led to calculate the general commutator $[H,\tilde\pi(a)]$
for $a\in dom(\delta).$

\begin{lemma}\label{commutator}
For any $a\in dom(\delta)$, $[H,\tilde\pi(a)]$ belongs to $\cal M_2^{\hat\tau}.$
In fact, $[H,\tilde\pi(a)]=Ind(x),$ where 
$x\in C_0(\real,A_{\frk Z})\cap L^2(\real,A_{\frk Z})$ is given by
$$x(t)=\frac{\alpha_t(a)-a}{\pi it}.$$
\end{lemma}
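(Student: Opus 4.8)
The plan is to compute the commutator $[H,\tilde\pi(a)]$ directly by unwinding the definitions of $Ind$, $\tilde\pi$, and the (formal) convolution kernel of $H$ established in Remark \ref{formalconvo}. First I would observe that both $H$ and $\tilde\pi(a)$ lie in $\cal M$ (the former by Lemma \ref{Hilbconvo}), so the commutator certainly lies in $\cal M$; the content of the lemma is to identify it with $Ind(x)$ for the specific $x$ and to verify the integrability claims $x\in C_0(\real,A_{\frk Z})\cap L^2(\real,A_{\frk Z})$, which together give membership in $\cal M_2^{\hat\tau}$. Since $\cal M = V\cal U(\cal A)V^*$ and $Ind = \tilde\pi\times\lambda = V\pi V^*$, I would work throughout in the induced picture on $L^2(\real,A_{\frk Z})$, where $H$ acts as the principal-value convolution $(H\xi)(s)=\int \frac{1}{\pi it}\xi(s-t)\,dt$ and $\tilde\pi(a)$ acts by $(\tilde\pi(a)\xi)(s)=\bar\alpha_s^{-1}(a)\xi(s)$, as recorded in Definition \ref{Indrep}.

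The central computation is then a formal kernel calculation on a dense subspace, say $\xi\in\cal A = C_c(\real,A_{\frk Z})$. Applying $H\tilde\pi(a)$ and $\tilde\pi(a)H$ in turn, I get
\begin{eqnarray*}
([H,\tilde\pi(a)]\xi)(s)
&=& \int\frac{1}{\pi it}\bar\alpha_{s-t}^{-1}(a)\xi(s-t)\,dt
 - \int\frac{1}{\pi it}\bar\alpha_s^{-1}(a)\xi(s-t)\,dt\\
&=& \int\frac{\bar\alpha_{s-t}^{-1}(a)-\bar\alpha_s^{-1}(a)}{\pi it}\xi(s-t)\,dt.
\end{eqnarray*}
Pulling $\bar\alpha_s^{-1}$ out of the numerator using that it is an automorphism and fixes $\frk Z$ pointwise, the kernel becomes $\bar\alpha_s^{-1}\!\big(\frac{\alpha_t(a)-a}{\pi it}\big)$ evaluated along the convolution, which is exactly the induced action $Ind(x)$ of the function $x(t)=\frac{\alpha_t(a)-a}{\pi it}$. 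The key algebraic point driving this is the substitution $\bar\alpha_{s-t}^{-1}(a)-\bar\alpha_s^{-1}(a)=\bar\alpha_s^{-1}(\alpha_t(a)-a)$, after matching the convolution variable; I would carry this out carefully against the formula $(Ind(x)\xi)(s)=\int\bar\alpha_s^{-1}(x(t))\xi(s-t)\,dt$ from Definition \ref{Indrep}, so that the two expressions coincide on $\cal A$ and hence, by the unique extension property, as operators in $\cal L(\cal{H}_{\cal A})$.

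The main obstacle is analytic rather than algebraic: I must justify that $x(t)=\frac{\alpha_t(a)-a}{\pi it}$ genuinely defines an element of $C_0(\real,A_{\frk Z})\cap L^2(\real,A_{\frk Z})$, and that the principal-value manipulation above is legitimate (the singular kernel $1/\pi it$ means the naive termwise splitting is not absolutely convergent). For the integrability, I would use that $a\in dom(\delta)$: near $t=0$ the difference $\alpha_t(a)-a$ is $O(t)$ by differentiability, so $x$ extends continuously across $0$ with value $\frac{1}{\pi i}\delta(a)$, removing the singularity; for large $|t|$, $\|\alpha_t(a)-a\|\le 2\|a\|$ forces $\|x(t)\|=O(1/|t|)$, which gives both $C_0$ decay and square-integrability. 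To make the principal-value step rigorous I would reintroduce the truncated kernels $f_\epsilon$ of Lemma \ref{Hilbconvo}, form $[\,Ind(f_\epsilon),\tilde\pi(a)]$ where all integrals converge absolutely, perform the identical cancellation there to obtain $Ind(x_\epsilon)$ with $x_\epsilon(t)=\frac{\alpha_t(a)-a}{\pi it}\mathbf{1}_{|t|\ge\epsilon}$, and then pass to the limit $\epsilon\to0$ using the strong (hence ultraweak) convergence $Ind(f_\epsilon)\to H$ together with norm convergence $x_\epsilon\to x$ in $L^2(\real,A_{\frk Z})$, the latter being controlled by the $O(t)$ estimate near the origin. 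Finally, membership in $\cal M_2^{\hat\tau}$ follows since $x\in L^2(\real,A_{\frk Z})$ makes $Ind(x)$ a square-integrable element for $\hat\tau$ by the trace formula $\hat\tau[Ind(x)^*Ind(x)]=\int\bar\tau(x(t)^*x(t))\,dt$ recorded after Definition \ref{Indrep}.
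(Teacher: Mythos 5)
Your proposal is correct and takes essentially the same route as the paper: the rigorous core in both is the identity $[Ind(f_{\epsilon}),\tilde\pi(a)]\xi=Ind(x_{\epsilon})\xi$ on $\cal A$ for the truncated kernels, the passage to the limit $\epsilon\to 0$ using the strong convergence $Ind(f_{\epsilon})\to H$ together with $\|Ind(x)\xi-Ind(x_{\epsilon})\xi\|_{\frk Z}\leq\|Ind(x)\xi-Ind(x_{\epsilon})\xi\|_2\to 0$, and the same membership estimates ($x(t)\to(\pi i)^{-1}\delta(a)$ as $t\to 0$ from $a\in dom(\delta)$, and $\|x(t)\|=O(1/|t|)$ at infinity) yielding $x\in C_0(\real,A_{\frk Z})\cap L^2(\real,A_{\frk Z})$ and hence $[H,\tilde\pi(a)]\in\cal M_2^{\hat\tau}$. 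Your opening formal kernel computation is sound motivation but, as you yourself note, the truncated argument is what carries the proof, exactly as in the paper.
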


\begin{proof}[\bf Proof]
Now, $Ind(f_{\epsilon})$ converges strongly on $\cal A$ to $H$,
so we easily compute for $\xi\in \cal A$:
$$[Ind(f_{\epsilon}),\tilde\pi(a)]\xi=Ind(x_{\epsilon})\xi$$
where $$x_{\epsilon}(t)=\left\{\begin{array}{cc} \frac{\alpha_t(a)-a}{\pi it} &
|t|\geq \epsilon\\0 & else\end{array}\right..$$
So, the $Ind(x_{\epsilon})$ are uniformly bounded operators that converge
pointwise on $\cal A$ to $[H,\tilde\pi(a)]$. Now, since
$x(t)\to (\pi i)^{-1}\delta(a)$ as $t\to 0$ and
$$\|x(t)\|^2\leq \frac{4\|a\|^2}{\pi^2 t^2},$$
we see that $x\in C_0(\real,A_{\frk Z})\cap L^2(\real,A_{\frk Z})$. One easily
calculates that for $\xi\in\cal A$
$$\|Ind(x)\xi - Ind(x_{\epsilon})\xi\|_{\frk Z}\leq
\|Ind(x)\xi - Ind(x_{\epsilon})\xi\|_2\to 0,$$ and so
$Ind(x)$ and $[H,\tilde\pi(a)]$ agree on $\cal A$. That is, by the discussion
in 8.6, $\pi(x)=V^*Ind(x)V$ is left bounded and 
$$Ind(x)=[H,\tilde\pi(a)]\;\;in\;\; \cal L(\cal{H}_{\cal A}).$$ 
\end{proof}

We want to use the $\frk Z$-trace version of H\"{o}rmander's formula
(Theorem A3 and Corollary A4
in the Appendix) to calculate the $\hat\tau$-index
of the Toeplitz operator $T_a$ as $\hat\tau([T_a,T_{a^{-1}}])$. So we are led
to examine such commutators in the hopes that they are in fact trace-class
(they are).

\begin{cor}\label{commutator2}
If $a,b\in dom(\delta)$ we have $T_aT_b-T_{ab}\in\cal M^{\hat\tau}\cap\cal N=
\cal N^{\hat\tau}.$ In particular, if $b=a^{-1}$ then $T_a$ and $T_b$ are
$\hat\tau$-Fredholm operators in $\cal N$. In general, if $ab=ba$, then 
$[T_a,T_b]\in\cal N^{\hat\tau}.$ 
\end{cor}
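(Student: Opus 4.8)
The plan is to reduce everything to the algebra of the projection $P$ and the two commutators $[H,\tilde\pi(a)]$, $[H,\tilde\pi(b)]$, and then to quote the trace-class estimate of item (3) of Lemma \ref{technical}. First I would record the basic identity coming from the multiplicativity of $\tilde\pi$. Since $\tilde\pi$ is a homomorphism, $T_{ab}=P\tilde\pi(a)\tilde\pi(b)P$, whereas $T_aT_b=P\tilde\pi(a)P\tilde\pi(b)P$, so that
$$T_aT_b-T_{ab}=-P\tilde\pi(a)(1-P)\tilde\pi(b)P=-\bigl[P\tilde\pi(a)(1-P)\bigr]\bigl[(1-P)\tilde\pi(b)P\bigr].$$
The two bracketed factors are exactly the off-diagonal corners computed just before Lemma \ref{commutator}: using $H=2P-1$ one has $P\tilde\pi(a)(1-P)=\tfrac12 P[H,\tilde\pi(a)]$ and $(1-P)\tilde\pi(b)P=-\tfrac12[H,\tilde\pi(b)]P$, so that
$$T_aT_b-T_{ab}=\tfrac14\,P\,[H,\tilde\pi(a)]\,[H,\tilde\pi(b)]\,P.$$

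The central step is to show the inner product $[H,\tilde\pi(a)][H,\tilde\pi(b)]$ lies in the ideal of definition $\cal M^{\hat\tau}$. By Lemma \ref{commutator}, $[H,\tilde\pi(a)]=Ind(x_a)$ and $[H,\tilde\pi(b)]=Ind(x_b)$ with $x_a,x_b\in L^2(\real,A_{\frk Z})$ and with $Ind(x_a),Ind(x_b)$ bounded. Since $Ind$ is a $*$-representation, $Ind(x_a)=Ind(x_a^*)^*$, where $x_a^*\in L^2(\real,A_{\frk Z})$ is the $\cal A$-involution of $x_a$; hence
$$[H,\tilde\pi(a)][H,\tilde\pi(b)]=Ind(x_a^*)^*\,Ind(x_b),$$
which is precisely of the form covered by item (3) of Lemma \ref{technical}, so this product lies in $\cal M^{\hat\tau}$. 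As $\cal M^{\hat\tau}$ is a two-sided ideal in $\cal M$ and $P\in\cal M$ by Lemma \ref{Hilbconvo}, compressing by $P$ on both sides keeps us inside $\cal M^{\hat\tau}$; since the result is visibly of the form $P(\cdot)P\in\cal N=P\cal M P$, we obtain $T_aT_b-T_{ab}\in\cal M^{\hat\tau}\cap\cal N=\cal N^{\hat\tau}$, which is the first assertion.

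The remaining claims are then formal corollaries. Taking $b=a^{-1}$ gives $ab=ba=1$, so $T_{ab}=P\tilde\pi(1)P=P=1_{\cal N}$; thus both $T_aT_{a^{-1}}-1_{\cal N}$ and $T_{a^{-1}}T_a-1_{\cal N}$ lie in $\cal N^{\hat\tau}$, i.e. $T_a$ is invertible modulo the trace ideal with $T_{a^{-1}}$ as a parametrix, hence $\hat\tau$-Fredholm (and symmetrically for $T_{a^{-1}}$). For the commutator statement, if $ab=ba$ then $T_{ab}=T_{ba}$, so
$$[T_a,T_b]=(T_aT_b-T_{ab})-(T_bT_a-T_{ba})$$
is a difference of two elements of $\cal N^{\hat\tau}$ and therefore lies in $\cal N^{\hat\tau}$.

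I expect the only genuinely delicate point to be the identification $[H,\tilde\pi(a)][H,\tilde\pi(b)]=Ind(x_a^*)^*Ind(x_b)$ with both arguments in $L^2(\real,A_{\frk Z})$: one must verify that $Ind(x_a)^*=Ind(x_a^*)$ (that the $L^2$-involution intertwines adjoints) and that $x_a^*$ is again an $L^2$-section, so that the hypotheses of item (3) of Lemma \ref{technical} genuinely hold. Once this is in place, the ideal and $P$-compression manipulations, and the two specializations, are routine.
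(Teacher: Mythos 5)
Your proof is correct and follows essentially the same route as the paper: the identity $T_aT_b-T_{ab}=\tfrac14 P[H,\tilde\pi(a)][H,\tilde\pi(b)]P$, Lemma \ref{commutator} to write each commutator as $Ind$ of an $L^2$-section, and the $Ind(x)^*Ind(y)$ trace-class criterion, with the Fredholm and commutator claims as formal consequences. The ``delicate point'' you flag is genuine but is already covered by the paper's machinery, since $\pi(\xi^*)=\pi(\xi)^*$ for left-bounded $\xi$ (Proposition \ref{lbdd}(4)) and the twisted involution is isometric for $\|\cdot\|_2$, so $x_a^*\in L^2(\real,A_{\frk Z})$ with $\pi(x_a^*)$ bounded.
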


\begin{proof}[\bf Proof]
We easily calculate (see cor.3.3 of \cite{PhR}): 

\begin{eqnarray}
T_aT_b-T_{ab}&=&P\tilde\pi(a)(P-1)\tilde\pi(b)P\\
=\cdots&=&\frac{1}{4}P[H,\tilde\pi(a)][H,\tilde\pi(b)]P
\end{eqnarray}

which is in $\cal M^{\hat\tau}\cap P\cal M P=\cal N^{\hat\tau}.$ If $ab=ba,$
then
$$[T_a,T_b]=(T_aT_b-T_{ab})+(T_{ba}-T_bT_a)\in\cal N^{\hat\tau}.$$
\end{proof}

\begin{dis*}
In the case that $a,b\in dom(\delta)$ commute we have by equation (1) and a
small calculation:
\begin{eqnarray}
[T_a,T_b]&=&P\tilde\pi(a)(P-1)\tilde\pi(b)P-P\tilde\pi(b)(P-1)\tilde\pi(a)P\\
=\cdots&=&\frac{1}{2}P(\tilde\pi(a)H\tilde\pi(b)-\tilde\pi(b)H\tilde\pi(a))P,
\end{eqnarray}
and both of these terms are trace-class. Applying the trace to equation (4)
we get:
\begin{equation}
\hat\tau([T_a,T_b])=
\frac{1}{2}\hat\tau(P(\tilde\pi(a)H\tilde\pi(b)-\tilde\pi(b)H\tilde\pi(a))P).
\end{equation}
On the other hand, applying the trace to equation (3) , using the cyclic 
property of the trace and a little calculation (see \cite{PhR}) we get:
\begin{equation}
\hat\tau([T_a,T_b])=
\frac{1}{2}\hat\tau((1-P)(\tilde\pi(a)H\tilde\pi(b)-
\tilde\pi(b)H\tilde\pi(a))(1-P)).
\end{equation}
Defining
$$T:=\tilde\pi(a)H\tilde\pi(b)-\tilde\pi(b)H\tilde\pi(a),$$
and averaging equations (4) and (6) we get:
\begin{equation}
\hat\tau([T_a,T_b])=\frac{1}{4}\hat\tau(PTP+(1-P)T(1-P)),
\end{equation}
and both of these terms are trace-class. Unfortunately, $T$ itself is not
usually trace-class. However, $T$ is in $\cal M^{\hat\tau}_2$ by the following
lemma.
\end{dis*}

\begin{lemma}\label{commutator3}(cf lemma 3.4 of \cite{PhR}) Suppose $a,b\in\;dom(\delta)$ and
$ab=ba.$ Then 
$$T=\tilde\pi(a)H\tilde\pi(b)-\tilde\pi(b)H\tilde\pi(a)$$
belongs to $\cal M^{\hat\tau}_2$; in fact it has the form $Ind(y)$ where $y$
is the function in $C_0(\real,A_{\frk Z})\cap L^2(\real,A_{\frk Z})$
given by $y(t)=(\pi it)^{-1}(a\alpha_t(b)-b\alpha_t(a)).$
\end{lemma}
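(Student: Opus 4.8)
The plan is to follow the template of Lemma \ref{commutator}: compute the action of $T$ on the dense $*$-subalgebra $\cal A = C_c(\real, A_{\frk Z})$ by replacing $H$ with the approximants $Ind(f_\epsilon)$, recognize the result as $Ind$ of an explicit truncated kernel, and then let $\epsilon \to 0$. First I would fix $\xi \in \cal A$ and use the pointwise formulas $(\tilde\pi(a)\xi)(s) = \bar\alpha_s^{-1}(a)\xi(s)$ together with $(Ind(f_\epsilon)\xi)(s) = \int_{|t|\geq\epsilon}(\pi i t)^{-1}\xi(s-t)\,dt$, the latter being valid because $f_\epsilon$ is scalar, so its $\bar\alpha$-twist is trivial. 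A direct substitution then gives
$$\bigl(\tilde\pi(a)Ind(f_\epsilon)\tilde\pi(b)\xi - \tilde\pi(b)Ind(f_\epsilon)\tilde\pi(a)\xi\bigr)(s) = \int_{|t|\geq\epsilon}\frac{1}{\pi i t}\bigl[\bar\alpha_s^{-1}(a)\bar\alpha_{s-t}^{-1}(b) - \bar\alpha_s^{-1}(b)\bar\alpha_{s-t}^{-1}(a)\bigr]\xi(s-t)\,dt.$$
The key algebraic identity is the covariance relation $\bar\alpha_s^{-1}(\alpha_t(c)) = \bar\alpha_{s-t}^{-1}(c)$, which lets me rewrite the bracket as $\bar\alpha_s^{-1}$ applied to $a\alpha_t(b) - b\alpha_t(a)$; hence the displayed operator equals $Ind(y_\epsilon)$ on $\cal A$, where $y_\epsilon$ is the truncation of $y$ to $|t|\geq\epsilon$.

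Next I would verify that $y \in C_0(\real, A_{\frk Z}) \cap L^2(\real, A_{\frk Z})$, and this is where the hypothesis $ab = ba$ does the essential work. Writing
$$a\alpha_t(b) - b\alpha_t(a) = a(\alpha_t(b) - b) - b(\alpha_t(a) - a)$$
uses $ab = ba$ to cancel the $t$-independent terms, and shows that the numerator vanishes to first order at $t=0$; since $a,b \in dom(\delta)$ this yields $y(t) \to (\pi i)^{-1}(a\delta(b) - b\delta(a))$ as $t \to 0$, so $y$ extends continuously across the singularity. Away from the origin the crude bound $\|y(t)\| \leq 2\|a\|\,\|b\|/(\pi|t|)$ gives decay at infinity (so $y \in C_0$) and square-integrability of the tail, while continuity on a neighbourhood of $0$ handles the remaining $L^2$ estimate.

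Finally I would pass to the limit exactly as in Lemma \ref{commutator}. The operators $Ind(y_\epsilon)$ are uniformly bounded and, by the strong convergence $Ind(f_\epsilon) \to H$ of Lemma \ref{Hilbconvo} together with boundedness of $\tilde\pi(a),\tilde\pi(b)$, converge pointwise on $\cal A$ to $T$; at the same time $\|y - y_\epsilon\|_2 \to 0$ (immediate from continuity and the tail bound) forces $Ind(y_\epsilon)\xi \to Ind(y)\xi$ in $\cal{H}_{\cal A}$, by the convolution estimate used for item (2) of Lemma \ref{technical}. Comparing the two limits and invoking uniqueness of bounded module-map extensions (Proposition 3.6 of \cite{Pa}) gives $T = Ind(y)$ in $\cal L(\cal{H}_{\cal A})$. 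Since $T$ is manifestly bounded, $\pi(y) = V^*TV$ is bounded, so with $y \in L^2(\real, A_{\frk Z})$ the membership $T \in \cal M_2^{\hat\tau}$ follows from the $Ind$-form of item (3) of Lemma \ref{technical}, which places $Ind(y)^*Ind(y)$ in the ideal of definition of $\hat\tau$.

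The only genuinely delicate point I expect is the behaviour at $t = 0$: commutativity is precisely what converts the apparent $1/t$ singularity of $y$ into a removable one, and without it $y$ would fail to lie in $L^2$ near the origin, so $T$ would not be $Ind$ of an $L^2$-kernel at all. Everything else is bookkeeping parallel to Lemma \ref{commutator}.
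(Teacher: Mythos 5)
Your proof is correct, but it takes a more computational route than the paper, which disposes of the lemma in two lines by an algebraic reduction: using $ab=ba$ to cancel the terms $H\tilde\pi(a)\tilde\pi(b)=H\tilde\pi(b)\tilde\pi(a)$, the paper rewrites $T=[H,\tilde\pi(b)]\tilde\pi(a)-[H,\tilde\pi(a)]\tilde\pi(b)$, applies Lemma \ref{commutator} to each commutator, and absorbs the right factors via $Ind(x)\tilde\pi(c)=Ind\bigl(t\mapsto x(t)\alpha_t(c)\bigr)$, giving $y(t)=(\pi it)^{-1}\bigl[(\alpha_t(b)-b)\alpha_t(a)-(\alpha_t(a)-a)\alpha_t(b)\bigr]=(\pi it)^{-1}(a\alpha_t(b)-b\alpha_t(a))$; the remaining verification that $y\in C_0(\real,A_{\frk Z})\cap L^2(\real,A_{\frk Z})$ is identical to yours (your limit $(\pi i)^{-1}(a\delta(b)-b\delta(a))$ agrees with the paper's $(\pi i)^{-1}(\delta(b)a-\delta(a)b)$, as $ab=ba$ forces $\delta(a)b+a\delta(b)=\delta(b)a+b\delta(a)$). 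You instead re-run the $\epsilon$-truncation argument of Lemma \ref{commutator} from scratch on the combined operator, and your kernel computation, covariance identity $\bar\alpha_s^{-1}\circ\bar\alpha_t=\bar\alpha_{s-t}^{-1}$, uniform bound on $Ind(y_\epsilon)$, $L^2$ convergence $\|y-y_\epsilon\|_2\to 0$, and identification via the unique extension property are all sound. An instructive difference: in the paper's proof the hypothesis $ab=ba$ is spent at the operator level (to rewrite $T$ as a difference of commutators times bounded factors), whereas in yours it enters only at the kernel level, to make the $1/t$ singularity of $y$ removable at $t=0$ — your version thus makes explicit that the kernel identity itself needs no commutativity, at the cost of redoing the limiting argument that Lemma \ref{commutator} already packaged.
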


\begin{proof}[\bf Proof]
It is straightforward to verify that we can also write:
$$T=[H,\tilde\pi(b)]\tilde\pi(a)-[H,\tilde\pi(a)]\tilde\pi(b).$$
Then by Lemma \ref{commutator} we see that $T=Ind(y)$ where 
$$y(t)=\frac{(\alpha_t(b)-b)\alpha_t(a)}{\pi it} - 
\frac{(\alpha_t(a)-a)\alpha_t(b)}{\pi it}
=\frac{a\alpha_t(b)-b\alpha_t(a)}{\pi it}.$$
Since $y(t)\to (\pi i)^{-1}(\delta(b)a-\delta(a)b)$ in the norm of $A$
as $t\to 0$, $y$ is a continuous $A$-valued function. As 
$\|y(t)\|\leq 2\|a\| \|b\|/\pi t$ for $t\neq 0$, we also see that 
$y\in L^2(\real,A_{\frk Z}).$
\end{proof}

\begin{rem*}
In the previous lemma $y(0)=
(\pi i)^{-1}(\delta(b)a-\delta(a)b)=-2(\pi i)^{-1}\delta(a)b.$
Combining this with equation (7) of the previous discussion {\bf would}
yield the desired formula: 
$$\hat\tau([T_a,T_b])= \frac{-1}{2\pi i}\hat\tau(\delta(a)b),$$
{\bf assuming} that the operator $T$ is trace-class. Since $T$ is generally not 
trace-class, we need an approximate identity argument.
\end{rem*}

\begin{lemma}\label{approxid}
If $S\in \cal M^{\hat\tau}$ and $\{f_n\}$ is a sequence
of functions in $C_c(\real)^+\subset C_c(\real,A_{\frk Z})$ each having 
integral $1$ and symmetric supports about $0$ shrinking to $0$ then
$$\hat\tau(S)=\rm{uw}\lim_{n\to\infty}\hat\tau(Ind(f_n)S).$$
\end{lemma}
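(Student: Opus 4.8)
The plan is to transport everything back to the $\cal U(\cal A)$-picture through the unitary $V$ of Definition \ref{Indrep}, reduce the statement to the defining formula for the trace, and then close the argument with the weak$^*$-continuity results of Proposition \ref{conv}. An element $S\in\cal M^{\hat\tau}$ has the form $S=V\pi(\xi\eta)V^*$ for some $\xi,\eta\in\cal A_b$, and the defining formula $\sigma(\pi(\xi\eta))=\bra\xi^*,\eta\ket$ of Theorem \ref{Tr} together with Lemma \ref{*map} gives
$$\hat\tau(S)=\sigma(\pi(\xi\eta))=\bra\xi^*,\eta\ket=\bra\eta^*,\xi\ket.$$
Since $\cal M^{\hat\tau}$ is a two-sided ideal we have $Ind(f_n)S\in\cal M^{\hat\tau}$, and because $Ind(f_n)=V\pi(f_n)V^*$ and $\hat\tau(\,\cdot\,)=\sigma(V^*\,\cdot\,V)$, this reduces $\hat\tau(Ind(f_n)S)$ to $\sigma(\pi(f_n)\pi(\xi\eta))$.

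Next I would simplify this using the $\frk Z$-Hilbert-algebra calculus. As $f_n\in C_c(\real)\subseteq\cal A\subseteq\cal A_b$ and $\pi(f_n)\in\cal U(\cal A)$, Proposition \ref{lbdd}(2) shows $\pi(f_n)\xi\in\cal A_b$ with $\pi(f_n)\pi(\xi)=\pi(\pi(f_n)\xi)$, so
$$\hat\tau(Ind(f_n)S)=\sigma(\pi((\pi(f_n)\xi)\eta))=\bra(\pi(f_n)\xi)^*,\eta\ket=\bra\eta^*,\pi(f_n)\xi\ket,$$
the last step again by Lemma \ref{*map}. Comparing with the formula for $\hat\tau(S)$, the lemma is reduced to establishing
$$\bra\eta^*,\pi(f_n)\xi\ket\longrightarrow\bra\eta^*,\xi\ket\quad\text{ultraweakly in }\frk Z.$$

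The heart of the matter is to prove that $\pi(f_n)\to 1$ ultraweakly in $\cal L(\cal{H}_{\cal A})$. The net $\{\pi(f_n)\}$ is uniformly bounded, since $\|\pi(f_n)\|=\|Ind(f_n)\|\le\|f_n\|_1=1$ (here $f_n\ge 0$ and $\int f_n=1$). By Proposition \ref{conv}(3) it then suffices to check $\bra\pi(f_n)a,b\ket\to\bra a,b\ket$ ultraweakly for $a,b\in\cal A$. For such $a$ one has $\pi(f_n)a=f_n\cdot a$, the $\bar\alpha$-twisted convolution, and the hypotheses on $\{f_n\}$ (positive, integral one, symmetric supports shrinking to $0$) together with the norm-continuity of $\bar\alpha$ give the standard approximate-identity estimate $\|f_n\cdot a-a\|_2\to 0$; since $\|\cdot\|_{\cal A}\le\|\cdot\|_2$ this yields $f_n\cdot a\to a$ in $\cal{H}_{\cal A}$, so $\bra\pi(f_n)a,b\ket\to\bra a,b\ket$ in $\frk Z$-norm and a fortiori ultraweakly. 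Hence $\pi(f_n)\to 1$ ultraweakly. Finally, Proposition \ref{conv}(2) applied to the fixed vectors $\xi,\eta^*$ gives $\bra\pi(f_n)\xi,\eta^*\ket\to\bra\xi,\eta^*\ket$ ultraweakly, and taking adjoints (which is ultraweakly continuous on $\frk Z$) converts this into the required convergence $\bra\eta^*,\pi(f_n)\xi\ket\to\bra\eta^*,\xi\ket$, completing the proof. The main obstacle is this middle step: verifying that twisted convolution by the approximate identity converges in the vector-valued $L^2(\real,A_{\frk Z})$-norm (where the $\bar\alpha$-twist must be absorbed by the norm-continuity of the action) and then promoting this test-on-$\cal A$ convergence to ultraweak \emph{operator} convergence via Proposition \ref{conv}(3); once $\pi(f_n)\to 1$ ultraweakly is in hand, the remainder is formal manipulation of the trace formula.
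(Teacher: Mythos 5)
Your proposal is correct and follows essentially the same route as the paper's proof: both rest on writing $S=Ind(\xi)Ind(\eta)$ with $\xi,\eta\in\cal A_b$, the trace formula $\hat\tau(Ind(f_n)S)=\bra\eta^*,\pi(f_n)\xi\ket$, the approximate-identity estimate $\|f_n\cdot a-a\|_{\cal A}\leq\|f_n\cdot a-a\|_2\to 0$ for $a\in\cal A$, and the bounded-net criteria of Proposition \ref{conv} to upgrade convergence tested on $\cal A$ to the ultraweak statement on $\cal A_b$. The only cosmetic difference is order of operations: the paper first records the norm-limit identity for $x,y\in\cal A$ via Paschke's Cauchy--Schwarz inequality and then invokes Proposition \ref{conv}(3), while you establish $\pi(f_n)\to 1$ ultraweakly once and for all and then apply Proposition \ref{conv}(2) -- the same argument in slightly different packaging.
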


\begin{proof}[\bf Proof]
As in the proof of Lemma \ref{Hilbtran}, we see that the operators,
$Ind(f_n)=V\pi(f_n)V^*$ 
are 
uniformly bounded on $\cal{H}_{\cal A}$
by $1$ and converge strongly to $1$ on $L^2(\real)\otimes\bar{A_{\frk Z}}.$
In particular, for all $x,y\in \cal A$ we have by Paschke's Cauchy-Schwarz
inequality (Propn. 2.3 of \cite{Pa}):
\begin{eqnarray*}
\hat\tau[Ind(x) Ind(y)]&=&\bra x^*,y\ket
=\bra y^*,x\ket
=\rm{norm}\lim_{n\to\infty}\bra y^*,\pi(f_n)x\ket\\
&=&\rm{norm}\lim_{n\to\infty}\bra(f_nx)^*,y\ket
=\rm{norm}\lim_{n\to\infty}\hat\tau[Ind(f_nx) Ind(y)]\\
&=&\rm{norm}\lim_{n\to\infty}\hat\tau[Ind(f_n)Ind(x)Ind(y)].
\end{eqnarray*}
Now, by item (3) of Lemma \ref{conv} we see that for all $\xi,\eta\in\cal A_b$:
$$\hat\tau[Ind(\xi) Ind(\eta)]=
\rm{uw}\lim_{n\to\infty}\hat\tau[Ind(f_n)Ind(\xi)Ind(\eta)].$$
Since every $S\in\cal M^{\hat\tau}$ has the form $S=Ind(\xi) Ind(\eta)$ for
some $\xi,\eta\in\cal A_b$, we are done.
\end{proof}

\begin{prop}\label{commutator4}
If $a,b\in dom(\delta)$ and $ab=ba$, then $[T_a,T_b]\in\cal N^{\hat\tau}$
and
$$\hat\tau [T_a,T_b] = \frac{-1}{2\pi i}\tau(\delta(a)b).$$
\end{prop}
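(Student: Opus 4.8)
The plan is to combine the averaged trace formula (equation (7) of the Discussion) with the approximate-identity Lemma~\ref{approxid}, applied to the operator $T$ identified in Lemma~\ref{commutator3}. By Corollary~\ref{commutator2} we already know $[T_a,T_b]\in\cal N^{\hat\tau}$, so only the trace \emph{value} remains. The first step is to recall that
$$\hat\tau([T_a,T_b])=\tfrac14\hat\tau\bigl(PTP+(1-P)T(1-P)\bigr),$$
where $T=Ind(y)$ with $y(t)=(\pi it)^{-1}(a\alpha_t(b)-b\alpha_t(a))$ lies in $C_0(\real,A_{\frk Z})\cap L^2(\real,A_{\frk Z})$. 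Since $T$ itself is only in $\cal M_2^{\hat\tau}$ and not trace-class, I cannot apply $\hat\tau$ to $T$ directly; the whole point of the averaging is that the compressions $PTP$ and $(1-P)T(1-P)$ \emph{are} trace-class while $T$ is not.

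Next I would introduce the approximate identity $\{f_n\}\subset C_c(\real)^+$ of Lemma~\ref{approxid}, with shrinking symmetric supports and integrals $1$. The idea is to replace $\hat\tau(PTP+(1-P)T(1-P))$ by $\mathrm{uw}\lim_n \hat\tau\bigl(Ind(f_n)(PTP+(1-P)T(1-P))\bigr)$. The key computation is to recognize $Ind(f_n)\,Ind(y)=Ind(f_n\cdot y)$ as a twisted convolution, and then to evaluate $\hat\tau(Ind(f_n\cdot y))$ using the fundamental formula $\hat\tau[Ind(x)^*Ind(y)]=\int\bar\tau(x(t)^*y(t))\,dt$. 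Because $f_n$ concentrates at $t=0$ and $y$ is norm-continuous into $A_{\frk Z}$ with $y(0)=(\pi i)^{-1}(\delta(b)a-\delta(a)b)=-2(\pi i)^{-1}\delta(a)b$ (using $ab=ba$), the convolution against $f_n$ should pick out the value of the relevant integrand at $t=0$ in the limit. One must handle the factors of $P$ and $1-P$: after averaging, the $H$-dependence combines so that the $\tfrac14$ and the averaging conspire to give $\tfrac12 y(0)$, consistent with the heuristic in the Remark preceding the lemma.

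The main obstacle I expect is making the limit argument rigorous at the level of the $\frk Z$-valued trace rather than a scalar trace: the convergence $\hat\tau(Ind(f_n)S)\to\hat\tau(S)$ from Lemma~\ref{approxid} is only ultraweak in $\frk Z$, and I must ensure the integral $\int\bar\tau((f_n\cdot y)(t))\,dt$ (or the appropriate compressed version) converges ultraweakly to $\bar\tau(y(0))$ rather than merely in some weaker sense. This requires that $t\mapsto\bar\tau(y(t))$ be uniformly bounded and norm-continuous near $0$, which follows from $y\in C_0(\real,A_{\frk Z})$ and the uw-continuity and contractivity of $\bar\tau$; the symmetric supports of the $f_n$ are what kill the odd $(\pi it)^{-1}$ singularity so the principal-value structure does not spoil convergence. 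Once the limit is justified, I would assemble
$$\hat\tau([T_a,T_b])=\tfrac12\,\bar\tau(y(0))=\tfrac12\,\bar\tau\bigl(-2(\pi i)^{-1}\delta(a)b\bigr)=\frac{-1}{2\pi i}\tau(\delta(a)b),$$
using that $\bar\tau$ restricts to $\tau$ on $A$ and that $\delta(a)b\in A$, which gives exactly the claimed formula.
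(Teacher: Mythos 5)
Your route is exactly the paper's: membership $[T_a,T_b]\in\mathcal N^{\hat\tau}$ from Corollary \ref{commutator2}, equation (7) of the Discussion, smearing with the approximate identity of Lemma \ref{approxid}, identifying $T=Ind(y)$ by Lemma \ref{commutator3}, and evaluating the limit via the pairing $\hat\tau[Ind(x)^*Ind(y)]=\int\bar\tau(x(t)^*y(t))\,dt$ together with the norm-continuity of $y$ at $0$. Your convergence worries are resolved just as you suggest; indeed the paper notes that the final limit converges in norm, since $t\mapsto\tau(y(t))$ is bounded and norm-continuous and the $f_n$ are probability densities with supports shrinking to $0$.

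However, there is a concrete error at the one point you leave vague, and it makes your closing display false. The mechanism that removes the compressions is that $Ind(f_n)$ commutes with $P$ (both act only on the $L^2(\real)$ leg: $Ind(f_n)=\lambda(f_n)\otimes 1$ and $P=P_{\real}\otimes 1$), so
$$\hat\tau\bigl(Ind(f_n)(PTP+(1-P)T(1-P))\bigr)=\hat\tau\bigl(PInd(f_n)TP+(1-P)Ind(f_n)T(1-P)\bigr),$$
and then cyclicity of $\hat\tau$ together with $P+(1-P)=1$ collapses this to $\hat\tau(Ind(f_n)T)$ with the prefactor $\tfrac14$ from equation (7) \emph{untouched} --- no doubling occurs, because the two compressions recombine into a single copy of $Ind(f_n)T$. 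Hence
$$\hat\tau([T_a,T_b])=\tfrac14\,\mathrm{uw}\lim_n\hat\tau\bigl(Ind(f_n)Ind(y)\bigr)=\tfrac14\,\tau(y(0)),$$
not $\tfrac12\,\bar\tau(y(0))$ as you assert. As written, your last display is arithmetically inconsistent: $\tfrac12\,\bar\tau\bigl(-2(\pi i)^{-1}\delta(a)b\bigr)=\frac{-1}{\pi i}\tau(\delta(a)b)$, which is twice the claimed value; with the correct factor, $\tfrac14\cdot(-2)(\pi i)^{-1}\tau(\delta(a)b)=\frac{-1}{2\pi i}\tau(\delta(a)b)$, as required. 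A second, minor, abuse (which the paper's Remark also commits) is treating $y(0)=(\pi i)^{-1}(\delta(b)a-\delta(a)b)=-2(\pi i)^{-1}\delta(a)b$ as an identity of elements of $A$; it holds only after applying $\tau$, via $\tau(\delta(b)a)=\tau(a\delta(b))=-\tau(\delta(a)b)$, which follows from $\tau\circ\delta=0$ as in the proof of Theorem \ref{wind}.
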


\begin{proof}[\bf Proof]
Let $\{f_n\}$ be as in the previous lemma. Then, by equation (7) of the 
Discussion, the previous two lemmas, and the fact that
$Ind(f_n)P=PInd(f_n)$ we get:
\begin{eqnarray*}
\hat\tau([T_a,T_b])&=&\frac{1}{4}\hat\tau(PTP+(1-P)T(1-P))\\
&=&\rm{uw}\lim\frac{1}{4}\hat\tau(Ind(f_n)(PTP+(1-P)T(1-P)))\\
&=&\rm{uw}\lim\frac{1}{4}\hat\tau(Ind(f_n)PTP+Ind(f_n)(1-P)T(1-P))\\
&=&\rm{uw}\lim\frac{1}{4}\hat\tau(PInd(f_n)TP+(1-P)Ind(f_n)T(1-P))\\
&=&\rm{uw}\lim\frac{1}{4}\hat\tau(PInd(f_n)T+(1-P)Ind(f_n)T)\\
&=&\rm{uw}\lim\frac{1}{4}\hat\tau(Ind(f_n)T)\\
&=&\rm{uw}\lim\frac{1}{4}\hat\tau(Ind(f_n)Ind(y))\\
&=&\rm{uw}\lim\frac{1}{4\pi i}\int f_n(t)\tau\left(\frac{\alpha_t(b)-b}{t}a-
\frac{\alpha_t(a)-a}{t}b\right)dt.
\end{eqnarray*}
In fact, this last limit is easily seen to converge in norm, so that
\begin{eqnarray*}
\hat\tau([T_a,T_b])&=&\frac{1}{4\pi i}\tau(\delta(b)a-\delta(a)b)\\
&=&\frac{-1}{2\pi i}\tau(\delta(a)b).
\end{eqnarray*}
\end{proof}

\begin{proof}[\bf Proof of Theorem \ref{index}]
Recall that relative to the decomposition $1=P+(1-P)$ we have: 
$$\tilde\pi(a)=\left[\begin{array}{cc} T_a & B\\C & D\end{array}\right],$$
where
$$B=P\tilde\pi(a)(1-P)=P[P,\tilde\pi(a)]=\frac{1}{2}P[H,\tilde\pi(a)]
\in \cal M^{\hat\tau}_2,$$
and,
$$C=\frac{1}{2}[H,\tilde\pi(a)]P\in \cal M^{\hat\tau}_2.$$
By Corollary A4 of the Appendix and the previous proposition we have:
$$\hat\tau{\text -}Index(T_a)=\hat\tau([T_a,T_{a^{-1}}])=
\frac{-1}{2\pi i}\tau(\delta(a)a^{-1}).$$
This completes the proof of Theorem \ref{index}.
\end{proof}

\begin{cor}\label{IndMorph}
If $\varphi: A_1\to A_2$ defines a morphism from $(A_1,Z_1,\tau_1,\alpha^1)$ to $(A_2,Z_2,\tau_2,\alpha^2)$
and if $a\in A_1^{-1}\cap(dom(\delta_1))$ then $\varphi(a)\in A_2^{-1}\cap(dom(\delta_2))$  and
$$\hat\tau_1{\text -}Index(T_a)\in (Z_1)_{sa}\;\;{\rm while}\;\; \hat\tau_2{\text -}Index(T_{\varphi(a)})\in (Z_2)_{sa}\;\;{\rm and\; also}$$
 $$\varphi(\hat\tau_1{\text -}Index(T_a))=\hat\tau_2{\text-}Index(T_{\varphi(a)}).$$
\end{cor}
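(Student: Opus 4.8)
The plan is to deduce this corollary directly from Theorem~\ref{index}, which identifies the $\hat\tau$-index of a Toeplitz operator with the negative of the winding operator, together with Proposition~\ref{wind2}, which already establishes that the winding operator is natural with respect to morphisms. In other words, once we observe that all the analytic content of the index has been packaged into $wind_\alpha$, the corollary becomes a purely formal consequence of facts proved in Sections~1 and~9.

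First I would record that Proposition~\ref{wind2} guarantees $\varphi(a)\in A_2^{-1}\cap dom(\delta_2)$ whenever $a\in A_1^{-1}\cap dom(\delta_1)$, so that the Toeplitz operator $T_{\varphi(a)}$ is defined and, by Theorem~\ref{index} applied to the $4$-tuple $(A_2,Z_2,\tau_2,\alpha^2)$, is $\hat\tau_2$-Fredholm with
\[
\hat\tau_2{\text -}Index(T_{\varphi(a)})
=\frac{-1}{2\pi i}\tau_2\!\left(\delta_2(\varphi(a))\varphi(a)^{-1}\right)
=-wind_{\alpha^2}(\varphi(a))\in (Z_2)_{sa}.
\]
Likewise, Theorem~\ref{index} applied to $(A_1,Z_1,\tau_1,\alpha^1)$ gives
\[
\hat\tau_1{\text -}Index(T_a)
=\frac{-1}{2\pi i}\tau_1\!\left(\delta_1(a)a^{-1}\right)
=-wind_{\alpha^1}(a)\in (Z_1)_{sa},
\]
which secures the two membership claims in the statement.

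It then remains to apply $\varphi$ to this last identity and compare. Since $\varphi$ is a unital $*$-homomorphism it is in particular $\comp$-linear, so $\varphi(-wind_{\alpha^1}(a))=-\varphi(wind_{\alpha^1}(a))$; and by Proposition~\ref{wind2} the latter equals $-wind_{\alpha^2}(\varphi(a))$. Chaining these equalities yields
\[
\varphi\!\left(\hat\tau_1{\text -}Index(T_a)\right)
=-\varphi\!\left(wind_{\alpha^1}(a)\right)
=-wind_{\alpha^2}(\varphi(a))
=\hat\tau_2{\text -}Index(T_{\varphi(a)}),
\]
as required.

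I do not expect any genuine obstacle here: all the difficulty resides in Theorem~\ref{index} itself (and in the von~Neumann-algebraic and $\frk Z$-Hilbert algebra machinery used to make its statement meaningful), after which the corollary is essentially a translation of the already-proved naturality of $wind_\alpha$ through the index formula. The only point that merits a sentence is that $\varphi$ need \emph{not} intertwine the enveloping von~Neumann algebras $\frk A_i$ or the crossed-product constructions $\cal M_i$ on the two sides; but this is irrelevant, precisely because the index theorem lets us bypass those constructions entirely and compute both indices intrinsically from the $C^*$-level data $(\delta_i,\tau_i)$, on which $\varphi$ acts compatibly by Proposition~\ref{wind2}.
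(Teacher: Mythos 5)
Your proposal is correct and is exactly the paper's argument: the authors prove this corollary in one line by combining Proposition~\ref{wind2} with Theorem~\ref{index}, which is precisely the deduction you spell out. Your additional remark that $\varphi$ need not intertwine the enveloping von Neumann algebras, and that this is irrelevant because both indices are computed from the $C^*$-level data, is a sound and worthwhile clarification of why the one-line proof works.
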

\begin{proof}
This follows immediately from Proposition \ref{wind2} and Theorem \ref{index}.
\end{proof}

\section{EXAMPLES}
{\bf 1. Kronecker (scalar trace) Example.} Recall: $A=C({\bf T}^2)$, the $C^{*}$-algebra of 
continuous functions on
the $2$-torus, with the usual scalar trace $\tau$ given by 
the Haar measure on ${\bf T}^2$, and
$\alpha :{\bf R}\to\hbox{Aut}(A)$ is the Kronecker flow on $A$ 
determined by the
real number, $\mu$. That is, for $s\in {\bf R}$, $f\in A$, and $
(z,w)\in {\bf T}^2$ we have:
$$(\alpha_s\,f)(z,w)=f\left(e^{-2\pi is}\,z,\,e^{-2\pi i\mu
s}\,w\right).$$
In this case, $Z=\frk Z = {\bf C}$ and so $A_{\frk Z}=A.$ Hence our 
$\frk Z$-Hilbert algebra $\mathcal A=C_c({\bf R},A)$ is just a Hilbert 
algebra in the ordinary sense and 
$\cal{H}_{\mathcal A}=L^2({\bf R},L^2({\bf T}^2)).$
Now, denoting $\cal{H}=L^2({\bf T}^2)$, we have that the $C^{*}$-crossed product 
$A\rtimes_{\alpha}{\bf R}$ is represented on $L^2({\bf R},\cal{H})$ by the induced
representation of Definition \ref{Indrep} as follows: for
$$s,t\in {\bf R},\quad\xi\in C_c({\bf R},A)\subseteq L^2({\bf R},\cal{H})\qquad
\hbox{and}\qquad f\in A$$
we define
\begin{eqnarray*}\left(\tilde\pi (f)\,\xi\right)(s)&=&\alpha_s^{-1}(f)\cdot\xi
(s)\qquad\hbox{and}\\
\left(\lambda_t\,\xi\right)(s)&=&\xi(s-t).
\end{eqnarray*}
Thus, $\tilde\pi\times \lambda$ is a faithful representation of 
$A\rtimes_{\alpha}{\bf R}$ on $L^2({\bf R},\cal H)$. It is
well-known that for $\mu$ irrational,
$\mathcal M=\left(\tilde\pi\times \lambda (A\rtimes_{\alpha}\,{\bf R})\right
)^{\prime\prime}$ is a $\hbox{II}_{\infty}$ factor, [CMX]. In general $\mathcal M$ is a semifinite von Neumann algebra and $\tilde\pi :A\to \mathcal M$. Now, if $\delta$ is the densely 
defined derivation on $A$ generating the representation
$\alpha :{\bf R}\to\hbox{Aut}(A)$ and we let $u\in U(A)$ be the 
function $u(z,w)=w$ then $u$ is a smooth element 
for $\delta$ and $\delta(u)=-(2\pi i\mu) u$. 
Thus by Theorem \ref{index}, the 
Toeplitz operator $T_u:=P\tilde\pi(u)P$ is Fredholm relative to the trace
$\hat\tau$ in the semifinite von Neumann algebra, $\mathcal N=P\mathcal MP$ and its index is given by:
$$\hat\tau{\text -}Index(T_u)=\frac{-1}{2\pi i}\tau(\delta(u)u^*)=\mu.$$

{\bf 2. General Kronecker Examples.} Recall $Z=C(X)$ is any commutative unital $C^{*}$-algebra 
with a faithful state $\omega$ and $\theta\in Z_{sa}$ is any self-adjoint 
element in 
$Z$. Recall $A=C({\bf T}^2,Z)=C(X)\otimes C({\bf T}^2)$, and $\tau :A\to Z$ is given by the
``slice-map'' $\tau=id_Z\otimes\varphi$ where $\varphi$ is the
trace on $C({\bf T}^2)$ given by Haar measure. That is, for $f\in A=C({\bf T}^2,Z)$ we have
$$\tau(f)=\int_{\bf{T}^2}f(z,w)d(z,w)\in Z,$$ and $\tau$ is a faithful,
tracial conditional expectation of $A$ onto $Z$. 
Recall that $\bar\omega:=\omega\circ\tau=\omega\otimes\varphi$ is a faithful (tracial) state $\bar\omega$ on $A.$  We use the element $\theta\in 
Z_{sa}$ to
define a $\tau$-invariant action $\{\alpha_t\}$ of $\real$ on $A$:
$$\alpha_t(f)(x,z,w)=f(x,e^{-2\pi it}z,e^{-2\pi i\theta(x)t}w),$$
for $f\in A$, $t\in\real$, $x\in X$, and $z,w\in \bf{T}.$

Let $(\pi,\mathcal H)$ be the GNS representation of $A$
induced by $\bar\omega$ then there is a continuous
unitary representation $\{U_t\}$ of $\real$ on $\mathcal H$ so that 
$(\pi,U)$ is covariant for $\alpha$ on $A$. Also, $\{U_t\}$ implements an
uw-continuous ``extension'' of $\alpha$ to $\bar\alpha$ acting on 
$\frk A:=\pi(A)^{\prime\prime}.$ Morover, 
letting $\frk Z:=\pi(Z)^{\prime\prime},$
there exists a unique faithful unital, uw-continuous 
$\frk Z$-trace $\bar\tau:\frk A \to \frk Z$ ``extending'' $\tau,$ 
and $\bar\alpha$ leaves $\bar\tau$ invariant. That is, in this
representation on $\mathcal H$, we have that {\bf Standing Assumptions}
are also satisfied. We simplify our notation and write $L^2(X)$, 
$L^2({\bf T}^2)$, $L^\infty(X)$, and  $L^\infty({\bf T}^2)$ for $L^2(X,\omega)$,
$L^2({\bf T}^2,\varphi)$, $L^\infty(X,\omega)$, and $L^\infty({\bf T}^2,\varphi)$,
respectively.

Then, in this representation, one easily verifies that:
\begin{eqnarray*}
\mathcal H &=& L^2(X)\otimes L^2({\bf T}^2),
\;as\;Hilbert\;spaces,\;and\\
\frk Z &=& L^{\infty}(X)\otimes 1,\; and\\
A_{\frk Z} &=& L^{\infty}(X)\otimes C({\bf T}^2)
\;as\;C^*-algebras,\;and\\
\frk A &=& L^{\infty}(X)\bar{\otimes} L^{\infty}({\bf T}^2)\;as\;
von\;Neumann\;algebras.
\end{eqnarray*}

Identifying $\frk Z=L^\infty(X)$, our $L^\infty(X)$-Hilbert algebra is
$\mathcal A = C_c(\real,L^{\infty}(X)\otimes C({\bf T}^2))$ with 
the $\bar\alpha$-twisted convolution multiplication and $L^\infty(X)$-valued
inner product for $f,g\in \mathcal A$ given by:
\begin{eqnarray*}\hat{\tau}(Ind(f)^*Ind(g))&=&\bra f,g\ket=\int_{\real}\bar\tau((f(t))^*g(t))dt\\
&=&\int_{\real}\left(\int_{\bf{T}^2}(f(t)[(z,w)])^*g(t)[(z,w)]d(z,w)\right)dt.
\end{eqnarray*}
Now, consider the following unitary $v$ in $A$: $v(x,z,w)=w.$ Then
$$\alpha_t(v)(x,z,w)=e^{-2\pi i\theta(x)t}w\;\;{\rm and\;\;so}\;\; \delta(v)(x,z,w)=-2\pi i\theta(x)w.$$
Hence, $(\delta(v)v^*)(x,z,w) = -2\pi i\cdot\theta(x).$ Since the trace $\tau$ on $A$ is just the slice map $id_Z\otimes\varphi$ we see that $\tau(\delta(v)v^*)=-2\pi i\cdot\theta.$ Hence, by Theorem \ref{index}, the Toeplitz operator
$T_v$ is Fredholm relative to the trace $\hat\tau$ on 
$\cal N=P(Ind(A\rtimes\real)^{\prime\prime})P$, and
$$\hat\tau{\text -}Index(T_v)=\frac{-1}{2\pi i}\tau(\delta(v)v^{*})=\theta\in C(X)=Z\hookrightarrow
Z\otimes C({\bf{T}}^2)=A.$$

{\bf 3. Fiberings of Toeplitz operators}.  Recall that for any fixed $x\in X$ (where $Z=C(X)$) the evaluation map at $x$ yields a homomorphism from $A=Z\otimes C({\bf{T}}^2)$ to $C({\bf{T}}^2)$ which defines a morphism from Example 2 to Example 1 which carries $\theta$ to $\mu:=\theta(x)$.
Moreover this morphism carries $v$ to $u=v(x)$. So that $Index(T_u)=\mu=\theta(x)=(Index(T_v))(x)$. That is, the Toeplitz operator $T_v$ fibers over $X$ as the Toeplitz operators $T_{\theta(x)}$ and moreover for each $x\in X$:
$$Index(T_{v(x)})=(Index(T_v))(x). $$
so the Index fibers accordingly.

Similarly, for any fixed $x\in X$ (where $Z=C(X)$) the evaluation map at $x$ yields a homomorphism from $A=Z\otimes A_\theta$ to $A_\theta$ which defines a morphism from \\$(Z\otimes A_\theta,Z, id\otimes \tau_\theta,\alpha^\eta)$ to $(A_\theta,{\bf C},\tau_\theta,\alpha^{\eta(x)}).$ This morphism carries $1\otimes V$ to $V$. Since $Index(T_{1\otimes V})=\eta$ and $Index(T_V)=\eta(x)$ we see that:
$$Index(T_{1\otimes V})(x) = Index(T_V)= Index(T_{1\otimes V}(x)). $$

{\bf 4. $C^*$-algebra of the Integer Heisenberg group.} Recall that $A=C^*(H)$ is the $C^*$-algebra of the Integer Heisenberg group viewed as the universal $C^*$-algebra generated by three unitaries $U, V, W$
satisfying:
$$WU=UW,\;\;\;WV=VW,\;\;\;and\;\;\;UV=WVU.$$
In this case $Z=C^*(W)$ is the centre of $A$ and also equals $C^*(C)$ the $C^*$-algebra generated by 
$C=\bra W\ket$ the centre of $H.$ The trace $\tau:C^*(H)\to C^*(C)$ on functions in $l^1(H)\subset C^*(H)$ 
is just given by restriction to $C$. Our Hilbert space $\cal{H}=l^2(H)$ acted on by the left regular representation of $C^*(H).$ The restriction of this action to $Z=C^*(C)$ on 
$$l^2(H)=\bigoplus_{(n,m)\in{\bf Z^2}} l^2(C\cdot(V^nU^m))$$
is unitarily equivalent to $1_{\bf Z^2}\otimes \pi_C(C)$ on 
$\bigoplus_{(n,m)\in{\bf Z^2}} l^2(C).$ In this labelling of the cosets, multiplication by $W$ acts the same on each coset: it increases the power of $W$ by one. Multiplication by $V$ acts as the identification of
$l^2(C\cdot(V^nU^m))$ with $l^2(C\cdot(V^{n+1}U^m))$: that is, it acts as a permutation of the copies of $l^2(C)$ while acting on the basis elements as the identity on $l^2(C)$. However, multiplication by $U$ not only maps $l^2(C\cdot(V^nU^m))$ to $l^2(C\cdot(V^nU^{m+1}))$, but it also acts on the basis elements of
$l^2(C)$ by sending $W^k$ to $W^{k+1}$. In this representation we recall that the map $\tau :C^*(H)\to C^*(C)$
is given by $\tau(x)=1_{{\bf Z^2}}\otimes ExE,$ where $E$ is the projection of $l^2(H)$ onto $l^2(C).$
Thus we have an action $\alpha:{\bf R}\to Aut(A),$ that fixes $Z=C^*(W)$ and leaves the $Z$-valued trace 
$\tau$ invariant. A short calculation using Theorem \ref{index} then gives us the nontrivial index:
$$\hat\tau{\text -}Index(T_{V^nU^mW^p})=(n\theta+m)\in Z=C^*(W).$$

\appendix{APPENDIX: FREDHOLM THEORY RELATIVE to a $\frk Z$-VALUED TRACE 
on a von NEUMANN ALGEBRA}

We let $\cal N$ denote a semifinite von Neumann algebra and let $\frk Z$
denote a unital von Neumann subalgebra of the centre of $\cal N$. We suppose
that we have a faithful, normal, semifinite $\frk Z$-trace $\phi$ defined on
$\cal N_+$ as in Definition 6.1. We will show that using $\phi$ as a dimension
function we can adapt M. Breuer's arguments in \cite{Br1}, \cite{Br2} to obtain
a Fredholm theory involving a $\frk Z$-valued index with the usual algebraic 
and topological stability properties, and in which the role of the compact 
operators is replaced by the norm-closed ideal, $\cal K_{\cal N}^{\phi}$
generated by the projections of $\phi$-finite trace.\\
\indent A projection $E$ in $\cal N$ will be called $\phi {\text -}finite$ if
$\phi(E)\in \frk Z_+$. Since $\phi$ is faithful, it is clear that any 
$\phi$-finite projection is also finite in the Murray-von Neumann sense.
An operator $T\in\cal N$ is called $\phi {\text -}Fredholm$ if the 
projection $N_T$
on $ker(T)$ is $\phi$-finite and there is a $\phi$-finite projection $E$
with $range(1-E)\subseteq range(T).$ Since $\phi$-finite projections are
finite, every $\phi$-Fredholm operator is Fredholm in Breuer's sense. If
$T$ is $\phi$-Fredholm, the $\phi$-$index$ of $T$ is by definition
$$\phi{\text -}Index(T):=\phi(N_T)-\phi(N_{T^*}):$$
we shall see below that $T^*$ is also $\phi$-Fredholm so that 
$\phi$-$Index(T)$ is a well-defined self-adjoint element of $\frk Z$.

We observe as we did in \cite{PhR} that the ideal $\cal K_{\cal N}^{\phi}$
can also be described as the {\it closure} of any of:\\
\noindent (1) the span of the $\phi$-finite projections in $\cal N$,\\
\noindent (2) the span of the $\phi$-finite elements in $\cal N$,\\
\noindent (3) the algebra of elements $T\in\cal N$ whose range projection
$R_T$ is $\phi$-finite.\\
This ideal is clearly contained in Breuer's ideal $\cal K$ generated by all
the finite projections in $\cal N.$

Now the further remarks and proofs concerning how Breuer's arguments carry over 
to this situation follow verbatim from Appendix B of \cite{PhR}. So, we obtain
the analogues of Breuer's theorems exactly as we did in \cite{PhR}.

\begin{thm*}[\bf A1]
Let $\phi$ be a faithful, normal, semifinite $\frk Z$-trace on the von
Neumann algebra $\cal N$, and let $\cal K^{\phi}_{\cal N}$ be the norm-closed
ideal in $\cal N$ generated by the $\phi$-finite projections.

(1) (The Fredholm alternative) If $T\in\cal K^{\phi}_{\cal N}$, then $(1-T)$
is $\phi$-Fredholm and $$\phi{\text -}Index(1-T) = 0.$$

(2) (Atkinson's Theorem) An operator $T\in \cal N$ is $\phi$-Fredholm if and
only if $T+\cal K^{\phi}_{\cal N}$ is invertible in 
$\cal N/\cal K^{\phi}_{\cal N}$.

(3) If $S$ and $T$ are $\phi$-Fredholm, then so are $S^*$ and $ST$, and
$$\phi{\text -}Index(S^*)=-(\phi{\text -}Index(S)),\hspace{.5in} 
\phi{\text -}Index(ST)=\phi{\text -}Index(S)+\phi{\text -}Index(T).$$
\end{thm*}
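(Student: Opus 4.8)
The plan is to adapt M.\ Breuer's Fredholm theory for semifinite von Neumann algebras (\cite{Br1}, \cite{Br2}) to the present setting, exactly as was done for the scalar trace in Appendix B of \cite{PhR}; the only new ingredient is that the scalar dimension function is replaced throughout by the $\frk Z$-valued trace $\phi$, and that ``finiteness'' of a projection $E$ now means $\phi(E)\in\frk Z_+$ rather than the classical $\phi(E)<\infty$. What makes this substitution harmless is that $\frk Z$ is abelian: $\frk Z_+$ admits suprema of bounded increasing nets (Vigier's Theorem), $\phi$ is additive, normal and $\frk Z_+$-linear, and these are precisely the structural properties of the scalar trace that Breuer's arguments actually use. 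Accordingly, the role of the compact operators is played by $\cal K^\phi_{\cal N}$, and all estimates are read off in $\frk Z_+$ (or in the enlarged cone $\hat{\frk Z}_+$) in place of $[0,\infty]$.

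First I would establish that the $\phi$-index is well-defined, i.e.\ that $\phi$-Fredholmness of $T$ forces $N_{T^*}$ to be $\phi$-finite. Writing the polar decomposition $T=V|T|$, with $V$ the partial isometry satisfying $V^*V=1-N_T$ and $VV^*=R_T$ the range projection, one has $1-R_T\leq E$ for the $\phi$-finite projection $E$ supplied by the definition, so $\phi(N_{T^*})=\phi(1-R_T)\leq\phi(E)\in\frk Z_+$; monotonicity and normality of $\phi$ then make $N_{T^*}$ $\phi$-finite. Hence $\phi$-$Index(T)=\phi(N_T)-\phi(N_{T^*})\in\frk Z_{sa}$ is well-defined, and the adjoint formula $\phi$-$Index(S^*)=-\phi$-$Index(S)$ of part (3) is immediate, since passing to the adjoint merely interchanges the two kernel projections.

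For part (1) I would approximate $T\in\cal K^\phi_{\cal N}$ in norm by an operator $T_0$ with $\phi$-finite range projection (possible by the description of $\cal K^\phi_{\cal N}$ as the closure of such operators), chosen so that $\|T-T_0\|<1$. Then $1-(T-T_0)$ is invertible in $\cal N$ and $1-T=\bigl(1-(T-T_0)\bigr)-T_0$ realises $1-T$ as a $\phi$-finite-range perturbation of an invertible operator; the kernel--cokernel bookkeeping of Breuer, carried out now with $\frk Z$-valued traces, gives that $1-T$ is $\phi$-Fredholm with $\phi$-$Index(1-T)=0$. Part (2) then follows the classical pattern. For the forward implication, since a $\phi$-Fredholm $T$ has closed range one forms the bounded operator $S$ inverting $T$ from $R_T\cal H$ onto $(1-N_T)\cal H$; then $ST=1-N_T$ and $TS=R_T$, so $ST-1$ and $TS-1$ are the $\phi$-finite projections $-N_T$ and $-(1-R_T)$, and $S$ inverts $T$ modulo $\cal K^\phi_{\cal N}$. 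For the converse, choosing $S,S'$ with $ST-1$ and $TS'-1$ in $\cal K^\phi_{\cal N}$, part (1) shows $ST$ and $TS'$ are $\phi$-Fredholm, so the $\phi$-finite kernel of $ST$ dominates $N_T$ and the $\phi$-finite kernel of $(TS')^*$ dominates $N_{T^*}$, whence $T$ is $\phi$-Fredholm.

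For the multiplicativity in part (3) I would pass to $M_2(\cal N)$ with the $\frk Z$-trace $\phi\otimes\mathrm{tr}_2$ and use the standard rotation homotopy connecting $\mathrm{diag}(ST,1)$ to $\mathrm{diag}(S,T)$ through $\phi$-Fredholm operators, together with homotopy invariance of the $\phi$-index and the evident additivity $\phi$-$Index(\mathrm{diag}(X,Y))=\phi$-$Index(X)+\phi$-$Index(Y)$; homotopy invariance reduces to the local constancy of the index, established exactly as in Breuer's work from the openness of the $\phi$-Fredholm operators (part (2)) and the stability in part (1). The main obstacle I anticipate is not any individual step but the uniform bookkeeping of $\frk Z$-valued finiteness: at each stage one must confirm that the projections produced (kernels, cokernels, range projections of products) have $\phi$-value in $\frk Z_+$ rather than merely in $\hat{\frk Z}_+$, and that ``additivity'' of the index is genuine addition in $\frk Z$. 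Because $\frk Z$ is abelian and $\phi$ is normal and semifinite, each of these reductions goes through verbatim, which is precisely why we are content to assert that Breuer's proofs carry over as in \cite{PhR}.
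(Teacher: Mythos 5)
Your overall strategy is exactly the paper's: the paper proves Theorem A1 by asserting that Breuer's arguments from \cite{Br1}, \cite{Br2} carry over verbatim as in Appendix B of \cite{PhR}, and most of your bookkeeping (well-definedness of the index via $1-R_T\leq E$, the Fredholm alternative by finite-range approximation, local constancy plus the $M_2(\mathcal{N})$ rotation trick for additivity) is the right adaptation. However, the forward direction of your Atkinson argument rests on a step that is genuinely false in this setting: a $\phi$-Fredholm operator need \emph{not} have closed range, so there is in general no bounded $S$ with $ST=1-N_T$ and $TS=R_T$. Concretely, let $E$ be a $\phi$-finite projection whose corner $E\mathcal{N}E$ is infinite-dimensional (e.g.\ a type $II_1$ corner, which is the typical situation in this paper), let $S\in E\mathcal{N}E$ be positive, injective and non-invertible in $E\mathcal{N}E$, and set $T=(1-E)+S$. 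Then $N_T=0$ and $range(1-E)\subseteq range(T)$, so $T$ is $\phi$-Fredholm exactly per the definition, yet $range(T)=(1-E)\mathcal{H}\oplus S(E\mathcal{H})$ is dense in $(1-E)\mathcal{H}\oplus E\mathcal{H}$ but not closed. Closed range is a type $I$ phenomenon, where ``cofinite'' means genuinely finite codimension; Breuer's actual proof avoids it by using the range inclusion $(1-E)\mathcal{H}\subseteq T\mathcal{H}$ and the closed graph theorem to produce a bounded $S_0\in\mathcal{N}$ with $TS_0=1-E$ (a right parametrix), and then invoking the separately proved lemma that $T^*$ is again Fredholm to manufacture a left parametrix. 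That lemma, and the check that $S_0$ lies in $\mathcal{N}$, are precisely the content that cannot be waved through.

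Your converse direction is also misstated, though more easily repaired: you conclude $\phi$-Fredholmness from the $\phi$-finiteness of $N_T$ and $N_{T^*}$, but that is strictly weaker than the definition, which demands a $\phi$-finite $E$ with $range(1-E)\subseteq range(T)$. (An injective positive operator with injective adjoint, dense non-closed range, and infinite trace-density of spectrum at $0$ has both kernel projections zero without being Fredholm.) The fix is immediate and is the argument you should have written: since $TS'$ is $\phi$-Fredholm by part (1), there is a $\phi$-finite $E$ with $(1-E)\mathcal{H}\subseteq TS'\mathcal{H}\subseteq T\mathcal{H}$, which verifies the definition directly. With these two repairs your outline does reduce, as intended, to Breuer's arguments with the scalar dimension function replaced by the normal $\mathfrak{Z}_+$-linear trace $\phi$, which is all the paper itself claims.
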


The following corollary is proved exactly as Corollary B2 of \cite{PhR}.

\begin{cor*}[\bf A2] The set $\cal F_{\phi}(\cal N)$ of 
$\phi$-Fredholm
operators is open in the norm topology of $\cal N$, and the index map
$T\mapsto \phi$-$Index(T)$ is locally constant on $\cal F_{\phi}(\cal N)$.
\end{cor*}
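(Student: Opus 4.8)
The plan is to derive both assertions formally from Theorem~A1, which already supplies Atkinson's theorem, the Fredholm alternative, and the additivity of the $\phi$-index; no further analysis of $\phi$ or of $\frk Z$ is needed. The structural fact I would use is that $\cal K^{\phi}_{\cal N}$ is a norm-closed ideal in $\cal N$, so that the quotient $\cal N/\cal K^{\phi}_{\cal N}$ is a unital Banach algebra and the quotient map $q:\cal N\to\cal N/\cal K^{\phi}_{\cal N}$ is a contractive homomorphism. By part (2) of Theorem~A1, an operator $T\in\cal N$ is $\phi$-Fredholm exactly when $q(T)$ is invertible; that is, $\cal F_{\phi}(\cal N)=q^{-1}(G)$, where $G$ is the group of invertibles of the quotient.

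Openness is then immediate: $G$ is open in any unital Banach algebra and $q$ is norm-continuous, so $q^{-1}(G)=\cal F_{\phi}(\cal N)$ is open in the norm topology of $\cal N$. Explicitly, if $T$ is $\phi$-Fredholm then $q(T)\in G$, and for $S$ with $\|S-T\|$ sufficiently small we have $\|q(S)-q(T)\|\le\|S-T\|$ small, whence $q(S)\in G$ and $S$ is $\phi$-Fredholm again by Atkinson.

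For local constancy of the index I would argue via a parametrix. Fix a $\phi$-Fredholm $T$ and, using Atkinson's theorem, choose $R\in\cal N$ with $RT-1,\ TR-1\in\cal K^{\phi}_{\cal N}$; since $q(R)=q(T)^{-1}$, $R$ is itself $\phi$-Fredholm, and by the Fredholm alternative together with additivity (parts (1) and (3) of Theorem~A1) $\phi{\text -}Index(R)=-\phi{\text -}Index(T)$. Now let $S$ satisfy $\|S-T\|<\|R\|^{-1}$, and set $W=1+R(S-T)$, so that $\|R(S-T)\|<1$ and $W$ is invertible; being invertible, $W$ has trivial kernel and cokernel and hence $\phi{\text -}Index(W)=0$. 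The identity $RS=RT+R(S-T)=W-(1-RT)=W\bigl(1-W^{-1}(1-RT)\bigr)$ exhibits $RS$ as the product of $W$ with an operator $1-K$, where $K=W^{-1}(1-RT)\in\cal K^{\phi}_{\cal N}$ because $1-RT\in\cal K^{\phi}_{\cal N}$ and $\cal K^{\phi}_{\cal N}$ is an ideal. By the Fredholm alternative and additivity, $\phi{\text -}Index(RS)=\phi{\text -}Index(W)+\phi{\text -}Index(1-K)=0$. On the other hand additivity gives $\phi{\text -}Index(RS)=\phi{\text -}Index(R)+\phi{\text -}Index(S)$, so $\phi{\text -}Index(S)=-\phi{\text -}Index(R)=\phi{\text -}Index(T)$ on the whole ball $\|S-T\|<\|R\|^{-1}$.

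I do not expect a genuine obstacle here: once Theorem~A1 is in hand, the classical Banach-algebra proof transfers essentially verbatim, the sole difference being that the concluding equality $\phi{\text -}Index(S)=\phi{\text -}Index(T)$ is an identity of self-adjoint elements of $\frk Z$ rather than of integers, and \emph{locally constant} is understood in that $\frk Z$-valued sense. The only points to keep honest are that every operator to which additivity is applied ($R$, $S$, $W$, and $1-K$) is genuinely $\phi$-Fredholm, which is ensured by the openness proved first and by the explicit factorizations above.
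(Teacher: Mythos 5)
Your proposal is correct, and it is essentially the paper's own proof: the paper simply invokes the classical argument (``proved exactly as Corollary B2 of \cite{PhR}''), which is precisely your route --- openness via Atkinson's theorem applied to the quotient map onto $\cal N/\cal K^{\phi}_{\cal N}$, and local constancy via a parametrix $R$ with $\phi{\text -}Index(R)=-\phi{\text -}Index(T)$, a Neumann-series invertible $W=1+R(S-T)$, and the factorization $RS=W(1-K)$ with $K\in\cal K^{\phi}_{\cal N}$, combined with parts (1) and (3) of Theorem A1. All the operators you feed to the additivity statement are verified to be $\phi$-Fredholm, so there is no gap.
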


The following trace formula for the index goes back to Calder\'{o}n for
pseudodifferential operators. The general type $I$ case is due to
H\"{o}rmander \cite{H} but Connes also has an elegant proof \cite{Co}.
One of the authors generalised H\"{o}rmander's proof to the case of a 
factor of type $II_{\infty}$ in \cite{Ph}, Theorem A7. It is this latter
proof that goes through essentially verbatim to our present setting, so we refer
the reader to Appendix A of \cite{Ph} for the proof.

\begin{thm*}[\bf A3]
Let $\phi$ be a faithful, normal, semifinite $\frk Z$-trace on the von
Neumann algebra $\cal N$, and let $S,T\in\cal N$ so that
$R_1=1-ST$ and $R_2=1-TS$ are both $n$-summable for some integer $n>0$.
Then, $T$ is a $\phi$-Fredholm operator and 
$$\phi{\text -}Index(T)=\phi(R_1^n)-\phi(R_2^n).$$
\end{thm*}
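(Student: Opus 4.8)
The plan is to adapt H\"ormander's trace formula to the semifinite, $\frk Z$-valued setting, following the type $II_\infty$ argument of \cite{Ph} with $\frk Z$ in place of the scalars, and to isolate the single genuinely analytic point. First I would dispose of Fredholmness. By hypothesis $R_1=1-ST$ and $R_2=1-TS$ are $n$-summable, i.e. $\phi(|R_i|^n)\in\frk Z_+$, so $|R_i|^n$ lies in the ideal of definition $\cal M^\phi$ of $\phi$, which is contained in $\cal K^\phi_{\cal N}$. Since $|R_i|=(|R_i|^n)^{1/n}$ is a continuous function vanishing at $0$ of a $\phi$-compact positive operator, $|R_i|$ and hence $R_i$ lie in $\cal K^\phi_{\cal N}$. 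Thus $ST\equiv 1\equiv TS$ modulo $\cal K^\phi_{\cal N}$, so $T$ is invertible in $\cal N/\cal K^\phi_{\cal N}$ and is $\phi$-Fredholm by Atkinson's Theorem (Theorem~A1(2)); the same argument applies to $T^*$, so that $\phi\text{-}Index(T)$ is defined.

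Next I would reduce to the case where the two defect operators already lie in the trace ideal $\cal M^\phi$. The key identities are $TR_1=R_2T$, whence $TR_1^{\,k}=R_2^{\,k}T$, together with the telescoping $\big(\sum_{k=0}^{n-1}R_1^{\,k}\big)(1-R_1)=1-R_1^{\,n}$. Setting $S'=\big(\sum_{k=0}^{n-1}R_1^{\,k}\big)S$, a short computation gives $1-S'T=R_1^{\,n}$ and, using $TR_1^{\,k}=R_2^{\,k}T$, also $1-TS'=\big(\sum_{k=0}^{n-1}R_2^{\,k}\big)(1-R_2)=R_2^{\,n}$; both of these are in $\cal M^\phi$ because $R_i\in\cal L^n(\phi)$. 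Hence $S'$ is a parametrix whose two defects are trace class and $\phi(R_1^{\,n})-\phi(R_2^{\,n})=\phi(1-S'T)-\phi(1-TS')$. It therefore suffices to prove the theorem for a parametrix whose defects lie in $\cal M^\phi$, i.e. the case $n=1$.

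For that case I would bring in the canonical parametrix. Writing $T=W|T|$ for the polar decomposition, the generalized inverse $S_0$ satisfies $1-S_0T=N_T$ and $1-TS_0=N_{T^*}$, with $N_T,N_{T^*}$ the ($\phi$-finite) kernel projections; for $S_0$ the formula is transparent, since $[T,S_0]=TS_0-S_0T=N_T-N_{T^*}$ yields $\phi(1-S_0T)-\phi(1-TS_0)=\phi(N_T)-\phi(N_{T^*})=\phi\text{-}Index(T)$. For an arbitrary trace-class-defect parametrix $S$, set $G=S-S_0$; then $R_1=N_T-GT$ and $R_2=N_{T^*}-TG$, so that $\phi(R_1)-\phi(R_2)=\phi\text{-}Index(T)-\big(\phi(GT)-\phi(TG)\big)$, where $GT,TG\in\cal M^\phi$. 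Thus the entire theorem comes down to the cyclicity identity $\phi(GT)=\phi(TG)$.

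The hard part is precisely this last identity: one must show $\phi$ is cyclic across the Schatten-type $\frk Z$-ideals even when neither factor individually lies in $\cal M^\phi$, and it is exactly the failure of naive cyclicity for the bounded operators $S,T$ that manifests as the index. This is where H\"ormander's argument does its real work. The inputs that let it go through in the present generality are the traciality of the Dixmier-type trace op\'eratorielle on its ideal of definition (Theorem~\ref{Tr} and Section~6) together with the $\frk Z$-valued H\"older duality pairing $\cal L^p(\phi)$ with $\cal L^{p'}(\phi)$: writing each defect as a product of factors drawn from $\cal L^n(\phi)$, one verifies the cyclic property on the conjugate pairs $\cal L^{n/(j+1)}$--$\cal L^{n/(n-1-j)}$ that arise, after which the scalar estimates of \cite{Ph} and \cite{H} carry over verbatim with $\frk Z$ replacing $\comp$. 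Assembling the three steps gives $\phi\text{-}Index(T)=\phi(R_1^{\,n})-\phi(R_2^{\,n})$, as claimed.
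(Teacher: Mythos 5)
Your overall architecture --- Fredholmness via Atkinson, telescoping reduction to $n=1$ using $TR_1^{\,k}=R_2^{\,k}T$ and $S'=\bigl(\sum_{k=0}^{n-1}R_1^{\,k}\bigr)S$, then comparison with a canonical parametrix and a final cyclicity identity --- is exactly the H\"ormander argument that the paper itself invokes: the paper gives no self-contained proof of Theorem A3, but refers to Appendix A of \cite{Ph} and asserts that proof carries over verbatim with $\frk Z$ replacing the scalars. Your steps 1 and 2 are correct as far as they go (granting the $\frk Z$-valued H\"older fact that $R\in\cal L^n(\phi)$ forces $R^n\in\cal M^{\phi}$, which you rightly flag as needing the $\frk Z$-valued theory).

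Step 3, however, contains a genuine gap: in a semifinite von Neumann algebra a $\phi$-Fredholm operator need \emph{not} have closed range, so the bounded generalized inverse $S_0$ with $1-S_0T=N_T$ and $1-TS_0=N_{T^*}$ simply need not exist. (Take an injective positive $T$ in a $II_\infty$ factor whose spectral projections $\chi_{[0,\epsilon)}(T)$ are $\phi$-finite of nonzero trace for every $\epsilon>0$: it is $\phi$-Fredholm of index $0$ but not bounded below on any cofinite piece. In H\"ormander's type $I$ setting Fredholm does imply closed range, which is why the step looks innocuous.) The repair, which is what the argument of \cite{Ph} actually uses, is the truncated parametrix: with $T=W|T|$ set $S_\epsilon=f_\epsilon(|T|)W^*$ where $f_\epsilon(t)=t^{-1}$ for $t\geq\epsilon$ and $0$ otherwise; then $1-S_\epsilon T=\chi_{[0,\epsilon)}(|T|)=:P_\epsilon$ and $1-TS_\epsilon=1-W\chi_{[\epsilon,\infty)}(|T|)W^*=:Q_\epsilon$ are $\phi$-finite projections for suitable $\epsilon$ (the Breuer-type characterization of $\phi$-Fredholmness, cf.\ Appendix B of \cite{PhR}), and since $Q_\epsilon-N_{T^*}=W(P_\epsilon-N_T)W^*$, traciality gives $\phi(P_\epsilon)-\phi(Q_\epsilon)=\phi(N_T)-\phi(N_{T^*})=\phi{\text -}Index(T)$; your algebra with $G=S-S_\epsilon$ then goes through unchanged. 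Separately, your step 4 is asserted rather than proved, and the H\"older gloss is misdirected: after the reduction to $n=1$ neither $G$ nor $T$ lies in any $\cal L^p(\phi)$, so no conjugate-exponent pairing applies. What is actually needed is the lemma that $\phi(AB)=\phi(BA)$ whenever \emph{both} products $AB$ and $BA$ lie in the trace ideal, even though neither factor does --- you correctly identify this as the crux and as exactly where the index lives, but it is the one step that cannot be waved through ``verbatim,'' since it must be re-established for the $\frk Z$-valued trace using its normality and traciality on $\cal M^{\phi}$; this is precisely the content the paper delegates to \cite{Ph} (following \cite{H}).
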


\begin{cor*}[\bf A4]
Let $A$ be a unital $C^*$-algebra and let $Z\subseteq Z(A)$ be a unital
$C^*$-subalgebra of the centre of $A.$ Let $\tau:A \to Z$ be a faithful,
unital $Z$-trace which is invariant under a continuous
action $\alpha$ of $\real$. Then for any $a\in A^{-1}\cap dom(\delta)$, 
the Toeplitz operator
$T_a$ is Fredholm relative to the trace $\hat\tau$ on 
$\cal N=P(Ind(A\rtimes\real)^{\prime\prime})P$, and
$$\hat\tau{\text -}Index(T_a)=\hat\tau([T_a,T_{a^{-1}}]).$$
\end{cor*}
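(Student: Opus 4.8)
The plan is to apply the $\frk Z$-valued H\"ormander trace formula (Theorem A3 above) to the pair $S = T_{a^{-1}}$, $T = T_a$ and to read off the asserted identity as the case $n = 1$; all of the genuine analytic content is already packaged in Corollary \ref{commutator2}, so what remains is essentially formal.

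First I would set $R_1 = 1 - ST = 1 - T_{a^{-1}}T_a$ and $R_2 = 1 - TS = 1 - T_a T_{a^{-1}}$, where $1$ denotes the unit $P$ of $\cal N = P\cal M P$. Here $a^{-1}\in dom(\delta)$ because $dom(\delta)$ is a $*$-subalgebra closed under inversion of its invertible elements, so $T_{a^{-1}}=P\tilde\pi(a^{-1})P$ is defined and Corollary \ref{commutator2} applies. Since $\tilde\pi$ is unital we have $P = T_1 = T_{a^{-1}a} = T_{aa^{-1}}$, so that
$$R_1 = -(T_{a^{-1}}T_a - T_{a^{-1}a}), \qquad R_2 = -(T_a T_{a^{-1}} - T_{aa^{-1}}).$$
By that corollary both of the differences on the right lie in $\cal N^{\hat\tau}$, hence $R_1, R_2 \in \cal N^{\hat\tau}$ and are in particular $1$-summable. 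Theorem A3 with $n = 1$ then yields that $T_a$ is $\hat\tau$-Fredholm and $\hat\tau{\text -}Index(T_a) = \hat\tau(R_1) - \hat\tau(R_2)$.

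The last step is to recognise $\hat\tau(R_1) - \hat\tau(R_2)$ as $\hat\tau([T_a,T_{a^{-1}}])$. I would do this by telescoping at the operator level before applying the trace: the two copies of $1$ cancel, giving $R_1 - R_2 = T_a T_{a^{-1}} - T_{a^{-1}}T_a = [T_a, T_{a^{-1}}]$, and since $\hat\tau$ is $\frk Z$-linear on the ideal $\cal N^{\hat\tau}$ containing both $R_1$ and $R_2$ we conclude $\hat\tau(R_1) - \hat\tau(R_2) = \hat\tau(R_1 - R_2) = \hat\tau([T_a, T_{a^{-1}}])$. The one subtlety worth flagging, and the only place where this could go wrong, is that one must \emph{not} split $\hat\tau(R_1)$ as $\hat\tau(1) - \hat\tau(T_{a^{-1}}T_a)$, since the unit $P$ of $\cal N$ is not $\hat\tau$-finite; forming the difference $R_1 - R_2$ first is precisely what makes the unit terms cancel and keeps every quantity inside the trace ideal. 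I therefore expect the main (modest) obstacle to be purely this bookkeeping point together with confirming that membership in $\cal N^{\hat\tau}$ supplies exactly the $1$-summability demanded by Theorem A3.
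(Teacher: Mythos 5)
Your proposal is correct and is essentially identical to the paper's own proof: the paper likewise sets $S=T_{a^{-1}}$, $T=T_a$ in Theorem A3 with $n=1$, identifies $R_1=1-T_{a^{-1}}T_a=T_{a^{-1}a}-T_{a^{-1}}T_a\in\mathcal N^{\hat\tau}$ (and similarly $R_2$) via Corollary \ref{commutator2}, and concludes $\hat\tau{\text -}Index(T_a)=\hat\tau(R_1)-\hat\tau(R_2)=\hat\tau([T_a,T_{a^{-1}}])$. Your explicit flagging of why one must form $R_1-R_2$ before tracing (the unit $P$ of $\mathcal N$ not being $\hat\tau$-finite) is a sound bookkeeping point that the paper leaves implicit.
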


\begin{proof}[\bf Proof]
We let $T=T_a$ and $S=T_{a^{-1}}$ and $\phi=\hat\tau$ in the statement 
of the previous theorem.
Then, $R_1=1-T_{a^{-1}}T_a=T_{a^{-1}a}-T_{a^{-1}}T_a\in \cal N^{\hat\tau}$
by Corollary 9.4 and similarly, $R_2\in \cal N^{\hat\tau}.$
Then, by the previous theorem, $T_a$ is $\hat\tau$-Fredholm and 
$$\hat\tau{\text -}Index(T_a)=\hat\tau(R_1)-\hat\tau(R_2)=
\hat\tau([T_a,T_{a^{-1}}]).$$
\end{proof}

\end{document}